\documentclass[11pt, reqno]{amsart}
\usepackage{amsmath,amsthm,amsfonts,amssymb,mathrsfs,bm,graphicx,stmaryrd}
\usepackage{epstopdf}
\usepackage{mathtools}
\usepackage{dsfont}
\usepackage{multicol}
\usepackage[colorlinks=true,linkcolor=blue,citecolor=blue,breaklinks]{hyperref}
\usepackage{url}
\usepackage{float}

\usepackage{breakurl}
\setcounter{tocdepth}{2}
\usepackage{bbm}
\usepackage{subcaption}
\usepackage{enumerate}
\usepackage{longtable}
\newcommand{\wt}{\widetilde}
\newcommand{\wh}{\widehat}

\newcommand{\dist}{\mathrm{dist}}
\newcommand{\scX}{\mathscr{X}}

\newcommand{\bz}{\text{\usefont{U}{bboldx}{m}{n}z}}
\newcommand{\bw}{\text{\usefont{U}{bboldx}{m}{n}w}}

\newcommand{\cX}{\mathcal{X}}
\newcommand{\cZ}{\mathcal{Z}}

\newcommand{\RR}{\mathbb{R}}
\newcommand{\CC}{\mathbb{C}}

\newcommand{\PP}{\mathbb{P}}

\newcommand{\cB}{\mathcal{B}}

\newcommand{\EE}{\mathbb{E}}
\newcommand{\NN}{\mathbb{N}}

\newcommand{\ZZ}{\mathbb{Z}}

\newcommand{\cI}{\mathcal{I}}

\newcommand{\A}{\text{\usefont{U}{bboldx}{m}{n}A}}
\newcommand{\bp}{\mathbbm{p}}
\newcommand{\giv}{\,|\,}
\newcommand{\inte}{\mathrm{int}}
\newcommand{\bphi}{\boldsymbol{\varphi}}
\newcommand{\bpsi}{\boldsymbol{\psi}}

\usepackage[letterpaper,hmargin=1.0in,vmargin=1.0in]{geometry}
\parindent	1pc
\parskip 	\smallskipamount

\newtheorem{theorem}{Theorem}
\newtheorem{lemma}[theorem]{Lemma}
\newtheorem{conjecture}[theorem]{Conjecture}
\newtheorem{problem}[theorem]{Problem}

\newtheorem{definition}[theorem]{Definition}
\newtheorem{corollary}[theorem]{Corollary}
\newtheorem{proposition}[theorem]{Proposition}

\theoremstyle{definition}
\newtheorem{remark}[theorem]{Remark}
\newcommand{\ind}{\mathbbm{1}}

\newcommand{\cA}{\mathcal{A}}
\newcommand{\cC}{\mathcal{C}}
\newcommand{\cD}{\mathcal{D}}
\newcommand{\cE}{\mathcal{E}}

\newcommand{\cM}{\mathcal{M}}

\newcommand{\cG}{\mathcal{G}}
\newcommand{\cS}{\mathcal{S}}

\newcommand{\Cov}{\mathrm{Cov}}

\newcommand{\diam}{\mathrm{diam}}

\newcommand{\stab}{\mathrm{Stab}}

\newcommand{\bx}{\mathbbm{x}}
\newcommand{\by}{\mathbbm{y}}

\newcommand{\cW}{\mathcal{W}}
\newcommand{\cH}{\mathcal{H}}

\usepackage[backend=biber,style=alphabetic,doi=false,maxalphanames=10,maxnames=50]{biblatex}
\addbibresource{stars.bib}
\AtBeginBibliography{\small}

\begin{document}
\title[]{Strong confluence of geodesics in Liouville quantum gravity}
\author[]{Manan Bhatia}
\address{Manan Bhatia, Department of Mathematics, Massachusetts Institute of Technology, Cambridge, MA, USA}
\email{mananb@mit.edu}
\author[]{Konstantinos Kavvadias}
\address{Konstantinos Kavvadias, Department of Mathematics, Massachusetts Institute of Technology, Cambridge, MA, USA}
\email{kavva941@mit.edu}
\date{}
\maketitle
\begin{abstract}
  $\gamma$-Liouville quantum gravity ($\gamma$-LQG) constitutes a family of planar random geometries whose geodesics exhibit intricate fractal behaviour. As is observed in various planar models of random geometry as part of the phenomenon of geodesic confluence, geodesics in $\gamma$-LQG tend to merge with each other. In particular, in \cite{GM20}, it was established that in $\gamma$-LQG, geodesics targeted to a \emph{fixed} point do coalesce in the sense that any two such geodesics almost surely merge before reaching their common target. However, in view of the randomness inherent to the geometry, it is a priori possible that while geodesics targeted to a fixed point do coalesce, there exists a sequence of geodesics $P_n$ converging to an exceptional geodesic $P$ as $n\rightarrow \infty$ such that $P_n$ does not overlap with $P$ for any $n$. In this paper, we prove that this is not possible, thereby establishing a strong confluence statement for $\gamma$-LQG for all $\gamma\in (0,2)$. This extends the results obtained in \cite{MQ20} for $\gamma=\sqrt{8/3}$ to all subcritical values of $\gamma$. We discuss applications to the study of geodesic stars and geodesic networks and include a list of open questions.

\end{abstract}

\tableofcontents

\section{Introduction}
\label{sec:introduction}
Geodesic confluence, or the tendency of geodesics to merge with each other, is a cornerstone phenomenon observed in planar models of random geometry. This phenomenon was first investigated \cite{LeGal10} for Brownian geometry \cite{LeGal19}, a field which studies uniform planar maps and their scaling limits. Subsequently, versions of geodesic confluence (or coalescence) have also been established \cite{GM20} for the more general\footnote{Indeed, as established by Miller-Sheffield \cite{miller2021liouville}, Brownian geometry is equivalent to $\sqrt{8/3}$-LQG.} $\gamma$-Liouville Quantum gravity ($\gamma$-LQG) metrics \cite{DDG21}, and also for integrable growth models in the Kardar-Parisi-Zhang (KPZ) universality class \cite{KPZ86}.

The most basic form of confluence, by no means easy to prove, concerns the behaviour of geodesics to a \emph{fixed} point. Working with a random metric $D$ on the complex plane admitting geodesics, the question is the following-- fix a point $\bz\in \CC$ and consider a $D$-geodesic $P$ emanating from $\bz$ to some other point $w$. Suppose that we have a family of $D$-geodesics $P_n$ emanating from $\bz$ and converging to $P$ in the Hausdorff sense with respect to the metric $D$. Then, with $\cB_\varepsilon(z)$ denoting a $D$-ball of radius $\varepsilon$ around a point $z$, is it true that for any $\varepsilon>0$, we have $(P_n\setminus P) \cup (P\setminus P_n) \subseteq \cB_\varepsilon(\bz)\cup \cB_\varepsilon(w)$ for all $n$ large enough? In case the answer to the above is affirmative, we say that \emph{one-point} confluence holds, and indeed, the above is known to hold for all the models mentioned above \cite{GM20,BSS19,LeGal10}.%

However, due to the randomness inherent to the above models, the behaviour around fixed points $\bz$ does not necessarily capture the behaviour around all points. Indeed, it is often the case that a property which holds almost surely for a fixed point fails at an infinite collection of exceptional points, and a prominent example of such points are \emph{geodesic stars} (see e.g.\ \cite{LeGal22, Bha22, Dau23+,Gwy21}) -- these are points $z$ for which there exist two distinct geodesics $P',P$ emanating from $z$ which intersect only at $z$ (see Section \ref{sec:geodesic-stars})

In view of the above, one might formulate a stronger version of the confluence phenomenon, known simply as \emph{strong confluence}. For a random metric $D$ on the plane admitting geodesics, strong confluence is said to hold if the following holds. Almost surely, for all points $z,w$ and every geodesic $P$ between them, \emph{every} sequence of geodesics $P_n$ converging to $P$ in the Hausdorff sense satisfies that for any fixed $\varepsilon>0$, $(P_n\setminus P) \cup (P\setminus P_n)\subseteq \cB_\varepsilon(z)\cup \cB_\varepsilon(w)$ as $n\rightarrow \infty$. In practice, strong confluence is a very useful tool since it offers a route to prove results for all geodesics $P$ by instead choosing a sequence of typical geodesics $P_n$ converging to $P$ and only investigating the path properties of these typical geodesics.

It turns out that for both Brownian geometry (or equivalently, $\sqrt{8/3}$-LQG) and for the directed landscape \cite{DOV18,DV21}, which is the putative universal scaling limit of all models in the KPZ universality class, strong confluence has been established-- for the former, this was established in the work by Miller-Qian \cite{MQ20} while the latter is proved in \cite{BGH19}. However, strong confluence has not yet been established for $\gamma$-LQG with general values of $\gamma$, and the goal of the paper is to fill this gap.
\begin{theorem}[informal]
  \label{thm:2}
  For $\gamma$-LQG with $\gamma\in (0,2)$, strong confluence a.s.\ holds.
\end{theorem}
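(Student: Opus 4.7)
My plan is to reduce strong confluence to a quantitative single-scale merging estimate for pairs of geodesics, establish this estimate using the fixed-source confluence of \cite{GM20} together with a net argument, and then propagate it to all scales via the scale invariance of the Liouville field.

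First I would reformulate the failure of strong confluence. If it fails at $(z,w)$, then there exist a geodesic $P$ from $z$ to $w$, a sequence of geodesics $P_n$ Hausdorff-converging to $P$, and an $\varepsilon_0>0$ such that for infinitely many $n$ there is a point $q_n\in P_n\setminus P$ with $q_n\notin \cB_{\varepsilon_0}(z)\cup\cB_{\varepsilon_0}(w)$. Passing to a subsequence, $q_n\to q$ for some interior point $q\in P$. This means that at arbitrarily fine scales $r$ around $q$, the geodesics $P_n$ and $P$ both cross $\cB_{10r}(q)$, are Hausdorff-$o(r)$-close inside that ball, yet share no arc inside $\cB_{2r}(q)$. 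The task is to preclude this scenario a.s.

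Second, I would prove the core quantitative estimate: there exists a function $f(r)$, summable along dyadic scales, such that, uniformly in centers $q$ in a fixed compact set, the probability of the existence of two $D$-geodesics both crossing $\cB_{10r}(q)$, Hausdorff-$r/100$-close there, yet sharing no arc in $\cB_{2r}(q)$, is at most $f(r)$. The approach would be to (i) place a fine net $\{y_j\}$ inside $\cB_{10r}(q)$, (ii) invoke \cite{GM20} at each fixed source $y_j$ to force all geodesics from $y_j$ leaving $\cB_{10r}(q)$ to pass through a finite set of confluence points on $\partial\cB_{2r}(q)$, (iii) union-bound over the net via tightness and uniform continuity of geodesics with respect to their endpoints, and (iv) note that two geodesics passing through a common confluence point must share an arc just inside, again by one-point confluence applied from the confluence point. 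Once this estimate is established at scale $1$, Weyl scaling of the GFF propagates it to all dyadic scales $r=2^{-k}$; a Borel--Cantelli argument over a countable dense collection of centers then yields almost surely that the exceptional event occurs at only finitely many scales around every point, which directly contradicts the reformulated failure from the first step.

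The main obstacle will be the quantitative single-scale estimate. At $\gamma=\sqrt{8/3}$, the work \cite{MQ20} relies on Brownian-map-specific machinery that is unavailable for general $\gamma$, so the argument must proceed purely at the level of the GFF and the axiomatic LQG metric. The delicate point is controlling how the geodesic from a source depends on that source, uniformly over a fine net of sources; this will likely require combining the Markov property of the GFF across annuli, stability of the LQG metric with respect to the field, and an upgrade of the qualitative confluence of \cite{GM20} to a quantitative power-of-scale failure bound.
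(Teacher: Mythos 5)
Your reduction in the first step is fine, but the core single-scale estimate in the second step has a genuine gap that the rest of the plan cannot repair. The one-point confluence of \cite{GM20} (and even the stronger version in \cite{GPS20}) constrains only geodesics \emph{emanating from} the fixed source $y_j$: it forces all geodesics started at $y_j$ and exiting a metric ball around $y_j$ to pass through finitely many confluence points. It says nothing about the pair $P,P_n$ in your exceptional scenario, whose endpoints lie far outside $\cB_{10r}(q)$ and which merely \emph{pass through} the ball; no union bound over a net of sources touches such geodesics. Your step (iii) tries to bridge this via ``tightness and uniform continuity of geodesics with respect to their endpoints'', but that stability is essentially the statement being proved (it is a consequence of strong confluence, cf.\ Proposition \ref{prop:15}), so the argument is circular exactly at the exceptional points where it matters. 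Step (iv) is also incorrect as stated: two arbitrary geodesics passing through a common confluence point of some net source need not share an arc near it — they can cross transversally — since the uniqueness statement in Proposition \ref{prop:6}\eqref{it:uniquegeod} is again only about geodesics from the fixed source. Finally, the quantitative input you require (a failure probability $f(r)$ summable after a union over the $\asymp r^{-2}$ centers needed to cover a compact set, i.e.\ a power-of-$r$ bound with exponent $>2$) is not available from \cite{GM20}, and the paper explicitly does not prove, and does not need, such a single-scale quantitative confluence bound.

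The paper's route is structurally different precisely to avoid these issues. Following \cite{MQ20}, it combines (a) the finiteness of the number of geodesics between any two points \cite{Gwy21} with (b) the statement that two geodesics that are close in a one-sided Hausdorff sense must intersect (Proposition \ref{prop:close_one_sided_they_intersect}). For (b), instead of trying to control arbitrary nearby geodesics directly, it plants ``trap'' configurations — the $\scX$s, i.e.\ pairs of \emph{unique} geodesics between deterministic points that merge and then separate — along geodesics emanating from lattice points: a positive-probability local event $F_r(z)$ is built from \cite{GPS20}, its probability is boosted by iteration, a bump-function/absolute-continuity argument forces the geodesic to actually pass through an $\scX$ when it comes near $z$, and the independence-along-geodesics theorem of \cite{GM21} (Proposition \ref{thm:theorem_uniqueness_lqg_metric}) upgrades this to: every geodesic from a lattice point traverses such an $\scX$ at a controlled scale. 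A topological ``ladder'' argument (Section 5) then fits a lattice-based geodesic between two hypothetical disjoint-but-close geodesics; since that geodesic must pass through an $\scX$ whose size dominates the separation, the uniqueness of the $\scX$'s arms forces the two original geodesics to meet. Any repair of your plan would need an ingredient of this type — a mechanism converting statements about geodesics from typical (lattice) sources into constraints on \emph{all} geodesics passing through a region — which the net-plus-union-bound scheme does not supply.
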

As we discuss in detail later in Section \ref{sec:stars}, the above result has applications to the study of geodesic stars and networks in $\gamma$-LQG. We note that while strong confluence for $\gamma$-LQG was not yet known prior to this paper, a version of confluence stronger than one-point confluence but weaker than strong confluence was established in \cite{GPS20}. Namely, it was shown that in the definition of one-point confluence, one can relax the condition that the geodesics $P_n$ converging to $P$ all emanate from $\bz$. That is, instead of only considering geodesics $P_n$ targeted at a fixed point $\bz$, one can also allow geodesics whose target lies in the vicinity of $\bz$, which we note is the point from which $P$ emanates.

\subsection{The $\gamma$-LQG metric and the main result}
\label{sec:gamma-lqg-metric}

In order to state the formal statement of the main result, we now give a quick introduction to $\gamma$-LQG and the metric associated to it. %
Starting with a Gaussian Free Field (GFF) $h$ on a domain $U\subseteq \CC$, the theory of $\gamma$-LQG studies the random geometry obtained by considering the Riemannian metric tensor ``$e^{\gamma h(x+iy)}(dx^2+dy^2)$''. Such random metrics arise in physics and are in particular expected to describe the scaling limits of a wide class of planar map models when conformally embedded in the plane.

Since the GFF $h$, being log-correlated, is not well-defined pointwise, it is not straightforward to make rigorous sense of the metric tensor ``$e^{\gamma h(x+iy)}(dx^2+dy^2)$''. Indeed, for $\gamma\in (0,2)$, even the volume form ``$e^{\gamma h(z)}d|z|^2$'', an a priori simpler object than the entire Riemannian metric, is defined via a renormalisation procedure resulting in the Gaussian Multiplicative chaos measures \cite{Kah85,DS09} associated to the GFF $h$. The strategy used is to first mollify $h$, for instance, by defining $h^*_\varepsilon(z)$ to be the convolution of $h$ with the heat kernel at time $\varepsilon^2/2$, and then define the volume form associated to $h^*_\varepsilon$ (here $h^*_\varepsilon$ does have well-defined pointwise values) and then argue that this measure, when appropriately renormalised, converges in the limit $\varepsilon\rightarrow 0$.

In the last decade, the $\gamma$-LQG metric $D_h$ associated to $h$ has been successfully defined \cite{DDDF20, GM21} via a renormalisation procedure as a limit of certain first-passage percolation metrics associated to the mollified field $h_{\varepsilon}^*$. Consider the domain $U=\CC$ and let $h$ be an instance of the whole plane GFF (see Section \ref{sec:gaussian-free-field}), and let $d_\gamma$ be the Hausdorff dimension of $\gamma$-LQG as defined in \cite{DG20}. One can then consider the prelimiting metric $D_{h,\varepsilon}$ defined by
\begin{equation}
  \label{eq:11}
  D_{h,\varepsilon}(u,v)= (a_\varepsilon)^{-1}\inf_{P\colon u\rightarrow v}\int_0^1e^{(\gamma/d_\gamma)h^*_\varepsilon(P(t))}|P'(t)|dt,
  \end{equation}
  where the infimum above is over all piecewise continuously differentiable paths $P$ from $u$ to $v$, and $a_\varepsilon$ is an appropriately chosen normalising constant. The construction of the $\gamma$-LQG metric amounts to arguing that the metrics $D_{h,\varepsilon}$ converge in probability, in the appropriate topology, to a limiting metric $D_h$, and this defines the Riemannian metric tensor associated to $\gamma$-LQG.

  In addition to the distances $D_h(u,v)$, one can also consider geodesics, which are shortest paths in the random geometry given by $D_h$. Indeed, given a metric $d$ on $\CC$, the length of a path $\eta\colon [a,b]\rightarrow \CC$ can be defined as
  \begin{equation}
    \label{eq:16}
    \ell(\eta;d)= \sup_{a=t_0<\cdots<t_n=b} \sum_{i=1}^n d(\eta(t_{i-1}), \eta(t_i)),
  \end{equation}
  where the supremum is over all $n\in \NN$ and all partitions $\{t_0,\dots,t_n\}$ of $[a,b]$, and for any $u,v\in \CC$, a geodesic refers to any continuous path $\Gamma_{u,v}$ from $u$ to $v$ for which $\ell(\Gamma_{u,v};D_h)=D_h(u,v)$. It can be shown that a.s.\ such geodesics exist for all $u,v\in \CC$ and in fact, they are a.s.\ unique for fixed $u,v$ \cite[Theorem 1.2]{MQ18} (see also \cite[Lemma 4.2]{DDG21}). Unless otherwise specified, we shall always work with unit speed parametrisations of such geodesics, that is, we shall have $\Gamma_{u,v}\colon [0,D_h(u,v)]\rightarrow \CC$ with $\ell(\Gamma_{u,v}\lvert_{[s,t]};D_h)=t-s$ for every $0\leq s\leq t\leq D_h(u,v)$. For a point $z\in \CC$ and radius $r>0$, $\cB_r(z)$ shall denote the (open) $D_h$-ball of radius $r$ around $z$. We are now ready to state the main result of this paper.%

  \begin{figure}
    \centering
    \includegraphics[width=0.8\linewidth]{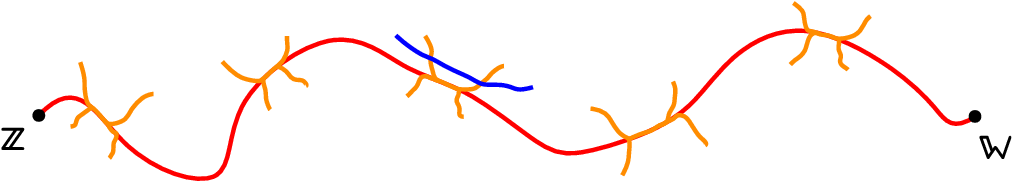}
    \caption{The geodesic $\Gamma_{\bz,\bw}$ is depicted in red and the multiple $\scX$s that $\Gamma_{\bz,\bw}$ passes through are depicted in orange. The blue path is a subsegment of another $D_h$-geodesic which is very close to $\Gamma_{\bz,\bw}$ with respect to the Hausdorff distance with respect to $D_h$.  If the blue segment did not ``coalesce'' with $\Gamma_{\bz,\bw}$, then it would have to cross one of the arms $P_+,P_-$ corresponding to the orange $\scX$ since the size of the $\scX$ is much larger than the Hausdorff distance between $\Gamma_{\bz,\bw}$ and the geodesic containing the blue path. But then, this would violate the uniqueness of the $D_h$-geodesics used to construct the $\scX$s.}
    \label{fig:chis_on_geod}
  \end{figure}
  \begin{theorem}[formal]
    \label{thm:3}
    Fix $\gamma\in (0,2)$ and consider a whole plane GFF $h$ and the associated LQG metric $D_h$. Fix $\varepsilon>0$. Then the following holds almost surely. For any points $u,v\in \CC$, any $D_h$-geodesic $P$ from $u$ to $v$ and any sequence of $D_h$-geodesics $\{P_n\}_{n\in \NN}$ converging to $P$ in the Hausdorff sense with respect to $D_h$\footnote{Or equivalently, with respect to the Euclidean metric and with Euclidean balls on the right side of \eqref{eq:14}. Indeed, since $\gamma\in (0,2)$, the $D_h$-metric is a.s.\ locally bi-H\"older with respect to the Euclidean metric (see Lemma \ref{lem:holder_regularity}).}, for all $n$ large enough, we have
    \begin{equation}
      \label{eq:14}
      (P_n\setminus P)\cup (P\setminus P_n)\subseteq \cB_\varepsilon(u)\cup \cB_\varepsilon(v).
    \end{equation}
  \end{theorem}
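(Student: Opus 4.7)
The plan is to realise the barrier argument suggested by Figure \ref{fig:chis_on_geod}. For $\bz,\bw$ in a fixed countable dense set $\cS\subset\CC$, I will a.s.\ construct along the reference geodesic $\Gamma_{\bz,\bw}$ a finite ``$\scX$-carpet'': a sequence of transverse crossings of $\Gamma_{\bz,\bw}$ by secondary geodesics $\Gamma_{a,b}$ with $a,b\in \cS$ on opposite sides of $\Gamma_{\bz,\bw}$. The a.s.\ uniqueness of $D_h$-geodesics between deterministic points is then used both to force any Hausdorff-nearby geodesic to pass through every $\scX$-point, and subsequently to force coincidence with $\Gamma_{\bz,\bw}$ between consecutive such points. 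The extension from $\bz,\bw\in\cS$ to arbitrary $u,v\in\CC$ then follows from the continuity of $(u,v)\mapsto \Gamma_{u,v}$ available from a.s.\ uniqueness together with one-point confluence \cite{GM20}, by approximating $u_k\to u,\ v_k\to v$ with $u_k,v_k\in \cS$.

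\textbf{Construction of $\scX$-carpets.} For $\bz,\bw\in \cS$ and $\varepsilon>0$ fixed, I aim to show a.s.\ the existence of $\delta>0$ and points $x_1,\ldots,x_m$ on $\Gamma_{\bz,\bw}$, with $x_1\in\cB_{\varepsilon/2}(\bz)$, $x_m\in\cB_{\varepsilon/2}(\bw)$ and consecutive $D_h$-spacings at most $\delta/100$, together with pairs $(a_i,b_i)\in \cS^2$ such that $\Gamma_{a_i,b_i}$ crosses $\Gamma_{\bz,\bw}$ transversely at $x_i$ with two arms $P_{-}^i,P_{+}^i$ on opposite sides of $\Gamma_{\bz,\bw}$, each of $D_h$-length at least $\delta$. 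The construction combines the one-point confluence of \cite{GM20} with the near-target confluence of \cite{GPS20}: at each deterministic scale one uses these tools to locate a pair $(a,b)\in \cS^2$ for which $\Gamma_{a,b}$ crosses $\Gamma_{\bz,\bw}$ transversely near a prescribed deterministic point, and a multi-scale covering argument exploiting the Weyl rescaling of the $\gamma$-LQG metric then yields the required density of such crossings with uniform arm control.

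\textbf{Barrier argument and conclusion.} Given $P_n\to\Gamma_{\bz,\bw}$ in the Hausdorff sense with respect to $D_h$, for $n$ large one has $\mathrm{Haus}_{D_h}(P_n,\Gamma_{\bz,\bw})<\delta/10$, and the Hausdorff convergence of unit-speed parametrisations forces $P_n$ to traverse $x_1,\ldots,x_m$ in the order induced by $\Gamma_{\bz,\bw}$. Suppose $P_n$ were to avoid some $x_i$; then $P_n$ must still enter $\cB_\delta(x_i)$, and the planar topology of the transverse cross $\Gamma_{\bz,\bw}\cup\Gamma_{a_i,b_i}$ at $x_i$ forces $P_n$ to cross $\Gamma_{a_i,b_i}$ at two distinct points $y^-\in P_{-}^i,\ y^+\in P_{+}^i$. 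Both the sub-segment of $P_n$ and that of $\Gamma_{a_i,b_i}$ joining $y^-$ to $y^+$ are $D_h$-geodesics of length $D_h(y^-,y^+)$, and they are distinct as sets because only the second contains $x_i$. Hence the concatenation $\Gamma_{a_i,b_i}|_{a_i\to y^-}\cdot P_n|_{y^-\to y^+}\cdot\Gamma_{a_i,b_i}|_{y^+\to b_i}$ is a $D_h$-geodesic from $a_i$ to $b_i$ distinct from $\Gamma_{a_i,b_i}$, contradicting the a.s.\ uniqueness of the geodesic between the fixed pair $(a_i,b_i)\in \cS^2$. So $P_n$ contains every $x_i$. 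The same replacement argument applied now to $\Gamma_{\bz,\bw}$ itself shows that $P_n|_{x_i\to x_{i+1}}$ must coincide with $\Gamma_{\bz,\bw}|_{x_i\to x_{i+1}}$ (else one produces a second $D_h$-geodesic from $\bz$ to $\bw$). Thus $P_n\supset \Gamma_{\bz,\bw}|_{[x_1,x_m]}\supset \Gamma_{\bz,\bw}\setminus(\cB_\varepsilon(\bz)\cup \cB_\varepsilon(\bw))$, and the short outer portions of $P_n$ fit inside $\cB_\varepsilon(\bz)\cup \cB_\varepsilon(\bw)$ by length comparison. Taking a countable union over $(\bz,\bw)\in \cS^2$ and a sequence $\varepsilon_k\downarrow 0$, together with the reduction step, completes the proof.

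\textbf{Main obstacle.} The principal technical difficulty is the $\scX$-carpet construction: ensuring the crossings are genuinely \emph{transverse}, with uniformly positive arm lengths on \emph{both} sides of $\Gamma_{\bz,\bw}$, and densely distributed along it. Producing such crossings goes beyond the plain one-point confluence of \cite{GM20}; it will call for a quantitative use of the near-target confluence of \cite{GPS20} to create a two-sided geodesic arm at each scale, combined with multi-scale estimates and the scale-invariance of the $\gamma$-LQG metric under Weyl rescaling. A secondary delicate point is the parametrisation-order assertion used to force the $x_i$ to appear on $P_n$ in the correct order, which requires upgrading Hausdorff set convergence to convergence of unit-speed parametrisations.
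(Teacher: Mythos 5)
Your overall architecture (a barrier of $\scX$-type crossings along a reference geodesic, plus a uniqueness/replacement argument forcing nearby geodesics to coincide with it) is the same strategy as the paper, which follows \cite{MQ20}. However, the proposal has a genuine gap exactly where the paper's technical content lies: the ``$\scX$-carpet'' construction is asserted, not proved. One-point confluence \cite{GM20} and the near-target confluence of \cite{GPS20} only produce coalescence phenomena for geodesics targeted at (or near) a \emph{fixed} point; they do not, by themselves, produce densely spaced crossings of a given geodesic $\Gamma_{\bz,\bw}$ by geodesics between \emph{other} fixed (rational) points with two-sided arm control. The paper needs: (i) a positive-probability local event $F_r(z)$ containing a coarse-grained $\scX$ between deterministic points (built from \cite{GPS20} plus a pigeonhole over deterministic squares and tubes), (ii) a probability-boosting step over many disjoint balls, (iii) a bump-function/absolute-continuity argument (including the delicate Lemma \ref{lem:geodesic_intersects_chi}, which rules out the geodesic skirting the $\scX$ without touching it) to verify the conditional-probability hypothesis \eqref{it:inclusion_of_events} of Proposition \ref{thm:theorem_uniqueness_lqg_metric}, and (iv) the independence-along-geodesics theorem of \cite{GM21} (further upgraded in Proposition \ref{prop:5} so that the target $w$ may vary off-lattice). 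Your sketch replaces all of this by ``a multi-scale covering argument exploiting Weyl rescaling,'' which is not an argument; you flag it as the main obstacle, but without it there is no proof. A related structural point: if your ``transverse crossing at $x_i$'' literally means $\Gamma_{a_i,b_i}\cap\Gamma_{\bz,\bw}=\{x_i\}$, such single-point intersections of two geodesics are conjecturally nonexistent (see Section \ref{sec:geodesic-networks}); confluence-based constructions can only give crossings sharing a segment, i.e.\ genuine $\scX$s, so the carpet must be built from those.

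The second gap is your reduction/extension step. Almost-sure uniqueness of $\Gamma_{u,v}$ holds only for \emph{fixed} pairs; at exceptional pairs there are multiple geodesics and $(u,v)\mapsto\Gamma_{u,v}$ is not continuous there, so ``continuity from a.s.\ uniqueness plus one-point confluence'' cannot upgrade the statement from rational endpoints to all $u,v$ and \emph{all} geodesics $P$ between them. That upgrade is precisely the ``strong'' part of strong confluence and cannot be obtained by naive approximation without circularity; indeed, an arbitrary limit geodesic $P$ need not be approximable by the rational-endpoint geodesics in a way you control a priori. The paper circumvents this by never requiring the atypical geodesics themselves to pass through $\scX$s: given two nearby, disjoint geodesics $\eta_*,\eta$ it builds the ladder region, fits inside it a geodesic $\sigma_\eta$ or $\sigma_\eta^*$ started from a lattice point (Lemma \ref{lem:geodesic_between_geodesics}), applies the $\scX$-passing result (Proposition \ref{prop:9}) to \emph{that} geodesic, and transfers the conclusion to $\eta_*,\eta$ via the uniqueness of the $\scX$'s constituent geodesics; it then needs the finiteness of geodesics \cite{Gwy21} and the no-bubble statement (Proposition \ref{prop:11}) to pass from ``nearby geodesics intersect'' to the full Theorem \ref{thm:3}. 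Your proposal contains no substitute for this mechanism, so as written it does not handle exceptional endpoint pairs or non-unique geodesics, which are the whole point of the theorem.
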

  In other words, if a sequence of geodesics eventually gets arbitrarily close to another geodesic, then they must eventually overlap for almost the entirety of their journeys.
  \begin{remark}
    With $d_H$ denoting the Hausdorff distance induced by $D_h$, we expect that it should be possible to carefully adapt the argument in this paper to yield an explicit constant $\beta\in (0,1)$ depending only on the LQG parameter $\gamma\in (0,2)$ for which the following quantitative version of Theorem \ref{thm:3} holds. Fix a bounded set $U\subseteq \CC$ and a constant $\alpha>0$. Then almost surely, for all small enough $\varepsilon>0$, for all points $u,v\in U$ with $D_h(u,v)\geq \alpha$, any $D_h$-geodesic $P$ from $u$ to $v$ and any $D_h$-geodesic $P'$ satisfying $d_H(P,P')\leq \varepsilon$, we have
    \begin{equation}
      \label{eq:105}
      (P\setminus P')\cup (P'\setminus P)\subseteq \cB_{\varepsilon^{\beta}}(u)\cup \cB_{\varepsilon^{\beta}}(v).
    \end{equation}
  We note that, in contrast, the work \cite{MQ20} in the setting of Brownian geometry ($\gamma=\sqrt{8/3}$), proves the above with $\varepsilon^\beta$ replaced by $\varepsilon^{1-o_\varepsilon(1)}$ (see \cite[Theorem 1.1]{MQ20}). We expect this stronger result to be true for $\gamma$-LQG for all $\gamma\in (0,2)$ as well, but this claim does not appear to be tractable by the methods used in this paper.
  \end{remark}
  We now state a particular striking consequence of Theorem \ref{thm:3} which concerns the geodesic frame $\mathcal{W}$ defined as the unions of interiors of \emph{all} possible $D_h$-geodesics; the geodesic frame $\mathcal{W}$ has been studied for Brownian geometry \cite{MQ20,AKM17} as well as the directed landscape \cite{Bha23}, and the following result is the $\gamma$-LQG analogue of \cite[Proposition 23]{Bha23} and \cite[Corollary 1.8]{MQ20}.
  \begin{proposition}
    \label{thm:4}
  Fix $\gamma\in (0,2)$ and consider a whole plane GFF $h$ and the associated LQG metric $D_h$. Almost surely, the Hausdorff dimension of the geodesic frame $\cW$ with respect to the LQG metric $D_h$ is equal to $1$.
\end{proposition}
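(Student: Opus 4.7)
The plan is to prove the two bounds separately. The lower bound is immediate: for any two distinct points $q, q' \in \CC$, the unique $D_h$-geodesic $\Gamma_{q,q'}$, parametrized by $D_h$-arclength, is a $1$-Lipschitz map $[0, D_h(q,q')] \to (\CC, D_h)$, so its image has $D_h$-Hausdorff dimension exactly $1$. Since the interior of this image lies in $\cW$, we conclude $\dim_{D_h}(\cW) \geq 1$.

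For the upper bound, write $\cW = \bigcup_{k \geq 1} \cW_{1/k}$, where $\cW_\delta$ denotes the set of points $x$ that lie on some $D_h$-geodesic $P$ at $D_h$-distance at least $\delta$ from both endpoints of $P$. By countable stability of Hausdorff dimension, it suffices to show almost surely that $\dim_{D_h}(\cW_\delta \cap K) \leq 1$ for every $\delta > 0$ and every bounded set $K \subseteq \CC$. The key input is a local rigidity statement: for each $x \in \cW_\delta \cap K$ there exist $r(x) > 0$ and finitely many $D_h$-geodesic sub-arcs $\eta_1, \dots, \eta_m$, each of length $\delta$ and passing through $x$ as an interior point, with $\cW_\delta \cap \cB_{r(x)}(x) \subseteq \eta_1 \cup \cdots \cup \eta_m$. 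To establish this, consider any sequence $y_n \in \cW_\delta \cap K$ with $y_n \to x$, and for each $n$ pick a geodesic sub-arc $\eta_n$ of length $\delta$ with $y_n$ at $D_h$-distance $\delta/2$ from each endpoint of $\eta_n$. Since the $\eta_n$ are uniformly $1$-Lipschitz in $D_h$ and take values in a bounded region, Arzel\`a-Ascoli yields a subsequence converging uniformly to a $D_h$-geodesic $\eta_\infty$ containing $x$ in its interior. Applying Theorem \ref{thm:3} to this convergent sequence with $\varepsilon = \delta/4$, we see that $\eta_n$ and $\eta_\infty$ coincide outside small $D_h$-balls around the endpoints of $\eta_\infty$ for all large $n$, and in particular $y_n \in \eta_\infty$. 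A further compactness argument shows that only finitely many distinct local branches $\eta_\infty \cap \cB_{r(x)}(x)$ can arise near $x$: otherwise, a subsequential limit of such branches would, by strong confluence, force all sufficiently nearby branches to agree, contradicting their assumed distinctness.

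With the local rigidity in hand, separability lets us cover $\cW_\delta \cap K$ by countably many balls $\cB_{r(x_j)}(x_j)$, and hence $\cW_\delta \cap K$ is contained in a countable union of $D_h$-rectifiable arcs, each of which has $D_h$-Hausdorff dimension $1$; countable stability then gives $\dim_{D_h}(\cW_\delta \cap K) \leq 1$. The main obstacle is precisely the local rigidity claim: strong confluence must be applied not to a single pre-chosen geodesic but uniformly over an a priori uncountable family of geodesics passing through a neighborhood of $x$, which is exactly the setting where the strengthening of one-point confluence to the almost-sure, all-geodesic statement in Theorem \ref{thm:3} is indispensable. A minor secondary point is verifying that uniform limits of unit-speed geodesic arcs in $(\CC, D_h)$ remain geodesics, but this follows from the standard lower semicontinuity of length under uniform convergence together with the lower bound on distance supplied by $D_h$.
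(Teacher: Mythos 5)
Your proposal is correct in substance, but it takes a genuinely different route from the paper. The paper's proof is a direct globalisation: enumerating $\QQ^2$ as $\{z_n\}$ and letting $P_{i,j}$ be the a.s.\ unique geodesic between $z_i$ and $z_j$, it shows $\cW\subseteq \bigcup_{i,j}P_{i,j}$ by approximating the middle portion $P\lvert_{[\varepsilon,T-\varepsilon]}$ of an arbitrary geodesic by geodesics between nearby rational points, using Proposition \ref{prop:11} (no geodesic bubbles) to identify every subsequential Hausdorff limit of these approximants as $P\lvert_{[\varepsilon,T-\varepsilon]}$, and then Theorem \ref{thm:3} to upgrade convergence to the containment $P([2\varepsilon,T-2\varepsilon])\subseteq P_{n_k,m_k}$; the upper bound is then immediate since each $P_{i,j}$ has $D_h$-dimension $1$. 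You instead prove a local rigidity statement around each point of $\cW_\delta$ (nearby frame points lie on one of finitely many geodesic germs through $x$), applying strong confluence twice — once to place the points $y_n$ on a limiting geodesic $\eta_\infty$, and once to bound the number of distinct germs — and then conclude by Lindelöf and countable stability; notably you never need Proposition \ref{prop:11}, whereas the paper's argument leans on it, and your covering arcs are random rather than geodesics between rational points (which is harmless for the dimension bound). Two steps are compressed but routine: (i) when you deduce $y_n\in\eta_\infty$, you should note that $y_n$ is at distance $\delta/2$ from the endpoints of $\eta_n$, and only via Lemma \ref{lem:endpoints_close} (endpoints of Hausdorff-close geodesics are close) does it follow that $y_n$ avoids the $\delta/4$-balls around the endpoints of $\eta_\infty$ appearing in Theorem \ref{thm:3}; (ii) the passage from the two sequential facts to the stated rigidity claim needs a short contradiction argument, and to avoid circularity the germs should be taken at a fixed radius such as $\delta/8$ (smaller than $\delta/2$ minus the confluence scale) before choosing $r(x)$. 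With these points spelled out, your argument is complete; the paper's route is shorter, while yours has the mild advantage of producing a locally finite structure for $\cW_\delta$ rather than only a countable cover.
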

On the way to establishing strong confluence, we shall also obtain the following interesting result on the non-existence of geodesic bubbles and the approximation of geodesics by typical points.
\begin{proposition}
  \label{prop:11}
  Fix $\gamma\in (0,2)$ and consider a whole plane GFF $h$ and the associated LQG metric $D_h$. %
  Then, almost surely, simultaneously for all $D_h$-geodesics $P: [0,T] \to \mathbb{C}$ and $0<s<t<T$, $P|_{[s,t]}$ is the unique $D_h$-geodesic between $P(s)$ and $P(t)$.
  
\end{proposition}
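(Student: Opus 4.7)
The plan is a contradiction argument using Theorem~\ref{thm:3} together with an approximation of arbitrary geodesics by unique rational-endpoint ones. Suppose for contradiction that there exist a $D_h$-geodesic $P:[0,T]\to\CC$, interior times $0<s<t<T$, and a $D_h$-geodesic $Q\neq P|_{[s,t]}$ from $u:=P(s)$ to $v:=P(t)$. Since both $P|_{[s,t]}$ and $Q$ have $D_h$-length $t-s$, the concatenation $\tilde P:=P|_{[0,s]}\cdot Q\cdot P|_{[t,T]}$ is a second $D_h$-geodesic from $z:=P(0)$ to $w:=P(T)$, distinct from $P$. By restricting to a minimal sub-bubble (a maximal interval $[s',t']\subseteq[s,t]$ on which $P|_{[s',t']}$ and the corresponding segment of $Q$ meet only at their endpoints) I may assume $P|_{[s,t]}\cap Q=\{u,v\}$, and I fix a point $x^\star\in P((s,t))\setminus Q$.

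The crucial intermediate step is the following \emph{approximation lemma}: both $P$ and $\tilde P$ are realisable as Hausdorff limits (with respect to $D_h$) of unique $D_h$-geodesics $\Gamma_n=\Gamma_{a_n,b_n}$ whose endpoints $(a_n,b_n)$ lie in a countable dense set (on which geodesics are a.s.\ unique) and converge to $(z,w)$. I would establish this using compactness via the local bi-H\"older regularity of $D_h$ (Lemma~\ref{lem:holder_regularity}) combined with Theorem~\ref{thm:3}: choosing $a_n,b_n$ respectively near interior points of the targeted geodesic (for instance close to $P(1/n)$ and $P(T-1/n)$) and using Theorem~\ref{thm:3} along subsequential limits to rule out any limit other than the targeted geodesic. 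Granting such sequences $P_n\to P$ and $\tilde P_n\to\tilde P$, Theorem~\ref{thm:3} applied with $\varepsilon<\min\{D_h(x^\star,z),D_h(x^\star,w),s,T-t\}$ gives that for $n$ large, $x^\star\in P_n$ but $x^\star\notin\tilde P_n$ (since $\tilde P$ routes via $Q$ inside the bubble). To close the argument, I form the hybrid sequence $\Xi_n:=\Gamma_{a_n,\tilde b_n}$ of unique rational-endpoint geodesics with $(a_n,\tilde b_n)\to(z,w)$; compactness and Theorem~\ref{thm:3} force its Hausdorff subsequential limit $G^\star$ to match the ``$P$-side'' near $z$ and the ``$\tilde P$-side'' near $w$. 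Since $P$ and $\tilde P$ agree outside $[s,t]$, consistency of $G^\star$ with the two sides forces $G^\star$ to match both $P|_{[s,t]}$ and $Q$ between $u$ and $v$, which is impossible unless $P=\tilde P$, contradicting the assumption.

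\textbf{Main obstacle.} The technical heart of this strategy is the approximation lemma, namely establishing that a \emph{specific} geodesic $G$ from $z$ to $w$ (rather than some other element of the geodesic network between them) can be realised as a Hausdorff limit of unique rational-endpoint geodesics. A priori, small perturbations of endpoints can cause the resulting geodesic to jump across branches of the geodesic network; forcing the correct target requires exploiting the internal geometry of $G$ together with Theorem~\ref{thm:3} and the perturbed-endpoint one-point confluence of \cite{GPS20}. I expect this step to constitute the bulk of the technical work.
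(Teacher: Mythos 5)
There is a genuine gap, and it sits exactly where you placed your ``main obstacle'': the approximation lemma is not merely difficult, it is \emph{false} under your contradiction hypothesis. Suppose $P$ carries an interior bubble, i.e.\ $Q\neq P|_{[s,t]}$ is a second geodesic from $P(s)$ to $P(t)$ with $0<s<t<T$, and suppose you had \emph{unique} rational-endpoint geodesics $\Gamma_n=\Gamma_{a_n,b_n}$ with $(a_n,b_n)\to(P(0),P(T))$ and $\Gamma_n\to P$ in the Hausdorff sense. By Proposition~\ref{prop:12}, for $n$ large $\Gamma_n$ must meet $P$ at some point $P(v_n^1)$ with $v_n^1\le s$ and at some point $P(v_n^2)$ with $v_n^2\ge t$. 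Splicing $P|_{[v_n^1,v_n^2]}$ into $\Gamma_n$ between these two hits, and separately splicing in the path obtained from $P|_{[v_n^1,v_n^2]}$ by replacing $P|_{[s,t]}$ with $Q$, produces two \emph{distinct} $D_h$-geodesics from $a_n$ to $b_n$ of the correct length, contradicting the uniqueness of $\Gamma_{a_n,b_n}$. Hence no sequence of unique geodesics can converge to a geodesic containing a bubble, and the same applies to $\tilde P$. Your plan is therefore circular: the approximation lemma for an arbitrary geodesic is essentially equivalent to Proposition~\ref{prop:11} itself (given Theorem~\ref{thm:3}; the converse direction is the content of Proposition~\ref{prop:15}), and Theorem~\ref{thm:3} applied along a subsequential limit $G'$ only tells you that $\Gamma_n$ eventually agrees with $G'$ away from its endpoints --- it gives no means of identifying which element of the geodesic network between $z$ and $w$ the limit $G'$ is.

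The paper's proof turns precisely this failure into the argument. Taking rational $z_k\to P(s_0)$ with $s_0<s$ and geodesics $P_k$ from $z_k$ to $P(T)$, it shows that every subsequential Hausdorff limit $P^{s_0}$ equals \emph{neither} $P|_{[s_0,T]}$ nor $\tilde P|_{[s_0,T]}$: if it did, Proposition~\ref{prop:12} together with the splicing above would yield two geodesics emanating from the fixed rational point $z_k$ whose restrictions to a common proper initial segment differ, contradicting the one-point bubble statement of Proposition~\ref{prop:geodesics_don't_form_bubbles} (from \cite{GPS20}). Iterating over $s_k\downarrow 0$ then manufactures infinitely many distinct geodesics between $P(0)$ and $P(T)$, contradicting the deterministic finiteness bound of Proposition~\ref{prop:finitely_many_geodesics} (from \cite{Gwy21}). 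Both of these external inputs --- the one-point bubble result and the finiteness of the geodesic network --- are absent from your proposal, and without them (in particular without the finiteness bound, which is what ultimately absorbs the ``new'' limiting geodesics) there is no way to close the argument along the lines you sketch.
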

We note that the above is an analogue of the Brownian geometry result \cite[Theorem 1.3]{MQ20} and the result \cite[Theorem 1]{Bha23} (also independently proved in \cite[Lemma 3.3]{Dau23+}) in the setting of the directed landscape. Finally, we state the following result stating that any $D_h$-geodesic can be well-approximated by geodesics between typical points; this is an analogue of the Brownian geometry result \cite[Theorem 1.7]{MQ20}.
\begin{proposition}
  \label{prop:15}
   Fix $\gamma\in (0,2)$ and consider a whole plane GFF $h$ and the associated LQG metric $D_h$. Then, almost surely, simultaneously for all $D_h$-geodesics $P: [0,T] \to \mathbb{C}$ and $0<s<t<T$, for every $\varepsilon>0$, there exists $\delta>0$ such that for every point $u\in \cB_\delta(P(s)), v\in \cB_\delta(P(t))$, and every geodesic $\Gamma_{u,v}$, we have $P\lvert_{[s+\varepsilon,t-\varepsilon]}\subseteq \Gamma_{u,v}$ and $\Gamma_{u,v}\lvert_{[\varepsilon, D_h(u,v)-\varepsilon]}\subseteq P$.
\end{proposition}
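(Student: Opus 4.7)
The plan is to obtain Proposition \ref{prop:15} as a direct consequence of Theorem \ref{thm:3} (strong confluence) and Proposition \ref{prop:11} (uniqueness of geodesic extensions), combined with a standard Arzel\`a--Ascoli compactness argument. I would work on the almost sure event on which the conclusion of Theorem \ref{thm:3} holds for every $\varepsilon_k=1/k$, $k\in\NN$, and on which Proposition \ref{prop:11} holds; on this event, fix any $D_h$-geodesic $P:[0,T]\to\CC$, parameters $0<s<t<T$, and target $\varepsilon>0$. Argue by contradiction: if no $\delta>0$ works, then there exist sequences $u_n\to P(s)$, $v_n\to P(t)$ (convergence in $D_h$ and in the Euclidean metric being equivalent by Lemma \ref{lem:holder_regularity}) and $D_h$-geodesics $\Gamma_n:=\Gamma_{u_n,v_n}$ for which the conclusion of Proposition \ref{prop:15} fails for all $n$.

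The next step is to extract a $D_h$-Hausdorff limit of $\Gamma_n$. Each $\Gamma_n$ is $1$-Lipschitz for $D_h$ in its arc-length parametrization and has $D_h$-diameter $D_h(u_n,v_n)\to D_h(P(s),P(t))$, so the bi-H\"older comparison between $D_h$ and the Euclidean metric (Lemma \ref{lem:holder_regularity}) makes $\{\Gamma_n\}$ uniformly H\"older equicontinuous in the Euclidean metric and confines them to a common bounded Euclidean region. Arzel\`a--Ascoli then yields a subsequence, which I do not relabel, converging uniformly in the Euclidean metric, hence in the $D_h$-Hausdorff sense, to a continuous path $\Gamma^*$ from $P(s)$ to $P(t)$. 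Lower semicontinuity of $D_h$-length together with $\ell(\Gamma_n;D_h)=D_h(u_n,v_n)\to D_h(P(s),P(t))$ forces $\Gamma^*$ to be a $D_h$-geodesic, and Proposition \ref{prop:11} identifies the unique such geodesic as $P|_{[s,t]}$, so $\Gamma^*=P|_{[s,t]}$.

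Choosing $\varepsilon'\in(0,\varepsilon/3)$ of the form $1/k$ and applying Theorem \ref{thm:3} with this $\varepsilon'$ to the geodesic $P|_{[s,t]}$ and the approximating sequence $\Gamma_n$ gives
\[
  (\Gamma_n\setminus P|_{[s,t]})\cup(P|_{[s,t]}\setminus\Gamma_n)\subseteq \cB_{\varepsilon'}(P(s))\cup \cB_{\varepsilon'}(P(t))
\]
for all large $n$ along the subsequence. Since $P$ is $D_h$-arc-length parametrized, the segment $P|_{[s+\varepsilon,t-\varepsilon]}$ stays at $D_h$-distance at least $\varepsilon>\varepsilon'$ from $P(s)$ and $P(t)$, whence $P|_{[s+\varepsilon,t-\varepsilon]}\subseteq\Gamma_n$. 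Likewise, once $n$ is large enough that $D_h(u_n,P(s)),D_h(v_n,P(t))<\varepsilon'$, any point of $\Gamma_n|_{[\varepsilon,D_h(u_n,v_n)-\varepsilon]}$ lies at $D_h$-distance at least $\varepsilon-\varepsilon'>\varepsilon'$ from both $P(s)$ and $P(t)$, and so it belongs to $P|_{[s,t]}\subseteq P$. This contradicts the standing failure of the conclusion along the subsequence and closes the argument. All the real difficulty has been packaged into Theorem \ref{thm:3} and Proposition \ref{prop:11}, which already hold simultaneously over all geodesics and parameter pairs, so I do not foresee any substantive obstacle in the reduction itself; the only point requiring care is the Arzel\`a--Ascoli step, which relies on the bi-H\"older comparison between $D_h$ and the Euclidean metric.
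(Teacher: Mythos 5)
Your proposal is correct and follows essentially the same route as the paper: reduce to sequences $u_n\to P(s)$, $v_n\to P(t)$, use Proposition \ref{prop:11} to identify $P|_{[s,t]}$ as the unique geodesic between its endpoints, extract a Hausdorff limit of the $\Gamma_{u_n,v_n}$ by Arzel\`a--Ascoli (which must then equal $P|_{[s,t]}$), and conclude via Theorem \ref{thm:3}. The only difference is that you spell out the compactness step and the final $\varepsilon'$-bookkeeping, which the paper leaves as a sketch; both are carried out correctly.
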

As mentioned earlier, the results of this paper have applications in the study of geodesic stars and networks in $\gamma$-LQG and we refer the reader to Section \ref{sec:stars} for a detailed discussion.

\begin{figure}[]
    \centering
    \begin{subfigure}{0.4\textwidth}
        \centering
        \includegraphics[width=\textwidth]{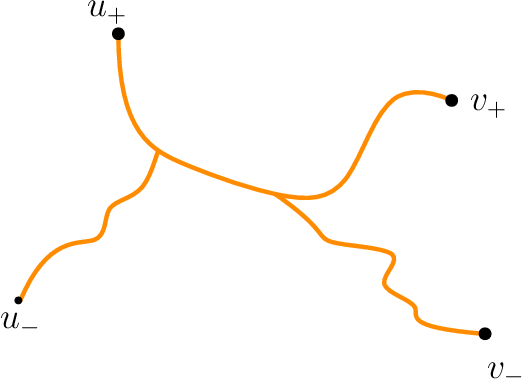}
    \end{subfigure}
    \hfill
    \begin{subfigure}{0.4\textwidth}
        \centering
        \includegraphics[width=\textwidth]{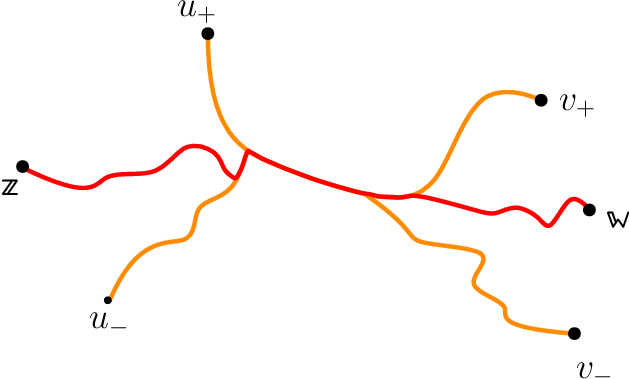}
    \end{subfigure}
    \caption{The left panel depicts the occurrence of the event $\scX=\scX^{u_+,v_-}_{u_-,v_-}$. The unique $D_h$-geodesic from $u_+$ to $v_+$ merges with the unique $D_h$-geodesic from $u_-$ to $v_-$ and after they merge,  the geodesics overlap on a segment up until they separate. After the separation point, the geodesics do not intersect again.  In the right panel, the $D_h$-geodesic $\Gamma_{\bz,\bw}$ (red) passes through the $\scX$ in the sense that $\Gamma_{u_+,v_+}\cap \Gamma_{u_-,v_-}\subseteq \Gamma_{\bz,\bw}$ with $\Gamma_{u_+,v_+}$ and $\Gamma_{u_-,v_-}$ being on opposite sides of $\Gamma_{\bz,\bw}$. %
    }
    \label{fig:two-panels}
\end{figure}

\subsection{Proof outline}
\label{sec:proof-outline}

We now give a broad overview of the strategy of the proof of Theorem \ref{thm:3}. As we shall see, the proof technique draws heavily from the proof of strong confluence in Brownian geometry by Miller-Qian \cite{MQ20}, where they considered the presence of a specific configuration which we denote as a $\scX$. For points $u_+,v_+,u_-,v_-\in \CC$, we say that $\scX^{u_+,v_+}_{u_-,v_-}$ occurs if the scenario described in the first panel of Figure \ref{fig:two-panels} occurs. Further, for points $\bz,\bw\in \CC$, we say that $\scX^{u_+,v_+}_{u_-,v_-}$ lies on the geodesic $\Gamma_{\bz,\bw}$ (or equivalently, $\Gamma_{\bz,\bw}$ passes ``through'' the $\scX$) if the scenario in the second panel holds. In \cite{MQ20}, the argument for strong confluence proceeds by combining two ingredients. First, it is shown that, almost surely, no two points in the space have infinitely many distinct geodesics connecting them. Secondly, it is argued that if two geodesics are very close to each other in the Hausdorff sense, then they must necessarily intersect. In fact, the former statement mentioned above is now also known \cite{Gwy21} for $\gamma$-LQG with general values $\gamma\in (0,2)$ (see Proposition \ref{prop:finitely_many_geodesics}). Thus, our primary task in this paper shall be to obtain a $\gamma$-LQG version (Proposition \ref{prop:12}) of the latter statement above-- namely, that geodesics close by in the Hausdorff sense necessarily intersect non-trivially.

In order to prove this for $\sqrt{8/3}$-LQG, the argument in \cite{MQ20} proceeds by showing that for any geodesic $\Gamma_{\bz,\bw}$ starting from a typical point $\bz$ to any point $\bw$ sufficiently far away from $\bz$, with very high probability, there are a significant number of $\scX$s (corresponding to some points $u_+,v_+,u_-,v_-$) lying on this geodesic (see Figure \ref{fig:chis_on_geod}).
Since the points $u_+,v_+,u_-,v_-$ can be taken such that the geodesics $\Gamma_{u_+,v_+},\Gamma_{u_-,v_-}$ for any $\scX$ as above are unique, the presence of such $\scX$s forces geodesics (see Figure \ref{fig:chis_on_geod}) which stay close to $\Gamma_{\bz,\bw}$ to non-trivially intersect $\Gamma_{\bz,\bw}$. To argue that numerous such $\scX$s are indeed present around $\Gamma_{\bz,\bw}$ with high probability, \cite{MQ20} heavily uses Markov exploration properties which are specific to the Brownian case ($\gamma=\sqrt{8/3}$). Roughly, by using the ``reverse metric exploration'' of a Brownian sphere, it is argued that one can divide $\Gamma_{\bz,\bw}$ into segments surrounded by mutually independent ``metric bands'', and it is then shown that in each such metric band, a $\scX$ as discussed above is present with positive probability.
\begin{figure}
  \centering
  \includegraphics[width=0.9\linewidth]{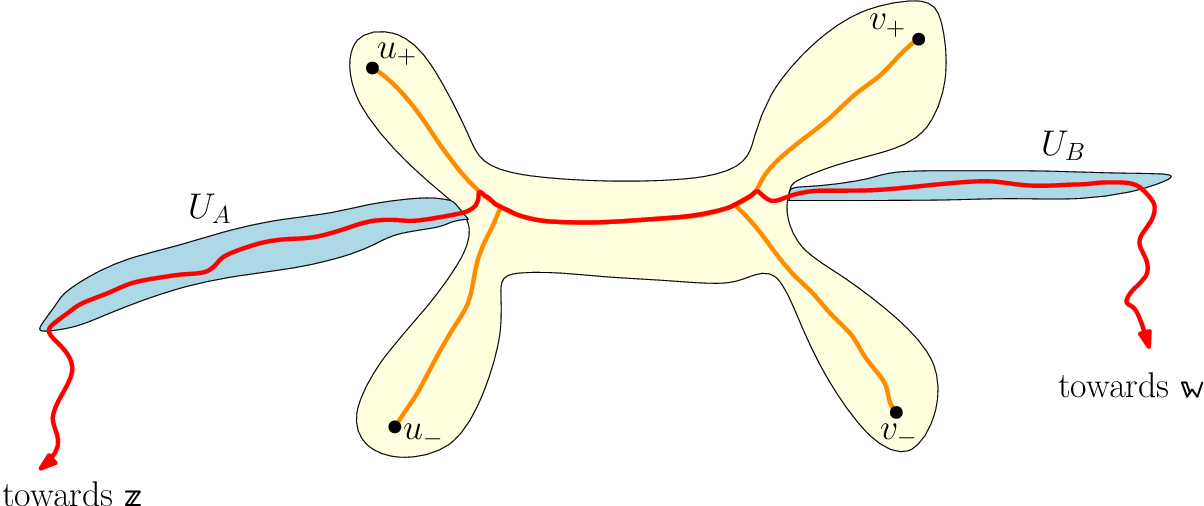}
  \caption{The yellow region is a finite union of small squares which intersect $\scX_{u_-,v_-}^{u_+,v_+}$ (orange colour).  The blue tubes ($U_A$ and $U_B$) come from relatively distant locations and respectively reach very close to the initial and terminal points of $\Gamma_{u_+,v_+}\cap \Gamma_{u_-,v_-}$. Note that $U_A$ and $U_B$ are sandwiched between the arms of the $\scX$. %
  The utility of introducing the bump function $\phi$ which is constant and takes its maximum value on the union of $U_A \cup U_B$ with the yellow region, is that it if the $D_h$ geodesic $\Gamma_{\bz,\bw}$ intersects both $U_A$ and $U_B$, then the $D_{h-\phi}$-geodesic (red colour) from $\bz$ to $\bw$  actually stays in $U_A$ until it enters the yellow region for the first time, then stays in the yellow region, and then finally continues for a while in $U_B$ before departing towards $\bw$.}
    \label{fig:chi}
\end{figure}
However, since the above explicit Markovian exploration is not available for $\gamma$-LQG when $\gamma \neq \sqrt{8/3}$, a different approach is necessary when trying to argue the presence of such $\scX$s for $\gamma$-LQG. Importantly, even in the case of $\gamma$-LQG, one-point confluence of geodesics \cite{GM20} does hold, and as argued in \cite{GM21}, this does lead to a certain degree of independence present along a $\gamma$-LQG geodesic, albeit much weaker than the independence used in \cite{MQ20}. Very roughly, suppose that for $z\in \CC$,  $E_r(z)$ is a locally determined event on which a $\scX$ is present inside the annulus $\mathbb{A}_{r/2,r}(z)=B_r(z)\setminus \overline{B_{r/2}(z)}$ and $\mathfrak{C}_r^{\bz,\bw}(z)$ is the event that the geodesic $\Gamma_{\bz,\bw}$ actually passes through the above $\scX$. %
Now, consider the following conditions--
\begin{enumerate}
\item \label{it:out1} The probabilities $\PP(E_r(z))$ can be made to be large enough so that $E_r(z)$ occur for a dense enough collection of $z$ and radii $r$ spanning many scales.
\item \label{it:out2} For fixed $z,r$, conditional on $\Gamma_{\bz,\bw}$ coming ``sufficiently close'' to $z$ and conditional on the occurrence of $E_r(z)$, $\mathfrak{C}_r^{\bz,\bw}(z)$ has probability bounded away from $0$ independently of $z,r$.
\end{enumerate}
If the above conditions are satisfied, then by an independence result proved in \cite{GM21} (see Proposition \ref{thm:theorem_uniqueness_lqg_metric}), the events $\mathfrak{C}_r^{\bz,\bw}(z)$ hold for a number of points $z$ and radii $r$ around the geodesic $\Gamma_{\bz,\bw}$ as it traverses from $\bz$ to $\bw$. Thus, the primary challenge to overcome in this paper, is to appropriately construct the locally dependent events $E_r(z)$, and the globally dependent events $\mathfrak{C}_r^{\bz,\bw}(z)$ in a way such that the two points above hold. As a prerequisite to obtaining \eqref{it:out1} above, one requires that for some collection of points $u_+,v_+,u_-,v_-\in \CC$, we have $\PP(\scX^{u_+,v_+}_{u_-,v_-})>0$, and this can be obtained by a soft application of the already known confluence result from \cite{GPS20}. The challenging part is to obtain \eqref{it:out2} above and this is done by starting with an instance of $h$ for which $E_r(z)$ holds and carefully modifying the field $h$ using bump functions to be small/large in certain regions so as to incentivise the geodesic $\Gamma_{\bz,\bw}$ to actually pass through the $\scX$ if it gets close enough to $z$, while ensuring that the above modification of $h$ does not disturb the metric $D_h$ enough so that the $\scX$ ceases to exist.

With reference to Figure \ref{fig:chi}, we now give a rough idea of how the above is achieved-- Suppose that for the field $h$, the event $E_r(z)$ which roughly demands the presence of a $\scX$ around $z$ at scale $r$ does occur. We now take a small (yellow) neighbourhood of this $\scX$ along with two thin (blue) tubes $U_A,U_B$ where $U_A$ and $U_B$ are disjoint and connect the ``left'' and ``right'' portions of the $\scX$ to a relatively distant location. The point now is to define an appropriate large non-negative bump function $\phi$ supported on the union of the yellow and blue regions. Further, the bump function is defined so that it is equal to its maximum value on the $\scX$. The effect of this is twofold:
\begin{enumerate}[(a)]
\item \label{it:out3} Since $\phi$ is positive and takes its largest possible value on the $\scX$, the $\scX$ (defined earlier for $D_h$) still remains a $\scX$ for $D_{h-\phi}$. Thus, the event $E_r(z)$ also occurs for the field $h-\phi$ instead of just the field $h$.
\item \label{it:out4} The field $\phi$ being very large and positive on the tubes $U_A, U_B$ makes it so that if a $D_h$-geodesic $\Gamma_{\bz,\bw}$ were to intersect $U_A, U_B$, then the corresponding $D_{h-\phi}$-geodesic (say $\Gamma_{\bz,\bw}^\phi$) would be incentivised to actually stay inside the union of the blue and yellow region in Figure \ref{fig:chi}, thereby passing through the $\scX$.
\end{enumerate}
Having done so, the strategy is to use the mutual absolute continuity between $h$ and $h-\phi$ to obtain item \eqref{it:out2}\footnote{We note that in \cite{GM21}, the above strategy of using bump functions along with absolute continuity was used to lower bound the conditional probability of a \emph{very different} event $\mathfrak{C}_{r}^{\bz,\bw}(z)$ than the one we shall define. In particular, the event $\mathfrak{C}_{r}^{\bz,\bw}(z)$ used in \cite{GM21} does not involve any $\scX$s. We refer the reader to Remark \ref{rem:comparison} for a discussion about this.}.
While the above picture captures the broad intuition, the precise definition of the above bump function and the corresponding event $\mathfrak{C}_r^{\bz,\bw}(z)$ is rather involved-- firstly, the above discussion treated the $\scX$ as if it were simply a deterministic object, whereas in actuality, it is a random set. To get around this issue, we undertake a discretisation of the plane into $\delta$-sized tubes and consider only $\scX$'s passing and staying inside such tubes. However, this approach of defining $E_r(z)$ so as to consider only $\scX$s which stay inside a given thin tube would prevent $\PP(E_r(z))$ from being large enough as is required in item \eqref{it:out1}, and for this, we do an iteration argument by considering multiple copies of such $\scX$s and arguing that with probability close to $1$, at least one of them is present (Section \ref{sec:tubes})-- this portion of the argument is modeled after the corresponding step in \cite{GM21}. Finally, to make \eqref{it:out4} above precise, one needs to ensure that it would be very costly for a $D_{h-\phi}$ geodesic not to utilise the yellow and blue regions and this is done by working on a high-probability regularity event on which the metric $D_h$ satisfies certain natural H\"older regularity conditions. A subtle point is that since the bump function $\phi$ is also large in a small neighbourhood of the $\scX$ itself, it is a priori possible that the $D_{h-\phi}$ geodesic between $\bz,\bw$ does not pass through the $\scX$ but instead approximately traces its boundary-- remaining very close to it at all times so as to take advantage of the bump function $\phi$ without ever touching the $\scX$ itself. Using estimates from \cite{DFGPS20,GM21} showing that $D_h$-geodesics are unlikely to stay restricted to deterministic narrow tubes, the above behaviour will be ruled out in Lemma \ref{lem:geodesic_intersects_chi}.

\textbf{Notational comments.} For a simple curve $\xi\colon [a,b]\rightarrow \CC$ viewed as a directed path from $\xi(a)$ to $\xi(b)$, we shall consider the left $\xi^{\mathrm{L}}$ and right $\xi^{\mathrm{R}}$ sides of $\xi$ and these are defined as collections of prime ends of the boundary of the simply connected domain $\CC\cup \{\infty\}\setminus \xi$. Thus, for every $r\in (a,b)$, we have $\xi^{\mathrm{L}}(r)\neq \xi^{\mathrm{R}}(r)$ while for $r\in \{a,b\}$, we have $\xi^{\mathrm{L}}(r)=\xi^{\mathrm{R}}(r)$. For a curve $\xi$ as above and another curve $\xi'\colon [a',b']\rightarrow \CC$ such that $\xi'(b')\in \xi((a,b))$ but $\xi'( [a',b'))\cap \xi([a,b])=\emptyset$, we have $\xi'(b')\in \xi^{*}$ for precisely one $*\in \{\mathrm{L},\mathrm{R}\}$ and we say that $\xi'$ is to the left (resp.\ right) of $\xi$ if $*=\mathrm{L}$ (resp.\ $*=\mathrm{R}$). For $z \in \mathbb{C}$ and $r>0$,  we shall use $B_r(z)$ to denote the open Euclidean ball of radius $r$ centered at $z$.  Moreover, for $0<r_1<r_2$, we shall often use the open annulus $\mathbb{A}_{r_1,r_2}(z) := B_{r_2}(z) \setminus \overline{B_{r_1}(z)}$. Finally, for a metric $d$ on $\CC$ and sets $U,V\subseteq \CC$, we shall set $d(U,V)=\inf_{u\in U,v\in V}d(u,v)$. Often, we shall use $\dist$ to denote the Euclidean metric on $\CC$, that is, for $z,w\in \CC$, $\dist(z,w)=|z-w|$. For $a<b\in \RR$, we shall use $[\![a,b]\!]$ to denote the discrete interval $[a,b]\cap \ZZ$.

Finally, in Section \ref{sec:construction_of_chi} of the paper, there will be a large number of different parameters and sets simultaneously in play; in order to make it easier for the reader to track their roles, we have provided a summary in Table \ref{table} at the end of the paper.

\textbf{Acknowledgements.} The authors thank Ewain Gwynne, Wei Qian for the discussions and Ewain Gwnyne, Jason Miller for comments on an earlier version of the manuscript. M.B.\ acknowledges the partial support of the NSF grant DMS-2153742 and the MathWorks fellowship.

\section{Preliminaries}
\label{sec:preliminaries}
We now give a quick introduction to the basic objects that we shall use in this paper.
\subsection{Gaussian Free Field}
\label{sec:gaussian-free-field}
Throughout this paper, we shall work with a whole plane GFF $h$ which can be defined as the centered Gaussian process $h$ with covariances given by
\begin{equation}
  \label{eq:15}
  \Cov(h(u),h(v))= G(u,v)=\log \frac{\max\{|u|,1\}\max\{|v|,1\}}{|u-v|}
\end{equation}
  for all $u,v\in \CC$. Due to the explosion of the above covariance kernel $G(u,v)$ along the diagonal, $h$ is not pointwise well-defined but is instead a.s.\ well defined as a Schwartz distribution in the sense that almost surely, for any bump function $\phi$, the average $(h,\phi)=\int h(u)\phi(u)du$ is well-defined. In fact, it is possible \cite[Proposition 3.1]{DS09} to define a version of $h$ such that for all $z\in \CC$ and $r>0$, the average of $h$ on $\partial B_r(z)$ is well-defined, and this quantity is denoted as $h_r(z)$, and is called the circle average-- in fact, we can fix a version of $h$ such that the map $(z,r)\mapsto h_r(z)$ is a.s.\ continuous. Further, we note that the normalisation in \eqref{eq:15} is conveniently done to ensure that $h_1(0)=0$ almost surely. Moreover we note that $h$ is scale and translation invariant when viewed ``modulo an additive constant'' in the sense that for every fixed $z \in \mathbb{C},  r>0$, the laws of the fields $h(\cdot),  h(\cdot + z) - h_1(z)$ and $h(r\cdot ) - h_r(0)$ are the same.

\subsection{The $\gamma$-LQG metric $D_h$}
\label{sec:gamma-lqg-metric-2}
As discussed in Section \ref{sec:gamma-lqg-metric}, the GFF $h$ comes associated with a metric on $\CC$; we now give a statement summarising the properties of the above, phrased for a GFF plus a continuous function $\mathtt{h}$, by which we mean $\mathtt{h}=h+\phi$ for a possibly random continuous function $\phi\colon \CC\rightarrow \RR$.
\begin{proposition}(\cite[Theorem~1.2]{GM21})
  \label{prop:8}
  For a GFF plus a continuous function $\mathtt{h}$, there exists a unique (up to a deterministic multiplicative constant) random metric $D_{\mathtt{h}}$ on $\CC$ which is measurable with respect to $\mathtt{h}$ and satisfies the following almost sure properties.
  \begin{enumerate}
  \item \label{it:length_space}
 \textbf{Length space}: Almost surely, for all $u,v\in \CC$, $D_\mathtt{h}(u,v)=\inf_{\eta}\ell(\eta;D_\mathtt{h})$, where the infimum is over all paths $\eta$ connecting $u,v$.
  \item \label{it:locality}
   \textbf{Locality}: For any deterministic set $U\subseteq \CC$, consider the induced metric $D_{\mathtt{h}}(\cdot,\cdot;U)$ defined by $D_{\mathtt{h}}(u,v;U)= \inf_{\eta\subseteq U} \ell(\eta; D_{\mathtt{h}})$,  where the infimum is taken over all paths contained in $U$. Then, for any fixed open set $U\subseteq \CC$, the induced metric $D_{\mathtt{h}}(\cdot,\cdot;U)$ is measurable with respect to $\mathtt{h}\lvert_U$. %
  \item \label{it:weyl_scaling}
  \textbf{Weyl scaling}: For a continuous function $f\colon \CC\rightarrow \RR$, consider the metric $e^f\cdot D_h$ defined by $e^f\cdot D_h(u,v)=\inf_{\eta}\int_{0}^{\ell(\eta;D_h)}e^{f(\eta(t))}dt$, where the infimum is over all paths $\eta$ from $u$ to $v$. Then, for $\xi=\gamma/d_\gamma$, with $d_\gamma$ denoting the fractal dimension of $\gamma$-LQG as defined in \cite{DG20}, we a.s.\ have the equality $e^{\xi f}\cdot D_{\mathtt{h}}=D_{\mathtt{h}+f}$ simultaneously for every continuous function $f\colon \CC\rightarrow \RR$.
  \item \label{it:coordinate_change}
   \textbf{Coordinate change formula}: Fix a deterministic $z\in \CC$ and $r>0$. With $Q$ defined by $Q=\gamma/2+2/\gamma$ and the field $\mathtt{h}'$ defined by $\mathtt{h}'(\cdot)=\mathtt{h}(r\cdot+z)+Q\log r$, almost surely, for all $u,v\in \CC$, we have $D_\mathtt{h}(ru+z,rv+z)=D_{\mathtt{h}'}(u,v)$.

  \end{enumerate}
\end{proposition}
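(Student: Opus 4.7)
The statement in question is a recap of \cite[Theorem~1.2]{GM21}, and so the plan is to reproduce, at a high level, the argument of \cite{DDDF20,GM21} that yields both the existence and the uniqueness (up to a multiplicative constant) of the metric $D_{\mathtt{h}}$ with the four listed properties. My plan would split cleanly into an existence part and a characterization part, with the latter being the principal obstacle.

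For existence, I would begin with the Liouville first-passage percolation approximations $D_{h,\varepsilon}$ defined in \eqref{eq:11}. The first task is to choose the normalizing constants $a_\varepsilon$ (governed by the fractal dimension $d_\gamma$ from \cite{DG20}) so that the prelimiting distances across any fixed Euclidean annulus are of order one, neither exploding nor collapsing. Using the Gaussian concentration and multiplicative-chaos tail bounds from \cite{DDDF20}, together with the chaining argument that upgrades annulus-crossing estimates into pointwise distance comparisons, I would prove tightness of $\{D_{h,\varepsilon}\}$ under the local uniform topology on continuous pseudo-metrics. Passing to subsequential limits, properties (\ref{it:length_space})--(\ref{it:coordinate_change}) would then be verified one by one: the length-space property is inherited from the variational form of $D_{h,\varepsilon}$; locality follows because the induced metric $D_{h,\varepsilon}(\cdot,\cdot;U)$ is manifestly a measurable functional of $h_\varepsilon^*|_{\mathrm{nbhd}(U)}$, and passing to the limit preserves this after localising $\varepsilon\downarrow 0$; Weyl scaling is trivial at the prelimit level since shifting the field by $f$ multiplies the LFPP integrand by $e^{\xi f}$; and the coordinate-change formula follows from the scale/translation invariance of $h$ modulo constants and a bookkeeping computation involving $Q=\gamma/2+2/\gamma$.

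The hard part is uniqueness: showing that any two metrics $D_{\mathtt{h}}$ and $\widetilde D_{\mathtt{h}}$ satisfying (\ref{it:length_space})--(\ref{it:coordinate_change}) must agree up to a deterministic multiplicative constant. Following \cite{GM21}, I would proceed by first proving a quantitative bi-Lipschitz comparison, establishing random constants $0<c_*\le c^*<\infty$ with
\begin{equation*}
c_*\,\widetilde D_{\mathtt{h}}(u,v)\;\le\;D_{\mathtt{h}}(u,v)\;\le\;c^*\,\widetilde D_{\mathtt{h}}(u,v)
\end{equation*}
uniformly on compact sets; this step uses Weyl scaling applied to bump-function perturbations of $\mathtt{h}$ to leverage the mutual absolute continuity of restrictions of the GFF. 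The next step is the core improvement argument: define the essential infimum $C_*$ (resp.\ supremum $C^*$) of ratios $D_{\mathtt h}/\widetilde D_{\mathtt h}$ at a typical scale, and assume for contradiction $C_*<C^*$. Then in many disjoint annuli at a common scale one can, with positive probability, find paths realising ratios close to $C_*$, and by using coordinate change and scale invariance to distribute this event across dyadic scales along an actual $D_{\mathtt{h}}$-geodesic, one concatenates such near-optimal pieces to obtain a global ratio strictly smaller than $C_*$, a contradiction. The input that makes the concatenation work is precisely the independence-across-annuli coming from confluence of geodesics together with the locality axiom --- essentially the same ingredient we will reuse later via Proposition~\ref{thm:theorem_uniqueness_lqg_metric}. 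The anticipated obstacle is making this multi-scale concatenation rigorous: one needs very careful control on how a single geodesic crosses independent annuli and how the Weyl-scaling trick reconstructs the field on thin shells without introducing boundary effects; this is where the bulk of the work in \cite{GM21} lies, and is where I expect the technical heavy lifting would concentrate.
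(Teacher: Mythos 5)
The paper does not prove this statement at all: it is quoted verbatim as background from \cite{GM21} (with the existence input from \cite{DDDF20}), so there is no internal proof to compare against. Your sketch — LFPP tightness for existence, then bi-Lipschitz comparison plus the multi-scale ratio-improvement argument for uniqueness up to a multiplicative constant — is a faithful high-level outline of exactly the argument in the cited works, so it matches the route the paper relies on by citation.
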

The notation and the conclusions of the above proposition will be used extensively throughout the paper.

As in any geometric space, for any $z\in \CC$ and $r>0$, one can consider the metric ball $\cB_r(z;D_h)=\{w\in \CC: D_h(z,w)<r\}$. It turns out that such metric balls, though a.s.\ bounded, are not simply-connected and in fact have an infinite number of ``holes''. Thus, it is often useful to consider the filled metric ball $\cB^\bullet_r(z;D_h)$ which is defined as the complement of the unique unbounded connected component of $\CC\setminus \overline{\cB_r(z;D_h)}$. It can be shown that a.s.\ all filled metric balls $\cB^\bullet_r(z;D_h)$ are Jordan domains in the sense that their boundary $\partial\cB^\bullet_r(z;D_h)$ is a Jordan curve (see \cite[Lemma~2.4]{GPS20}). Throughout the paper, we shall abbreviate $\cB_r(z)=\cB_r(z;D_h)$ and $\cB_r^\bullet(z)=\cB_r^\bullet(z;D_h)$.%

\subsection{One-point confluence of geodesics}
\label{sec:geodesic-confluence}
We now discuss certain results from \cite{GM20,GPS20} on the one-point confluence of geodesics in $\gamma$-LQG. Before stating this result, we note that while there can exist multiple geodesics $\Gamma_{z,w}$ for certain exceptional points $z,w\in \CC$, there is a way to a.s.\ uniquely define a leftmost (resp.\ rightmost) such geodesic $\Gamma_{z,w}$ (see \cite[Lemma 2.4]{GM20}). In \cite{GM20}, the following result is proved regarding the confluence properties of geodesics targeted at a fixed point $\bz$.
    \begin{proposition}
      \label{prop:6}
      Almost surely, for any fixed $\bz\in \CC$ and simultaneously for all $0<t<s$, the following hold.
      \begin{enumerate}
      \item There is a finite set of points $\cX_{t,s}(\bz)\subseteq \partial \cB_t^\bullet(\bz)$ such that every leftmost $D_h$-geodesic from $\bz$ to a point on $\partial \cB_s^\bullet(\bz)$ passes via some $x\in \cX_{t,s}(\bz)$.
      \item \label{it:uniquegeod} There is a unique $D_h$-geodesic from $\bz$ to $x$ for each $x\in \cX_{t,s}(\bz)$.
      \item For $x\in \cX_{t,s}(\bz)$, let $I_x$ denote the set of $y\in \partial \cB_s^\bullet(\bz)$ for which the leftmost $D_h$-geodesic from $0$ to $y$ passes via $x$. Each $I_x$ for $x\in \cX_{t,s}(\bz)$ is a connected arc of the Jordan curve $\partial \cB_s^\bullet(\bz)$ and $\partial \cB_s^\bullet(\bz)$ is a union of the arcs $I_x$ for $x\in \cX_{t,s}(\bz)$, with any two such arcs being disjoint.
      \end{enumerate}
    \end{proposition}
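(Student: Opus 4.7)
The plan is: without loss of generality take $\bz = 0$ by the coordinate change formula (Proposition~\ref{prop:8}\eqref{it:coordinate_change}). For each $y \in \partial \cB_s^\bullet(0)$, let $\Gamma_y$ denote the leftmost $D_h$-geodesic from $0$ to $y$ and let $x(y) \in \partial \cB_t^\bullet(0)$ denote its last exit point from the filled metric ball $\cB_t^\bullet(0)$. Leftmost geodesics cannot strictly cross (a crossing would yield a strict shortcut that contradicts one of them being leftmost), so the Jordan-domain structure of $\cB_t^\bullet(0)$ and $\cB_s^\bullet(0)$ forces $y \mapsto x(y)$ to be monotone with respect to the cyclic orders on the two boundary curves, and an Arzel\`a--Ascoli argument applied to $y_n \downarrow y$ gives right continuity. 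Setting $\cX_{t,s}(0) := \mathrm{image}(x(\cdot))$, the arcs $I_x$ of conclusion (iii) are then automatically the level sets of $x(\cdot)$ and are connected sub-arcs of $\partial \cB_s^\bullet(0)$ by monotonicity, so conclusion (iii) reduces to showing that $\cX_{t,s}(0)$ is finite.

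The crux is a local confluence statement: for every $x_* \in \partial \cB_t^\bullet(0)$ there exists a (random) $\delta > 0$ such that $\cX_{t,s}(0) \cap B_\delta(x_*)$ contains at most one point; granting this, compactness of $\partial \cB_t^\bullet(0)$ excludes accumulation points of $\cX_{t,s}(0)$ and yields finiteness. I would obtain this local statement by constructing a positive-probability locally determined ``bottleneck'' event $E_\delta(x_*)$ supported on the annulus $\mathbb{A}_{\delta/2,\delta}(x_*)$ on which every $D_h$-geodesic crossing the annulus is funnelled through an arc so narrow that any two such geodesics must agree on a small neighbourhood of it. A single-scale version of $E_\delta(x_*)$ has positive probability by a.s.\ uniqueness of $D_h$-geodesics between fixed endpoints (stated above Proposition~\ref{prop:8}) together with a compactness argument over endpoints chosen on the inner and outer circles of the annulus. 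One then transports $E$ to dyadic scales $\delta_k = 2^{-k}$ around a countable dense set using the coordinate change formula, and uses the locality of $D_h$ (Proposition~\ref{prop:8}\eqref{it:locality}) with the spatial Markov property of the GFF to get near-independence across widely separated scales, so that a Borel--Cantelli argument guarantees that near every $x_* \in \partial \cB_t^\bullet(0)$ some $E_{\delta_k}$ occurs with $\delta_k$ arbitrarily small. This local-to-global upgrade is the main obstacle: unlike the Brownian map case in \cite{MQ20}, $\gamma$-LQG with general $\gamma$ offers no explicit metric-ball Markov exploration, so the uniform positive probability bound on $E_\delta(x_*)$ must be extracted from scale invariance of the GFF modulo additive constant, Weyl scaling, and absolute-continuity arguments alone, which is the technical core of the approach in \cite{GM20}.

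For conclusion (ii), let $\Gamma'$ be any $D_h$-geodesic from $0$ to some $x \in \cX_{t,s}(0)$. Approximating $x$ by $y_n \in I_x$ and applying the bottleneck event at arbitrarily fine scales near $x$, the geodesics $\Gamma_{y_n}$ share initial segments with $\Gamma'$ extending arbitrarily close to $x$; passing to a subsequential limit and invoking the already-established uniqueness of the leftmost geodesic forces $\Gamma' = \Gamma_x$. Finally, to upgrade the almost sure statement from fixed $(t,s)$ to simultaneity for all $0 < t < s$, take a countable dense $\mathcal{D} \subset \{(t,s) : 0 < t < s\}$, apply the above conclusions at every pair of $\mathcal{D}$ on the intersection of the corresponding full-measure events, and extend to general $(t,s)$ by the monotonicity and right continuity in $t$ of the exit-point map $x(\cdot)$ coming from the nested structure of filled metric balls.
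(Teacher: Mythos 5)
The paper does not prove this proposition at all: it is quoted verbatim from \cite{GM20} (and the surrounding text of Section \ref{sec:geodesic-confluence} only recalls, via the events $H_r(z)$ and the radii $\rho_\varepsilon^n(z)$, some ingredients of that external proof). So what you are attempting is a from-scratch reproof of the main confluence theorem of \cite{GM20}, and as written it has genuine gaps precisely where that paper's work lies. The central one is your ``bottleneck'' event: you assert that on $E_\delta(x_*)$ every geodesic crossing the annulus is ``funnelled through an arc so narrow that any two such geodesics must agree'' near it. Narrowness of a region does not force geodesics to coalesce --- two geodesics can traverse an arbitrarily thin neighbourhood without ever intersecting, and ruling this out is exactly the difficulty of confluence (it is why the present paper needs the elaborate $\scX$ construction, and why \cite{GM20} needs barrier events that make passage through a \emph{single point} metrically strictly cheaper than any path avoiding it). Moreover, your local-to-global step applies the events at points $x_*\in\partial\cB_t^\bullet(0)$, which is a random set depending on $h$ in a complicated way; a Borel--Cantelli argument over deterministic dyadic centers and scales does not transfer to it. One needs events measurable with respect to the field away from the ball together with a uniform lower bound on their probability \emph{conditional} on $\cG_t=\sigma(\cB_t^\bullet(\bz),h|_{\cB_t^\bullet(\bz)})$, plus an iteration over $\asymp\log\varepsilon^{-1}$ scales (this is the role of $H_r(z)$ and $\rho^n_\varepsilon(z)$ in \eqref{eq:8}--\eqref{eq:9}); your sketch acknowledges this as ``the main obstacle'' but does not carry it out, so the crux is assumed rather than proved.

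Two further steps are also unjustified. First, conclusion \eqref{it:uniquegeod} concerns uniqueness of the geodesic from $\bz$ to the \emph{random} point $x\in\cX_{t,s}(\bz)$, which does not follow from a.s.\ uniqueness between fixed endpoints; your limiting argument via $y_n\in I_x$ at best shows that an arbitrary geodesic to $x$ agrees with $\Gamma_x$ near $x$, not globally, and the passage to a subsequential limit of leftmost geodesics requires an argument that the limit is again leftmost. Second, the upgrade from a countable dense family of pairs $(t,s)$ to all $0<t<s$ simultaneously is not a soft consequence of ``monotonicity and right continuity'' of the exit-point map: restrictions of leftmost geodesics to $\partial\cB_{s'}^\bullet(\bz)$ need not be leftmost geodesics to $\partial\cB_s^\bullet(\bz)$ for $s<s'$, so $\cX_{t,s}(\bz)$ has no obvious monotone structure in $(t,s)$, and obtaining the statement simultaneously for all radii is a nontrivial part of \cite{GM20}. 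If your goal is to use the proposition (as this paper does), the correct move is simply to cite \cite{GM20}; if your goal is to reprove it, the sketch would need the barrier construction, the conditional-probability bound given $\cG_t$, and the all-radii argument filled in.
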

        In fact, if one works with two stopping times instead of considering all $s<t$ simultaneously, the above statement can be upgraded to remove the leftmost condition; we state this result for $\bz=0$ for convenience (recall that $h_1(0)=0$ almost surely). %

    \begin{proposition}[{\cite[Proposition~3.2]{GPS20}}]
      \label{prop:7}
      Let $\sigma_1,\sigma_2$ be stopping times for the filtration $\cG$ defined by $\cG_s=\sigma(\mathcal{B}_s^{\bullet}(0) , h|_{\mathcal{B}_s^{\bullet}(0)})$ such that $\sigma_1 < \sigma_2$ almost surely. Then, almost surely, for every $D_h$-geodesic $P$ from $0$ to a point of $\CC\setminus \cB_{\sigma_2}^\bullet(0)$, there is a point $x\in \cX_{\sigma_1,\sigma_2}(0)$ such that $P(\sigma_1)=x$ and $P(\sigma_2)$ is an interior point of the arc $I_x$. %
    \end{proposition}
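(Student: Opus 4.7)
The plan is to prove Proposition~\ref{prop:7} by upgrading Proposition~\ref{prop:6} in three respects: from deterministic to stopping times, from leftmost to arbitrary $D_h$-geodesics, and from endpoints on $\partial\cB_s^\bullet(0)$ to endpoints outside $\cB_{\sigma_2}^\bullet(0)$ with the added ``interior of $I_x$'' conclusion. The first upgrade is free, since Proposition~\ref{prop:6} holds simultaneously for all $0<t<s$ and thus applies pathwise with $(t,s)=(\sigma_1,\sigma_2)$. The reduction to boundary endpoints uses that $P$ is parametrized by unit $D_h$-speed, so $D_h(0,P(t))=t$; monotonicity together with the hypothesis $w\notin\cB_{\sigma_2}^\bullet(0)$ forbids $P$ from re-entering $\cB_{\sigma_i}(0)$ after leaving it, and excludes boundary-of-hole locations since a geodesic from $0$ continuing outside $\cB_{\sigma_i}^\bullet(0)$ cannot pass through such a point. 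Hence $P(\sigma_i)\in \partial\cB_{\sigma_i}^\bullet(0)$ for $i\in\{1,2\}$, and Proposition~\ref{prop:6} applied to the leftmost geodesic from $0$ to $y:=P(\sigma_2)$ identifies a candidate $x\in\cX_{\sigma_1,\sigma_2}(0)$ with $y\in I_x$.

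The upgrade from leftmost to arbitrary geodesics reduces to the following lemma: for every $y'\in\inte(I_x)$, every $D_h$-geodesic from $0$ to $y'$ passes through $x$. To prove it, pick $y_-,y_+\in\inte(I_x)$ on opposite sides of $y'$ along $\partial\cB_{\sigma_2}^\bullet(0)$. The leftmost geodesics $P_\pm$ from $0$ to $y_\pm$ pass through $x$, and by Proposition~\ref{prop:6}(\ref{it:uniquegeod}) coincide with the unique geodesic from $0$ to $x$ on $[0,\sigma_1]$. By the tree structure of leftmost geodesics, the arms $P_\pm|_{[\sigma_1,\sigma_2]}$ do not re-intersect after $x$, and together with the arc of $\partial\cB_{\sigma_2}^\bullet(0)$ from $y_-$ to $y_+$ through $y'$ they bound a Jordan subregion $R$ of $\cB_{\sigma_2}^\bullet(0)$, with $0\notin R$ and $y'\in\partial R$. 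Any geodesic $P'$ from $0$ to $y'$ must enter $R$, since $P'(t)$ lies in $R$ for $t$ sufficiently close to $\sigma_2$ by continuity and the local geometry near $y'$, while $P'(0)=0\notin R$. Crossings of the arms at an interior point $z\neq x$ would place $z\in\cX_{D_h(0,z),\sigma_2}(0)$ by applying Proposition~\ref{prop:6} at the pair $(D_h(0,z),\sigma_2)$, making the geodesic from $0$ to $z$ unique and forcing $P'\equiv P_\pm$ on $[0,D_h(0,z)]$, hence $P'(\sigma_1)=x$. Crossings of the arc before $\sigma_2$ are excluded by unit-speed monotonicity. Hence $P'$ must enter $R$ at $x$, and $P'(\sigma_1)=x$ in all cases.

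The main obstacle is the interior claim: to show $y\in\inte(I_x)$. Otherwise $y$ is one of the finitely many boundary points $\{y_1,\ldots,y_m\}\subseteq\partial\cB_{\sigma_2}^\bullet(0)$ separating adjacent arcs, and the extension of $P$ to $w\notin\cB_{\sigma_2}^\bullet(0)$ forces the optimality equation $D_h(0,w)=\sigma_2+D_h(y,w)$ for some $y=y_i$ and some exterior $w$. The set $\{y_1,\ldots,y_m\}$ is $\cG_{\sigma_2}$-measurable, so conditional on $\cG_{\sigma_2}$ the plan is to rule out almost surely the existence of any such $w$, for each fixed $y_i$. This should follow from the strong Markov property at $\sigma_2$: the conditional law of the field on $\CC\setminus\cB_{\sigma_2}^\bullet(0)$ is absolutely continuous with respect to a reference GFF, under which the random function $w\mapsto D_h(0,w)-D_h(y_i,w)$ attains the value $\sigma_2$ only on a null set containing no point reachable by a $D_h$-geodesic from $0$ (in analogy with the a.s.\ uniqueness of $D_h$-geodesics between typical pairs of points, \cite[Theorem~1.2]{MQ18}). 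Summing over the finite family $\{y_i\}$ yields the interior claim. I expect this to be the hardest step, as it requires careful analysis of the conditional law of $D_h$ outside $\cB_{\sigma_2}^\bullet(0)$ and a non-degeneracy argument for the level sets of $D_h(0,\cdot)-D_h(y_i,\cdot)$.
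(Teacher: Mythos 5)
Your route is genuinely different from the paper's, which does not reprove the statement at all: it observes that the deterministic-time case is exactly \cite[Proposition~3.2]{GPS20}, and that the stopping-time case follows from the same argument because the only essential ingredient there, \cite[Lemma~3.5]{GPS20}, is valid for stopping times. You instead try to rebuild the GPS20 statement from Proposition \ref{prop:6}, and it is precisely at the point where this rebuild needs new input that your argument has a genuine gap.

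The unproved step is the interior claim, and the sketch you give for it would not go through as stated. Nullity of the level set $\{w:\ D_h(0,w)=\sigma_2+D_h(y_i,w)\}$ proves nothing: the trace of a single geodesic is itself Lebesgue-null, so a null set can perfectly well contain exterior points $w$ admitting a geodesic from $0$ through $y_i$; what must be excluded is the existence of \emph{any} exterior target $w$ and \emph{any} geodesic through $y_i$, simultaneously over $w$, for a $\cG_{\sigma_2}$-measurable and highly atypical point $y_i$ (an endpoint shared by two adjacent arcs, i.e.\ a point in the common ``shadow boundary'' of two confluence points). The analogy with a.s.\ uniqueness of geodesics between typical points does not transfer, since there the target is a fixed, Lebesgue-typical point. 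The absolute-continuity input is also delicate: given $\cG_{\sigma_2}$, the law of $h$ off $\cB_{\sigma_2}^\bullet(0)$ is a GFF-type field with boundary data on a fractal Jordan curve, not obviously comparable to a reference GFF near that boundary, and $D_h(y_i,w)$ for exterior $w$ is not a functional of $h\lvert_{\CC\setminus \cB_{\sigma_2}^\bullet(0)}$ alone, since competing paths from $y_i$ may re-enter the filled ball. Ruling out geodesics through the arc endpoints is exactly the nontrivial content of \cite[Proposition~3.2]{GPS20}, so as written your proof assumes what the citation is there to supply. Two smaller gaps: the claim that the two leftmost arms do not re-intersect after $x$ is not part of Proposition \ref{prop:6} and needs the initial-segment uniqueness of geodesics from $0$ (Proposition \ref{prop:geodesics_don't_form_bubbles}, i.e.\ \cite[Lemma~3.9]{GPS20}); and you use without justification that $P(\sigma_i)\in\partial\cB_{\sigma_i}^\bullet(0)$ and that $\partial\cB_{\sigma_i}^\bullet(0)$ lies at $D_h$-distance exactly $\sigma_i$ from $0$.
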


\begin{proof}
The claim in the statement of the proposition in the case that $\sigma_1,\sigma_2$ are deterministic times was shown in \cite[Proposition~3.2]{GPS20}. The case for general stopping times follows from the exact same argument since the main ingredient \cite[Lemma~3.5]{GPS20} holds for stopping times and not just deterministic times.   
\end{proof}

The proof of Proposition \ref{prop:6} uses a sophisticated ``barrier'' argument, wherein the filled metric ball is gradually exposed and the Markov property of the GFF is used to argue that there are sufficiently many ``barrier'' events which occur around bottlenecks of the boundary of the filled metric ball. In fact, for defining a regularity event later in this paper, we shall need some notation from the proof of the above, which we now introduce.

The argument in \cite{GM20} (see Section 3.2 therein) crucially uses certain high probability events $H_r(z)$ (therein called $E_r(z)$) for a radius $r>0$ and a point $z\in \CC$. Roughly, the event $H_r(z)\in \sigma( h-h_{3r}(z))\lvert_{\mathbb{A}_{2r,5r}(z)}$ is defined so that the following property holds: for a fixed $\bz\in \CC$ and conditional on $\sigma(\cB_t^\bullet(\bz),h\lvert_{\cB_t^\bullet(\bz)})$, for any $z$ such that $B_r(z)\cap\cB_t^\bullet(\bz)\neq \emptyset$, there is a strictly positive conditional probability that no $D_h$-geodesic from $\bz$ to a point outside of $\cB_t^\bullet(\bz)\cup B_{5r}(z)$ can enter $B_r(z)$ before intersecting with $\cB_t^\bullet(\bz)$.

The precise definition of $H_r(z)$ is complicated and will not be needed for this paper, and we shall refer the reader to \cite[Section 3.2]{GM20} for the details. For $\varepsilon>0$, we shall work with the radii $\rho_\varepsilon^n(z)$ defined by $\rho_\varepsilon^0(z)=\varepsilon$ and
\begin{equation}
  \label{eq:8}
  \rho_\varepsilon^n(z)=\inf\{r\geq 6\rho_{\varepsilon}^{n-1}(z): r=2^k\varepsilon \textrm{ for some } k\in \ZZ \textrm{ and } H_r(z) \textrm{ occurs}\}.
\end{equation}
The following result shows that one has uniform control over the above radii on a lattice of points.
\begin{lemma}[{\cite[Lemma 3.5]{GM20}}]
  \label{lem:5}
  There exists a constant $\eta>0$ for which the following holds. Using $\rho_{\varepsilon}(z)$ to denote $\rho_{\varepsilon}^{\lfloor \eta \log \varepsilon^{-1}\rfloor}(z)$, for each compact set $K\subseteq \CC$ and each $\bz\in \CC$, it holds with probability $1-O(\varepsilon^2)$ at a rate depending only on $K$ that
  \begin{equation}
    \label{eq:9}
    \rho_\varepsilon(z)\leq \varepsilon^{1/2} \textrm{ for all } z\in \left(\frac{\varepsilon}{4}\ZZ^2\right)\cap B_\varepsilon(K+\bz). 
  \end{equation}
\end{lemma}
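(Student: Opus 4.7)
The plan is to reduce to a per-point estimate via a union bound, and then control the dyadic iteration using the fact that $\PP(H_r(z))$ can be arranged to be close to $1$ together with approximate independence across well-separated scales. The lattice $(\varepsilon/4)\ZZ^2 \cap B_\varepsilon(K+\bz)$ has cardinality $O_K(\varepsilon^{-2})$, so by a union bound it suffices to show $\PP(\rho_\varepsilon(z) > \varepsilon^{1/2}) = O(\varepsilon^4)$ for each fixed $z$, uniformly over $z$ in compact sets. Translation/scale invariance of $h$ modulo additive constant together with $H_r(z) \in \sigma\bigl((h - h_{3r}(z))|_{\mathbb{A}_{2r,5r}(z)}\bigr)$ makes $p := \PP(H_r(z))$ a positive absolute constant, and by the construction of $H_r(z)$ one can arrange $1 - p$ to be as small as desired.

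Writing $r_k = 2^k \varepsilon$, the key geometric observation is that for $j' \geq j + 3$ the annuli $\mathbb{A}_{2r_j, 5r_j}(z)$ and $\mathbb{A}_{2r_{j'}, 5r_{j'}}(z)$ are disjoint, since $5 r_j < 2 r_{j+3} = 16 r_j$. I would thus pass to the sparser sub-sequence $q_k := r_{3k} = 8^k \varepsilon$ and note that $\rho_\varepsilon^n(z)$ is stochastically dominated by the analogous iteration $\tilde\rho_\varepsilon^n(z)$ built from only the $q_k$. To establish approximate independence of the events $H_{q_k}(z)$, I would apply the domain Markov property of the whole-plane GFF iteratively to the balls $B_{5q_k}(z)$: conditional on the field outside this ball, the field on $\mathbb{A}_{2q_{k+1}, 5q_{k+1}}(z)$ decomposes as an independent zero-boundary GFF plus the harmonic extension of the outside boundary data. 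Because $H_r(z)$ is insensitive to additive constants and the oscillation of a harmonic function across a thin annulus is controlled by its boundary oscillation, one can restrict to a high-probability regularity event controlling the circle-average process $r \mapsto h_r(z)$; Cameron--Martin then yields a uniform lower bound $p'$ on the conditional success probability, with $1-p'$ of the same order as $1-p$.

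With this in hand, $\{\tilde\rho_\varepsilon^N(z) \leq \varepsilon^{1/2}\}$ amounts to having at least $N := \lfloor \eta \log \varepsilon^{-1}\rfloor$ successes among the first $M := \lfloor \log_8 \varepsilon^{-1/2}\rfloor$ $q_k$-scale trials, and the number of successes stochastically dominates $\mathrm{Bin}(M, p')$. A Chernoff bound gives failure probability $\leq e^{-M\, D(N/M\,\|\,p')}$ for $N/M \leq p'$; choosing $\eta$ so that $N/M = 6\eta \log 2 \leq p'/2$ and then $p'$ close enough to $1$ to force $M\, D(p'/2\,\|\,p') \geq 4 \log \varepsilon^{-1}$ pushes the per-point failure probability below $\varepsilon^4$, and the union bound over the lattice delivers the advertised $1 - O(\varepsilon^2)$. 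The main obstacle is the approximate-independence step: the long-range correlations of the GFF prevent the $H_{q_k}(z)$ from being genuinely independent, so the Markov-property decomposition must be combined with uniform control on the harmonic contribution from the outer field, which is what requires the regularity event for the circle averages; once this is in place, the remaining concentration and union-bound steps are routine.
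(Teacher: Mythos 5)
This lemma is quoted verbatim from \cite{GM20} and the paper supplies no proof of its own, so there is nothing internal to compare against; I will assess your argument against the proof in the cited source. Your reconstruction follows the same route as \cite[Lemma 3.5]{GM20}: a union bound over the $O_K(\varepsilon^{-2})$ lattice points reducing matters to a per-point bound of order $\varepsilon^4$, passage to the sparse scales $8^k\varepsilon$ (which makes the annuli $\mathbb{A}_{2r,5r}(z)$ disjoint and makes the separation constraint $r\geq 6\rho_\varepsilon^{n-1}(z)$ automatic, so that the pathwise domination $\rho_\varepsilon^n\leq\tilde\rho_\varepsilon^n$ is valid), approximate independence of the events $H_{q_k}(z)$ across scales, and a binomial concentration bound with $\eta$ chosen so that $N/M=6\eta\log 2$ is small; the arithmetic ($M\asymp\frac16\log_2\varepsilon^{-1}$, needing $1-p'$ below an absolute threshold) checks out, and the fact that $\PP(H_r(z))$ can be made as close to $1$ as desired is indeed available from the construction in \cite{GM20}.

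The one step you assert rather than prove is the stochastic domination of the success count by $\mathrm{Bin}(M,p')$, i.e.\ a uniform lower bound on the conditional success probability at each scale given the field at all larger scales. This is genuinely the crux: it is exactly the content of the ``iterating events across scales'' lemma for the GFF (\cite[Lemma 3.1]{GM19}, which is what \cite{GM20} invokes at this point), and its proof is precisely the mechanism you sketch — the domain Markov decomposition into an independent zero-boundary GFF plus a harmonic part, insensitivity of $H_r(z)$ to additive constants, a regularity event controlling the oscillation of the harmonic part across the thin annulus, and a Radon--Nikodym comparison. So your proposal is correct in outline and identifies the right obstacle; in a complete write-up one would either cite that lemma directly or carry out the Radon--Nikodym estimate in detail, since the quantitative relation between $p$ and $p'$ is where all the work lives.
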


Another useful property of the $\gamma$-LQG metric that we are going to use throughout the paper is the following.

\begin{lemma}(\cite[Theorem~1.7]{DFGPS20})\label{lem:holder_regularity}
Fix constants $\chi\in (0,\xi(Q-2))$ and $\chi'>\xi(Q+2)$. Then, almost surely, the identity map from $\mathbb{C}$ equipped with the Euclidean metric  to $(\mathbb{C} ,  D_h)$ is locally H\"older continuous with exponent $\chi$. Furthermore,  the inverse of this map is a.s.\ locally H\"older continuous with exponent $\chi'$.
\end{lemma}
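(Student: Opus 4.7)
The plan is to deduce both Hölder bounds from the axioms in Proposition \ref{prop:8} (Weyl scaling and the coordinate change formula) combined with standard Gaussian estimates on circle averages of the GFF. Combining these two axioms applied to a deterministic ball $B_r(z)$ and a rescaled field, one obtains the pointwise identity
\begin{equation*}
D_h(ru+z,rv+z) \;=\; e^{\xi h_r(z)}\, r^{\xi Q}\, D_{\widetilde h}(u,v),
\end{equation*}
where $\widetilde h(\cdot) = h(r\cdot +z) - h_r(z)$ has the same law as $h$ restricted to $B_1(0)$ modulo an additive constant, by scale–translation invariance of the GFF modulo constants. Thus the $D_h$-diameter of, and the $D_h$-annular crossing distances across, a Euclidean ball of radius $r$ around $z$ are captured by the deterministic factor $r^{\xi Q}$, the random ``reference'' scale $e^{\xi h_r(z)}$, and a scale-invariant quantity whose tails need to be controlled only once.

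For the upper estimate (exponent $\chi<\xi(Q-2)$), I would first establish a small-moment bound on the $D_h$-diameter of a fixed ball, giving that $D_{\widetilde h}\text{-diam}(B_1(0))$ has sufficiently light polynomial tails. Combining this with the Gaussian tail $\PP(h_r(z) > t\log r^{-1}) \leq r^{t^2/2+o(1)}$, a union bound over a grid of $z$'s at spacing $r$ in a compact set $K$ yields that with probability $1-O(r^{\alpha})$ for some $\alpha>0$, every ball $B_r(z)$ with $z\in K$ has $D_h$-diameter at most $r^{\xi(Q-2)-\varepsilon}$. A Borel--Cantelli argument over dyadic scales $r_n = 2^{-n}$ then gives the almost sure local Hölder bound $D_h(u,v) \lesssim |u-v|^{\xi(Q-2)-\varepsilon}$, and taking $\varepsilon$ small yields the desired exponent $\chi$.

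For the lower estimate, which I expect to be the main obstacle, I would argue that any $D_h$-path from $u$ to $v$ must cross the Euclidean annulus $\mathbb{A}_{|u-v|/4,|u-v|/2}(u)$, so it suffices to lower bound annular crossing distances. Applying the same scaling identity, the $D_h$-distance across such an annulus equals $e^{\xi h_r(z)} r^{\xi Q}$ times the scale-invariant $D_{\widetilde h}$-annular crossing distance, for which one needs a positive lower-tail bound---this is the delicate nontrivial input, typically derived from nondegeneracy of $D_h$ together with a subadditivity/tightness argument in the axiomatic construction of the metric. The Gaussian lower tail $\PP(h_r(z) < -t\log r^{-1}) \leq r^{t^2/2+o(1)}$ then yields, after a union bound over a grid of $z$'s in $K$ and over dyadic radii, that $D_h$-distances across $\mathbb{A}_{r/4,r/2}(z)$ are at least $r^{\xi(Q+2)+\varepsilon}$ with high probability, simultaneously for all relevant $z$ and $r$. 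Summing the contributions of a single annulus (or chaining several dyadic scales) gives $D_h(u,v) \gtrsim |u-v|^{\xi(Q+2)+\varepsilon}$ almost surely and locally, which rearranged is exactly the statement that the inverse map is locally Hölder with the desired exponent $\chi'$.

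The hardest step, as noted, is the lower-tail estimate for the scale-invariant annular crossing distance: lower bounds on random metric distances are typically more delicate than upper bounds and cannot be obtained by direct moment computations. Once such a lower tail is in hand, the rest of the argument is standard chaining using Gaussian concentration of circle averages; uniformity across the compact set $K$ is automatic since the $\sim r^{-2}$ grid points are easily absorbed by the $r^{t^2/2}$ Gaussian tail for any fixed $t>2$.
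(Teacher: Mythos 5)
This lemma is not proved in the paper at all: it is quoted verbatim from \cite[Theorem~1.7]{DFGPS20}, with only a heuristic explanation (via $\alpha$-thick points, $|\alpha|\le 2$) of why the exponents $\xi(Q-2)$ and $\xi(Q+2)$ appear. Your sketch is essentially the standard argument by which this result is established in the cited work: the exact scaling relation $D_h(ru+z,rv+z)=e^{\xi h_r(z)}r^{\xi Q}D_{\widetilde h}(u,v)$ coming from Weyl scaling and the coordinate change formula, Gaussian tails $\PP(|h_r(z)|>t\log r^{-1})\approx r^{t^2/2}$ for circle averages with $t$ slightly above $2$ (to beat the union bound over the $\sim r^{-2}$ grid points), and a Borel--Cantelli/chaining argument over dyadic scales, with the upper bound reduced to diameters of Euclidean balls and the lower bound reduced to crossings of Euclidean annuli. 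So the route is the right one and produces the right exponents.

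However, as a self-contained proof it has a genuine gap, which you yourself flag: the two scale-invariant tail estimates you invoke --- a polynomial upper tail for the $D_{\widetilde h}$-diameter of a fixed ball and, more seriously, a lower tail for the $D_{\widetilde h}$-distance across a fixed annulus --- are not consequences of the axioms in Proposition \ref{prop:8} by any soft argument; they are precisely the hard quantitative content of \cite{DFGPS20} (and of the tightness theory for LFPP underlying the construction of $D_h$). Deferring them to ``nondegeneracy plus a subadditivity/tightness argument'' is exactly where the proof lives, so your write-up is a correct reduction to the cited estimates rather than a proof. Two smaller points: the circle average $h_r(z)$ and the rescaled field $h(r\cdot+z)-h_r(z)$ are not independent, so the step where you multiply the two tail bounds needs the usual local-independence/Markov-property argument rather than a direct product bound; and note that since $\chi'>\xi(Q+2)\ge 1$, the phrase ``inverse map is H\"older with exponent $\chi'$'' is the paper's shorthand for the bound $D_h(u,v)\gtrsim |u-v|^{\chi'}$ (equivalently, H\"older exponent $1/\chi'$ in the usual convention), which is the form \eqref{eq:5} used later --- your derivation lands on exactly this inequality, so your reading is the correct one.
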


Intuitively,  the above result is true for the following reason.  If $z$ is an $\alpha$-thick point for $h$ (see \cite{HMP10}),  i.e.,  the circle average satisfies $h_{\varepsilon}(z) = (\alpha + o_{\varepsilon}(1)) \log \varepsilon^{-1}$ as $\varepsilon \to 0$,  then the $D_h$-distance from $z$ to $\partial B_{\varepsilon}(z)$ behaves like $\varepsilon^{\xi (Q-\alpha) + o_{\varepsilon}(1)}$ as $\varepsilon \to 0$ (see \cite{DFGPS20}).  Further, in the context of a planar GFF, $\alpha$-thick points exist for $\alpha \in [-2,2]$ but not for $|\alpha| > 2$ \cite{HMP10}. 

\textbf{We now fix constants $\chi\in (0,\xi(Q-2))$ and $\chi'>\xi(Q+2)$ as in Lemma \ref{lem:holder_regularity}: these will stay fixed throughout the paper}.

\subsection{Independence along a geodesic and a regularity event}
\label{sec:indep-along-geod}
As outlined in Section \ref{sec:proof-outline}, in order to argue that with very high probability, $\scX$s occur around geodesics between fixed points, we shall use a result from \cite{GM21} obtaining a certain form of independence as one traverses along an LQG geodesic, and we now state the precise result that we shall use. Note that, throughout the paper, for $z\in \CC$ and $r>0$, we shall use $\tau_r(z)$ to denote the smallest $t>0$ for which $\cB_t^\bullet(z)\cap \partial B_r(z)\neq \emptyset$.
\begin{proposition}[{\cite[Theorem~4.2]{GM21}}]
  \label{thm:theorem_uniqueness_lqg_metric}
  Let $h$ be a whole-plane GFF. Fix $\nu>0$ and $0 < \lambda_1 < \lambda_2 \leq \lambda_3 \leq \lambda_4 < \lambda_5$. Then there exists $\mathbbm{p} \in (0,1)$ such that the following is true.  Suppose that for each $z\in \CC,r>0$, we have events $E_r(z) \in \sigma(h)$ and additionally for any points $\bz ,  \bw \in \mathbb{C}$, and any simple path $\eta$ from $\bz$ to $\bw$, we have events $\mathfrak{C}_r^{\bz,\bw,\eta}(z) \in \sigma(h)$, satisfying, for a deterministic constant $\Lambda > 1$, the following properties.

\begin{enumerate}[(1)]
\item \label{it:locality_property}
  For all $z \in \mathbb{C}$ and all $r >0$,  the event $E_r(z)$ is determined by $(h-h_{\lambda_5 r}(z))|_{\A_{\lambda_1 r ,  \lambda_4 r}(z)}$,  and for all $\bz,\bw \in \mathbb{C}$,  the event $\mathfrak{C}_r^{\bz,\bw,\eta}(z)$ is determined by $h|_{B_{\lambda_4 r}(z)}$ and the path $\eta$ stopped at the last time that it exits $B_{\lambda_4 r}(z)$.
\item \label{it:high_prob}
For all $z \in \mathbb{C}$ and all $r >0$, we have $\mathbb{P}(E_r(z)) \geq \mathbbm{p}$.
\item \label{it:inclusion_of_events}
For all points $z \in \mathbb{C},r >0$ and $\bz,\bw$ satisfying $\bz,\bw\notin B_{\lambda_4 r}(z)$, we have
\begin{align*}
\Lambda^{-1} \mathbb{P}(E_r(z) \cap \{\Gamma_{\bz,\bw} \cap B_{\lambda_2 r}(z) \neq \emptyset\} \lvert h|_{\mathbb{C} \setminus B_{\lambda_3r}(z)}) \leq \mathbb{P}(\mathfrak{C}_r^{\bz,\bw,\Gamma_{\bz,\bw}}(z) \cap \{\Gamma_{\bz,\bw} \cap B_{\lambda_2 r}(z) \neq \emptyset\} \lvert h|_{\mathbb{C} \setminus B_{\lambda_3r}(z)}).
\end{align*}
\end{enumerate}

Under the above hypotheses, for all $q>0,  \ell \in (0,1)$,  and for each bounded open set $U \subseteq \mathbb{C}$,  it holds with probability tending to $1$ as $\varepsilon \to 0$,  at a rate depending only on $U,q,\ell,\nu,  \{\lambda_i\}_{i=1,\cdots,5},  \Lambda$,  that for all $\bz,\bw \in (\varepsilon^q \mathbb{Z}^2) \cap U$ with $\bw\notin \cB_{\tau_{4\ell}(\bz)}^\bullet(\bz)$,  there exist $z \in \mathbb{C},  r \in [\varepsilon^{1+\nu} ,  \varepsilon]$ such that $\Gamma_{\bz,\bw} \cap B_{\lambda_2 r}(z) \neq \emptyset$ and $\mathfrak{C}_r^{\bz,\bw,\Gamma_{\bz,\bw}}(z)$ occurs. %
\end{proposition}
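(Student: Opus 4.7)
The plan is, for each pair $\bz,\bw\in (\varepsilon^q\ZZ^2)\cap U$, to establish the stated existence of $z,r$ with probability at least $1-o(\varepsilon^{2q+2})$, so that a union bound over the $O(\varepsilon^{-2q})$ pairs delivers the conclusion uniformly. The guiding heuristic is that one can run $N\asymp \log\varepsilon^{-1}$ approximately independent ``trials'' at well-separated scales $r_j\in[\varepsilon^{1+\nu},\varepsilon]$ along the geodesic $\Gamma_{\bz,\bw}$; by hypothesis \eqref{it:high_prob} each trial succeeds (i.e., has $E_{r_j}(z_j)$ occurring) with probability at least $\mathbbm{p}$, so choosing $\mathbbm{p}$ sufficiently close to $1$ makes the failure probability $(1-\mathbbm{p})^N$ beat the union-bound cost.

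The first step is to construct the candidate centers $z_j$ and scales $r_j$. I would use the filtration $\cG_s=\sigma(\cB_s^\bullet(\bz),h|_{\cB_s^\bullet(\bz)})$ and a sequence of stopping times $\tau_\ell(\bz)=\sigma_0<\sigma_1<\cdots<\sigma_N\leq \tau_{4\ell}(\bz)$ chosen so that the Euclidean annular gaps between consecutive filled metric balls form a geometric sequence of scales $r_j\in[\varepsilon^{1+\nu},\varepsilon]$. Proposition \ref{prop:7} applied to $\sigma_{j-1}<\sigma_j$ identifies a (random) point $x_j\in\cX_{\sigma_{j-1},\sigma_j}(\bz)\subseteq\partial\cB^\bullet_{\sigma_{j-1}}(\bz)$ through which $\Gamma_{\bz,\bw}$ is forced to pass; I would place $z_j$ a short Euclidean distance outside $\cB^\bullet_{\sigma_{j-1}}(\bz)$ near $x_j$, so that $\Gamma_{\bz,\bw}\cap B_{\lambda_2 r_j}(z_j)\neq \emptyset$ automatically and so that $\A_{\lambda_1 r_j,\lambda_4 r_j}(z_j)$ fits inside the corresponding gap $\cB^\bullet_{\sigma_j}(\bz)\setminus\cB^\bullet_{\sigma_{j-1}}(\bz)$. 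The $H_r(z)$-events together with Lemma \ref{lem:5} supply the Euclidean-to-$D_h$ regularity of the filled metric ball boundaries needed to make this selection uniformly, and a sufficiently large geometric ratio $r_{j-1}/r_j$ guarantees that the annuli are pairwise disjoint.

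Next, I would exploit hypothesis \eqref{it:locality_property}: since $E_{r_j}(z_j)$ is determined by $(h-h_{\lambda_5 r_j}(z_j))|_{\A_{\lambda_1 r_j,\lambda_4 r_j}(z_j)}$, the Markov property of the whole-plane GFF implies that, conditioned on $\cG_{\sigma_{j-1}}$ and on the scalar $h_{\lambda_5 r_j}(z_j)$, this centered restricted field has a Gaussian law that is independent of everything at strictly larger scales and of the past exploration. Hence $\PP(E_{r_j}(z_j)\mid\cG_{\sigma_{j-1}})\geq\mathbbm{p}$, and iterating over $j$ gives $\PP\bigl(\bigcap_j E_{r_j}(z_j)^c\bigr)\leq(1-\mathbbm{p})^N$. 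On the complementary event, selecting the first $j_*$ with $E_{r_{j_*}}(z_{j_*})$ occurring and invoking hypothesis \eqref{it:inclusion_of_events} together with the guaranteed fact $\{\Gamma_{\bz,\bw}\cap B_{\lambda_2 r_{j_*}}(z_{j_*})\neq\emptyset\}$ yields a conditional lower bound of $\Lambda^{-1}$ on the occurrence of $\mathfrak{C}_{r_{j_*}}^{\bz,\bw,\Gamma_{\bz,\bw}}(z_{j_*})$. Averaging this bound over the many candidate $j$'s (each contributing independently under the same locality considerations) upgrades the probability of success to $1-(1-\Lambda^{-1}\mathbbm{p})^{\Theta(N)}$, which is the desired polynomially small quantity.

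The main obstacle will be handling the fact that the centers $z_j$ are random and adapted to $\cG_{\sigma_j}$, so strictly speaking $E_{r_j}(z_j)$ is not a single event but a family of events indexed by the random choice of $z_j$. The key observation that makes the independence argument go through is the circle-average subtraction in hypothesis \eqref{it:locality_property}: conditioning on $\cG_{\sigma_{j-1}}$ and on the one-dimensional variable $h_{\lambda_5 r_j}(z_j)$, the conditional law of $(h-h_{\lambda_5 r_j}(z_j))|_{\A_{\lambda_1 r_j,\lambda_4 r_j}(z_j)}$ is translation- and scale-invariant, hence depends only on the geometry of the annulus and not on $z_j$ or on the past, so the lower bound $\mathbbm{p}$ transfers verbatim to the conditional setting. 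Making this conditioning fully rigorous -- quantitatively separating $z_j$ from $\partial\cB^\bullet_{\sigma_{j-1}}(\bz)$ so the annuli do not leak into the explored region, and exploiting $\bw\notin\cB^\bullet_{\tau_{4\ell}(\bz)}(\bz)$ to guarantee that the geodesic truly crosses every ring -- is the bulk of the technical work; the rest is standard multi-scale union-bound bookkeeping.
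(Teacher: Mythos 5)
You should first note that the paper does not actually prove this statement: it is imported verbatim from \cite[Theorem~4.2]{GM21}, the only added remarks being that it is applied with $4\ell$ in place of $\ell$ and that the proof there only uses the weaker hypothesis $\bw\notin\cB^{\bullet}_{\tau_{4\ell}(\bz)}(\bz)$ rather than $|\bz-\bw|>4\ell$. So your sketch is an attempted reproof of the GM21 theorem, and it has two genuine gaps. The first is quantitative and comes from a quantifier mismatch: in the statement, $\mathbbm{p}$ is fixed \emph{before} $q$ (it depends only on $\nu$ and the $\lambda_i$'s), so you are not allowed to take $\mathbbm{p}$ close to $1$ depending on $q$. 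With a fixed $\mathbbm{p}<1$ and only $N\asymp\log\varepsilon^{-1}$ trials (one per scale in $[\varepsilon^{1+\nu},\varepsilon]$), the failure bound $(1-\mathbbm{p})^{N}$ is a \emph{fixed} power of $\varepsilon$, which cannot beat the union bound over the $\varepsilon^{-O(q)}$ lattice pairs once $q$ is large. The actual argument gets super-polynomially small failure probability by exploiting spatial, not scale, multiplicity: the geodesic is cut into $K\asymp\varepsilon^{-\beta}$ increments $[s_k,t_k]$ (exactly the exploration \eqref{eq:6} recalled in Section 3 of this paper), each of which supplies an approximately independent chance of success, giving a bound like $(1-c)^{cK}$.

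The second, more structural, gap is the decoupling of the event $\{\Gamma_{\bz,\bw}\cap B_{\lambda_2 r}(z)\neq\emptyset\}$ from the field inside the candidate annulus. Placing $z_j$ near ``the'' confluence point of Proposition \ref{prop:7} does not make the hit automatic: $\cX_{\sigma_{j-1},\sigma_j}(\bz)$ is a finite set, and \emph{which} of its points the geodesic to $\bw$ actually passes through is determined by the unexplored field beyond $\cB^{\bullet}_{\sigma_{j-1}}(\bz)$ --- including the field in the very annuli on which $E_{r_j}(z_j)$ is defined --- so $E_{r_j}(z_j)$ and the hitting event are not conditionally independent given $\cG_{\sigma_{j-1}}$, and your claim $\PP(E_{r_j}(z_j)\mid\cG_{\sigma_{j-1}})\ge\mathbbm{p}$ does not combine with the hitting event in the way you assert. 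Moreover, hypothesis \eqref{it:inclusion_of_events} is a comparison of probabilities conditioned on $h|_{\CC\setminus B_{\lambda_3 r}(z)}$ for a \emph{deterministic} pair $(z,r)$; applying it at exploration-adapted random centers, and then ``averaging over $j$ as independent trials,'' is precisely the part that needs an argument. This is why GM21 work with the full deterministic family $\cZ_k$ of candidate pairs near $\partial\cB^{\bullet}_{t_k}(\bz)$, introduce the events $\stab_{k,r}(z)$ (so that, on a high-probability event, which candidate ball the geodesic enters is decided by coarse information that does not look inside $B_{\lambda_3 r}(z)$), and only then run the iteration using \eqref{it:inclusion_of_events}; none of this machinery is replaced by anything in your sketch. (A smaller issue of the same flavour: the conditional law of the field in the unexplored annulus given $\cG_{\sigma_{j-1}}$ is not exactly translation/scale invariant; transferring the unconditional bound \eqref{it:high_prob} to such a conditioning needs an absolute-continuity argument, which is again why the hypotheses are phrased with conditioning on $h|_{\CC\setminus B_{\lambda_3 r}(z)}$.)
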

To obtain the above, we apply \cite[Theorem~4.2]{GM21} with $\ell$ replaced by $4\ell$. Also, we note that applying \cite[Theorem~4.2]{GM21} directly would yield the above statement with the condition $\bw\notin \cB^\bullet_{\tau_{4\ell}(\bz)}(\bz)$ replaced by the stronger condition $|\bw-\bz|>4\ell$; however, it can be checked that the proof therein only utilises the weaker condition $\bw\notin \cB^\bullet_{\tau_{4\ell}(\bz)}(\bz)$ (see in particular the proofs of Propositions 4.12 and 4.17 in \cite{GM21}). In our application, as indicated in Section \ref{sec:proof-outline}, $E_r(z)$ will be the event that there exist a $\scX$ of scale $r$ around $z$, while $\mathfrak{C}_r^{\bz,\bw,\eta}(z)$ will be the event that the path $\eta$ passes through such a $\scX$. %

For most of the paper, we will work with the event $\mathfrak{C}_r^{\bz,\bw,\Gamma_{\bz,\bw}}(z)$ for points $\bz,\bw$ admitting a unique $D_h$-geodesic $\Gamma_{\bz,\bw}$. In such a setting, we shall simply write $\mathfrak{C}_r^{\bz,\bw}(z)=\mathfrak{C}_r^{\bz,\bw,\Gamma_{\bz,\bw}}(z)$. In case there are multiple geodesics $\Gamma_{\bz,\bw}$, we shall use the more general notation, with a path appearing as superscript, to indicate which geodesic the event is being considered for.

In fact, apart from Proposition \ref{thm:theorem_uniqueness_lqg_metric} itself, we shall also need to introduce a certain regularity event introduced in its proof. This definition is taken from \cite[Section 4.3]{GM21} and will be stated in terms of certain parameters-- consider $a\in (0,1)$, $\nu>0$, open sets $U\subset V\subset \CC$ and $\ell>0$. Let the regularity event $\cE=\cE(a,\nu,U,V,\ell)$ depending on the above parameters $a,\nu,U,V,\ell$ and the event $E_r(z)$ (from Proposition \ref{thm:theorem_uniqueness_lqg_metric}) be as follows.
\begin{enumerate}
\item We have $\sup_{z,w\in U}D_h(z,w)\leq D_h(U,\partial V)$.
\item \label{it:ell2ell} For each $z\in B_{16\ell}(V)$, we have $B_a(z)\subseteq \cB^\bullet_{\tau_{\ell}(z)}(z)$ and
  \begin{align}
    \label{eq:4}
    &\min\{\tau_{2\ell}(z)-\tau_{\ell}(z), \tau_{3\ell}(z)-\tau_{2\ell}(z), \tau_{8\ell}(z)-\tau_{4\ell}(z), \tau_{12\ell}(z)-\tau_{8\ell}(z)\}\notag \\
    & \geq a \max\{e^{\xi h_1(z)},\ell^{\xi Q}e^{\xi h_\ell(z)}, (4\ell)^{\xi Q} e^{\xi h_{4\ell}(z)}\}.
  \end{align}
\item For each $z,w\in B_{16\ell}(V)$ with $|z-w|\leq a$, we have
  \begin{equation}
    \label{eq:5}
    D_h(z,w)\geq |z-w|^{\chi'}, D_h(z,w;B_{2|z-w|}(z))\leq |z-w|^\chi.    
  \end{equation}
\item \label{it:field_bounded} We have $\sup_{z\in V}|h_1(z)|\leq a^{-1}$.
  \item For each $\varepsilon\in (0,a]\cap \{2^{-n}\}_{n\in \NN}$ and each $z\in (\frac{\lambda_1\varepsilon^{1+\nu}}{4}\ZZ^2)\cap B_{16\ell}(V)$, there exists at least one $r\in [\varepsilon^{1+\nu},\varepsilon]$ for which $E_r(z)$ occurs.
    \item We have $\rho_{\varepsilon}(z)\leq \varepsilon^{1/2}$ for each $z\in \frac{\varepsilon^{1+\nu}}{4}\ZZ^2\cap B_{16\ell }(V)$ and each dyadic $\varepsilon \in (0,a]$.
    \end{enumerate}

    The above conditions are chosen such that the corresponding event $\cE$ from \cite[Section 4.3]{GM21} holds for both the parameters $\ell$ and $4\ell$. We now state a slightly different version of Proposition~\ref{thm:theorem_uniqueness_lqg_metric} where we condition on the regularity event $\mathcal{E}$ and demand that the output point $z$ is not too close or far from the point $\bz$-- this will be useful later.

\begin{proposition}\label{prop:chi_proposition}
  Suppose that we have the same setup as in Proposition~\ref{thm:theorem_uniqueness_lqg_metric}. Fix bounded open sets $U\subseteq V\subseteq \CC$ and $a\in (0,1), \ell > 0,\nu>0$. Then, for any fixed $q>0$, conditional on the event $\mathcal{E}$, the following holds.
  With probability tending to $1$ as $\varepsilon \to 0$ faster than any positive power of $\varepsilon$ (at a rate depending only on $a,\ell,\nu,q,U$ and $V$), for all $\bz,\bw \in (\varepsilon^q \mathbb{Z}^2) \cap U$ with $\bw\notin \cB_{\tau_{4\ell}(\bz)}^\bullet(\bz)$, there exist $(z,r) \in \mathbb{C} \times [\varepsilon^{1+\nu} ,  \varepsilon]$ such that 
\begin{align*}
B_{\lambda_4 r}(z) \subseteq \mathcal{B}_{\tau_{2 \ell}(\bz)}^{\bullet}(\bz) \setminus \mathcal{B}_{\tau_{\ell}(\bz)}^{\bullet}(\bz), \quad  \Gamma_{\bz ,  \bw} \cap B_{\lambda_2 r}(z) \neq \emptyset \quad \text{and} \quad \mathfrak{C}_r^{\bz ,  \bw}(z)\quad  \text{occurs}.
\end{align*}
\end{proposition}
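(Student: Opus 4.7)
The plan is to modify the proof of Proposition \ref{thm:theorem_uniqueness_lqg_metric} from \cite{GM21} by restricting the candidate $(z,r)$ pairs to those whose $z$ lies in the annular region $\mathcal{A}(\bz) := \mathcal{B}_{\tau_{2\ell}(\bz)}^\bullet(\bz) \setminus \mathcal{B}_{\tau_\ell(\bz)}^\bullet(\bz)$. The first step is to show that, on the regularity event $\mathcal{E}$, this annular region is uniformly thick around the sub-geodesic of $\Gamma_{\bz,\bw}$ crossing it. Writing $\sigma_i := \tau_{i\ell}(\bz)$, equation \eqref{eq:4} together with the bound $|h_1(\bz)| \leq a^{-1}$ from item \eqref{it:field_bounded} of $\mathcal{E}$ yields $\sigma_2 - \sigma_1 \geq a e^{-\xi/a} =: c_1 > 0$. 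Hence the middle portion $\Gamma_{\bz,\bw}|_{[\sigma_1 + c_1/4,\, \sigma_2 - c_1/4]}$ lies at $D_h$-distance at least $c_1/4$ from $\partial\mathcal{A}(\bz)$, and by the H\"older regularity \eqref{eq:5}, this translates to a Euclidean tube of thickness at least $c_2 = c_2(a,\ell) > 0$ around the middle portion contained in $\mathcal{A}(\bz)$, with the middle portion itself having Euclidean diameter at least $c_3 = c_3(a,\ell) > 0$.

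Next, I would re-run the argument of \cite{GM21}. That proof constructs, for each $\Gamma_{\bz,\bw}$, a polynomial (in $\log \varepsilon^{-1}$) number of candidate pairs $(z_j, r_j) \in \mathbb{C} \times [\varepsilon^{1+\nu}, \varepsilon]$ along the geodesic with $E_{r_j}(z_j)$ occurring, and invokes a martingale concentration argument (using hypothesis \eqref{it:inclusion_of_events}) to conclude that $\mathfrak{C}_{r_j}^{\bz,\bw}(z_j)$ holds for at least one $j$ with failure probability $o(\varepsilon^k)$ for every $k$. I would restrict attention to those candidates with $z_j$ in a Euclidean $(c_2/2)$-tube around the middle portion of $\Gamma_{\bz,\bw}|_{[\sigma_1, \sigma_2]}$; for $\varepsilon$ small enough that $\lambda_4 \varepsilon < c_2/2$, any such candidate automatically satisfies $B_{\lambda_4 r_j}(z_j) \subseteq \mathcal{A}(\bz)$. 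Since on $\mathcal{E}$ the middle portion has Euclidean diameter at least $c_3$, a positive (depending only on $a, \ell$) fraction of the original candidates lie in the restricted tube, so the same martingale argument yields super-polynomial failure probability. A union bound over the $O(\varepsilon^{-2q})$ lattice pairs $(\bz,\bw) \in (\varepsilon^q \ZZ^2)^2 \cap U^2$ preserves the rate.

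The main obstacle is the tension between the global nature of $\mathcal{A}(\bz)$, which depends on the field through the filled metric balls centered at $\bz$, and the local dependence structure required by hypothesis \eqref{it:locality_property}. Conditioning on $\mathcal{E}$ resolves this by fixing the regularity profile of these balls so that the restriction of $z_j$ to a Euclidean tube around the middle portion of $\Gamma_{\bz,\bw}$ becomes a purely Euclidean localization compatible with the locality-based martingale argument of \cite{GM21}. Since $\mathbb{P}(\mathcal{E})$ is bounded below by a positive constant (as its defining conditions each hold almost surely over the bounded region $B_{16\ell}(V)$), the super-polynomial decay of the failure probability translates from the unconditional to the conditional setting without loss.
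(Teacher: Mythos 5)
Your overall strategy (localize the candidate balls of the \cite{GM21} argument to the annulus $\mathcal{B}_{\tau_{2\ell}(\bz)}^{\bullet}(\bz)\setminus\mathcal{B}_{\tau_{\ell}(\bz)}^{\bullet}(\bz)$ using the regularity encoded in $\cE$) is in the right spirit, but the paper does not need any of the machinery you build: the proof is a one-line citation of the fact, stated as \cite[Lemma 4.19]{GM21}, that on $\cE$ \emph{every} candidate ball $B_{\lambda_4 r}(z)$ appearing in the proof of \cite[Theorem 4.2]{GM21} is already contained in $\mathcal{B}_{\tau_{2\ell}(\bz)}^{\bullet}(\bz)\setminus\mathcal{B}_{\tau_{\ell}(\bz)}^{\bullet}(\bz)$. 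This is automatic from the construction: the number of exploration steps $K=\lfloor a\varepsilon^{-\beta}\rfloor-1$ is chosen so that, on $\cE$ (via \eqref{eq:4} and item \eqref{it:field_bounded}), all times $t_k$ lie between $\tau_\ell(\bz)$ and $\tau_{2\ell}(\bz)$, and the candidates sit within Euclidean distance $O(\varepsilon)$ of $\partial\mathcal{B}^{\bullet}_{t_k}(\bz)$. So no restriction of the candidate set, no ``middle portion'' tube, and no modification of the concentration argument is required; the conditional super-polynomial bound is exactly what the proof of \cite[Theorem 4.2]{GM21} delivers on $\cE$.

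Beyond being unnecessary, your restriction step has a genuine soft spot. You propose to keep only candidates $(z_j,r_j)$ whose centers lie in a Euclidean tube around $\Gamma_{\bz,\bw}|_{[\sigma_1+c_1/4,\sigma_2-c_1/4]}$ and then assert that ``the same martingale argument yields super-polynomial failure probability.'' But the iteration in \cite{GM21} is run with respect to the ball-growth filtration $\cG_s=\sigma(\mathcal{B}_s^{\bullet}(\bz),h|_{\mathcal{B}_s^{\bullet}(\bz)})$, and its whole architecture (the $\stab_{k,r}(z)$ events and the arcs $\cI_k$) exists precisely to avoid conditioning on the geodesic, which depends on the entire field. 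A candidate set defined through the position of $\Gamma_{\bz,\bw}$, and through $\sigma_2=\tau_{2\ell}(\bz)$ (a stopping time not known at intermediate exploration steps), is not measurable in the way the iterative conditioning requires, so ``a positive fraction of candidates survive'' is not by itself enough to rerun the argument; one would have to redo the adaptedness bookkeeping (e.g.\ restrict instead to a deterministic-in-$\cG_{\tau_\ell}$ range of $k$ using the a priori bound $\tau_{2\ell}-\tau_\ell\geq ae^{-\xi/a}$), at which point one has essentially rederived \cite[Lemma 4.19]{GM21}. Two smaller slips: the number of candidate steps is polynomial in $\varepsilon^{-1}$ (of order $\varepsilon^{-\beta}$), not in $\log\varepsilon^{-1}$; and $\PP(\cE)$ is not bounded below because its conditions ``hold almost surely'' (they do not, for fixed $a,V$ --- positivity comes from choosing the parameters via Lemma \ref{lem:4}), though your conclusion only needs $\PP(\cE)>0$ independent of $\varepsilon$, which is fine.
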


\begin{proof}
  This directly follows from the proof of \cite[Theorem 4.2]{GM21}. Indeed, on the event $\cE$, all the candidate balls $B_{\lambda_4 r}(z)$ considered therein are contained in the region $\mathcal{B}_{\tau_{2 \ell}(\bz)}^{\bullet}(\bz) \setminus \mathcal{B}_{\tau_{\ell}(\bz)}^{\bullet}(\bz)$; this is precisely stated in \cite[Lemma 4.19]{GM21}.
\end{proof}

As formalised in the following lemma, as long as the event $E_r(z)$ has a sufficiently high probability, the probability of the regularity event $\cE$ can be made as high as desired.
    \begin{lemma}[{\cite[Lemma 4.11]{GM21}}]
      \label{lem:4}
    There is a constant $\mathbbm{p}$ depending only on $\nu$ and $\{\lambda_i\}_{i=1}^5$ such that under the hypotheses of Proposition \ref{thm:theorem_uniqueness_lqg_metric}, the following holds.  For each bounded open set $U\subseteq \CC,\ell > 0,  p\in (0,1)$, there exists a bounded open set $V\supseteq U$ and an $a\in (0,1)$ such that the event $\cE=\cE(a,\nu,U,V,\ell)$ satisfies $\PP(\cE)\geq p$.
  \end{lemma}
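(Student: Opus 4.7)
The plan is to verify each of the six defining conditions of $\cE(a,\nu,U,V,\ell)$ individually, showing each holds with probability at least $1-(1-p)/6$ upon first fixing a sufficiently large bounded open set $V\supseteq U$ and then taking $a\in(0,1)$ sufficiently small. For condition (1), since by Lemma \ref{lem:holder_regularity} the metric $D_h$ a.s.\ induces the Euclidean topology on $\CC$, the $D_h$-diameter of $\overline U$ is a.s.\ finite while $D_h(U,\partial V)\to\infty$ a.s.\ as $V$ grows to cover $\CC$; we simply fix a bounded open $V\supseteq U$ large enough to ensure (1) holds with the required probability. With $V$ now fixed, conditions (3) and (4) reduce to uniform regularity on the compact set $\overline{B_{16\ell}(V)}$: condition (3) is a direct application of Lemma \ref{lem:holder_regularity} on this compact set (for $a$ small), and condition (4) follows from the a.s.\ continuity and hence boundedness of $z\mapsto h_1(z)$ on $\overline V$. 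Condition (2) combines the a.s.\ continuity of $z\mapsto \tau_{k\ell}(z)$ and $z\mapsto h_{k\ell}(z)$ on $\overline{B_{16\ell}(V)}$ with the a.s.\ strict positivity of the differences $\tau_{k\ell}(z)-\tau_{(k-1)\ell}(z)$: together these make the ratio of the two sides of \eqref{eq:4} a.s.\ bounded away from $0$ on the compact set, so (2) holds for all sufficiently small $a$. Condition (6) follows from Lemma \ref{lem:5} together with a union bound over dyadic $\varepsilon\in(0,a]$, together with a standard refinement to the finer lattice $\frac{\varepsilon^{1+\nu}}{4}\ZZ^2$ using the local nature of the events $H_r(z)$.

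The main substance lies in condition (5), which is where the constant $\mathbbm{p}$ from the hypothesis of Proposition \ref{thm:theorem_uniqueness_lqg_metric} plays its role. I would choose $\mathbbm{p}$ close enough to $1$ so that, for each lattice point $z$ and each sufficiently small dyadic $\varepsilon$, the probability that \emph{no} $r\in [\varepsilon^{1+\nu},\varepsilon]\cap\{2^{-k}\}_{k\in\NN}$ satisfies $E_r(z)$ is super-polynomially small in $\varepsilon$, after which a union bound over lattice points in $B_{16\ell}(V)$ and over dyadic $\varepsilon\in(0,a]$ yields (5). Exploiting the locality hypothesis of Proposition \ref{thm:theorem_uniqueness_lqg_metric}, namely that $E_r(z)$ is determined by $(h-h_{\lambda_5 r}(z))|_{\mathbb{A}_{\lambda_1 r,\lambda_4 r}(z)}$, I would restrict attention to a geometrically decreasing subsequence $r_k=2^{-m_0 k}$ with $m_0$ chosen large enough (in terms of $\lambda_1,\lambda_4$) that the annuli $\mathbb{A}_{\lambda_1 r_k,\lambda_4 r_k}(z)$ are pairwise disjoint. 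There are $\asymp \nu\log_2\varepsilon^{-1}$ such radii $r_k$ in $[\varepsilon^{1+\nu},\varepsilon]$. If the events $E_{r_k}(z)$ were truly independent, each of probability at least $\mathbbm{p}$, the probability of them all failing would be bounded by $(1-\mathbbm{p})^{c\nu\log_2\varepsilon^{-1}}=\varepsilon^{c\nu\log_2(1/(1-\mathbbm{p}))}$, which, for $\mathbbm{p}$ close enough to $1$, beats the lattice count $\varepsilon^{-2(1+\nu)}$ by an arbitrary polynomial margin and is summable over dyadic $\varepsilon$.

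The main obstacle will be making precise the near-independence of the events $E_{r_k}(z)$ across scales. The GFF restricted to disjoint annuli $\mathbb{A}_{\lambda_1 r_k,\lambda_4 r_k}(z)$ is \emph{not} independent because of the harmonic (long-range) part of the field, and the subtraction of the circle average $h_{\lambda_5 r_k}(z)$ in the definition of $E_r(z)$ only annihilates the constant part of that harmonic contribution. To surmount this, I would decompose $h$ on a large disc around $z$ as a sum of a harmonic piece (determined by values on a family of concentric reference circles separating the annuli) and independent zero-boundary GFFs on the annular components; conditionally on the harmonic piece, the events $E_{r_k}(z)$ become genuinely independent. A bound of the form $\PP(E_{r_k}(z)\mid \cF_{\mathrm{harm}})\geq \mathbbm{p}'$ uniformly on a high-probability event where the harmonic piece is controlled then follows from absolute continuity of the GFF-in-annulus against the zero-boundary GFF plus a bounded harmonic perturbation, with the Radon--Nikodym derivative controlled using standard Cameron--Martin estimates. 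This yields the product bound $\PP\bigl(\bigcap_k E_{r_k}(z)^c \mid \cF_{\mathrm{harm}}\bigr)\leq (1-\mathbbm{p}')^{c\nu\log_2\varepsilon^{-1}}$, after which the union bound bookkeeping sketched above closes the argument.
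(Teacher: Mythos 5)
The paper does not actually prove this lemma: it is imported verbatim as \cite[Lemma~4.11]{GM21}, so the only meaningful comparison is with the argument in \cite{GM21}, and your reconstruction follows essentially the same route as that cited proof. The condition-by-condition scheme is right: (1), (3), (4) from H\"older regularity (Lemma~\ref{lem:holder_regularity}) and continuity of circle averages on compacts after fixing $V$ large and taking $a$ small; (2) from continuity and a.s.\ positivity of the increments $\tau_{k\ell}(z)-\tau_{k'\ell}(z)$ (do not forget the clause $B_a(z)\subseteq \cB^\bullet_{\tau_\ell(z)}(z)$, which is handled the same way); (6) from Lemma~\ref{lem:5} plus a union bound; and (5) is indeed the only place where $\mathbbm{p}$ enters, via an ``iterate events across scales'' estimate followed by a union bound over the lattice and over dyadic $\varepsilon\le a$. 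Two cautions on the substance of (5). First, your quantitative claim should be stated correctly: with $\asymp \nu\log_2\varepsilon^{-1}$ separated scales the failure probability is polynomial in $\varepsilon$ with a large exponent, not super-polynomial; this is all that is needed, but to keep $\mathbbm{p}$ depending only on $\nu$ and $\{\lambda_i\}$ (as the statement requires) you must fix the exponent margin once and for all (say exceeding $2(1+\nu)+2$) and obtain the final bound $\PP(\cE)\ge p$ purely by shrinking $a$ through the summability of the tail over dyadic $\varepsilon\le a$; the phrase ``beats the lattice count by an arbitrary polynomial margin'' invites the reading that $\mathbbm{p}$ is tuned to $p$, which is not allowed.

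Second, the near-independence across scales is precisely the content of a known GFF tool (the concentric-annulus analogue of the spatial statement quoted in Section~\ref{sec:tubes} as \cite[Lemma~2.7]{GM21}; see also \cite{GM20}), and your sketch of its proof is thinner than the real argument. Conditioning once on a ``harmonic piece'' does not make the events $E_{r_k}(z)$ genuinely independent with a uniform conditional lower bound: the harmonic contributions to the different annuli are themselves correlated across scales, and a two-sided Radon--Nikodym bound $M^{-1}\le dQ/dP\le M$ applied to $E_{r_k}$ only gives $\mathbbm{p}/M$, which is useless; one must instead apply it to the complement, $\PP(E_{r_k}^c\mid\cdot)\le M(1-\mathbbm{p})$, and run the conditioning sequentially from the outside in, using at each step that subtracting $h_{\lambda_5 r_k}(z)$ removes the dominant (constant) part of the harmonic term while the remaining fluctuation has scale-invariant, uniformly bounded variance, so that the per-scale conditional failure probability is at most a fixed constant times $(1-\mathbbm{p})$ plus a controllable bad-event term. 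This is exactly how the cited lemma is proved, so your approach is correct in outline, but if you intend a self-contained proof rather than a citation, this iteration bookkeeping is the step that needs to be written out carefully.
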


    \section{An upgraded version of Proposition \ref{prop:chi_proposition}}
    The goal now is to prove an upgraded version of Proposition~\ref{prop:chi_proposition} where $\bw$ is allowed to vary freely instead of being restricted to lie on the lattice $\varepsilon^q\ZZ^2$.  For the rest of the section,  we shall work in the same setup as Proposition~\ref{thm:theorem_uniqueness_lqg_metric}.

        \begin{proposition}
      \label{prop:5}
   Fix bounded open sets $U\subseteq V\subseteq \CC$, $a\in (0,1), \ell>0$ and $\nu>0$. %
   Then, for any fixed $q>0$, conditional on the event $\cE$, the following holds. With probability tending to $1$ as $\varepsilon\rightarrow 0$ faster than any positive power of $\varepsilon$ (at a rate depending only on $a,\ell,\nu,q,U$ and $V$), for all $\bz\in (\varepsilon^q\ZZ^2)\cap U$ and all $w\in U$ satisfying $w\notin \cB_{\tau_{16\ell}(\bz)}^\bullet(\bz)$, and for every $D_h$-geodesic $\Gamma_{\bz,w}$ from $\bz$ to $w$, there exists $(z,r)\in \CC\times [\varepsilon^{1+\nu},\varepsilon]$ such that 
\begin{align*}
B_{\lambda_4 r}(z) \subseteq \mathcal{B}_{\tau_{2\ell}(\bz)}^{\bullet}(\bz) \setminus \mathcal{B}_{\tau_{\ell}(\bz)}^{\bullet}(\bz), \quad  \Gamma_{\bz ,  w} \cap B_{\lambda_2 r}(z) \neq \emptyset \quad \text{and} \quad \mathfrak{C}_r^{\bz ,  w,\Gamma_{\bz,w}}(z)\quad  \text{occurs}.
\end{align*}
    \end{proposition}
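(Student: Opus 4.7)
The plan is to deduce Proposition~\ref{prop:5} from Proposition~\ref{prop:chi_proposition} by combining it with the one-point confluence statement of Proposition~\ref{prop:7}. I work conditionally on the event $\cE$. By Proposition~\ref{prop:chi_proposition}, with probability tending to $1$ faster than any positive power of $\varepsilon$, the following event $\cA_\varepsilon$ occurs: for every lattice pair $(\bz, \bw) \in (\varepsilon^q \ZZ^2 \cap U)^2$ with $\bw \notin \cB^\bullet_{\tau_{4\ell}(\bz)}(\bz)$, there exist $(z,r) \in \CC \times [\varepsilon^{1+\nu},\varepsilon]$ such that $B_{\lambda_4 r}(z) \subseteq \cB^\bullet_{\tau_{2\ell}(\bz)}(\bz) \setminus \cB^\bullet_{\tau_\ell(\bz)}(\bz)$, $\Gamma_{\bz,\bw} \cap B_{\lambda_2 r}(z) \neq \emptyset$, and $\mathfrak{C}_r^{\bz,\bw}(z)$ occurs. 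The goal is to upgrade this to general $w \in U$ with $w \notin \cB^\bullet_{\tau_{16\ell}(\bz)}(\bz)$ and to every $D_h$-geodesic $\Gamma_{\bz,w}$.

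Fix $\bz$ and apply Proposition~\ref{prop:7} with the $\cG$-stopping times $\sigma_1 = \tau_{2\ell}(\bz)$ and $\sigma_2 = \tau_{4\ell}(\bz)$. Every $D_h$-geodesic $P$ from $\bz$ to a point outside $\cB^\bullet_{\sigma_2}(\bz)$ satisfies $P(\sigma_1) \in \cX_{\sigma_1,\sigma_2}(\bz)$ and $P(\sigma_2) \in \inte(I_{P(\sigma_1)})$; moreover by Proposition~\ref{prop:6}\eqref{it:uniquegeod}, the geodesic from $\bz$ to each such $x$ is unique. Given $w$ and $\Gamma_{\bz,w}$, let $x^* = \Gamma_{\bz,w}(\sigma_1)$ and introduce the cone
\begin{equation*}
C_{x^*} := \{w' \notin \cB^\bullet_{\sigma_2}(\bz) : \text{some } D_h\text{-geodesic from } \bz \text{ to } w' \text{ exits } \cB^\bullet_{\sigma_2}(\bz) \text{ in } \inte(I_{x^*})\}.
\end{equation*}
Then $w \in \overline{C_{x^*}}$, and by relative openness of $C_{x^*}$ in $\CC \setminus \cB^\bullet_{\sigma_2}(\bz)$ together with openness of $U$, the set $U \cap C_{x^*}$ is nonempty and open; for $\varepsilon$ small enough, by standard density of $\varepsilon^q \ZZ^2$, there exists a lattice point $\bw_* \in \varepsilon^q \ZZ^2 \cap U \cap C_{x^*}$.

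Apply Proposition~\ref{prop:chi_proposition} on $\cA_\varepsilon$ to $(\bz,\bw_*)$ to obtain $(z,r)$ with the three stated properties for $\Gamma_{\bz,\bw_*}$. Since $\bw_* \in C_{x^*}$, the a.s.-unique geodesic $\Gamma_{\bz,\bw_*}$ passes through $x^*$, and since $\Gamma_{\bz,w}$ also passes through $x^*$ and the geodesic from $\bz$ to $x^*$ is unique, the two paths coincide on $[0,\sigma_1]$. As $B_{\lambda_4 r}(z) \subseteq \cB^\bullet_{\sigma_1}(\bz)$ and $D_h$-geodesics from $\bz$ parametrized by arclength cannot re-enter $\cB^\bullet_{\sigma_1}(\bz)$ after time $\sigma_1$ (else they would simultaneously satisfy $D_h(\bz,\cdot) = t > \sigma_1$ and $D_h(\bz,\cdot) = \sigma_1$ upon crossing $\partial \cB_{\sigma_1}(\bz)$), the portions of the two paths inside $B_{\lambda_4 r}(z)$ coincide. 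Hence, by the locality of $\mathfrak{C}_r$ in the stopped path inside $B_{\lambda_4 r}(z)$, $\mathfrak{C}_r^{\bz,w,\Gamma_{\bz,w}}(z) = \mathfrak{C}_r^{\bz,\bw_*}(z)$ occurs and $\Gamma_{\bz,w} \cap B_{\lambda_2 r}(z) = \Gamma_{\bz,\bw_*} \cap B_{\lambda_2 r}(z) \neq \emptyset$, giving the desired conclusion for $(w,\Gamma_{\bz,w})$.

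The main obstacle is ensuring, uniformly in $(w,\Gamma_{\bz,w})$ and with probability tending to $1$ faster than any positive power of $\varepsilon$, the existence of the lattice point $\bw_*$ described above. When $w$ lies close to the boundary between two cones (equivalently, when $\Gamma_{\bz,w}(\sigma_2)$ lies close to an endpoint of its arc $I_{x^*}$), the set $C_{x^*} \cap U$ can be thin near $w$, demanding a finer lattice. Overcoming this requires using the H\"older regularity from $\cE$ (Lemma~\ref{lem:holder_regularity} together with item~\eqref{it:ell2ell}), the a.s. finiteness of $\cX_{\sigma_1,\sigma_2}(\bz)$, and choosing $q$ sufficiently large relative to $a,\ell,\nu$ and $\chi,\chi'$, so that on a high-probability subevent of $\cE \cap \cA_\varepsilon$ the required lattice point can be produced simultaneously for all admissible $(w,\Gamma_{\bz,w})$.
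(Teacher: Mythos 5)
Your overall transfer mechanism (coincidence of the stopped paths inside $B_{\lambda_4 r}(z)$, using that $B_{\lambda_4 r}(z)\subseteq \cB^\bullet_{\tau_{2\ell}(\bz)}(\bz)$, that geodesics from $\bz$ cannot re-enter the filled ball, and that the event $\mathfrak{C}_r$ is determined by $h\lvert_{B_{\lambda_4 r}(z)}$ and the stopped path) is sound and is essentially the same as the last step of the paper's proof. The genuine gap is the step you yourself flag as ``the main obstacle'': producing, uniformly over all $w\in U$ and all geodesics $\Gamma_{\bz,w}$, and with probability $1-o(\varepsilon^N)$ for every $N$, a lattice point $\bw_*\in \varepsilon^q\ZZ^2\cap U\cap C_{x^*}$ lying in the same shadow cell as $w$. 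Nothing in the paper (or in the inputs you cite) gives a quantitative lower bound on the size of the cell $C_{x^*}\cap U$ near $w$: the number of confluence points in $\cX_{\sigma_1,\sigma_2}(\bz)$ and the widths of the arcs $I_x$ are qualitative, almost-sure statements, and when $\Gamma_{\bz,w}(\sigma_2)$ lies close to an endpoint of its arc the cell can be arbitrarily thin, with no superpolynomial control on the probability of this. H\"older regularity and finiteness of $\cX_{\sigma_1,\sigma_2}(\bz)$ do not repair this, and your proposed remedy of ``choosing $q$ sufficiently large relative to $a,\ell,\nu,\chi,\chi'$'' is not available: the proposition is quantified over every fixed $q>0$, so $q$ cannot be tuned to the other parameters. (Even the relative openness of $C_{x^*}$, defined via ``some geodesic'', needs an argument, but that is secondary to the missing quantitative bound.)

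For contrast, the paper avoids the shadow-cell geometry entirely: it first applies Proposition \ref{prop:chi_proposition} with $\bw$ ranging over the much finer lattice $(\lambda_1\varepsilon^{1+\nu}/4)\ZZ^2\cap V$, and then uses Propositions \ref{prop:4} and \ref{prop:3} (the $\stab_{k,r}(z)$ barrier events inherited from \cite{GM21}) to find, with superpolynomially high probability, a fine-lattice point $z_w$ within distance $O(\varepsilon)$ of the geodesic $\Gamma_{\bz,w}$ itself such that \emph{all} $D_h$-geodesics from $\bz$ to points of $B_{\lambda_3 r_w}(z_w)$ hit the same arc of $\cI_{k_w}$; Lemma \ref{lem:6} then forces $\Gamma_{\bz,w}$ and $\Gamma_{\bz,z_w}$ to agree up to time $s_{k_w}\geq \tau_{4\ell}(\bz)$. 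In other words, the coalescence with a lattice geodesic is guaranteed by an event whose probability is quantitatively controlled by the barrier construction, rather than by asking the lattice to intersect a random cell of uncontrolled thickness. If you want to salvage your route, you would need to either prove a quantitative non-degeneracy statement for the confluence cells (not currently known) or replace the cone argument by the stability events, at which point you recover the paper's proof.
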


In the remainder of this section, we shall discuss the proof of the above proposition.   For this, we shall need to use certain ideas from the proof technique used in \cite{GM21}-- it will be convenient to use a setup very similar to \cite[Section 4.1]{GM21}, and we now introduce this. %

\label{sec:reg-event}

    Let $\beta\in (0,1)$ be a parameter to be specified later. Also, for now we fix a choice of the parameters $U,V,\nu,\ell,a$ from the definition of $\cE$; later, we shall choose them appropriately using Lemma \ref{lem:4}.  We define $K:=\lfloor a \varepsilon^{-\beta} \rfloor - 1$,  where $a$ is the constant in the definition of the regularity event $\mathcal{E}$. Fix $\bz\in U$ and, for $k\in \NN$, consider the following set of times which constitute a gradual metric exploration of the LQG surface starting from the point $\bz$.
    \begin{align}
      \label{eq:6}
      s_k&=\tau_{4\ell}(\bz)+ k\varepsilon^\beta e^{\xi h_1(\bz)},\nonumber\\
      t_k&=s_k+\varepsilon^{2\beta}e^{\xi h_1(\bz)}.
    \end{align}
    \begin{figure}
      \centering
      \includegraphics[width=0.7\linewidth]{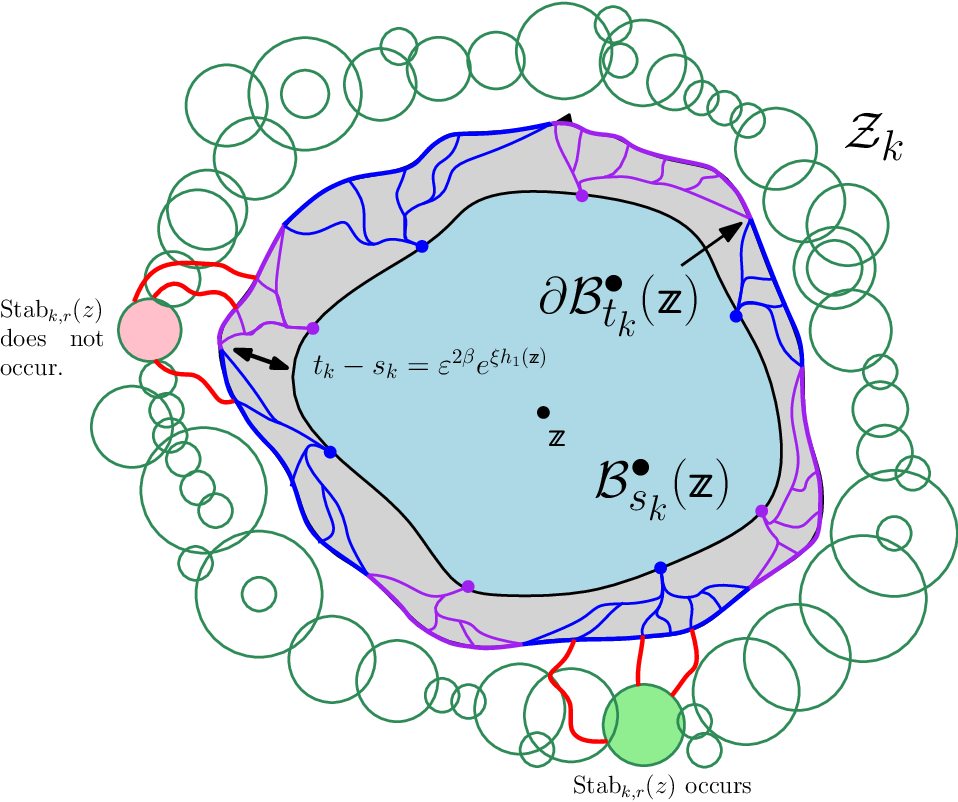}
      \caption{Illustration of the setup of the proof of Proposition~\ref{prop:5}; this figure is taken directly from \cite[Figure 3]{GM21}. The two filled LQG metric balls $\mathcal{B}_{s_k}^{\bullet}(\bz) \subseteq \mathcal{B}_{t_k}^{\bullet}(\bz)$ centered at $\bz$ are shown,  together with the set of points $\text{Conf}_k \subseteq \partial \mathcal{B}_{s_k}^{\bullet}(\bz)$ hit by leftmost $D_h$-geodesics from $\bz$ to $\partial \mathcal{B}_{t_k}^{\bullet}(\bz)$ (blue and purple) and the sets of arcs $\mathcal{I}_k$ of $\partial \mathcal{B}_{s_k}^{\bullet}(\bz)$ consisting of points whose leftmost $D_h$-geodesics hit the same point of $\text{Conf}_k$. Several balls $B_{\lambda_3 r}(z)$ for $(z,r) \in \mathcal{Z}_k$ are shown in green. One such ball for which $\text{Stab}_{k,r}(z)$ occurs is also depicted, that is, all the geodesics (red) from $\bz$ to points of $\partial B_{\lambda_3 r}(z)$ hit the same arc of $\mathcal{I}_k$. Finally, on the left, there is a ball (pink) for which $\text{Stab}_{k,r}(z)$ does not occur.}
    \end{figure}
Compared with the setup from \cite[Section 4]{GM21}, we have replaced $\ell$ by $4\ell$ as this will be convenient later. Note that $t_k\in [s_k,s_{k+1}]$.  Also, Proposition~\ref{prop:6} implies that it is a.s.\  the case that for each $k \in \mathbb{N}$ there are only finitely many points of $\partial \mathcal{B}_{s_k}^{\bullet}(\bz)$ which are hit by leftmost $D_h$-geodesics from $\bz$ to $\partial \mathcal{B}_{t_k}^{\bullet}(\bz)$.  Let $\text{Conf}_k \subseteq \partial \mathcal{B}_{s_k}^{\bullet}(\bz)$ be the set of such points and let $\mathcal{I}_k$ be the family of subsets of $\partial \mathcal{B}_{t_k}^{\bullet}(\bz)$ of the form
\begin{align*}
\{y \in \partial \mathcal{B}_{t_k}^{\bullet}(\bz)\,\,: \,\text{the leftmost} \,\,\,D_h\text{-geodesic} \,\,\,\text{from} \,\,\,  \bz \,\,\,\text{to} \,\,\,  y \,\,\,\,  \text{passes through} \,\,\,x\} \quad \text{for} \quad x \in \text{Conf}_k.
\end{align*}

Recall that Proposition~\ref{prop:6} implies that the sets in $\mathcal{I}_k$ are connected arcs of the Jordan curve $\partial \mathcal{B}_{t_k}^{\bullet}(\bz)$. We now wish to consider a family of small balls around the boundary $\cB_{t_k}^\bullet(\bz)$. Using $\dist(\cdot,\cdot)$ to denote the Euclidean metric, let $\cZ_k$ for $k \in \NN$ be the set of pairs $(z,r)$ such that %
    \begin{equation}
      \label{eq:7}
      z\in (\lambda_1\varepsilon^{1+\nu}/4)\ZZ^2\setminus \cB_{t_k}^\bullet(\bz), r\in [\varepsilon^{1+\nu},\varepsilon], \dist(z,\partial \cB_{t_k}^\bullet(\bz))\in [\lambda_4\varepsilon,2\lambda_4\varepsilon].
    \end{equation}
From now on,  we will work with events $E_r(z),\mathfrak{C}_r^{\bz,\bw,\eta}(z)$ defined so as to satisfy the hypothesis of Proposition \ref{thm:theorem_uniqueness_lqg_metric}.%

    The following result is an easy consequence of the definitions and the choices of the constants. Recall from the notational comments in the introduction that for $x<y\in \RR$, we shall use $[\![x,y]\!]$ to denote $[x,y]\cap \ZZ$.
        \begin{proposition}
      \label{prop:4}
      On the regularity event $\cE$, there exists a deterministic $\varepsilon_0>0$ such that for all $\varepsilon\leq \varepsilon_0$ and any $k\in [\![1,K]\!]$, simultaneously for all points $w\notin \mathcal{B}^\bullet_{\tau_{12\ell}(\bz)}(\bz)$ and every $D_h$-geodesic $P$ from $\bz$ to $w$, there must exist a pair $(z,r)\in \cZ_k$ such that $P\cap B_{\lambda_2 r}(z) \neq \emptyset$.
    \end{proposition}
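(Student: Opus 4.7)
The plan is to deduce Proposition~\ref{prop:4} directly from the ingredients of the regularity event $\cE$: the $\tau$-gap estimates in \eqref{eq:4}, the H\"older upper bound in \eqref{eq:5}, and the pointwise field bound \eqref{it:field_bounded}. I would first control the exploration times: since $K\varepsilon^\beta<a$ by definition of $K$, one has $t_k\leq t_K<\tau_{4\ell}(\bz)+(a+\varepsilon^{2\beta})e^{\xi h_1(\bz)}$ for every $k\in [\![1,K]\!]$. The two $\tau$-gap estimates in \eqref{eq:4} applied at $\bz$ (note $\bz\in U\subseteq B_{16\ell}(V)$) sum to give $\tau_{12\ell}(\bz)-\tau_{4\ell}(\bz)\geq 2ae^{\xi h_1(\bz)}$, so for $\varepsilon$ below a deterministic threshold $\varepsilon_1=\varepsilon_1(a)$ we obtain $\tau_{12\ell}(\bz)-t_k\geq (a/2)e^{\xi h_1(\bz)}\geq (a/2)e^{-\xi/a}$ for every $k\in [\![1,K]\!]$; the last inequality uses the pointwise field bound $|h_1(\bz)|\leq a^{-1}$ from \eqref{it:field_bounded}.

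Next, I would show that the Euclidean $2\lambda_4\varepsilon$-neighbourhood of $\partial\cB^\bullet_{t_k}(\bz)$ lying outside $\cB^\bullet_{t_k}(\bz)$ is contained in $\cB^\bullet_{\tau_{12\ell}(\bz)}(\bz)$. Any such $q$ admits a closest point $p\in \partial\cB^\bullet_{t_k}(\bz)$; for $\varepsilon$ small enough that $|p-q|\leq 2\lambda_4\varepsilon\leq a$, the H\"older bound \eqref{eq:5} gives $D_h(p,q)\leq D_h(p,q;B_{2|p-q|}(p))\leq (2\lambda_4\varepsilon)^\chi$. Combined with $D_h(\bz,p)\leq t_k$ (which follows from the standard fact that $\partial\cB^\bullet_{t_k}(\bz)\subseteq \overline{\cB_{t_k}(\bz)}$), we get $D_h(\bz,q)\leq t_k+(2\lambda_4\varepsilon)^\chi$. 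Choosing a deterministic $\varepsilon_0=\varepsilon_0(a,\chi,\xi,\lambda_4)\in (0,\varepsilon_1]$ so that $(2\lambda_4\varepsilon_0)^\chi<(a/2)e^{-\xi/a}$, the LQG gap from the previous step forces $D_h(\bz,q)<\tau_{12\ell}(\bz)$, placing $q$ in $\cB^\bullet_{\tau_{12\ell}(\bz)}(\bz)$.

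Consequently, any $w\notin \cB^\bullet_{\tau_{12\ell}(\bz)}(\bz)$ satisfies both $w\notin \cB^\bullet_{t_k}(\bz)$ and $\dist(w,\partial\cB^\bullet_{t_k}(\bz))>2\lambda_4\varepsilon$. Since $\bz\in \inte(\cB^\bullet_{t_k}(\bz))$, the signed Euclidean distance from $\partial\cB^\bullet_{t_k}(\bz)$ along the continuous geodesic $P$ (taken negative inside $\cB^\bullet_{t_k}(\bz)$ and positive outside) is continuous and moves from a negative value at time $0$ to a value exceeding $2\lambda_4\varepsilon$ at time $D_h(\bz,w)$. The intermediate value theorem yields a time $t^*$ at which $q:=P(t^*)$ lies outside $\cB^\bullet_{t_k}(\bz)$ at Euclidean distance exactly $3\lambda_4\varepsilon/2$ from the boundary. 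Letting $z$ be the nearest point of $(\lambda_1\varepsilon^{1+\nu}/4)\ZZ^2$ to $q$, we have $|q-z|\leq \lambda_1\varepsilon^{1+\nu}\sqrt{2}/8<\lambda_2\varepsilon^{1+\nu}$ using $\lambda_1<\lambda_2$; since this perturbation is much smaller than $\lambda_4\varepsilon/2$ for $\varepsilon$ small, the triangle inequality forces $\dist(z,\partial\cB^\bullet_{t_k}(\bz))\in [\lambda_4\varepsilon,2\lambda_4\varepsilon]$ and $z\notin \cB^\bullet_{t_k}(\bz)$. Setting $r:=\varepsilon^{1+\nu}$, we have $(z,r)\in \cZ_k$ and $q\in P\cap B_{\lambda_2 r}(z)$, completing the proof.

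The argument is essentially deterministic on $\cE$, and the only step requiring real care is the scale-matching in the second paragraph: one must choose $\varepsilon_0$ so that the H\"older-comparable distance $(2\lambda_4\varepsilon)^\chi$ at Euclidean scale $\varepsilon$ is dominated by the LQG gap $ae^{\xi h_1(\bz)}$, uniformly in $\bz\in U$. This is exactly what the combination of the pointwise bound \eqref{it:field_bounded} on $h_1(\bz)$ with the $\tau$-gap estimate \eqref{eq:4} provides, both of which are automatic on $\cE$.
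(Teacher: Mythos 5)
Your proof is correct and follows essentially the same route as the paper's: both rest on the same two inputs from $\cE$ (the $\tau$-gap estimates in \eqref{eq:4} together with the bound $|h_1(\bz)|\leq a^{-1}$ to separate $t_k$ from $\tau_{12\ell}(\bz)$ by a deterministic amount, and the H\"older bound \eqref{eq:5} to show that a Euclidean $O(\lambda_4\varepsilon)$-neighbourhood of $\partial\cB^\bullet_{t_k}(\bz)$ stays inside $\cB^\bullet_{\tau_{12\ell}(\bz)}(\bz)$). The only cosmetic difference is the last step, where the paper argues that the union $\bigcup_{(z,r)\in\cZ_k}B_{\lambda_2 r}(z)$ disconnects $\partial\cB^\bullet_{t_k}(\bz)$ from $\infty$, whereas you locate a crossing point of $P$ at signed distance $3\lambda_4\varepsilon/2$ via the intermediate value theorem and snap it to the lattice $(\lambda_1\varepsilon^{1+\nu}/4)\ZZ^2$ — two equivalent executions of the same covering idea.
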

    \begin{proof}
      First, by condition \eqref{it:ell2ell} from the definition of $\cE$, it follows that for all $k\in [\![1,K]\!]$, we have $t_k< s_{k+1}\leq \tau_{8\ell}(\bz)$; indeed, this is because $\tau_{8\ell}(z)-\tau_{4\ell}(z)\geq ae^{\xi h_1(\bz)}$ and as $k+1\leq K+1=\lfloor a\varepsilon^{-\beta}\rfloor$. Now, note that for adjacent points in the lattice $\lambda_1\varepsilon^{1+\nu}\ZZ^2/4$, the balls of radius $\lambda_1 \varepsilon^{1+\nu}$ ``overlap'' with each other. %
      Thus, since for $(z,r) \in \mathcal{Z}_k$, we have $\lambda_2 \varepsilon \geq \lambda_2 r \geq \lambda_1 \varepsilon^{1+\nu}$, the union $\bigcup_{(z,r) \in \mathcal{Z}_k} B_{\lambda_2 r}(z)$ disconnects $\partial \mathcal{B}_{t_k}^{\bullet}(\bz)$ from $\infty$. %
  
Moreover, \eqref{eq:7} implies that 
      \begin{align*}
          B_{\lambda_2 r}(z) \subseteq B_{3\lambda_4 r}(\mathcal{B}_{t_k}^{\bullet}(\bz)), \quad \text{for all} \quad (z,r) \in \mathcal{Z}_k,
      \end{align*}
       and this when combined with \eqref{eq:5} and H\"older continuity implies that if we take $\varepsilon_0$ to be small enough, then
      \begin{align*}
          B_{\lambda_2 r}(z) \subseteq \mathcal{B}_{t_k + (3\lambda_4 \varepsilon)^{\chi}}^{\bullet}(\bz) \quad \text{for all} \quad (z,r) \in \mathcal{Z}_k
      \end{align*}
for all $\varepsilon\leq \varepsilon_0$, where we note that $\cZ_k$ implicitly depends on $\varepsilon$. By combining items \eqref{it:field_bounded}, \eqref{eq:4} in the definition of $\cE$, we have
      \begin{align*}
          \tau_{12\ell}(\bz) - \tau_{8\ell}(\bz) \geq a e^{\xi h_1(\bz)} \geq a e^{-\xi a^{-1}}.
      \end{align*}
      As discussed in the first paragraph of the proof, we also have $t_k \leq \tau_{8\ell}(\bz)$. As a result, by possibly taking $\varepsilon_0 \in (0,1)$ to be smaller, we can ensure that for all $\varepsilon\leq \varepsilon_0$,
      \begin{align*}
          B_{\lambda_2 r}(z) \subseteq \mathcal{B}_{\tau_{12\ell}(\bz)}^{\bullet}(\bz) \quad \text{for all} \quad (z,r) \in \mathcal{Z}_k.
      \end{align*}
      Therefore, the union 
      \begin{align*}
          \bigcup_{(z,r) \in \mathcal{Z}_k} B_{\lambda_2 r}(z) \subseteq \mathcal{B}_{\tau_{12\ell}(\bz)}^{\bullet}(\bz)
      \end{align*}
      disconnects $\partial \mathcal{B}_{t_k}^{\bullet}(\bz)$ from $\infty$, and thus any $D_h$-geodesic from $\bz$ to some point $w \notin \mathcal{B}^{\bullet}_{\tau_{12\ell}(\bz)}(\bz)$ must intersect $B_{\lambda_2 r}(z)$ for some $(z,r) \in \mathcal{Z}_k$. This completes the proof of the proposition.
    \end{proof}
    
    Now, for $(z,r)\in \cZ_k$, we say that $\stab_{k,r}(z)$ occurs if all $D_h(\cdot,\cdot; \CC\setminus \overline{B_{\lambda_3r}(z)})$-geodesics $P$ from $\bz$ to points of $\partial B_{\lambda_3 r}(z)$ hit $\partial \cB_{t_k}^\bullet(\bz)$ in the same arc of $\cI_k$. Note that since $\lambda_4 \geq  \lambda_3$, for every $(z,r)\in \cZ_k$, we must (see \eqref{eq:7}) have $B_{\lambda_3 r}(z)\subseteq \CC\setminus \cB_{t_k}^\bullet(\bz)$. Thus, for $(z,r)\in \cZ_k$, any $D_h$-geodesic $P$ from $\bz$ to a point in $B_{\lambda_3r}(z)$ must pass through $\partial B_{\lambda_3 r}(z)$ and thus there must be a smallest time $t_P$ satisfying $P(t_P)\in \partial B_{\lambda_3 r}(z)$ and $P\lvert_{[0,t_P)}\subseteq \CC\setminus\overline{B_{\lambda_3r}(z)}$. Note that since $P$ is a $D_h$-geodesic, $P\lvert_{[0,t_P)}$ is necessarily a $D_h(\cdot,\cdot; \CC\setminus \overline{B_{\lambda_3r}(z)})$-geodesic. As a result of the above discussion, on the event $\stab_{k,r}(z)$, all geodesics $P$ from $\bz$ to points in $B_{\lambda_3 r}(z)$ must pass via the same arc of $\cI_k$.

By combining the above with Proposition \ref{prop:7}, we have the following.
    \begin{lemma}
      \label{lem:6}
      By translation invariance, we know that $h(\bz+\cdot)-h_1(\bz)\stackrel{d}{=}h$. Now, consider the filtration
      For $(z,r) \in \cZ_k$, on the event $\stab_{k,r}(z)$, for any geodesics $P_1,P_2$ from $\bz$ to some points of $B_{\lambda_3 r}(z)$, we must necessarily have $P_1\lvert_{[0,s_k]}=P_2\lvert_{[0,s_k]}$.  
    \end{lemma}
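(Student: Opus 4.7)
The plan is to combine Proposition~\ref{prop:7}, applied with $\sigma_1 = s_k$ and $\sigma_2 = t_k$, with the definition of $\stab_{k,r}(z)$ and the uniqueness of geodesics stated in Proposition~\ref{prop:6}(\ref{it:uniquegeod}). The first task is to justify that $s_k, t_k$ are stopping times for the natural filtration: since $s_k = \tau_{4\ell}(\bz) + k\varepsilon^\beta e^{\xi h_1(\bz)}$ involves the circle average $h_1(\bz)$, which is not a priori $\cG_{\tau_{4\ell}(\bz)}$-measurable, we instead work with the translated field $\tilde h := h(\bz + \cdot) - h_1(\bz)$. By the translation invariance recalled in Section~\ref{sec:gaussian-free-field}, $\tilde h \stackrel{d}{=} h$; moreover, by Weyl scaling and the coordinate change formula in Proposition~\ref{prop:8}, one has $D_{\tilde h}(u,v) = e^{-\xi h_1(\bz)} D_h(\bz + u, \bz + v)$, so the times $s_k, t_k$ correspond under this transformation to $\tilde \tau_{4\ell}(0) + k\varepsilon^\beta$ and $\tilde \tau_{4\ell}(0) + k\varepsilon^\beta + \varepsilon^{2\beta}$ respectively, which are bona fide stopping times for the filtration $\tilde \cG_s = \sigma(\cB_s^\bullet(0;D_{\tilde h}), \tilde h|_{\cB_s^\bullet(0;D_{\tilde h})})$. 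Consequently, Proposition~\ref{prop:7} becomes applicable.

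Next, fix two $D_h$-geodesics $P_1, P_2$ from $\bz$ to points $w_1, w_2 \in B_{\lambda_3 r}(z)$. Recalling from the paragraph preceding the lemma that $B_{\lambda_3 r}(z) \subseteq \CC \setminus \cB_{t_k}^\bullet(\bz)$, Proposition~\ref{prop:7} yields $P_i(s_k) =: x_i \in \cX_{s_k,t_k}(\bz) = \text{Conf}_k$ and that $P_i(t_k)$ lies in the interior of the arc $I_{x_i} \in \cI_k$ for $i = 1, 2$. Let $t_{P_i}$ be the first time $P_i$ hits $\partial B_{\lambda_3 r}(z)$; then $P_i|_{[0,t_{P_i}]}$ is a $D_h(\cdot,\cdot; \CC \setminus \overline{B_{\lambda_3 r}(z)})$-geodesic from $\bz$ to $P_i(t_{P_i})$, since any shorter path avoiding $\overline{B_{\lambda_3 r}(z)}$ could be concatenated with $P_i|_{[t_{P_i}, D_h(\bz,w_i)]}$ to produce a path from $\bz$ to $w_i$ strictly shorter than $P_i$. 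On the event $\stab_{k,r}(z)$, the paths $P_1|_{[0,t_{P_1}]}$ and $P_2|_{[0,t_{P_2}]}$ must hit $\partial \cB_{t_k}^\bullet(\bz)$ in the same arc of $\cI_k$; since each of them passes through $P_i(t_k)$, which lies in the interior of $I_{x_i}$, we conclude $I_{x_1} = I_{x_2}$, and the pairwise disjointness of arcs in $\cI_k$ (Proposition~\ref{prop:6}) then forces $x_1 = x_2 =: x$.

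To conclude, Proposition~\ref{prop:6}(\ref{it:uniquegeod}) asserts that there is a unique $D_h$-geodesic from $\bz$ to $x$. Both paths $P_i|_{[0, s_k]}$ are $D_h$-geodesics from $\bz$ to $x$; indeed, their common length is $s_k = D_h(\bz, x)$, which follows from the unit-speed parametrization of $P_i$ together with $P_i(s_k) = x$. Hence $P_1|_{[0, s_k]} = P_2|_{[0, s_k]}$, as required. The only mildly subtle step that I foresee is the stopping-time measurability reduction carried out in the first paragraph; once this is in place, the remainder is a direct application of the one-point confluence structure encoded in Propositions~\ref{prop:6} and~\ref{prop:7}.
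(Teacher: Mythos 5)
Your proof is correct and follows essentially the same route as the paper's, which is terse and delegates the details to the reader: translate the field by $\bz$, recognise that the rescaled times are stopping times for the filtration $\cG_s=\sigma(\cB^\bullet_s(0;D_{h_\bz}),h_\bz|_{\cB^\bullet_s(0;D_{h_\bz})})$, and then combine Proposition~\ref{prop:7} (to identify $P_i(s_k)$ with a unique $x_i\in\cX_{s_k,t_k}(\bz)$ and place $P_i(t_k)$ in the interior of $I_{x_i}$) with the disjointness of the arcs, the definition of $\stab_{k,r}(z)$, and the uniqueness statement in Proposition~\ref{prop:6}\eqref{it:uniquegeod}. You carefully spell out a few points the paper leaves implicit—that $P_i|_{[0,t_{P_i}]}$ is a $D_h(\cdot,\cdot;\CC\setminus\overline{B_{\lambda_3 r}(z)})$-geodesic, that the geodesic's crossing of $\partial\cB^\bullet_{t_k}(\bz)$ occurs precisely at time $t_k$, and that $s_k=D_h(\bz,x)$ by unit-speed parametrisation—all of which are correct and strengthen the exposition.
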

    
\begin{proof}
  Since Proposition \ref{prop:7} is stated with $0$ instead of $\bz$, we shift our origin by considering the field $h_{\bz}(\cdot)=h(\bz+\cdot)-h_1(\bz)$ which we know satisfies $h_{\bz}\stackrel{d}{=} h$. Note that $P\colon [0,T]\rightarrow \CC$ is a $D_h$-geodesic if and only if $P_\bz\colon[0,e^{-\xi h_1(\bz)} T]\rightarrow \CC$ defined by $P_\bz(x)= P(e^{\xi h_1(\bz)} x)-\bz$ is a $D_{h_{\bz}}$-geodesic. Now, it can be checked that for all $k\in \NN$, $e^{-\xi h_1(\bz)} s_k$ and $ e^{-\xi h_1(\bz)}t_k$ are stopping times with respect to the filtration $\cG$ defined by
  \begin{equation}
    \label{eq:106}
    \cG_s=\sigma(\mathcal{B}_s^{\bullet}(0;D_{h_\bz}) , h_\bz|_{\mathcal{B}_s^{\bullet}(0;D_{h_\bz})}).
  \end{equation}
 The statement of the lemma now follows by combining item \eqref{it:uniquegeod} in Proposition~\ref{prop:6} with Proposition~\ref{prop:7}.
\end{proof}

    The following result, implicitly shown in Section 4.4 in \cite{GM21}, is the reason behind the utility of the $\stab_{k,r}(z)$ events discussed above and also fixes the choice of $\beta\in (0,1)$ used in the exploration \eqref{eq:6}.
    \begin{proposition}
      \label{prop:3}
      There exist constants $\beta,\theta\in (0,1)$ depending only on the LQG parameter $\gamma\in (0,2)$ such that for any choice of the parameters $U,V,a,\nu,\ell$, conditional on the regularity event $\cE$, the following holds. With probability tending to $1$ as $\varepsilon \to 0$ faster than any positive power of $\varepsilon$, there are at least $(1-\varepsilon^\theta)K$ many values of $k\in [\![1,K]\!]$ for which the following holds-- simultaneously for all points %
      $w\notin \cB_{\tau_{16\ell}(\bz)}^\bullet(\bz)$ and every $D_h$-geodesic $\Gamma_{\bz,w}$ from $\bz$ to $w$, $\stab_{k,r}(z)$ occurs for every $(z,r)\in \cZ_k$ for which $\Gamma_{\bz,w}\cap B_{\lambda_2 r}(z)\neq \emptyset$.
    \end{proposition}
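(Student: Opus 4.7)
The strategy is to adapt Section~4.4 of \cite{GM21}, where an analogous stability statement is established for the geodesic to a single fixed target endpoint $\bw$ on the lattice $(\varepsilon^q \ZZ^2) \cap U$. The constants $\beta, \theta \in (0,1)$ will be inherited from that argument. The extra work needed here is to upgrade the conclusion to hold simultaneously for \emph{all} $w \notin \cB^\bullet_{\tau_{16\ell}(\bz)}(\bz)$ and \emph{every} $D_h$-geodesic $\Gamma_{\bz, w}$, rather than only for a specific lattice endpoint and the leftmost geodesic.

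First I would observe that the event $\stab_{k,r}(z)$ and the set $\cZ_k$ depend only on the field $h$ and on the filled metric ball $\cB^\bullet_{t_k}(\bz)$; in particular they do not involve $w$. Hence the target $w$ enters the conclusion only through the condition $\Gamma_{\bz, w} \cap B_{\lambda_2 r}(z) \neq \emptyset$. I would then use confluence to reduce this $w$-uniformity to a finite union bound: after the shift and rescaling that maps $\bz$ to the origin and normalises the field (so that $e^{-\xi h_1(\bz)} s_k$ and $e^{-\xi h_1(\bz)} t_k$ become stopping times, as in the proof of Lemma \ref{lem:6}), Proposition \ref{prop:7} guarantees that every $D_h$-geodesic from $\bz$ to a point outside $\cB^\bullet_{t_k}(\bz)$ passes through some $x \in \text{Conf}_k$, and Proposition \ref{prop:6} item \eqref{it:uniquegeod} says its initial segment up to time $s_k$ coincides with the unique $D_h$-geodesic from $\bz$ to $x$. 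Consequently, the set of $(z,r) \in \cZ_k$ for which some geodesic $\Gamma_{\bz, w}$ meets $B_{\lambda_2 r}(z)$ is determined by the finite set $\text{Conf}_k$ together with the arcs $I_x$ rather than by the continuum of possible $w$.

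Then I would apply the fixed-target estimate from \cite{GM21} to a finite family of representative lattice endpoints, one for each confluence branch. On the regularity event $\cE$, the cardinality $|\text{Conf}_k|$ is polynomially controlled in $\varepsilon^{-1}$ (combining the lower bound on $\tau_{12\ell}-\tau_{8\ell}$ from \eqref{eq:4} with the bi-H\"older estimates \eqref{eq:5}, each confluence arc must subtend a Euclidean length bounded below by a positive power of $\varepsilon$), and in any case a union bound over $(\varepsilon^q \ZZ^2) \cap U$ for sufficiently large $q$ captures every arc. Since the \cite{GM21} bound on the number of bad $k$ for each fixed target holds with probability $1-o(\varepsilon^N)$ for every $N>0$, a union bound over a polynomial (in $\varepsilon^{-1}$) number of representative endpoints and over $k \in [\![1,K]\!]$ yields the statement with $\theta$ slightly smaller than the original GM21 constant. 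Finally, to pass from the representative lattice geodesic(s) to \emph{every} geodesic $\Gamma_{\bz, w}$, one uses that the initial segment of $\Gamma_{\bz,w}$ up to exiting $B_{\lambda_3 r}(z)$ is a $D_h(\cdot,\cdot;\CC\setminus \overline{B_{\lambda_3 r}(z)})$-geodesic starting from $\bz$, so its ``branch'' is recorded by the confluence point $x \in \text{Conf}_k$ through which it passes.

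The main obstacle is precisely this passage from a single fixed geodesic to uniformity over all $w$ and all geodesics simultaneously, since the argument in \cite{GM21} is carried out for a specific geodesic. The essential tools to overcome this are Propositions \ref{prop:6} and \ref{prop:7}, which reduce the uncountable family of geodesics leaving $\cB^\bullet_{t_k}(\bz)$ to the finite (random) family indexed by $\text{Conf}_k$. A technical subtlety is that one must verify that the failure-probability bound from GM21, which decays faster than any power of $\varepsilon$, is fast enough to absorb the polynomially-growing union bound over representative endpoints; this is not an issue since the decay in GM21 is super-polynomial.
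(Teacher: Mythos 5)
Your proposal diverges from the paper's argument, and the route you choose has a genuine gap at its core step. The paper proves this proposition by inspecting the proof of \cite[Proposition 4.12]{GM21} and observing that the barrier events which determine whether an index $k$ is good are constructed without any reference to the target: the only property of $w$ ever used is $w\notin \cB_{\tau_{16\ell}(\bz)}^\bullet(\bz)$, so the very same set of at least $(1-\varepsilon^\theta)K$ good indices works simultaneously for all $w$ and all geodesics $\Gamma_{\bz,w}$, with no union bound over targets. Your reduction instead tries to replace the continuum of targets by finitely many representatives via confluence, and this does not work. Proposition \ref{prop:7} pins down a geodesic only up to radius $s_k$ (it passes through a point of $\text{Conf}_k$ and exits $\partial\cB^\bullet_{t_k}(\bz)$ somewhere in the corresponding arc), whereas the balls $B_{\lambda_2 r}(z)$ for $(z,r)\in\cZ_k$ lie \emph{outside} $\cB^\bullet_{t_k}(\bz)$, at Euclidean distance of order $\lambda_4\varepsilon$ from its boundary. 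Which of these balls a geodesic hits is decided by its portion after time $t_k$, which is not constrained by the confluence structure: two geodesics through the same confluence point, targeted at different $w$, can exit through far-apart points of the (possibly long) arc $I_x$ and hit disjoint collections of balls in $\cZ_k$. So the set of $(z,r)\in\cZ_k$ that must be controlled is not ``determined by $\text{Conf}_k$'', and one representative lattice target per branch does not cover it.

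Even setting that aside, the union bound you invoke does not preserve the count of good indices. The fixed-target estimate gives, for each lattice target, an event of probability $1-o(\varepsilon^p)$ (all $p$) on which at most $\varepsilon^\theta K$ indices are bad \emph{for that target}; intersecting over $N\approx \varepsilon^{-2q}$ lattice targets controls the probability, but the union of the bad-index sets can have size up to $N\varepsilon^\theta K$, which swamps $K$ unless $q<\theta/2$ — while you need $q$ large for the lattice to approximate all arcs/targets. ``Taking $\theta$ slightly smaller'' does not repair this; what is really needed is that the bad indices are the same for all targets, i.e.\ precisely the target-independence of the barrier events that the paper's proof reads off from \cite{GM21}. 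Your instinct that the badness of $k$ should not depend on $w$ is the right one, but the proposal never establishes it; the covering-plus-union-bound substitute does not close.
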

    
\begin{proof}
  This is implicitly shown in the proof of \cite[Proposition 4.12]{GM21}. Indeed, while the proof from \cite{GM21} would yield the statement for a \emph{fixed} $w\notin B_{16\ell}(\bz)$, the only relevant property of $w$ used to construct the ``barrier-events'' therein is that $w\notin \cB_{\tau_{16\ell}(\bz)}^\bullet(\bz)$, and the desired result holds for all such $w$.
\end{proof}

We are now ready to complete the proof of Proposition \ref{prop:5}. To do so, we shall undertake a two step procedure where first, we use Proposition \ref{prop:3} to argue that simultaneously for all $w\in U$ with $w\notin \cB_{\tau_{16\ell}(\bz)}^\bullet(\bz)$, the geodesic $\Gamma_{\bz,w}$ must come close to a lattice point $z_w$ for which $\stab_{k,r}(z_w)$ occurs for appropriate $k,r$. Having done so, we will invoke Proposition \ref{prop:chi_proposition} with $\bw$ replaced by the above lattice point $z_w$.

    \begin{proof}[Proof of Proposition \ref{prop:5}]

      First by applying Proposition~\ref{prop:chi_proposition} and a union bound,  we obtain that there is an event $E_{\varepsilon}^1$ such that $\PP((E_{\varepsilon}^1)^c \cap \cE)$ converges to $0$ as $\varepsilon \to 0$ faster than any positive power of $\varepsilon$ and such that the following holds on $E_{\varepsilon}^1 \cap \mathcal{E}$.  For all $\bw \in (\frac{\lambda_1 \varepsilon^{1+\nu}}{4}) \mathbb{Z}^2 \cap V$ additionally satisfying $\bw\notin \cB_{\tau_{4\ell}(\bz)}^\bullet(\bz)$ and for each geodesic $\Gamma_{\bz,\bw}$, there exists $(z,r) \in \mathbb{C} \times [\varepsilon^{1+\nu} ,  \varepsilon]$ such that 
\begin{align*}
B_{\lambda_4 r}(z) \subseteq \mathcal{B}_{\tau_{2 \ell}(\bz)}^{\bullet}(\bz) \setminus \mathcal{B}_{\tau_{\ell}(\bz)}^{\bullet}(\bz), \quad  \Gamma_{\bz ,  \bw} \cap B_{\lambda_2 r}(z) \neq \emptyset \quad \text{and} \quad \mathfrak{C}_r^{\bz ,  \bw , \Gamma_{\bz,\bw}}(z)\quad  \text{occurs}.
\end{align*}

   By combining Propositions~\ref{prop:4} and ~\ref{prop:3},  we obtain that there is an event $E_{\varepsilon}^2$ with the property that $\PP((E_{\varepsilon}^2)^c \cap \mathcal{E})$ converges to $0$ as $\varepsilon \to 0$ faster than any positive power of $\varepsilon$ such that on $E_{\varepsilon}^2 \cap \mathcal{E}$, the following holds. For any point $w \in U$ satisfying $w\notin \cB_{\tau_{16\ell}(\bz)}^\bullet(\bz)$, and any geodesic $\Gamma_{\bz,w}$, there must exist at least one $k_w \in [\![K(1-\varepsilon^{\theta}) ,  K]\!]$ for which there exists $(z_w,r_w) \in \mathcal{Z}_{k_w}$ additionally satisfying $\Gamma_{\bz ,  w} \cap B_{\lambda_2r_w}(z_w) \neq \emptyset$,  so that $\stab_{k_w,r_w}(z_w)$ also occurs. Note that (see \eqref{eq:6}) since $s_{k_w} \geq \tau_{4\ell}(\bz)$, this $z_w$ must satisfy $z_w\notin \cB_{\tau_{4\ell}(\bz)}^\bullet(\bz)$.  %

By combining the above two paragraphs, on the event 
\begin{align*}
\mathcal{E} \cap E_{\varepsilon}^1 \cap E_{\varepsilon}^2,
\end{align*}
we can first find for any $w \in U\setminus \cB_{\tau_{16\ell}(\bz)}^\bullet(\bz)$ ,  a corresponding point $z_w \notin B_{\tau_{4\ell}(\bz)}(\bz)$ as described above. Further, since $z_w\in (\lambda_1\varepsilon^{1+\nu}/4)\ZZ^2 \cap V$, by the first paragraph of the proof,  there exists $(z,r) \in \mathbb{C} \times [\varepsilon^{1+\nu} ,  \varepsilon]$ such that
\begin{align*}     
B_{\lambda_4 r}(z) \subseteq \mathcal{B}_{\tau_{2 \ell}(\bz)}^{\bullet}(\bz) \setminus \mathcal{B}_{\tau_{\ell}(\bz)}^{\bullet}(\bz)     
\end{align*}
and for which both
\begin{align*}
\{\Gamma_{\bz ,  z_w} \cap B_{\lambda_2 r}(z) \neq \emptyset\} \quad \text{and} \quad \mathfrak{C}_r^{\bz ,  z_w , \Gamma_{\bz,z_w}}(z)
\end{align*}     
occur.   Finally,  we note by Lemma~\ref{lem:6} that
\begin{align*}
\Gamma_{\bz,w}|_{[0,s_{k_w}]} = \Gamma_{\bz,z_w}|_{[0,s_{k_w}]}.
\end{align*}
Since $s_{k_w} \geq \tau_{4\ell}(\bz)$ and both $\Gamma_{\bz,w} , \Gamma_{\bz , z_w}$ are $D_h$-geodesics and $B_{\lambda_4 r}(z) \subseteq \mathcal{B}^{\bullet}_{\tau_{2\ell}(\bz)}(\bz)$, we obtain that the part of $\Gamma_{\bz,w}$ (resp.\ $\Gamma_{\bz,z_w}$) stopped at the last time that it exits $B_{\lambda_4 r}(z)$ is contained in $\Gamma_{\bz,w}|_{[0,s_{k_w}]}$ (resp.\ $\Gamma_{\bz,z_w}|_{[0,s_{k_w}]}$). Moreover, the definition of $\mathfrak{C}_r^{\bz,w,\Gamma_{\bz,w}}(z)$ (resp.\ $\mathfrak{C}_r^{\bz,z_w,\Gamma_{\bz,z_w}}(z)$) implies that it is a.s.\ determined by $h|_{B_{\lambda_4 r}(z)}$ and the part of $\Gamma_{\bz,w}$ (resp.\ $\Gamma_{\bz,z_w}$) stopped at the last time that it exits $B_{\lambda_4 r}(z)$. It follows that
\begin{align*}
\{\Gamma_{\bz,w} \cap B_{\lambda_2 r}(z) \neq \emptyset\} \cap \mathfrak{C}_r^{\bz,w,\Gamma_{\bz,w}}(z) = \{\Gamma_{\bz,z_w} \cap B_{\lambda_2 r}(z) \neq \emptyset\} \cap \mathfrak{C}_r^{\bz,z_w,\Gamma_{\bz,z_w}}(z).
\end{align*}     
 By using the discussion in the first two paragraphs of the proof, we obtain that $\PP((E_{\varepsilon}^1 \cap E_{\varepsilon}^2)^c \cap \mathcal{E})$
converges to $0$ as $\varepsilon \to 0$ faster than any positive power of $\varepsilon$.  This completes the proof of the proposition.     
\end{proof}

Finally, we now state an immediate corollary of Proposition \ref{prop:5}, and this the only result from this section that will be used later in the paper. We emphasize that the family of events $\mathfrak{C}_r^{\bz,\bw,\eta}(z)$ is defined such that the hypothesis in Proposition \ref{thm:theorem_uniqueness_lqg_metric} is satisfied.

    \begin{corollary}\label{cor:main_corollary}
Fix a bounded open set $U \subseteq \mathbb{C}$, $\ell>0$ and $\nu>0$. Set $\varepsilon_n = 2^{-n}$ for all $n \in \mathbb{N}$. Fix also $q>0$. Then almost surely, the following holds for all $n$ large enough. For all $\bz \in (\varepsilon_n^{q} \mathbb{Z}^2) \cap U$ and all $w \in U$ satisfying $w\notin \cB_{\tau_{16\ell}(\bz)}^\bullet(\bz)$, and every geodesic $\Gamma_{\bz,w}$, there exists $(z,r) \in \mathbb{C} \times [\varepsilon_n^{1+\nu} , \varepsilon_n]$ such that 
\begin{align}
  \label{eq:hitt}
B_{\lambda_4 r}(z) \subseteq \mathcal{B}_{\tau_{2 \ell}(\bz)}^{\bullet}(\bz) \setminus \mathcal{B}_{\tau_{\ell}(\bz)}^{\bullet}(\bz), \quad  \Gamma_{\bz ,  w} \cap B_{\lambda_2 r}(z) \neq \emptyset \quad \text{and} \quad \mathfrak{C}_r^{\bz ,  w,\Gamma_{\bz,w}}(z)\quad  \text{occurs}.
\end{align}
\end{corollary}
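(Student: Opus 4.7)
The plan is to upgrade Proposition \ref{prop:5} from a high-probability statement at each fixed $\varepsilon$ to an almost sure statement holding for all sufficiently large $n$ along the dyadic sequence $\varepsilon_n = 2^{-n}$, via a Borel--Cantelli argument. The key observation is that the conclusion of the corollary depends only on the parameters $U, \ell, \nu, q$, whereas the regularity event $\cE$ appearing in Proposition \ref{prop:5} carries extra parameters $a$ and $V$ which we are free to adjust. By Lemma \ref{lem:4}, we can make $\PP(\cE)$ as close to $1$ as desired, and this freedom is exactly what will let us remove the conditioning on $\cE$ at the end.

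Concretely, for each $k \in \NN$ I would use Lemma \ref{lem:4} to select $a_k \in (0,1)$ and a bounded open set $V_k \supseteq U$ so that the event $\cE_k := \cE(a_k, \nu, U, V_k, \ell)$ satisfies $\PP(\cE_k) \geq 1 - 2^{-k}$. Let $F_n$ denote the event that the conclusion of the corollary fails at scale $\varepsilon_n$, i.e., that there exist $\bz \in (\varepsilon_n^q \ZZ^2) \cap U$, some $w \in U$ with $w \notin \cB^\bullet_{\tau_{16\ell}(\bz)}(\bz)$, and a $D_h$-geodesic $\Gamma_{\bz,w}$ for which no pair $(z,r) \in \CC \times [\varepsilon_n^{1+\nu}, \varepsilon_n]$ witnesses \eqref{eq:hitt}. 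Applying Proposition \ref{prop:5} with $\varepsilon = \varepsilon_n$ and the chosen parameters $a_k, V_k$ gives that for every $m > 0$ there exists a constant $C_{m,k} > 0$ with $\PP(F_n \cap \cE_k) \leq C_{m,k} \varepsilon_n^m$, since the conditional failure probability decays faster than any positive power of $\varepsilon_n$.

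Taking for instance $m = 2$, we obtain $\sum_n \PP(F_n \cap \cE_k) < \infty$, so the Borel--Cantelli lemma yields $\PP(\cE_k \cap \{F_n \text{ infinitely often}\}) = 0$, and hence $\PP(F_n \text{ i.o.}) \leq \PP(\cE_k^c) \leq 2^{-k}$. Letting $k \to \infty$ produces $\PP(F_n \text{ i.o.}) = 0$, which is exactly the statement of the corollary. No serious obstacle arises in this plan: Proposition \ref{prop:5} already supplies both the superpolynomial decay and the required uniformity in $\bz$, $w$ and the choice of geodesic, and the only bookkeeping is the outer two-step limit in $k$ needed to discharge the $a, V$ dependence of the regularity event $\cE$.
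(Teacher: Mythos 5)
Your argument is correct and it is essentially the same as the paper's (which just states tersely: choose $a,V$ via Lemma \ref{lem:4} to make $\PP(\cE)$ arbitrarily high, then apply Proposition \ref{prop:5} and Borel--Cantelli). You have merely spelled out the two-step limit in $k$ that discharges the auxiliary $a,V$ dependence of $\cE$, which is a clean way to present what the paper leaves implicit.
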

\begin{proof}
  First obtain by using Lemma \ref{lem:4} that by properly adjusting the parameters $a$ and $V$, $\PP(\cE)$ can be made arbitrarily high. Now, we use Proposition \ref{prop:5} along with the Borel-Cantelli lemma.
\end{proof}

In the next section, we will take up the task of carefully defining the events $E_r(z)$ and $\mathfrak{C}_r^{\bz,\bw,\eta}(z)$ satisfying the desired conditions. At the end, in Section \ref{sec:geodesic_goes_through_chi}, we will apply Corollary \ref{cor:main_corollary} with the above-mentioned choice of $\mathfrak{C}_r^{\bz,\bw,\eta}(z)$.

\section{Definition of $\scX$ and showing that the geodesic goes through it in many different places}\label{sec:construction_of_chi}

\subsection{Definitions and setup.}\label{sec:setup}

As defined in Section \ref{sec:gaussian-free-field}, we shall always work with a fixed $\gamma \in (0,2)$ and a whole plane GFF $h$ normalised to have $h_1(0)=0$. The goal of this section is to show that, with strictly positive probability uniformly in $z\in \CC$ and $r>0$, there is a ``locally dependent'' event $F_r(z)$, on which there exists a $\scX$ of Euclidean scale $r$ around the point $z$ as described in Section~\ref{sec:proof-outline}. For later use, we shall require that this $\scX$ satisfies a few properties depending on various parameters-- for instance, we shall need that the $\scX$ is a subset of a deterministic family of small squares around $z$ and that the four distinct ``arms'' of the $\scX$ do not venture close to each other. In this section, we shall formalise the above in Proposition~\ref{prop:definition_of_F_r(z)}, which is where the desired event $F_r(z)$ shall be finally defined. As a matter of notation, for a family of events $\{H_r(z)\}_{z\in \CC,r>0}$ measurable with respect to $\sigma(h)$, we say that $H_r(z)$ is translation and scale invariant if the occurrence of $H_r(z)$ for the field $h$ is the same as the occurrence of the event $H_1(0)$ for the field $z'\mapsto h(z+rz')-h_r(z)$. The events $F_r(z)$ that we shall define in this section will be \textbf{translation and scale invariant} in the above sense.

\textbf{There shall be many different parameters and sets in play throughout this section and we refer the reader to Table \ref{table} for a summary of their roles.}

At its core, the $\scX$ is produced via the following confluence result from \cite{GPS20} that we alluded to just after Theorem \ref{thm:2}.
\begin{proposition}
  \label{prop:1*} (\cite[Theorem~1.2]{GPS20})
  Fix $\bz\in \CC$. Almost surely, for every neighbourhood $U$ of $\bz$, there is a neighbourhood $U'\subseteq U$ of $\bz$ and a point $z\in U\setminus U'$ such that every $D_h$-geodesic from a point in $U'$ to a point in $\CC\setminus U$ passes through $z$.
\end{proposition}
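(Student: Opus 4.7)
The approach is to combine Proposition~\ref{prop:7} with a bottleneck argument upgrading confluence for geodesics from $\bz$ to confluence for geodesics starting in a whole neighborhood of $\bz$, and then to pass from positive to full probability via scale invariance of the GFF. I begin by reducing the quantifier over all neighborhoods $U$ of $\bz$ to a countable family of dyadic balls $U_n := B_{2^{-n}}(\bz)$. Given a $D_h$-geodesic $\gamma$ from $z_1 \in U_n' \subseteq U_n$ to $z_2 \in \CC \setminus U$ with $U_n \subseteq U$, observe that $\gamma$ must cross $\partial U_n$; the sub-geodesic of $\gamma$ from $z_1$ to its first exit of $U_n$ is itself a $D_h$-geodesic from $U_n'$ to $\CC \setminus U_n$. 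Hence any point $z_n \in U_n \setminus U_n'$ shared by all $D_h$-geodesics from $U_n'$ to $\CC \setminus U_n$ is also shared by $\gamma$, so by countable intersection it suffices to show, for each fixed $n$, that almost surely such $U_n'$ and $z_n$ exist inside $U_n$.

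Now fix $n$, and choose $\cG$-stopping times $\sigma_1 < \sigma_2$ (with respect to $\cG_s = \sigma(\cB_s^\bullet(\bz), h|_{\cB_s^\bullet(\bz)})$) such that $\cB_{\sigma_2}^\bullet(\bz) \subseteq U_n$. By Proposition~\ref{prop:7}, every $D_h$-geodesic from $\bz$ to a point of $\CC \setminus \cB_{\sigma_2}^\bullet(\bz)$ passes through some point of the finite set $\cX_{\sigma_1,\sigma_2}(\bz) \subseteq \partial \cB_{\sigma_1}^\bullet(\bz)$. By exploiting barrier-type events as in Section~\ref{sec:geodesic-confluence} to enforce a pronounced geometric neck on $\partial \cB_{\sigma_1}^\bullet(\bz)$, one arranges that, with probability bounded below by a constant $p > 0$, this confluence set reduces to a singleton $\{z_n\}$.

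The crux is to transfer confluence from geodesics emanating from $\bz$ to those emanating from an entire neighborhood of $\bz$. On the above positive-probability event, I take $U_n' := \cB_{\sigma_0}^\bullet(\bz)$ for a smaller $\cG$-stopping time $\sigma_0 < \sigma_1$ chosen as follows. The barrier construction guarantees that any path from $\cB_{\sigma_0}^\bullet(\bz)$ to $\CC \setminus \cB_{\sigma_2}^\bullet(\bz)$ avoiding $z_n$ carries an additive $D_h$-length excess $\delta > 0$ depending only on the neck width, and in particular independent of the far endpoint $z_2$. Shrinking $\sigma_0$ until the $D_h$-diameter of $\cB_{\sigma_0}^\bullet(\bz)$ drops below $\delta/2$, the concatenation of the geodesics from $z_1$ to $z_n$ and from $z_n$ to $z_2$ becomes strictly shorter than any competing path avoiding $z_n$, so every $D_h$-geodesic from $z_1 \in U_n'$ to $z_2 \in \CC \setminus \cB_{\sigma_2}^\bullet(\bz)$ passes through $z_n$. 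I expect this to be the main obstacle, since producing a quantitative detour cost $\delta$ uniform in $z_2$ requires a careful strengthening of the barrier estimates from \cite{GM20}.

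Finally, I promote the positive-probability event above to probability one. By scale and translation invariance of the GFF modulo additive constants (Proposition~\ref{prop:8}\eqref{it:coordinate_change}), the success probability at scale $2^{-n}$ does not depend on $n$. Restricting the barrier events to well-separated dyadic annuli around $\bz$ and invoking the Markov property of the GFF, one can make success at a sparse subsequence of scales nearly independent, so a Borel--Cantelli argument yields success at infinitely many scales almost surely. Combined with the reduction in the first paragraph, this produces the statement for every neighborhood $U$ of $\bz$.
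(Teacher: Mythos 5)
The paper does not actually prove Proposition \ref{prop:1*}: it is imported verbatim as \cite[Theorem~1.2]{GPS20}, so there is no internal proof to compare against. Your task was therefore to reconstruct an external theorem, and while your high-level architecture (reduction to dyadic scales, one-point confluence via Proposition \ref{prop:7}, a bottleneck upgrade from geodesics emanating from $\bz$ to geodesics emanating from a neighbourhood, positive probability per scale, and a Borel--Cantelli argument over nearly independent scales) is the right general shape and matches the style of arguments in \cite{GM20,GPS20}, the proposal has a genuine gap at exactly the step you identify as the crux.

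Concretely, two claims are asserted without justification and together carry essentially the entire content of the theorem. First, that ``barrier-type events'' reduce the finite confluence set $\cX_{\sigma_1,\sigma_2}(\bz)$ to a singleton with probability bounded below: the events $H_r(z)$ described in Section \ref{sec:geodesic-confluence} are designed to prevent geodesics \emph{from $\bz$} from entering prescribed Euclidean balls before hitting a filled metric ball; turning them into a mechanism that blocks all but one point of $\cX_{\sigma_1,\sigma_2}(\bz)$ is a substantial construction, not a corollary. Second, and more seriously, the claim that every path from $\cB_{\sigma_0}^{\bullet}(\bz)$ to $\CC\setminus\cB_{\sigma_2}^{\bullet}(\bz)$ avoiding $z_n$ incurs a uniform additive $D_h$-length excess $\delta>0$ is not supplied by Proposition \ref{prop:7} or by the $H_r(z)$ machinery as stated; indeed, once such a uniform detour cost is granted, the conclusion that all geodesics from the neighbourhood pass through $z_n$ is immediate (a subpath of a geodesic cannot exceed the distance between its endpoints), so this claim is essentially a restatement of the theorem rather than a step towards it. You acknowledge this yourself (``I expect this to be the main obstacle''), which is honest but confirms that the proof is incomplete where it matters. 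A smaller issue: the events you feed into Borel--Cantelli involve the filled metric balls $\cB_{\sigma_i}^{\bullet}(\bz)$, which are not locally determined, so the claimed near-independence across scales requires localized proxy events; this is standard but should not be waved through. As written, the proposal reduces Proposition \ref{prop:1*} to unproven claims of comparable depth, so it does not constitute a proof.
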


In order to proceed, we shall now need a precise definition of a $\scX$, which we now provide.  %
Note that for any $z,w\in \CC$, all geodesics $\Gamma_{z,w}$ are necessarily simple curves and we shall consider the left and right sides of $\Gamma_{z,w}$ defined as collections of prime ends (recall the notational comments from the introduction). %

\begin{definition}
  \label{def:1*}
  For distinct points $u_+,v_+,u_-,v_-\in \CC$, we say that $\scX^{u_+,v_+}_{u_-,v_-}$ occurs if the following hold.
  \begin{enumerate}
  \item \label{it:uniquegeod1} The $D_h$-geodesics $P_+,P_-$ from $u_+$ to $v_+$ and $u_-$ to $v_-$ are unique. Further, $P_+\cap P_-$ is non-empty and is a non-trivial simple path, whose (distinct) starting and ending points we call $u$ and $v$ respectively.
  \item \label{it:concatenation} There is a unique $D_h$-geodesic $\Gamma_{u_+,v_-}$ (resp.\ $\Gamma_{u_-,v_+}$) and this is equal to the concatenation of $\Gamma_{u_+,u}, \Gamma_{u,v}, \Gamma_{v,v_-}$ (resp.\ $\Gamma_{u_-,u}, \Gamma_{u,v}, \Gamma_{v,v_+}$).
  \item \label{it:sides} The geodesics $\Gamma_{u_-,u}, \Gamma_{v,v_-}$ lie to the right of $P_+$ and the geodesics $\Gamma_{u_+,u},\Gamma_{v,v_+}$ lie to the left of $P_-$.
  \end{enumerate}
\end{definition}

In order to make the notation simpler, for the reminder of the paper, we shall linearly reparametrise $P_+$ and $P_-$ to be maps from $[0,1]$ to $\CC$\footnote{Note that this is an exception to our usual convention of parametrising geodesics to cover unit $D_h$-length in unit time.}. That is, we shall have $P_+(0)=u_+,P_+(1)=v_+$ and $P_-(0)=u_-,P_-(1)=v_-$. %
We now have the following immediate consequence of Proposition \ref{prop:1*}, and we refer the reader to Figure \ref{fig:setup1} for a depiction of the setting.

\begin{figure}
  \centering
  \includegraphics[width=\linewidth]{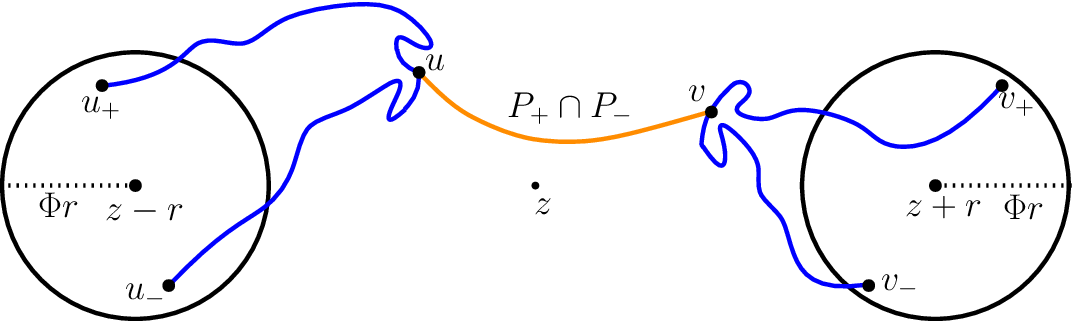}
  \caption{The setup in Lemma \ref{lem:1*}: $b$ is chosen to be small enough (independently of $r$) so that the Euclidean distance between $\{u_+ ,  u_-\}$ (resp.\ $\{v_+ ,  v_-\}$) and the union of the orange curve with the two blue curves intersecting the ball of radius $\Phi r$ centered at $z+r$ (resp.\  $z-r$) is at least $2br$.  Moreover the Euclidean distance between the union of the two blue curves intersecting the ball centered at $z-r$ and the union of the two blue curves intersecting the ball centered at $z+r$ is at least $2br$.}
  \label{fig:setup1}
\end{figure}

\begin{lemma}
  \label{lem:1*}
   Fix $q\in (0,1/5)$ and $\Phi\in (0,1/10)$. %
   Then for all $b \in (0,1)$ sufficiently small (depending only on $q,\Phi$), the following is true. For all $z\in \CC,r>0$, there exist deterministic (depending only on $z,r,q,\Phi,b$) points $u_+,u_-,v_+,v_-$ satisfying $u_+,u_-\in B_{\Phi r}(z-r)$ and $v_+,v_-\in B_{\Phi  r}(z+r)$ such that the balls of Euclidean radius $2br$ around $u_+,u_-,v_+,v_-,u,v$ are all pairwise disjoint and the intersection of the following events, which we call $G_r(z)$, occurs with probability at least $q$.
   \begin{enumerate}
   \item  $\scX_{u_-,v_-}^{u_+,v_+}$
   \item \label{it:Phi} $\Gamma_{u_+,u}\cup \Gamma_{u_-,u}\subseteq B_{\Phi r}(z-r)$ and $\Gamma_{v,v_+}\cup \Gamma_{v,v_-}\subseteq B_{\Phi r}(z+r)$.
   \item \label{it:distuv} $\dist(\Gamma_{u,v_+}\cup \Gamma_{u,v_-}, \{u_+,u_-\})\geq 2br$ and $\dist(\Gamma_{u_+,v}\cup \Gamma_{u_-,v}, \{v_+,v_-\})\geq 2br$

   \item $\dist(\Gamma_{u_-,u}\cup \Gamma_{u_+,u}, \Gamma_{v,v_-}\cup \Gamma_{v,v_+})\geq 2br$
   \end{enumerate}
   Further, the points $u_+,u_-,v_+,v_-$ can be chosen such that the family of events $\{G_r(z)\}_{z\in \CC, r>0}$ is translation and scale invariant.
\end{lemma}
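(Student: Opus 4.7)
The strategy is to apply the confluence result Proposition~\ref{prop:1*} at each of the two base points $z-r$ and $z+r$ to produce a pair of random confluence points through which every $D_h$-geodesic from a small neighbourhood of $z-r$ to a small neighbourhood of $z+r$ is forced to pass; the merged segment of the required $\scX$ will be carved out between these two confluence points. By the scale and translation invariance of the whole-plane GFF modulo additive constant, combined with the coordinate change formula (Proposition~\ref{prop:8} item~\eqref{it:coordinate_change}), it suffices to treat the case $z=0,r=1$ and to define $u_\pm, v_\pm$ covariantly for general $z,r$; this automatically yields the claimed translation and scale invariance of $\{G_r(z)\}$. Apply Proposition~\ref{prop:1*} separately at $\bz=-1$ with outer neighbourhood $B_{\Phi/4}(-1)$ and at $\bz=+1$ with outer neighbourhood $B_{\Phi/4}(+1)$ to obtain a.s.\ random open neighbourhoods $\widehat U_\pm \subseteq B_{\Phi/4}(\mp 1)$ of $\mp 1$ together with random confluence points $u^\ast \in B_{\Phi/4}(-1)$, $v^\ast \in B_{\Phi/4}(+1)$ such that every $D_h$-geodesic from $\widehat U_-$ to $\CC \setminus B_{\Phi/4}(-1)$ passes through $u^\ast$ and every $D_h$-geodesic from $\widehat U_+$ to $\CC \setminus B_{\Phi/4}(+1)$ passes through $v^\ast$. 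Since $\widehat U_\pm$ are a.s.\ open neighbourhoods of $\mp 1$, the event $\cA_\varepsilon := \{B_\varepsilon(-1) \subseteq \widehat U_-,\, B_\varepsilon(+1) \subseteq \widehat U_+\}$ has probability tending to $1$ as $\varepsilon \to 0$. Covering each of $B_{\Phi/4}(\mp 1)$ by finitely many disks of small radius $\rho$ and pigeonholing on the positions of $u^\ast, v^\ast$ then yields deterministic points $\tilde u \in B_{\Phi/4}(-1)$, $\tilde v \in B_{\Phi/4}(+1)$ such that $\PP(\cA_\varepsilon \cap \{u^\ast \in B_\rho(\tilde u)\} \cap \{v^\ast \in B_\rho(\tilde v)\}) \geq q$, provided $\varepsilon$ and $\rho$ are small enough in terms of $q, \Phi$.

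Now pick deterministic endpoints symmetrically about the real axis: $u_\pm = -1 \pm i\varepsilon/2$ and $v_\pm = +1 \pm i\varepsilon/2$, all of which lie in $B_\varepsilon(\mp 1) \subseteq \widehat U_\pm$ on $\cA_\varepsilon$. On the positive-probability event above, each of the geodesics $\Gamma_{u_\pm, v_\pm}, \Gamma_{u_+, v_-}, \Gamma_{u_-, v_+}$ has its starting point in $\widehat U_-$ and its endpoint in $\CC \setminus B_{\Phi/4}(-1)$, so it passes through $u^\ast$; by reversibility it also passes through $v^\ast$. The corresponding segments between $u^\ast$ and $v^\ast$ are therefore all $D_h$-geodesics from $u^\ast$ to $v^\ast$, and upon intersecting with a further high-probability event guaranteeing uniqueness of $\Gamma_{u^\ast,v^\ast}$ --- which we arrange by refining the confluence argument at a slightly smaller scale so that a common outgoing direction at $u^\ast$ and at $v^\ast$ is pinned down (or equivalently by arguing that leftmost and rightmost geodesics between $u^\ast,v^\ast$ agree here) --- these segments all coincide. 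Taking $u = u^\ast$ and $v = v^\ast$ then verifies items~\eqref{it:uniquegeod1} and~\eqref{it:concatenation} of Definition~\ref{def:1*}, while the side condition~\eqref{it:sides} is a topological consequence of the symmetric placement of the endpoints: the $u_+$- and $v_+$-arms lie in the upper half plane (hence on the left of $P_-$) and the $u_-$- and $v_-$-arms lie in the lower half plane (hence on the right of $P_+$).

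The decorative conditions of the lemma are then straightforward to arrange. Item~\eqref{it:Phi} (containment of the arms in $B_\Phi(z \pm r)$) follows from the choice of the strictly smaller outer neighbourhood $B_{\Phi/4}(\pm 1) \subset B_\Phi(\pm 1)$ together with H\"older continuity (Lemma~\ref{lem:holder_regularity}), which precludes the short arms $\Gamma_{u_\pm,u^\ast}$ and $\Gamma_{v^\ast,v_\pm}$ from making long Euclidean excursions --- intersect with a further event of probability close to $1$ if necessary. The distance conditions~\eqref{it:distuv} and the final arm-separation condition (item~$(4)$) are ensured by choosing $b \in (0,1)$ sufficiently small in terms of $\varepsilon, \rho, \Phi$: on the positive-probability event constructed, the endpoints $u_\pm, v_\pm, \tilde u, \tilde v$ are pairwise separated by fixed deterministic positive Euclidean distances, and the arms on opposite ``sides'' of the $\scX$ are separated by positive amounts bounded below deterministically, so that the $2br$-inflations remain pairwise disjoint for all $b$ below some threshold $b_0(\varepsilon,\rho,\Phi)$. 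The principal obstacle in the above plan is obtaining uniqueness of the random geodesic $\Gamma_{u^\ast, v^\ast}$: without it, the four geodesics could meet at the two isolated points $u^\ast, v^\ast$ but diverge on disjoint geodesics between them, failing to produce the common merged segment that the definition of $\scX$ demands.
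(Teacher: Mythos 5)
Your plan tracks the paper's proof closely: reduce to $z=0$, $r=1$ by translation and scale invariance, apply Proposition~\ref{prop:1*} at $\pm 1$, and pigeonhole deterministic endpoints $u_\pm, v_\pm$ into the guaranteed confluence neighbourhoods. You correctly flag the uniqueness concern, but the resolution is cleaner than the refinements you propose: because $u_+$ is deterministic, Proposition~\ref{prop:geodesics_don't_form_bubbles} gives that $\Gamma_{u_+,v_+}$ restricted from $u_+$ to $z_1$ is a.s.\ the unique $D_h$-geodesic between those endpoints; concatenating $\Gamma_{u_+,v_+}|_{[u_+,z_{-1}]}$ with $\Gamma_{u_-,v_-}|_{[z_{-1},z_1]}$ produces another geodesic from $u_+$ to $z_1$ (lengths add because both pieces are geodesics and $z_{-1}$ lies on $\Gamma_{u_+,z_1}$), so uniqueness forces $\Gamma_{u_+,v_+}$ and $\Gamma_{u_-,v_-}$ to coincide between $z_{-1}$ and $z_1$. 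No ``refinement at a smaller scale'' is needed, and the leftmost/rightmost coincidence you float as an alternative is itself exactly this uniqueness fact and does not come for free.

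The genuine gap is the side condition~\eqref{it:sides} of Definition~\ref{def:1*}. You assert that because $u_\pm$ and $v_\pm$ are placed symmetrically about the real axis, the $u_+$- and $v_+$-arms lie in the upper half plane and the $u_-$- and $v_-$-arms in the lower half plane, so the sidedness follows topologically. This is false: the arms are $D_h$-geodesics through a random fractal geometry and have no reason to respect Euclidean half-planes, and the confluence point $u=u^*$ is itself random and need not sit near the real axis. Which $u$-arm lies to the left or right of the merged segment $P_+\cap P_-$ (and likewise at $v$) is determined by the realization of $h$, and for a given realization exactly one of the four labelings obtained by swapping $u_+\leftrightarrow u_-$ and/or $v_+\leftrightarrow v_-$ satisfies~\eqref{it:sides}. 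The paper handles this by arranging that everything except~\eqref{it:sides} holds with probability at least $4q$ and then pigeonholing over the four labelings to land on a single deterministic labeling with probability at least $q$; your half-plane argument must be replaced by this, and you would also need to inflate your initial pigeonhole bound from $q$ to $4q$ to absorb the resulting loss.
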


\begin{proof}
By the translational and scaling symmetry of the whole plane GFF (see Section~\ref{sec:gaussian-free-field}), it suffices to work with $z=0,r=1$ and just define $G_1(0)$. Thus, for $(z,r)\neq (0,1)$, we can define the corresponding $u_+,u_-,v_+,v_-$ be translating and scaling the corresponding points for $(z,r)=(0,1)$, and as a result, the translation and scale invariance of $\{G_r(z)\}_{z\in \CC,r>0}$ shall hold automatically.

  Now, we consider the points $-1,1\in \CC$ and invoke Proposition \ref{prop:1*} twice with $\bz$ taking the above values. As a result, we obtain neighbourhoods $U'_{-1}\subseteq B_{\Phi }(-1),U'_{1}\subseteq B_{\Phi}(1)$ of $-1,1$ along with points $z_{-1}\in B_{\Phi}(-1)\setminus U_{-1}', z_{1}\in B_{\Phi}(1)\setminus U_{1}'$ such that all geodesics from points in $U_{-1}'$ to $U_{1}'$ pass via both $z_{-1}$ and $z_{1}$. As a consequence, condition \eqref{it:Phi} would be satisfied as long as we ensure that $u_+,u_-\in U_{-1}'$ and $v_+,v_-\in U_{1}'$.

  To do so, note that by choosing $b'$ sufficiently small, we can assure that with probability at least $5q$, we have $B_{b'}(-1)\subseteq U_{-1}', B_{b'}(1)\subseteq U_{1}'$. Thus, we can now just fix $u_+,u_-\in B_{b'}(-1)$ and $v_+,v_-\in B_{b'}(1)$ deterministically and note that for $b$ small enough with respect to $b'$, we have $|u_+-u_-|,|v_+-v_-|\geq 2b$ and thus the condition \eqref{it:Phi} holds as well.

Moreover since all the intersections $\{u_+,u_-\} \cap (\Gamma_{u,v_+} \cup \Gamma_{u,v_-}) $, $\{v_+,v_-\} \cap (\Gamma_{u_+,v} \cup \Gamma_{u_-,v}) $, $(\Gamma_{u_-,u}\cup \Gamma_{u_+,u})\cap (\Gamma_{v,v_+}\cup \Gamma_{v,v_-})$ are almost surely empty and $D_h$-geodesics between fixed and distinct deterministic points are almost surely unique, possibly by taking $b \in (0,1)$ to be even smaller, the event in the lemma statement, except for \eqref{it:sides} in the definition of $\scX_{u_-,v_-}^{u_+,v_+}$ in Definition \ref{def:1*}, occurs with probability at least $4q$. Finally, we note that by possibly swapping $u_+$ with $u_-$ and $v_+$ with $v_-$, we can ensure that condition \eqref{it:sides} in the definition of $\scX_{u_-,v_-}^{u_+,v_+}$ holds as well. Thus, by the pigeonhole principle, the event in the lemma occurs with probability at least $q$, and this completes the proof.
\end{proof}

\textbf{We now just fix $\Phi\in (0,1/10)$ that we shall use when applying the above lemma; this choice will not be altered throughout the paper.}

Since the geodesics $P_+,P_-$ are unique on the event $\scX_{u_-,v_-}^{u_+,v_+}$, we now define $\tau_u^+$ to be the a.s.\ unique time for which $P_+(\tau_u^+)=u$; we define $\tau_u^-,\tau_v^+,\tau_v^-$ analogously. Later, it will be convenient for us to have that the geodesics $P_+,P_-$ discussed above all lie inside a grid of squares of lattice spacing $\delta_1 r/2$ for a small $\delta_1>0$. For this purpose, for any set $X\subseteq \CC$ and $z\in \CC,\varepsilon>0$, we use $\cS^z_\varepsilon(X)$ %
to denote the set of closed Euclidean squares of size length $\varepsilon$ with corners in $z+\varepsilon \ZZ^2$ which additionally intersect $X$. %

As we shall see soon, for a small constant $\delta_1$, we shall cover the points $u,v$ by squares of size $\delta_1 r$. Further, for a constant $\delta_3$ much smaller than $\delta_1$ above, we cover the geodesics $P_+,P_-$ by squares of size $\delta_3r$. Indeed, %
we shall construct five families of squares of size $\delta_3 r$ covering each of the four arms of the $\scX$ and the central portion connecting $u$ and $v$. We shall also have another intermediate parameter $\delta_2$-- intuitively, the above parameters shall be chosen such that ``$b\gg \delta_1\gg \delta_2\gg \delta_3$''.

For us, it will be important that the squares described above be deterministic. However, the points $u,v$ are themselves random, and thus, in order to achieve this, we shall work on a positive probability event on which the above families of squares can just be fixed deterministically, and the goal now is to formalise this. Recall the constants $\chi\in (0,\xi(Q-2))$ and $\chi' > \xi (Q + 2)$ depending only on $\gamma$ that we fixed earlier (see Lemma \ref{lem:holder_regularity}). Also, recall the $\gamma$-dependent constants $Q = \frac{2}{\gamma}+\frac{\gamma}{2}$, $d_{\gamma}$ and $\xi = \frac{\gamma}{d_{\gamma}}$ from Proposition \ref{prop:8}. The following proposition defines the positive probability event $F_r(z)$ that we shall use repeatedly throughout the paper; a visual depiction of the relevant geometric objects is present in Figure \ref{fig:setup2}. Note that for a set $V\subseteq \CC$, we use $\inte(V)$ to denote its topological interior $V \setminus \partial V$.

\begin{figure}
  \centering
  \includegraphics[width=0.7\linewidth]{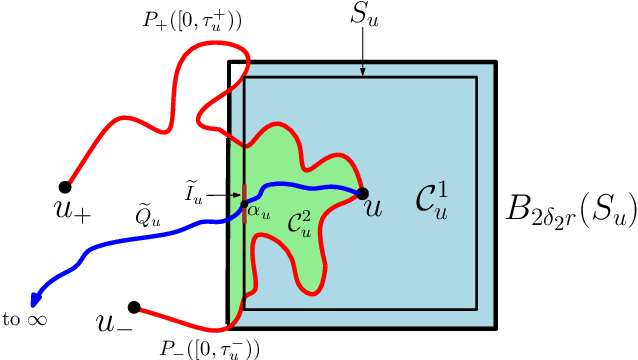}
  \caption{Illustration of the setup of conditions~\eqref{it:positive_distance_from_marked_points} and \eqref{it:boxdist} in Proposition~\ref{prop:definition_of_F_r(z)}.  The red paths represent the segments of the paths $P_+$ and $P_-$ stopped at the time that they hit $u$.  If we consider the segments of the red paths between the last time that they hit $\partial B_{2\delta_2 r}(S_u)$ and the time that they hit $u$,  then the aforementioned segments divide $B_{2\delta_2 r}(S_u)$ into two regions (green and blue).  We can take $\delta_2$ and $\delta_1$ to be sufficiently small so that the parts of $P_+$ and $P_-$ do not enter the green region after they hit $u$. The objects $\wt{Q}_u,\wt{I}_u,\alpha_u$ appear in the proof of condition \eqref{it:boxdist}. $\wt{I}_u\subseteq \partial S_u$ is a \emph{random} interval of length $100\delta r$ around the \emph{random} point $\alpha_u$. The objects $x_u,I_u$ are deterministic approximations of $\alpha_u,\wt{I}_u$ while $Q_u$ is a deterministic approximation of the portion of $\wt{Q}_u$ from $\alpha_u$ to $\infty$.}
  \label{fig:conncomp}
\end{figure}
\begin{proposition}\label{prop:definition_of_F_r(z)}  
  There exists a choice of the parameters $0<\delta_3<\delta_2<\delta_1<b<1$, $A>2$ and $\delta_1<\min\{(b/6)^{\chi'/\chi},b/8\}$ such that for any $z\in \CC, r>0$, there exist
  \begin{enumerate}
  \item \label{it:obj1} Deterministic subsets $\cA_{u_+}, \cA_{u_-},\cM,\cA_{v_+},\cA_{v_-}\subseteq B_{Ar}(z)$ which are unions of certain squares in $\cS^z_{\delta_3 r}(\mathbb{C})$,
  \item \label{it:obj2} Deterministic squares $S_u,S_v\subseteq B_{Ar}(z)$ of side length $\delta_1 r$ which belong to the family $2\cS^z_{\delta_1 r/2}(u)$\footnote{For a square $S$ and $\alpha>0$, $\alpha S$ refers to the square with the same center and $\alpha$ times the side-length as $S$.},
  \item \label{it:obj3} Deterministic segments $I_u\subseteq \partial S_u, I_v\subseteq \partial S_v$ both of Euclidean length equal to $50\delta_2 r$,
  \item \label{it:obj4} A deterministic point $x_u\in I_u$ (resp.\ $x_v\in I_v$), a deterministic path $Q_u$ (resp.\ $Q_{v}$) from $z-A^2r$ (resp.\ $z+A^2 r$) to $x_u$ (resp.\ $x_v$) satisfying $Q_{u}\setminus \{z-A^2r,x_u\}\subseteq B_{A^2 r}(z)\setminus S_u$ (resp.\ $Q_{v}\setminus \{z+A^2r,x_v\}\subseteq B_{A^2 r}(z)\setminus S_v$), and the fattened tube $\cD_u=\mathrm{int}(\bigcup \cS_{\delta_3 r}^z(Q_u))$ (resp.\ $\cD_v=\mathrm{int}(\cS_{\delta_3 r}^z(Q_v))$,
  \end{enumerate}
  satisfying the properties
  \begin{enumerate}[(a)]
    \item \label{it:geodesics_separated}
    The Euclidean balls of radius $100\delta_2r$ around the sets $\cA_{u_+}\setminus S_u, \cA_{u_-}\setminus S_u, \cM \setminus (S_u \cup S_v),  \cA_{v_+}\setminus S_v,$ and $\cA_{v_-}\setminus S_v$ are mutually disjoint.
  \item \label{it:geodesics_separated2} The Euclidean balls of radius $br$ around the sets $\{u_+\},\{u_-\}$ and $S_u\cup \mathcal{M} \cup S_v \cup \mathcal{A}_{v_+} \cup \mathcal{A}_{v_-}$ are mutually disjoint and the Euclidean balls of radius $br$ around the sets $\{v_+\},\{v_-\}$ and $\mathcal{A}_{u_+} \cup \mathcal{A}_{u_-}\cup S_u\cup \mathcal{M} \cup S_v$ are mutually disjoint.
  \item \label{it:Mdisconn}%
    The balls $B_{100\delta_2 r}( \cA_{u_+}\cup S_u\cup \cA_{u_-})$ and $B_{100\delta_2 r}( \cA_{v_+}\cup S_v\cup \cA_{v_-})$ are disjoint.
\item \label{it:I1} $B_{\delta_3 r}(\cD_u) \cap B_{\delta_3 r}(S_u) \subseteq B_{\delta_2 r}(I_u)$ and $B_{\delta_3 r}(\cD_v) \cap B_{\delta_3 r}(S_v) \subseteq B_{\delta_2 r}(I_v)$.
\item \label{it:I2} $\dist\left( \cD_u\cup I_u ,  \mathcal{A}_{u_+} \cup \mathcal{A}_{u_-} \cup \mathcal{M} \cup S_v\cup \mathcal{A}_{v_+} \cup \mathcal{A}_{v_-}  \cup \cD_v\right) \geq 50 \delta_2 r$.
\item \label{it:I3} $\dist\left( \cD_v \cup I_v , \cD_u\cup S_u \cup \mathcal{A}_{u_+} \cup \mathcal{A}_{u_-} \cup \mathcal{M} \cup \mathcal{A}_{v_+} \cup \mathcal{A}_{v_-} \right) \geq 50 \delta_2 r$.
\item \label{it:I4} $\dist\left(\cD_u\cup \cD_v, \{u_+,u_-,v_+,v_-\}\right) \geq br$.
  \end{enumerate}
such that the following is true. Let the event $F_r(z)$ be defined as the intersection of the event $G_r(z)$ from Lemma \ref{lem:1*} along with the following list of conditions.
    \begin{enumerate} [(i)]
          \item \label{it:across} We have $\sup_{w_1 ,  w_2 \in B_{Ar}(z)} D_h(w_1 ,  w_2;B_{A^{3/2}r}(z)) < D_h(\partial B_{A^{3/2}r}(z) ,  \partial B_{A^2 r}(z))$ and as a result,  every $D_h$-geodesic between any two points in the ball $B_{Ar}(z)$ is contained inside $B_{A^2r}(z)$ and is thus determined by $h\lvert_{B_{A^2r}(z)}$.
              \item \label{it:geodloc} We have  $P_+\cup P_-\subseteq B_{Ar/4}(z)$.
              \item \label{it:upper_holder} $\sup_{x,y \in S_u} D_h(x,y ; S_u) \leq \delta_1^\chi r^{\xi Q} e^{\xi h_r(z)}$ and the same with $u$ replaced by $v$.
      \item \label{it:lower_holder}
        $D_h(\partial B_{br/3}(u), \partial B_{b r/2}(u))\geq (b/6)^{\chi'} r^{\xi Q} e^{\xi h_{r}(z)}$ and the same is true when $u$ is replaced by $v$.

              \item \label{it:positive_distance_from_marked_points}
                The set $B_{2\delta_2 r}(S_u) \setminus (P_+([0,\tau_u^+]) \cup P_-([0,\tau_u^-]))$ has exactly (see Figure \ref{fig:conncomp}) two connected components $\cC^1_u$ and $\cC^2_u$ whose boundary contains $u$ and the set $P_+((\tau_u^+,1]) \cup P_-((\tau_u^-,1])$ intersects only one of $\cC^1_u$ and $\cC^2_u$,  say $\cC^1_u$.  Similarly there are exactly two connected components $\cC^1_v$ and $\cC^2_v$ of the set $B_{2\delta_2 r}(S_v) \setminus (P_+([\tau_v^+,1]) \cup P_-([\tau_v^-,1]))$ such that exactly one of them, namely $\cC_v^1$, intersects $P_+([0,\tau_v^+)) \cup P_-([0,\tau_v^-))$.
  \item \label{it:SuSvsquares} We have $u\in (1/2)S_u$ and $v\in (1/2)S_v$.\footnote{We emphasize here that $S_u,S_v$ are \emph{deterministic} squares while the points $u,v$ are the confluence points of $P_+$ and $P_-$ and are thus \emph{random}.}
  \item \label{it:boxes}  We have the equalities (see Figure \ref{fig:setup2})
    \begin{align}
            \label{eq:1*}
  &\cA_{u_+}=\inte(\bigcup\cS^z_{\delta_3 r}(P_+([0,\tau_u^+]))), \cA_{v_+}=  \inte(\bigcup\cS^z_{\delta_3 r}(P_+([\tau_v^+,1]))).\nonumber\\
  &\cA_{u_-}=\inte(\bigcup \cS^z_{\delta_3 r}(P_-([0,\tau_u^-]))), \cA_{v_-}=  \inte(\bigcup\cS^z_{\delta_3 r}(P_-([\tau_v^-,1]))),\nonumber\\
  &\cM=\inte(\bigcup\cS^z_{\delta_3r}(P_+([\tau_u^+,\tau_v^+])))=\inte(\bigcup(\cS^z_{\delta_3r}(P_-([\tau_u^-,\tau_v^-]))).
    \end{align}

  \item \label{it:Mcomp}We have $B_{2\delta_2 r}(\cM\setminus S_u)\cap \cC_u^2 = \emptyset$. Similarly, $B_{2\delta_2 r}(\cM\setminus S_v)\cap \cC_v^2 = \emptyset$,
    
  \item \label{it:boxdist} The deterministic segment $I_u\subseteq \partial S_u$ (resp.\ $I_v\subseteq \partial S_v$) satisfies $B_{\delta_2 r}(I_u)\subseteq \cC_u^2$ (resp.\ $B_{\delta_2 r}(I_v)\subseteq\cC_v^2)$.%

  \end{enumerate}
  Then the event $F_r(z)$ is measurable with respect to $h\lvert_{B_{A^2 r}(z)}$ viewed modulo an additive constant\footnote{Consider the equivalence relation $\sim$ wherein for two generalised functions $\mathtt{h}_1,\mathtt{h}_2$ on a domain $U$, we say $\mathtt{h}_1\sim \mathtt{h}_2$ if $\mathtt{h}_1-\mathtt{h}_2$ is constant. Let $\pi$ be the map projecting a generalised function $\mathtt{h}$ into its $\sim$ equivalence class. Then  $\mathtt{h}$ ``viewed modulo an additive constant'' refers to $\pi(\mathtt{h})$.}, and there exists a constant $p_1 \in (0,1)$ which is independent of $z$ and $r$ such that $\mathbb{P}(F_r(z)) \geq p_1$. Further, the $z,r$ dependent deterministic sets in items \eqref{it:obj1}, \eqref{it:obj2}, \eqref{it:obj3}, and \eqref{it:obj4} can be chosen such that the family of events $\{F_r(z)\}_{z\in \CC,r>0}$ is translation and scale invariant.
  \end{proposition}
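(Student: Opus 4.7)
The plan is to reduce to the case $z=0,r=1$ using the scale-translation symmetry of the whole plane GFF modulo an additive constant (Section~\ref{sec:gaussian-free-field}), and then construct $F_1(0)$ by layering successive conditions on top of the event $G_1(0)$ from Lemma~\ref{lem:1*}. I would fix the parameters in the order $b$, $A$, $\delta_1$, $\delta_2$, $\delta_3$, each chosen after the previous ones so that every step preserves positive probability. Applying Lemma~\ref{lem:1*} with some small $q>0$ gives $b$ and the event $G_1(0)$ with $\PP(G_1(0))\geq q$. By the H\"older continuity of $D_h$ (Lemma~\ref{lem:holder_regularity}) together with the growth of $D_h$-distances across large Euclidean annuli, for $A$ large enough the regularity events \eqref{it:across} and \eqref{it:geodloc} hold with arbitrarily high probability; and for $\delta_1<\min\{(b/6)^{\chi'/\chi},b/8\}$ sufficiently small, \eqref{it:upper_holder} and \eqref{it:lower_holder} follow directly from the two-sided H\"older bounds. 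At this stage the intersection of $G_1(0)$ with all of these conditions has probability at least $q/2$, say.

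The decisive step is to replace the random objects (the confluence points $u,v$ and the random tube systems generated by $P_+,P_-$) with deterministic ones via pigeonhole. On $G_1(0)$ we have $u\in B_{\Phi}(-1)$ and $v\in B_{\Phi}(1)$, so there are only $O(\Phi^2/\delta_1^2)$ candidate deterministic squares $S_u, S_v$ satisfying \eqref{it:obj2} and \eqref{it:SuSvsquares}; similarly, given \eqref{it:geodloc}, each of the five random sets $\inte(\bigcup\cS_{\delta_3}^0(P_\pm(\cdots)))$ appearing in \eqref{it:boxes} is determined by a subset of the finitely many $\delta_3$-squares meeting $B_{A/4}(0)$, so there are only finitely many candidate tuples $(\cA_{u_+}, \cA_{u_-}, \cM, \cA_{v_+}, \cA_{v_-})$. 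Iterated pigeonhole yields a specific deterministic choice of all of these objects for which the intersection with the previous events still has some positive probability $p_1>0$. The translation and scale invariance of $\{F_r(z)\}_{z,r}$ is then automatic, since having defined all the deterministic objects for $(0,1)$ we just translate and rescale them.

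The remaining conditions \eqref{it:positive_distance_from_marked_points}, \eqref{it:Mcomp}, and \eqref{it:boxdist} are topological. On $G_1(0)$, the three arcs $P_+([0,\tau_u^+])$, $P_-([0,\tau_u^-])$, and the merged segment $\cM$ are simple curves pairwise meeting only at $u$, so they form a topological ``Y'' at $u$. For $\delta_1,\delta_2$ sufficiently small, the complement $B_{2\delta_2}(S_u)\setminus(P_+([0,\tau_u^+])\cup P_-([0,\tau_u^-]))$ has exactly two connected components with $u$ on their boundary and $\cM$ leaves $u$ into exactly one of them, giving \eqref{it:positive_distance_from_marked_points} and \eqref{it:Mcomp}; once the $\delta_3$-tubes are fixed by \eqref{it:boxes}, there are only finitely many possible arrangements of the components $\cC_u^1,\cC_u^2$, and one further pigeonhole produces a deterministic segment $I_u\subseteq \partial S_u$ with $B_{\delta_2}(I_u)\subseteq \cC_u^2$ (and analogously for $v$). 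The tubes $\cD_u, \cD_v$ from $\pm A^2$ to $x_u, x_v$ are then constructed by hand in the free space outside the already-occupied region, with $\delta_3$ taken small enough to enforce \eqref{it:I1}--\eqref{it:I4}; the measurability assertion follows from \eqref{it:across} combined with the Weyl scaling property in Proposition~\ref{prop:8}\eqref{it:weyl_scaling}. The principal obstacle is the bookkeeping of the parameter hierarchy $\delta_3\ll\delta_2\ll\delta_1\ll b$: one must verify that the topological ``Y'' picture is valid on a scale comparable to $\delta_2$, which requires first conditioning on a high-probability regularity event guaranteeing macroscopic separation of the three arms away from $u$, and only then performing the pigeonhole discretizations.
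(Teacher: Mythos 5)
Your overall strategy — reduce to $(z,r)=(0,1)$ by scale/translation invariance, fix parameters in the order $b, A, \delta_1, \delta_2, \delta_3$, use H\"older estimates for the regularity conditions (\ref{it:across})--(\ref{it:lower_holder}), and then discretize the random $\scX$ objects via pigeonhole over the finitely many candidate squares and square-families — is precisely the approach the paper takes. Your treatment of the topological conditions (\ref{it:positive_distance_from_marked_points}), (\ref{it:Mcomp}) (the ``Y'' picture at $u$, with the $\scX$ arm staying out of $\cC_u^2$ via a diameter argument at the scale $\delta_2$ compared to $b$) also matches the paper's argument in spirit.

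The one place where you gloss over a genuine difficulty is the construction of the segments $I_u,I_v$ and the tubes $\cD_u,\cD_v$ satisfying properties (\ref{it:I1})--(\ref{it:I4}). You propose to first pigeonhole for a deterministic $I_u$ with $B_{\delta_2}(I_u)\subseteq \cC_u^2$, and then ``construct by hand'' a tube $\cD_u$ from $-A^2$ to $x_u\in I_u$ in ``the free space outside the already-occupied region''; but it is not a priori clear that a path from $-A^2$ to any given $I_u\subseteq \partial S_u$ can be found that stays $50\delta_2$-far from all of $\cA_{u_\pm}, \cM, S_v, \cA_{v_\pm}, \cD_v$ \emph{and} enters $S_u$ only through $B_{\delta_2}(I_u)$. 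The paper handles this by first constructing a \emph{random} path $\wt Q_u$ from $u$ (viewed as the prime end on the right side of $P_+([0,\tau_u^+])\cup P_-([0,\tau_u^-])$) to $\infty$ inside the unbounded complementary component of $P_+\cup P_-\cup B_{br}(\{u_\pm,v_\pm,v\})$; the last hit point $\alpha_u\in\partial S_u$ of $\wt Q_u$ lies in $\cC_u^2$ and, with high probability for $\delta_2$ small, is at distance $\geq 200\delta_2 r$ from $P_+\cup P_-\cup\wt Q_v$. The interval $\wt I_u$ and the path from $\alpha_u$ onward are then built around this random exit point, and only \emph{afterwards} does one pigeonhole jointly over $(I_u, x_u, Q_u)$. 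This joint pigeonhole (rather than pigeonholing $I_u$ and then constructing $Q_u$ separately) is what guarantees a compatible deterministic choice: the reachability of $I_u$ is built into the random data before discretization, rather than asserted afterwards. Absent something like this prime-end argument, your ``by hand'' step is a real gap.
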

  \begin{proof}
    Begin with choosing $b$ to be small enough such that for all $z,r$, the event $G_r(z)$ from Lemma \ref{lem:1*} satisfies
    \begin{equation}
      \label{eq:107}
      \PP(G_r(z))\geq 1/10.
    \end{equation}
We shall now go through all the conditions in Proposition \ref{prop:definition_of_F_r(z)} step by step.
  \item \paragraph{\textbf{Local dependence of geodesics-- items \eqref{it:across}, \eqref{it:geodloc}}} That
    \begin{equation}
      \label{eq:89}
      \sup_{w_1 ,  w_2 \in B_{Ar}(z)} D_h(w_1 ,  w_2;B_{A^{3/2}r}(z)) < D_h(\partial B_{A^{3/2}r}(z) ,  \partial B_{A^2 r}(z))
    \end{equation}
    in item \eqref{it:across}  holds with arbitrarily high probability as $A\rightarrow \infty$ follows by using the scaling properties of the LQG metric (see Proposition \ref{prop:8}) and the whole plane GFF combined with the fact that, almost surely,
\begin{align*}
  D_h(\partial B_1(0) ,  \partial B_{A^{1/2}}(0)) \to \infty \quad \text{as} \quad A \to \infty.
\end{align*}
An easy consequence of this is the fact that every $D_h$-geodesic between points of $B_{Ar}(z)$ stays within $B_{A^2r}(z)$ (see \cite[Lemma 4.8]{GPS20}). For item \eqref{it:geodloc}, since $P_+,P_-$ are bounded sets, we simply note that by taking $A$ to be large enough, we can ensure that $P_+\cup P_-\subseteq B_{Ar/4}(z)$ with arbitrarily high probability.
\item \paragraph{\textbf{H\"older continuity-- items \eqref{it:upper_holder}, \eqref{it:lower_holder}}}
 By basic H\"older continuity estimates for the LQG metric (see \cite[Lemmas~3.19,3.22]{DFGPS20} and Lemma~\ref{lem:holder_regularity}), it is straightforward to see that items \eqref{it:upper_holder} and \eqref{it:lower_holder} can be made to hold with arbitrarily high probability by taking $b,\delta_1$ to be small enough. \emph{The parameter $b$ is now fixed and will not be altered now.}
\item \paragraph{\textbf{The connected components $\cC_u^1,\cC_u^2,\cC_v^1,\cC_v^2$-- item \eqref{it:positive_distance_from_marked_points}}} We will only prove the claim for the point $u$ since the exact same argument works for $v$. Let $\sigma_u^+$ (resp.\ $\sigma_u^-$) denote the last time before $\tau_u^+$ (resp.\ $\tau_u^-$) that $P_+$ (resp.\ $P_-$) intersects $\partial B_{2\delta_2 r}(S_u)$.
  Since $P_+|_{[0,\tau_u^+]}$ and $P_-|_{[0,\tau_u^-]}$ are both simple curves which intersect only at $u$, we obtain that there are exactly two connected components $\mathcal{C}_u^1$ and $\mathcal{C}_u^2$ of 
\begin{align*}
    B_{2\delta_2 r}(S_u) \setminus (P_+([0,\tau_u^+]) \cup P_-([0,\tau_u^-]))
\end{align*}
whose boundaries contain $u$ and we choose these such that $\partial \mathcal{C}_u^2$ contains both the right side of $P_+([\sigma_u^+ , \tau_u^+])$ and the left side of $P_-([\sigma_u^- , \tau_u^-])$, and $\partial \mathcal{C}_u^1$ contains both the right side of $P_-([\sigma_u^- , \tau_u^-])$ and the left side of $P_+([\sigma_u^+ , \tau_u^+])$. Now, since we are working on $F_r(z)$, $\scX^{u_+,v_+}_{u_-,v_-}$ does occur and as a result (see item \eqref{it:sides} in Definition \ref{def:1*}), the geodesic $\Gamma_{u_-,u}$ lies to the right of $P_+$. As a consequence of this, for some small enough $\varepsilon>0$, we must have $P_+((\tau_u^+ , \tau_u^++\varepsilon]),P_-((\tau_u^- , \tau_u^-+\varepsilon])\subseteq \cC_u^1$.

Recall that $|u_+-u|,|u_--u|>4br$ (see Lemma \ref{lem:1*}). Since geodesics are simple curves, $P_+((\tau_u^+ , 1]) \cup P_-((\tau_u^- , 1])$ cannot intersect $P_+([0,\tau_u^+]) \cup P_-([0,\tau_u^-])$. Thus, by planarity, in order for there to exist a $w\in (P_+((\tau_u^+ , 1]) \cup P_-((\tau_u^- , 1]))\cap \cC_u^2$, we must have $D_h(u,w)\geq D_h(u,\partial B_{4br}(u))$. However, by choosing $\delta_1,\delta_2$ to be small enough relative to $b$, we can ensure that $\diam_{D_h}(B_{2\delta_2 r}(S_u)) < D_h(u,\partial B_{4br}(u))$ with high probability, and when this occurs, we must have $(P_+((\tau_u^+ , 1]) \cup P_-((\tau_u^- , 1]))\cap \cC_u^2=\emptyset$.
\item \paragraph{\textbf{Covering $P_+,P_-$ by tiny squares-- items \eqref{it:SuSvsquares},  \eqref{it:boxes}, \eqref{it:Mcomp} and items \eqref{it:geodesics_separated} to \eqref{it:Mdisconn}}} %
  By the pigeonhole principle and \eqref{eq:107}, we can fix squares $S_u,S_v\in 2\cS^z_{\delta_1r/2}(u)$ such that, conditional on the event $G_r(z)$, we additionally have $u\in S_u, v\in S_v$ with strictly positive probability. Similarly, by the pigeonhole principle, we can choose $\cA_{u_+},\cA_{u_-},\cM,\cA_{v_+},\cA_{v_-}$ such that \eqref{eq:1*} holds with positive probability conditional on the event $G_r(z)$. Note that we can ensure that all the above sets are subsets of $B_{Ar}(z)$ by working with only $\delta_3<A/4$; indeed, this is true because item \eqref{it:across} guarantees that $P_+\cup P_-\subseteq B_{Ar/4}(z)$. Items \eqref{it:geodesics_separated} and \eqref{it:geodesics_separated2} can be ensured by simply choosing a small enough $\delta_1<(b/6)^{\chi'/\chi}$ and then $\delta_2$ to be much smaller than $\delta_1$ and combining with Lemma~\ref{lem:1*}. Similarly, since $P_+([0,\tau_u^+]) \cup P_-([0,\tau_u^-])$ and $P_+([\tau_v^+,1]) \cup P_-([\tau_v^-,1])$ are disjoint almost surely, by taking $\delta_1$ to be smaller and $\delta_2 \in (0,1)$ to be much smaller, we can ensure that item \eqref{it:Mdisconn} holds with high probability. \emph{The parameter $\delta_1$ is now fixed and will not be altered now.}

  As for item \eqref{it:Mcomp},  we note that by item \eqref{it:positive_distance_from_marked_points}, $P_+((\tau_u^+,1]) \cup P_-((\tau_u^-,1])$ does not intersect $\mathcal{C}_u^2$.  Hence by taking $\delta_2 \in (0,1)$ to be even smaller, we can arrange so that $B_{2\delta_2 r}(\mathcal{M} \setminus S_u)$ does not intersect 
$\mathcal{C}_u^2 \cup P_+([0,\tau_u^+]) \cup P_-([0,\tau_u^-])$.  Therefore it follows that 
\begin{align*}
B_{2\delta_2 r}(\mathcal{M} \setminus S_u) \cap \mathcal{C}_u^2 = \emptyset.
\end{align*}
Similarly we obtain that 
\begin{align*}
B_{2\delta_2 r}(\mathcal{M} \setminus S_v) \cap \mathcal{C}_v^2 = \emptyset.
\end{align*}
\begin{figure}
  \centering
  \includegraphics[width=0.9\linewidth]{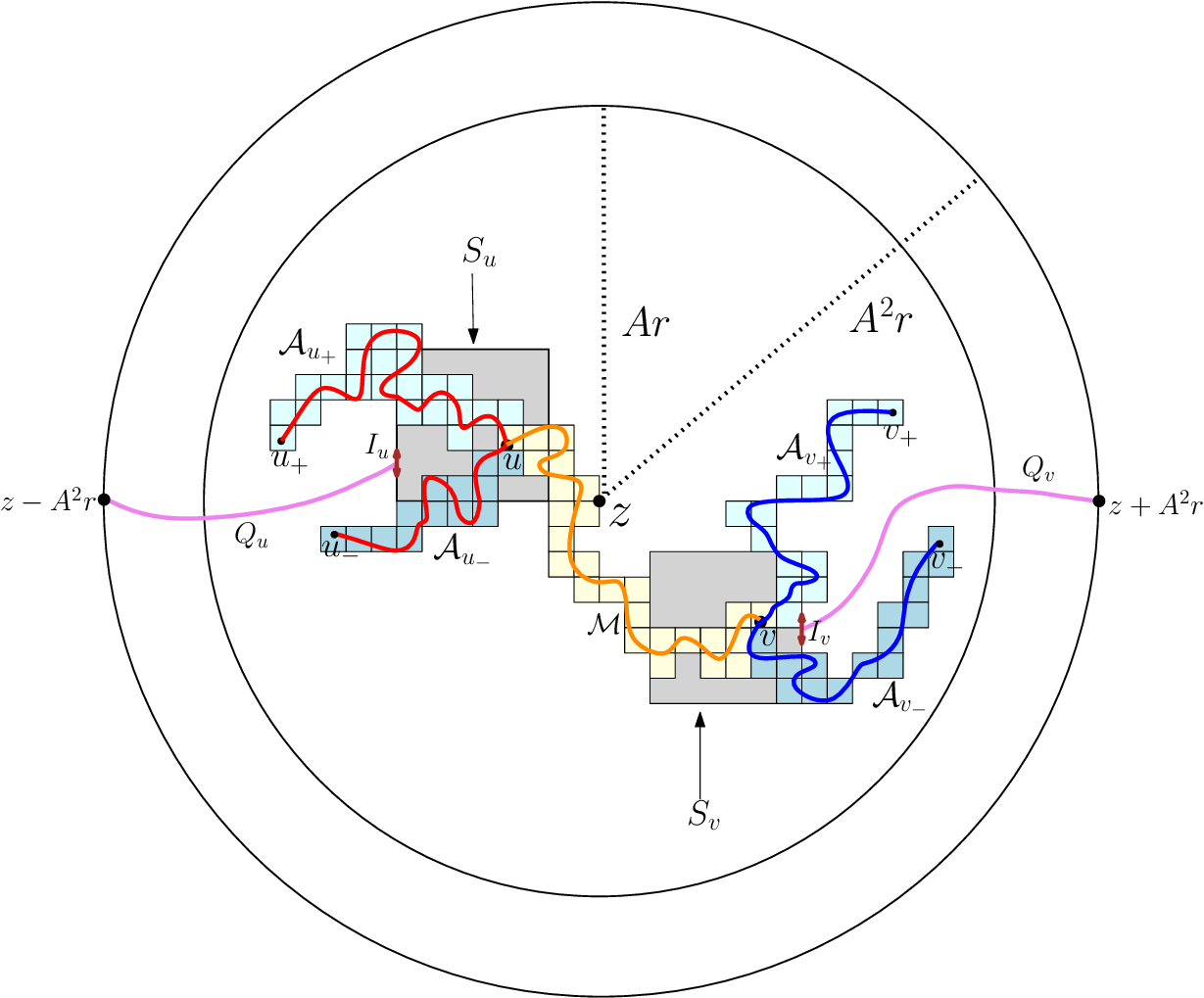}
  \caption{The setup in Proposition \ref{prop:definition_of_F_r(z)}: $S_u$ (resp.\ $S_v$) is a square of size $\delta_1 r$ containing $u$ (resp.\ $v$) depicted in grey colour.  The red (resp.\ blue) paths represent the segments of the paths $P_+,P_-$ from $u_+$ to $u$ and $u_-$ to $u$ (resp.\ $v$ to $v_+$ and $v$ to $v_-$) while the orange curve represents the common segment between $P_+$ and $P_-$.  The union of the red curves is at least $100 \delta_2 r$ away from the union of the blue curves and the two red (resp.\  blue) curves minus $S_u$ (resp.\ $S_v$) are at least $100 \delta_2 r$ apart.  $\mathcal{A}_{u_+}$ (cyan),  $\mathcal{A}_{u_-}$ (light blue),  $\mathcal{M}$ (yellow),  $\mathcal{A}_{v_+}$ (cyan),  $\mathcal{A}_{v_-}$ (light blue) are families of squares covering the red path from $u_+$ to $u$,  the red path from $u_-$ to $u$,  the orange path,  the blue path from $v$ to $v_+$,  and the blue path from $v$ to $v_-$ respectively.  Further, the violet path $Q_u$ (resp.\ $Q_v$) intersects $\mathcal{A}_{u_+} \cup \mathcal{A}_{u_-} \cup \mathcal{M} \cup \mathcal{A}_{v_+} \cup \mathcal{A}_{v_-} \cup S_u$ (resp.\ $\mathcal{A}_{u_+} \cup \mathcal{A}_{u_-} \cup \mathcal{M} \cup \mathcal{A}_{v_+} \cup \mathcal{A}_{v_-} \cup S_v$ ) precisely at the arc $I_u$ (resp.\ $I_v$). The set $\cD_u$ (resp.\ $\cD_v$) is a tube consisting of (not shown) $\delta_3 r$ sized boxes around $Q_u$ (resp.\ $Q_v$).}
  \label{fig:setup2}
\end{figure}
\item \paragraph{\textbf{The intervals $I_u$ and $I_v$: item \eqref{it:boxdist} and items \eqref{it:I1} to \eqref{it:I4}}}

  Since $P_+ \cup P_-\cup B_{br}(\{u_+,u_-,v,v_+,v_-\})$ is a bounded connected set, the unique unbounded component of
  \begin{equation}
    \label{eq:90}
    \CC\cup \{\infty\} \setminus (P_+ \cup P_-\cup B_{br}(\{u_+,u_-,v,v_+,v_-\}))
  \end{equation}
  is a simply connected domain. Now, as a consequence of item \eqref{it:geodesics_separated2}, the point $u$ corresponds to exactly two prime ends on the boundary of this domain; one corresponds to the right side of the Jordan curve $P_+([0,\tau_u^+]) \cup P_-([0,\tau_u^-])$ viewed as being traversed from $u_+$ to $u_-$, while the other prime end corresponds to its left side. Therefore, there exists (see Figure \ref{fig:conncomp}) a random path $\wt{Q}_{u}\colon [0,\infty)\rightarrow \CC$ in the unbounded component of $\mathbb{C} \cup \{\infty\} \setminus (P_+ \cup P_-\cup B_{br}(\{u_+,u_-,v,v_+,v_-\}))$ from $u$ (seen as prime end to the right side of $P_+([0,\tau_u^+]) \cup P_-([0,\tau_u^-])$) to $\infty$. %
  Note that $\wt{Q}_{u}$ can be chosen measurably as a function of the set $P_+ \cup P_-$ and that $\wt{Q}_{u}\setminus \{u\}$ is disjoint from $(P_+ \cup P_-\cup B_{br}(\{u_+,u_-,v_+,v_-, v\})$ and for a small enough (random) $\varepsilon>0$, we have
  \begin{equation}
    \label{eq:91}
    \wt{Q}_{u}((0,\varepsilon))\subseteq \cC_u^2.
  \end{equation}
  One can analogously define a random path $\wt{Q}_{v}$ joining $v$ to $\infty$ such that $\wt{Q}_v\setminus \{v\}$ is disjoint from $(P_+ \cup P_-\cup B_{br}(\{u_+,u_-,v_+,v_-,u\})$, and we additionally have $\wt{Q}_{v}((0,\varepsilon))\subseteq \cC_v^2$.

  Now, consider the last point $\alpha_u$ on $\partial S_u$ visited by $\wt{Q}_{u}$, let $\wt{Q}'_{u}\subseteq \wt{Q}_{u}$ be the subpath of $\wt{Q}_{u}$ from $\alpha_u$ onwards. Note that $\alpha_u\in \cC_u^2$ for any choice of $\delta_2>0$. Note that the Euclidean distance between $\alpha_u$ and $P_+\cup P_-$ is almost surely positive. Therefore, by taking $\delta_2$ to be sufficiently small, we obtain that with arbitrarily high probability, the Euclidean distance between $\wt{Q}_u'$ and $P_+ \cup P_-\cup \wt{Q}_v$ is at least $200\delta_2 r$. %
  As a result, on the above event, we can choose a random interval $\wt{I}_u$ of length equal to $100\delta_2 r$ around $\alpha_u$ such that
  \begin{equation}
    \label{eq:108}
    \dist( B_{100\delta_2 r}(P_+\cup P_-\cup \wt{Q}_v), \wt{I}_u\cup \wt{Q}_u')\geq 100\delta_2 r
  \end{equation}
Now, since $\alpha_u$ is the last point on $\partial S_u$ visited by $\wt{Q}_u$, by choosing a $\delta_3$ much smaller than $\delta_2$, we can also ensure that $\wt{Q}'_{u}$ does not intersect
\begin{displaymath}
    B_{100\delta_2 r}(P_+\cup P_-\cup \wt{Q}_{v}) \cup B_{br}(\{u_+,u_-,v,v_+,v_-\})\cup (B_{\delta_3 r}(S_u)\setminus B_{\delta_2 r}(\wt{I}_u)).
\end{displaymath}
By symmetry, the above steps can be repeated with $\wt{I}_u$ swapped with an analogously defined random interval $\wt{I}_v\subseteq \partial S_v$ and the path $\wt{Q}_u$ swapped with $\wt{Q}_{v}$. Finally, by a pigeonhole argument, conditional on the event $G_r(z)$, with positive probability, we can fix a deterministic interval $I_u\subseteq \wt{I}_u$ (resp.\ $I_v\subseteq \wt{I}_v$), a deterministic point $x_u\in I_u$ (resp.\ $x_v\in I_v$) and a deterministic path $Q_u$ (resp.\ $Q_{v}$) such that the Euclidean length of $I_u$ (resp.\ $I_v$) is equal to $50\delta_2 r$ and with the definition $\cD_u=\mathrm{int}(\bigcup \cS^z_{\delta_3 r}(Q_u))$ (resp.\ $\cD_v=\mathrm{int}(\cS^z_{\delta_3 r}(Q_v))$), all the conditions \eqref{it:obj3} to \eqref{it:obj4}, \eqref{it:boxdist} along with items \eqref{it:I1} to \eqref{it:I4} are satisfied.

\paragraph{\textbf{Probability lower bound, measurability, translation and scale invariance}} Having gone through all the conditions, on combining with \eqref{eq:107}, we obtain that there exists a constant $p_1 \in (0,1/10)$ which is independent of $z$ and $r$ such that $\mathbb{P}(F_r(z)) \geq p_1$.

Finally, that $F_{r}(z)$ is measurable with respect to $h\lvert_{B_{A^2r}(z)}$ viewed modulo an additive constant is true as the condition in \eqref{it:across} is measurable with respect to $h\lvert_{B_{A^2r}(z)}$ modulo additive constant. Further, note that the points $u_-,u_+,v_-,v_+$ and all the sets $\cA_{u_+},\cA_{u_-},\cM,\cA_{v_+},\cA_{v_-}$ lie in $ B_{Ar}(z)\subseteq B_{A^2r}(z)$. It can be verified that when \eqref{it:across} holds, all the conditions in the lemma can be verified by only revealing $h\lvert_{B_{A^2r}(z)}$ viewed modulo an additive constant (see the proof of \cite[Lemma 4.8]{GPS20}). The translation and scale invariance of the family $\{F_r(z)\}_{z\in \CC,r>0}$ follows immediately from the translation and scale invariance of $\{G_r(z)\}_{z\in \CC,r>0}$. Indeed, to define the objects in items \eqref{it:obj1}, \eqref{it:obj2}, \eqref{it:obj3}, we can first work with $(z,r)=(0,1)$ and then just subsequently just translate and scale the obtained objects for general $(z,r)$.
  \end{proof}

  \textbf{From now on, we shall always work with fixed parameters $b,A,\delta_1,\delta_2,\delta_3$ such that the result holds-- these parameters will not be changed for the rest of the paper.}

  \begin{remark}
     We now describe a slight modification of Proposition \ref{prop:definition_of_F_r(z)} that can be obtained via the same argument. Starting with a $\Phi\in (0,1/10)$ and a $p_2\in (0,1)$ possibly depending on $\Phi$, one can choose $A>1$ depending only on $p_2$ such that the following holds. Let $\{H_r(z)\}_{z\in \CC,r>0}$ be an additional translation and scale invariant family of events $\{H_r(z)\}_{z\in \CC,r>0}$ measurable with respect to $h\lvert_{B_{A^2 r}(z)}$ viewed modulo an additive constant such that $\PP(H_r(z))>p_2$. Then we can choose the parameters $\delta_3,\delta_2,\delta_1,b$ in Proposition \ref{prop:definition_of_F_r(z)} to be small enough such that the event $F_r(z)\cap H_r(z)$ has strictly positive probability (as opposed to just demanding that $F_r(z)$ have positive probability, as is shown in Proposition \ref{prop:definition_of_F_r(z)}). As a result, we can legitimately replace $F_r(z)$ by the more restrictive event $F_r(z)\cap H_r(z)$ for defining the events $E_r(z)$ later (see Section \ref{sec:bump_functions}) and thereby for constructing the $\scX$s. This will not be required in this paper but can, in principle, be used to ensure that the $\scX$s that are constructed (see Section \ref{sec:proof-outline}) along a geodesic $\Gamma_{\bz,\bw}$ satisfy additional properties (encoded via $H_r(z)$) depending on the desired application.
  \end{remark}
  Proposition \ref{prop:definition_of_F_r(z)} yields deterministic points $u_+,v_+,u_-,v_-$ and deterministic families of squares $\cA_{u_+},\cA_{u_-},\cM,\cA_{v_+},\cA_{v_-}$ which depend on the choice of $z\in \CC$ and the scale parameter $r>0$. For simplicity, we shall define $V_r(z)$ as the interior of the set
\begin{equation}
  \label{eq:26}
  \cA_{u_+}\cup\cA_{u_-}\cup\cM\cup\cA_{v_+}\cup\cA_{v_-} \cup S_u \cup S_v\subseteq B_{Ar}(z),
\end{equation}
where we emphasize that the set on the left hand side above depends on the choice of $z,r$. To summarise, $V_r(z)$ is a union of certain squares from the family $\cS^z_{\delta_3 r}(\mathbb{C})$ along with the two $\delta_1$-sized squares $S_u,S_v$. Importantly, on the positive probability event $F_r(z)$ that we just defined, we have $P_-\cup P_+\subseteq V_r(z)$. For convenience, we also define
\begin{equation}
  \label{eq:95}
  O_r(z)= \cD_u\cup V_r(z)\cup \cD_v.
\end{equation}
We now state a basic geometric property of the set $V_r(z)$ that will be useful for us shortly.
\begin{lemma}
  \label{lem:11}
Fix $z\in \CC, r>0$. On the event $F_r(z)$, the following hold.
  \begin{enumerate}
  \item  Any path $\xi\subseteq B_{\delta_2 r}(V_r(z))$ from $B_{\delta_2 r}(I_u)$ to $B_{\delta_2 r}( (\cM\setminus S_u)\cup S_v\cup \cA_{v_+}\cup \cA_{v_-})$ must necessarily intersect $B_{\delta_2 r}(\cA_{u_-}\cup \cA_{u_+})$.
  \item Any path $\xi\subseteq B_{\delta_2 r}(V_r(z))$ from $B_{\delta_2 r}(I_v)$ to $B_{\delta_2 r}( (\cM\setminus S_v)\cup S_u\cup \cA_{u_+}\cup \cA_{u_-})$ must necessarily intersect $B_{\delta_2 r}(\cA_{v_-}\cup \cA_{v_+})$.
  \end{enumerate}
\end{lemma}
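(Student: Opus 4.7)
The plan is a planar-topology argument which plays the defining property of the connected component $\cC_u^2$ from item~\eqref{it:positive_distance_from_marked_points} of Proposition~\ref{prop:definition_of_F_r(z)} against the quantitative separation conditions built into $F_r(z)$. Only claim~(1) needs to be proved, since (2) follows by interchanging the roles of $u$ and $v$. Fix $\xi$ as in the statement and suppose for contradiction that $\xi$ never enters $B_{\delta_2 r}(\cA_{u_+}\cup \cA_{u_-})$. By item~\eqref{it:boxes}, the arcs $P_+([0,\tau_u^+])\cup P_-([0,\tau_u^-])$ are contained in $\overline{\cA_{u_+}\cup \cA_{u_-}}$; since the open $\delta_2 r$-neighbourhood of a set coincides with that of its closure, $\xi$ also avoids the $\delta_2 r$-neighbourhood of $P_+([0,\tau_u^+])\cup P_-([0,\tau_u^-])$.

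By item~\eqref{it:boxdist} the initial point satisfies $\xi(0)\in B_{\delta_2 r}(I_u)\subseteq \cC_u^2$. I would next verify that the terminal set $B_{\delta_2 r}((\cM\setminus S_u)\cup S_v\cup \cA_{v_+}\cup \cA_{v_-})$ is disjoint from $\overline{\cC_u^2}$: item~\eqref{it:Mcomp} supplies $\dist(\cC_u^2,\cM\setminus S_u)\geq 2\delta_2 r$, while combining item~\eqref{it:Mdisconn} with the inclusion $\cC_u^2\subseteq B_{2\delta_2 r}(S_u)$ yields $\dist(\cC_u^2,S_v\cup \cA_{v_+}\cup \cA_{v_-})\geq 198\delta_2 r$. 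Consequently, setting $\sigma:=\inf\{t:\xi(t)\notin \cC_u^2\}$, the time $\sigma$ occurs strictly before $\xi$ terminates and continuity places $\xi(\sigma)\in\partial \cC_u^2$.

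It remains to identify where on $\partial \cC_u^2$ the exit occurs. Because $\cC_u^2$ is a connected component of the open set $B_{2\delta_2 r}(S_u)\setminus (P_+([0,\tau_u^+])\cup P_-([0,\tau_u^-]))$, its boundary is contained in $(P_+([0,\tau_u^+])\cup P_-([0,\tau_u^-]))\cup\partial B_{2\delta_2 r}(S_u)$. Landing on the first piece would place $\xi(\sigma)$ in $B_{\delta_2 r}(\cA_{u_+}\cup \cA_{u_-})$, violating the standing assumption, so $\xi(\sigma)\in \partial B_{2\delta_2 r}(S_u)$, that is, $\dist(\xi(\sigma),S_u)=2\delta_2 r$. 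Choosing $w\in V_r(z)$ with $|\xi(\sigma)-w|<\delta_2 r$, the triangle inequality forces $\dist(w,S_u)\in(\delta_2 r,3\delta_2 r)$ and hence $w\notin S_u$; the standing assumption forces $w\notin \cA_{u_+}\cup \cA_{u_-}$; and the $100\delta_2 r$-separation from item~\eqref{it:Mdisconn} forces $w\notin S_v\cup \cA_{v_+}\cup \cA_{v_-}$. Therefore $w\in\cM\setminus S_u$, giving $\xi(\sigma)\in \overline{\cC_u^2}\cap B_{\delta_2 r}(\cM\setminus S_u)$, which contradicts item~\eqref{it:Mcomp}. The argument is entirely combinatorial-geometric and the only real obstacle is bookkeeping: making sure that the correct one of items~\eqref{it:geodesics_separated}--\eqref{it:boxdist} of Proposition~\ref{prop:definition_of_F_r(z)} is invoked to rule out each alternative at each step.
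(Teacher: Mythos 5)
Your proposal is correct and follows essentially the same route as the paper's proof: both argue by contradiction assuming $\xi$ avoids $B_{\delta_2 r}(\cA_{u_+}\cup\cA_{u_-})$, observe that $\xi$ begins in $\cC_u^2$ (via item~\eqref{it:boxdist}), and exploit the separation encoded in items~\eqref{it:Mdisconn}, \eqref{it:Mcomp} together with the fact that $\partial\cC_u^2$ can only live on $\Gamma:=P_+([0,\tau_u^+])\cup P_-([0,\tau_u^-])$ or on $\partial B_{2\delta_2 r}(S_u)$. The only cosmetic difference is in how the final contradiction is extracted: the paper stops $\xi$ at the first entry time into $B_{\delta_2 r}(\cM\setminus S_u)$ and invokes the existence of a connected component $G\neq\cC_u^2$ of $B_{2\delta_2 r}(S_u)\setminus\Gamma$ containing that point, whereas you stop $\xi$ at the exit time from $\cC_u^2$, identify the exit point on $\partial B_{2\delta_2 r}(S_u)$, and show by an explicit elimination of the pieces of $V_r(z)$ that this exit point lies in $B_{\delta_2 r}(\cM\setminus S_u)$, directly contradicting item~\eqref{it:Mcomp}.
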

\begin{proof}
By symmetry, it suffices to simply prove that the first point holds. With the aim of eventually obtaining a contradiction, assume that there exists a path $\xi\colon [a,b]\rightarrow \CC$ such that $\xi\subseteq B_{\delta_2 r}(V_r(z))$ from $B_{\delta_2 r}(I_u)$ to $B_{\delta_2 r}((\cM\setminus S_u)\cup S_v\cup \cA_{v_+}\cup \cA_{v_-})$ which does not intersect $B_{\delta_2 r}(\cA_{u_-}\cup \cA_{u_+})$ and thus satisfies $\xi\subseteq B_{\delta_2 r}(S_u\cup \cM\cup \cA_{v_+}\cup \cA_{v_-})$. Now, recall that by \eqref{it:Mdisconn} in Proposition \ref{prop:definition_of_F_r(z)}, we have
  \begin{equation}
    \label{eq:58}
    \mathrm{dist}(B_{\delta_2 r}(S_u), B_{\delta_2 r}(S_v\cup \cA_{v_-}\cup \cA_{v_+})) \geq 98\delta_2 r>0,
  \end{equation}
  and as a result, by replacing $\xi$ by a shorter subsegment, we can assume that $\xi$ starts at $B_{\delta_2 r}(I_u)$, ends in $B_{\delta_2 r}(\cM\setminus S_u)$ and satisfies $\xi\subseteq B_{\delta_2 r}(S_u\cup \cM)$. 

  Now, recall the connected components $\cC_u^1$ and $\cC_u^2$ from \eqref{it:positive_distance_from_marked_points} in Proposition \ref{prop:definition_of_F_r(z)} and also the conditions \eqref{it:Mcomp}, \eqref{it:boxdist} in the definition of $F_r(z)$. The goal now is to show that there exists a subpath $\xi'\subseteq \xi$ from a point in $\cC_u^2$ to a point in some connected component $G$ of $B_{2\delta_2 r}(S_u) \setminus (P_+([0,\tau_u^+]) \cup P_-([0,\tau_u^-]))$ that is distinct from $\mathcal{C}_u^2$ such that $\xi'\subseteq B_{\delta_2 r}(S_u)$. %
We now define $b_*=\inf_{s\in [a,b]}\{\xi(s)\in B_{\delta_2 r}(\cM\setminus S_u)\}$ and set $\xi'=\xi\lvert_{[a,b_*]}$; note that we necessarily have $\xi'\subseteq B_{\delta_2 r}(S_u)$. Further, since $\xi$ is assumed to not intersect $B_{\delta_2 r}(\cA_{u_-}\cup \cA_{u_+})$, $\xi'$ cannot intersect $P_+([0,\tau_u^+])\cup P_-([0,\tau_u^-])$. As a result, $\xi'\subseteq B_{2\delta_2 r}(S_u)\setminus (P_+([0,\tau_u^+])\cup P_-([0,\tau_u^-]))$. Now, by the definition of $b_*$ and $\cM$, we must have $\xi(b_*)\in B_{2\delta_2 r}(\cM\setminus S_u)\cap B_{2\delta_2 r}(S_u)$ and thus by \eqref{it:Mcomp}, we must have $\xi(b_*)\in G$ for some connected component $G$ of $B_{2\delta_2 r}(S_u) \setminus (P_+([0,\tau_u^+]) \cup P_-([0,\tau_u^-]))$ that is distinct from $\mathcal{C}_u^2$. Therefore $\xi'$ is a path from $B_{\delta_2 r}(I_u)\subseteq \cC_u^2$ (see \eqref{it:boxdist}) to a point in $G$. However, this contradicts that $G$ and $\cC_u^2$ are distinct connected components of $B_{2\delta_2 r}(S_u)\setminus (P_+([0,\tau_u^+])\cup P_-([0,\tau_u^-]))$. Thus our initial assumption that a path $\xi$ exists must be incorrect, and this yields a contradiction, thereby completing the proof.
\end{proof}

\subsection{Constructing deterministic tubes and proving that $F_r(z)$ occurs at many places with high probability.}\label{sec:tubes}
The event $F_r(z)$ that we just defined only occurs with positive probability. However, the independence result Proposition~\ref{thm:theorem_uniqueness_lqg_metric} requires that we work with an event $E_r(z)$ of probability sufficiently close to $1$. For this reason, we will need to boost the probability of $F_r(z)$ and this will be done by dividing the space into distinct regions and looking at the event that $F_r(z)$ occurs at least somewhere. Such an approach was also used in the work \cite{GM21}, and thus, to improve readability for readers already familiar with \cite{GM21}, we use a very similar set up.
\subsubsection{Proving that $F_r(z)$ occurs at many places with high probability.}
  Let $p_1 \in (0,1)$ be such that $\mathbb{P}(F_r(z)) \geq p_1$ for all $z \in \mathbb{C} ,  r>0$,  where $p_1$ is independent of $z$ and $r$ (see Proposition~\ref{prop:definition_of_F_r(z)}). Fix $r>0$ and $\bp ,  \delta \in (0,1)$ (to be chosen in a way which does not depend on $z,r$).  We shall now use the following basic result from \cite{GM21}.

\begin{lemma}
(\cite[Lemma~2.7]{GM21})
Let $h$ be a whole plane GFF and fix $s>0$.  Let $n \in \mathbb{N}$ and let $\mathcal{Z}$ be a collection of $n$ points in $\mathbb{C}$ such that $|z-w| \geq 2(1+s)$ for all distinct points $z,w \in \mathcal{Z}$.  For $z \in \mathcal{Z}$, let $E_z$ be an event measurable with respect to $\sigma(h|_{B_1(z)}-h_{1+s}(z))$.  Then, for all $p,q \in (0,1)$,  there exists $n_* = n_*(s,p,q) \in \mathbb{N}$ such that if $\mathbb{P}(E_z) \geq p$ for all $z \in \mathcal{Z}$,  then $\mathbb{P}\left(\cup_{z \in \mathcal{Z}} E_z \right) \geq q$ for all $n \geq n_*$.
\end{lemma}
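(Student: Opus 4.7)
The plan is to use the Markov decomposition of the whole-plane GFF to turn the events $E_z$ into almost-independent events, then apply a second-moment/concentration argument.

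First I would decompose $h$ using the Markov property relative to the union of balls $A := \bigcup_{z \in \mathcal{Z}} B_{1+s}(z)$. Since the balls $B_{1+s}(z)$ are pairwise disjoint by the separation hypothesis, we can write $h = h^{\circ} + h^{\mathrm{harm}}$, where $h^{\circ}$ consists of mutually independent zero-boundary GFFs $\mathring{h}_z$ on each $B_{1+s}(z)$ (extended by zero outside), and $h^{\mathrm{harm}}$ is the complementary piece, harmonic inside every $B_{1+s}(z)$ and independent of $h^{\circ}$. The key identity is that $(h^\circ)_{1+s}(z) = 0$ because $\mathring{h}_z$ has zero boundary values, while by the mean value property $h^{\mathrm{harm}}_{1+s}(z) = h^{\mathrm{harm}}(z)$; therefore
\[
h\lvert_{B_1(z)} - h_{1+s}(z) \;=\; \mathring{h}_z\lvert_{B_1(z)} \;+\; \bigl( h^{\mathrm{harm}}\lvert_{B_1(z)} - h^{\mathrm{harm}}(z)\bigr).
\]

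Next I would condition on the exterior $\sigma$-algebra $\mathcal{F}^{\mathrm{ext}} := \sigma(h\lvert_{\mathbb{C} \setminus A})$. This conditioning fixes all the harmonic extensions simultaneously, and the residual zero-boundary fields $\mathring{h}_z$ remain mutually independent. Consequently, conditionally on $\mathcal{F}^{\mathrm{ext}}$, the events $E_z$ are \emph{independent}, each being a measurable function of $\mathring{h}_z\lvert_{B_1(z)}$ with a fixed deterministic shift. Writing $q_z := \mathbb{P}(E_z \mid \mathcal{F}^{\mathrm{ext}})$, I get the conditional product bound
\[
\mathbb{P}\Bigl( \bigcap_{z \in \mathcal{Z}} E_z^c \,\Big|\, \mathcal{F}^{\mathrm{ext}} \Bigr) \;=\; \prod_{z \in \mathcal{Z}} (1 - q_z) \;\leq\; \exp\Bigl( -\sum_{z \in \mathcal{Z}} q_z \Bigr),
\]
and by the tower property $\mathbb{E}[q_z] = \mathbb{P}(E_z) \geq p$, so $\mathbb{E}\bigl[\sum_z q_z\bigr] \geq n p$.

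To close the argument I would use a concentration estimate showing that $\sum_z q_z$ is large with probability tending to $1$ as $n \to \infty$. Each $q_z$ is a bounded function of $h^{\mathrm{harm}}\lvert_{B_1(z)} - h^{\mathrm{harm}}(z)$, and this harmonic-minus-average piece has variance bounded uniformly in $z$ by a constant depending only on $s$ (it is a centered random harmonic function on $B_1(z)$ with covariance controlled by the Green's function of $B_{1+s}(z)$). The covariances $\mathrm{Cov}(q_z, q_w)$ for distinct $z,w$ are controlled by the covariances of the corresponding harmonic pieces on disjoint balls separated by at least $2(1+s)$, which are uniformly bounded. A Chebyshev-type second-moment estimate then gives $\mathbb{P}(\sum_z q_z < np/2) \to 0$, from which $\mathbb{E}\bigl[\exp(-\sum_z q_z)\bigr] \to 0$ by dominated convergence, yielding the claim.

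The main obstacle is the concentration step: one must argue that although the $q_z$ are not independent (they share the correlated harmonic field $h^{\mathrm{harm}}$), their covariances are small enough—uniformly over all admissible configurations $\mathcal{Z}$—so that $\mathrm{Var}(\sum q_z) = o(n^2)$. For this I would use that the harmonic extension into each ball $B_{1+s}(z)$ is determined by the field on the complement, and bound $\mathrm{Cov}(q_z, q_w)$ in terms of the joint covariance of the harmonic extensions, exploiting that these come from the whole-plane GFF restricted to disjoint boundary circles. This, combined with the uniform variance bound on each harmonic piece, gives the required concentration.
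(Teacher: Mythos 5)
This lemma is imported directly from \cite[Lemma~2.7]{GM21}; the paper does not re-prove it, so there is no in-paper proof to compare against. Your overall framework is the right one and is presumably close in spirit to the source: the Markov decomposition of the whole-plane GFF into independent zero-boundary pieces on the disjoint balls $B_{1+s}(z)$ plus a harmonic remainder, the observation that $h_{1+s}(z) = h^{\mathrm{harm}}(z)$ (mean value property plus vanishing boundary trace of the zero-boundary part), the conditional independence of the $E_z$ given $\mathcal{F}^{\mathrm{ext}}$, and the reduction to showing $\mathbb{E}\bigl[\exp(-\sum_z q_z)\bigr] \to 0$ uniformly in the configuration.

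The genuine gap is the concentration step, and it is more serious than you indicate. You write that the covariances $\mathrm{Cov}(q_z,q_w)$ are ``controlled by the covariances of the corresponding harmonic pieces\ldots which are uniformly bounded'' and that a Chebyshev estimate then finishes the argument. That does not work: a uniform bound $|\mathrm{Cov}(q_z,q_w)|\le C$ only gives $\mathrm{Var}(\sum_z q_z) = O(n^2)$, which is useless for Chebyshev against a mean of order $np$. You do acknowledge in the final paragraph that what is actually needed is $\mathrm{Var}(\sum_z q_z) = o(n^2)$ uniformly over admissible configurations, but the route you sketch (``disjoint boundary circles'' plus ``uniform variance bound on each harmonic piece'') does not supply it. What is actually required is (i) a quantitative \emph{decay} of the cross-covariance of the centered harmonic pieces $(h^{\mathrm{harm}}|_{B_1(z)} - h^{\mathrm{harm}}(z))$ and $(h^{\mathrm{harm}}|_{B_1(w)} - h^{\mathrm{harm}}(w))$ in the separation $R = |z-w|$ — a Taylor expansion of $-\log|x-y|$ gives $O(R^{-2})$; (ii) a transfer of this Gaussian covariance decay to a covariance bound on the \emph{arbitrary bounded functionals} $q_z, q_w$, which requires a Gebelein/maximal-correlation-type inequality for the infinite-dimensional Gaussian vectors in question, and is not automatic because the marginal variances degenerate near the centers; and (iii) a packing estimate, using the minimum separation $2(1+s)$, to sum the pair covariances and obtain $O(n\log n)$. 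None of these steps is carried out, and step (ii) in particular is nontrivial. A cleaner alternative that avoids the delicate variance computation — and is likely closer to the actual argument in \cite{GM21} — is to greedily thin $\mathcal{Z}$ to a sub-collection of a fixed size $k=k(p,q)$ with pairwise separation $R$, noting that such a sub-collection exists once $n$ exceeds a threshold depending only on $k,R,s$ (a packing argument), and then to show that $k$ events at pairwise separation $R$ become asymptotically independent as $R\to\infty$; this reduces the decorrelation issue to finitely many balls with arbitrarily large separation, where a coupling or total-variation estimate suffices.
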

We define $\rho :=  \delta (n_*)^{-1}/24$ and we choose $n_* \in \mathbb{N}$ to be large enough so that the conclusion of the above lemma is satisfied with $s = \frac{1}{2}, p_1$ in place of $p$,  and $1-\frac{\delta (1-\mathbbm{p})}{16\pi}$ in place of $q$. In addition, we assume that $n_*$ is large enough so that $|e^{6i\rho}-1|=2\sin(3\rho)> 3\rho$. With $i=\sqrt{-1}$, we now define the set of points
\begin{align*}
\mathcal{Z}:= \left\{r \exp\left(-i k(6\rho)\right) : k \in [\![1,\pi (3\rho)^{-1}]\!] \right\} \subseteq \partial B_r(0).
\end{align*}
Note that the distance between any two distinct points of $\mathcal{Z}$ is at least $|e^{6i\rho}-1| r >3\rho r$. We now have the following.

\begin{lemma}\label{lem:event_occurs_at_many_places}
With probability at least $1 - \frac{1-\bp}{4}$,  each arc of $\partial B_r(0)$ with Euclidean length at least $\frac{\delta r}{2}$ contains a point $z \in \mathcal{Z}$ such that $F_{\frac{\rho r}{A^2}}(z)$ occurs.
\end{lemma}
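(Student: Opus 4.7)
The plan is to exploit the near-independence of the events $\{F_{\rho r/A^2}(z)\}_{z \in \mathcal{Z}}$ provided by the cited lemma from \cite{GM21}, combined with a union bound over a sufficiently fine angular partition of $\partial B_r(0)$.

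First I would note that by Proposition \ref{prop:definition_of_F_r(z)} each $F_{\rho r/A^2}(z)$ is measurable with respect to $h|_{B_{\rho r}(z)}$ viewed modulo an additive constant and has probability at least $p_1$. Since $F_{\rho r/A^2}(z)$ is invariant under adding a constant to $h|_{B_{\rho r}(z)}$, it is in particular measurable with respect to $\sigma(h|_{B_{\rho r}(z)} - h_{3\rho r/2}(z))$; after rescaling by $\rho r$ and appealing to the translation plus scale invariance of the whole-plane GFF modulo an additive constant, this is precisely the form of measurability required to invoke the cited GM21 lemma with $s=1/2$. The separation hypothesis of that lemma (pairwise distance at least $2(1+s) = 3$ between centers at the unit scale, hence at least $3\rho r$ at the original scale) is guaranteed by the inequality $2\sin(3\rho) > 3\rho$ built into the choice of $n_*$.

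Second, I would fix a partition of $\partial B_r(0)$ into $N$ consecutive arcs $A_1,\dots,A_N$ of equal angular length on the order of $\delta/4$, chosen so that (i) each $A_j$ contains at least $n_*$ consecutive points of $\mathcal{Z}$ (this is ensured by $\rho = \delta/(24 n_*)$, since consecutive points of $\mathcal{Z}$ are separated by angle $6\rho = \delta/(4n_*)$) and (ii) every arc of $\partial B_r(0)$ of length at least $\delta r/2$ contains at least one $A_j$ in full. Applying the cited lemma inside each $A_j$ with $p = p_1$ and $q = 1 - \delta(1-\bp)/(16\pi)$ then gives
\[
\mathbb{P}\left(\bigcup_{z \in \mathcal{Z} \cap A_j} F_{\rho r/A^2}(z)\right) \geq 1 - \frac{\delta(1-\bp)}{16\pi}.
\]
A union bound over the $N$ partition arcs, combined with $N\delta$ being at most a universal multiple of $\pi$, then produces the desired probability bound $1 - (1-\bp)/4$ (possibly after tuning the precise proportionality constant in the partition size), and the conclusion follows since every arc of length at least $\delta r/2$ contains some $A_j$ in full.

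The hard part of the argument has already been done upstream, namely in Proposition \ref{prop:definition_of_F_r(z)} (producing $F_r(z)$ with uniformly positive probability and the correct locality property) and in the cited independence result from \cite{GM21}. What remains is purely combinatorial bookkeeping of the partition constants, and the only minor analytic subtlety is verifying that measurability modulo an additive constant implies the specific measurability form required by the cited lemma, which follows directly from invariance of $F_{\rho r/A^2}(z)$ under the relevant shifts.
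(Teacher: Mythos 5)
Your proof is correct and follows essentially the same route as the paper: translating the locality-modulo-additive-constant of $F_{\rho r/A^2}(z)$ into the measurability form required by \cite[Lemma 2.7]{GM21}, applying that lemma over arcs containing $n_*$ points of $\mathcal{Z}$ (with the separation condition checked via $2\sin(3\rho)>3\rho$), and finishing with a union bound over $O(\delta^{-1})$ arcs of length $\delta r/4$ so that every arc of length at least $\delta r/2$ contains one. The only freedom is in the exact proportionality constants, which you correctly flag as needing to be tuned to hit the target bound $1-\frac{1-\bp}{4}$.
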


\begin{proof}
To begin, note by Proposition \ref{prop:definition_of_F_r(z)} that the events $F_{\frac{\rho r}{A^2}}(z)$ for $z \in \mathcal{Z}$ all have probability at least $p_1$ and are determined by the field $(h-h_{3\rho r / 2}(z))|_{B_{\rho r}(z)}$ by the locality of the LQG metric (Property \ref{it:locality}).  Moreover, by the definition of $\rho$, each arc $I \subseteq \partial B_r(0)$ with Euclidean length at least $\frac{\delta r}{4}$ satisfies $|\mathcal{Z} \cap I| \geq n_*$.  Therefore it follows by applying the previous lemma with the whole plane GFF $h(\frac{\cdot}{\rho r})-h_{\rho r}(0)$ in place of $h$ (see Section~\ref{sec:gaussian-free-field}) that for each such arc $I$, it holds that
\begin{align}\label{eqn:event_occurs_with_high_probability}
\mathbb{P}\left[\text{there exists} \,\,  z \in \mathcal{Z}\cap I \,\,\text{such that} \,\,F_{\frac{\rho r}{A^2}}(z) \,\,\text{occurs}\right] \geq 1 - \frac{\delta (1-\bp)}{16\pi}.
\end{align}
Furthermore we can choose at most $4 \pi \delta^{-1}$ arcs of $\partial B_r(0)$ with Euclidean length $\frac{\delta r}{4}$ in such a way that each arc of $\partial B_r(0)$ with Euclidean length at least $\frac{\delta r}{2}$ contains one of these arcs.  Hence,  the claim of the lemma follows by combining ~\eqref{eqn:event_occurs_with_high_probability} with a union bound.
\end{proof}

\subsubsection{Constructing deterministic tubes.}
\label{sec:manytubes}
Via Lemma \ref{lem:event_occurs_at_many_places}, we have ensured that with high probability, the events $F_{\rho r/A^2}(z)$ occur for many points $z\in \mathcal{Z}$. As discussed in Section~\ref{sec:gamma-lqg-metric}, recall that our goal later would be to argue that with high probability,  the $\scX$s in fact exist along a geodesic $\Gamma_{\bz,\bw}$ for fixed points $\bz,\bw$ which are not too close. In order to do so, we will precisely use the set up above and will want to route a geodesic to go through the $\scX$s guaranteed by the events $F_{\rho r/A^2}(z)$ for $z\in \mathcal{Z}$. To do so, we will need to define a set of deterministic tubes linking together different such $\scX$s through which we will finally guide the geodesic $\Gamma_{\bz,\bw}$ to go through. The plan now is to introduce this family of tubes, and to do so, we will need to introduce various deterministic tubes.

\begin{figure}
  \centering
  \includegraphics[width=0.9\linewidth]{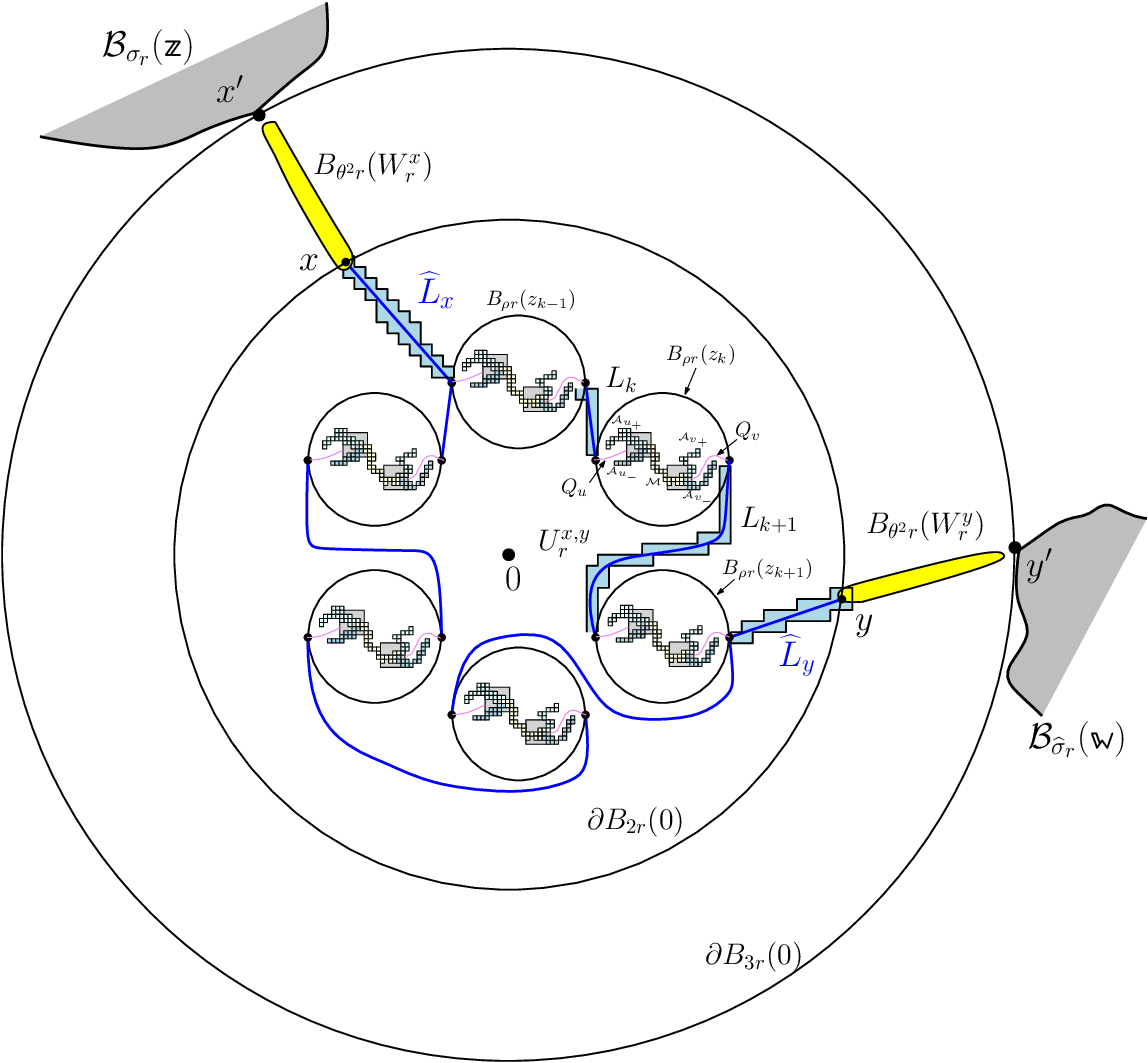}
  \caption{The setup in Section~\ref{sec:tubes}, this figure should be compared with \cite[Figures 11, 12]{GM21}: We grow LQG metric balls centered at $\bz$ and $\bw$ respectively and stop them at the first time that they intersect $\partial B_{3r}(0)$ (grey regions).  Then we construct thin tubes (yellow regions) which connect the aforementioned metric balls with $\partial B_{2r}(0)$.  Further,  we use pairwise disjoint tubes (light blue regions) which connect pairwise disjoint Euclidean balls such that each one of these balls contain a $\scX$ configuration (light grey).  Within each ball containing a $\scX$ configuration,  we construct disjoint paths (red colour) which connect the $\scX$ configuration with the light blue tubes.}
  \label{fig:detailedchi}
\end{figure}
\subsubsection*{The tubes $L_k$ connecting $\partial B_{\rho z}(z)$ for $z\in \mathcal{Z}$}
\label{sec:paths-l_k}
We enumerate $\mathcal{Z} = \{z_1,\cdots,z_N\}$,  where $N:=\lfloor \pi (3 \rho)^{-1} \rfloor$ and $z_k :=\exp\left(-ik(6 \rho)\right)$.   Also, for $n\in \ZZ\setminus [\![1,N]\!]$, set $z_n=z_k$, where $k\in [\![1,N]\!]$ and $n-k$ is divisible by $N$.  For $k \in [\![1,N]\!]$ we choose in a deterministic manner a piecewise linear path $L_k'$ from $z_{k-1}+\rho r$ to $z_k - \rho r$ which is contained in $\A_{(1-4\rho)r ,  (1+4\rho)r}(0)$ and such that the following are satisfied.  %
\begin{enumerate}
\item \label{it:tubel1}The paths $L_k'$s do not intersect (see Figure \ref{fig:detailedchi}) any of the (open) balls $B_{\rho r}(z)$ for $z \in \mathcal{Z}$.%
\item \label{it:tubel2} The tubes $L_k$ defined by $L_k = \mathrm{int}(\bigcup\cS^z_{\delta_3 \rho r / A^2}(L_k'))$ do not intersect the balls $B_{3\rho r/4}(z)$ for any $z\in \mathcal{Z}$.
\item \label{it:tubel3} For $k_1\neq k_2\in [\![1,N]\!]$, we have $\dist(L_{k_1},L_{k_2})>\rho r$.
\end{enumerate}

For $n\in \ZZ\setminus [\![1,N]\!]$, set $L_n=L_k$ where $k\in [\![1,N]\!]$ is such that $n-k$ is divisible by $N$. The tubes $L_k$ defined above shall be used throughout the paper. While these tubes provide a mechanism to go from $z_{k-1}+\rho r$ to $z_k-\rho r$, we will in fact need a way to go much closer to the points $z_{k-1},z_{k}$ since we are working with the events $F_{\rho r/A^2}(z)$ for $z\in \mathcal{Z}$ and thus the $\scX$s occur at a distance of order $\rho r/A^2$ around $z$ which is much smaller than $\rho r$.

\subsubsection*{The tubes $\cD_u, \cD_v$ connecting $\partial B_{\rho r}(z)$ to the $\scX$s}
\label{sec:paths-p_z-rho}
For points $z\in \cZ$, we shall heavily use the deterministic tubes $\cD_u,\cD_v$ that take us from the points $z-\rho r, z+\rho r$ to the coarse-grained $\scX$ coming from the event $F_{\rho r/A^2}(z)$ (recall Proposition \ref{prop:definition_of_F_r(z)}). %

\subsubsection*{The tubes $\widehat{L}_x$, $\widehat{L}_y$ going outward to $x,y\in \partial B_{2r}(0)$ and tubes from there to $3x/2,3y/2$. }
\label{sec:paths-hatl_x-hatl_y}
We shall, in fact, need certain extra paths in order to be able to attract $\Gamma_{\bz,\bw}$ to reach $B_{r}(0)$ in the first place, where we recall that all the balls $B_{\rho r/A^2}(z_k)$ are subsets of $B_{r}(0)$. For this, as in \cite[Section 5.4]{GM21}, we shall grow metric balls from $\bz,\bw$ until they hit $\partial B_{3r}(0)$ at certain random points $x',y'$ for $x',y'\in \partial B_{2r}(0)$ and then attempt to force the geodesic $\Gamma_{x',y'}$ to go via the $\scX$s corresponding to the points $z_k$. To do so, since the points $2x'/3,2y'/3\in \partial B_{2r}(0)$ above are random, we shall need to build highways corresponding to every deterministic pair $x,y\in \partial B_{2r}(0)$, and this is what we do now. This is done in two steps-- first, we construct tubes going from a point $3x/2$ to $x$ and then subsequently connect $x$ further inwards.

For the former, working with a parameter $\theta$, for any $x\in \partial B_{2r}(0)$, locally using $[x,(3/2-\theta)x]$ to denote the line segment joining $x$ and $(3/2-\theta)x$, we define
\begin{align*}
W_r^x = W_r^x(\theta) = \inte\left(\bigcup\cS^z_{\theta r}([x,(3/2-\theta) x])\right) \subseteq \mathbbm{A}_{r,3r}(0).
\end{align*}
Now, starting with a pair of points $x,y \in \partial B_{2r}(0)$ with $|x-y| \geq \delta r$, we define deterministic tubes $\widehat{L}_x',\widehat{L}_y'$ as follows. By possibly relabelling, we can assume that the counterclockwise arc of $\partial B_{2r}(0)$ from $x$ to $y$ is shorter than the clockwise arc.  Let $J_{x,y} \subseteq \partial B_r(0)$ be the clockwise arc from $\frac{x}{2}$ to $\frac{y}{2}$, so that $J_{x,y}$ has length at least $\frac{\delta r}{2}$.  Let $k_x,  k_y \in \ZZ$ be chosen so that $J_{x,y} \cap \mathcal{Z} = \{z_{k_x},z_{k_x+1},\cdots,z_{k_y-1},z_{k_y}\}$.  Let $\widehat{L}_x'$ (resp.\ $\widehat{L}_y'$) be a deterministic smooth path from $x$ to $z_{k_x}-\rho r$ (resp.\ from $z_{k_y} + \rho r$ to $y$ satisfying the following conditions:%
\begin{enumerate}[(a)]
\item \label{it:tubeht1} $\widehat{L}_x'$ (resp.\ $\widehat{L}_y'$) does not intersect any of the open balls $B_{\rho r}(z)$ for $z \in \mathcal{Z}$.
\item \label{it:tubeht2}The tube $\widehat{L}_x= \mathrm{int}(\bigcup\cS^z_{\delta_3 \rho r / A^2}(\widehat{L}_x'))$ (resp.\ $\widehat{L}_y= \mathrm{int}(\bigcup\cS^z_{\delta_3 \rho r / A^2}(\widehat{L}_y'))$) does not intersect any of the open balls $B_{3\rho r/4}(z)$ for $z\in \mathcal{Z}$. 
\item \label{it:tubeht3} Both $\widehat{L}_x$ and $\widehat{L}_y$ lie at Euclidean distance at least $\rho r$ from each other and from each $L_k$ for $k \in [\![k_x + 1 ,  k_y]\!]$.
\end{enumerate}

\subsubsection*{Defining the set $U_r^{x,y}$}
\label{sec:fatt-above-paths}
Recall the sets $O_r(z)$ from \eqref{eq:95}. We now define
\begin{align*}
  U_r^{x,y}:=\bigcup_{k = k_x}^{k_y} O_{\frac{\rho r}{A^2}}(z_k) \cup \widehat{L}_x\cup \widehat{L}_y\cup \bigcup_{k=k_x+1}^{k_y} L_k.
\end{align*}
Also,  since the $O_{\frac{\rho r}{A^2}}(z_k)$'s are connected,  we obtain that $U_r^{x,y}$ is connected and contains $x,y$.  Moreover, given $x,y$, the set $U_r^{x,y}$ is deterministic.  Furthermore combining with Lemma~\ref{lem:event_occurs_at_many_places},  we obtain that with probability at least $1-\frac{1-\bp}{4}$, the arc $J_{x,y}$ contains a point $z \in \mathcal{Z}$ such that $F_{\frac{\rho r}{A^2}}(z)$ occurs and $V_{\frac{\rho r}{A^2}}(z) \subseteq \overline{U_r^{x,y}}$.

\subsection{Constructing bump functions and the event $E_r(z)$}
\label{sec:bump_functions}
Recall from Section \ref{sec:introduction} that we want to construct an event $E_r(z)$ which occurs with high probability, admits $\scX$s of scale $\rho r/A^2$ near $z$, and is regular enough such that if the geodesic $\Gamma_{\bz,\bw,}$ were to venture $O(r)$ close to $z$, then with positive conditional probability, it would actually pass through one of the above $\scX$s-- that is, the event $\mathfrak{C}_{r}^{\bz,\bw}(z)$ would occur. We shall in fact just work with $z=0$-- and for other $z$, the event $E_r(z)$ is defined so that the family $\{E_{r}(z)\}_{z\in \CC,r>0}$ is translation and scale invariant.

The strategy is to proceed by an absolute continuity argument. Indeed, defining the Dirichlet inner product $(\phi,\psi)_\nabla =\int \nabla \phi \nabla \psi$, recall that for a zero boundary GFF $\mathtt{h}$ (see e.g.\ \cite[Theorem 1.29]{BP25} for a definition) on a domain $U\subseteq \CC$ and for a bump function $\phi$ with support inside $U$, the Radon-Nikodym derivative of the field $\mathtt{h}-\phi$ with respect to $\mathtt{h}$ is simply $\exp( -(\mathtt{h},\phi)_\nabla-(\phi,\phi)_\nabla/2)$ (see e.g.\ \cite[Proposition 1.51]{BP25})

Thus, the plan is to define an appropriate bump function $\phi_r^{x,y}$ for $x,y\in \partial B_{2r}(0)$ taking very large values and supported in a small neighbourhood of the set $U_r^{x,y}\cup W_r^x\cup W_r^y$ defined earlier, such that if we let $\bx',\by'$ be the (random) points at which growing metric balls around $\bz,\bw$ first hit $B_{3r}(0)$, then on a high probability event $E_r(0)$ on which abundant $\scX$s exist inside $B_{2r}(0)$ for the metric $D_h$, the following occurs:
\begin{itemize}
\item The above-mentioned $\scX$s for the metric $D_h$ are also $\scX$s for the field $D_{h-\phi}$.
\item The $D_{h-\phi}$ geodesic from $\bx'$ to $\by'$ stays close to the tube $U_r^{2\bx'/3,2\by'/3}\cup W_r^{2\bx'/3}\cup W_r^{2\by'/3}$ and in fact also passes through one of the $\scX$s above.
\end{itemize}
The above would provide a lower bound for the probability that the event $\mathfrak{C}_{r}^{\bz,\bw}(0)$ holds for the field $h-\phi$ given that the event $E_r(0)$ and the event $\{\Gamma_{\bz,\bw}\cap B_{2r}(0)\neq \emptyset\}$ occur. By additionally assuming certain regularity on the Dirichlet energy $(\phi,\phi)_{\nabla}$ along with the absolutely continuity discussed above, this can be translated into obtaining a corresponding lower bound for the conditional probability of the desired event for the field $h$ itself.
\begin{remark}
  \label{rem:comparison}
Both in this paper and in \cite{GM21}, bump functions are used to route $D_{h-\phi}$-geodesics through certain tubes, but the events $\mathfrak{C}_{r}^{\bz,\bw}(z)$ that these are used to force are very different in the two works. Here, bump functions are used to force geodesics to pass through a $\scX$, whereas in \cite{GM21} the task is to route the $D_{h-\phi}$-geodesic $\Gamma_{\bz,\bw}^\phi$ to go very close to two specific points $u,v$ satisfying a certain metric comparison inequality (see \cite[equation (5.1)]{GM21}). However, proving the latter \emph{does not} guarantee that the $D_{h-\phi}$-geodesic $\Gamma^\phi_{u,v}$ merges with $\Gamma_{\bz,\bw}^\phi$ (see \cite[Remark 5.1]{GM21} for a discussion). Indeed, guaranteeing merging statements of the above sort is the goal of the $\scX$-based construction in this paper.
\end{remark}
In view of the above remark, in our definitions of the bump functions, we have tried to use notation similar to \cite[Section 5.5.1]{GM21} to highlight the differences for a reader comparing the two works. We now give formal definitions of the bump functions that we shall use.

\subsubsection{Constructing bump functions.}

Consider small parameters $\delta ,\Delta, \zeta,\alpha \in (0,\frac{\delta_3\rho}{4 A^2}),  \theta \in (0,\zeta/100)$ and large parameters $B,M,\Lambda_0 >1$. The relevant bump functions and the event $E_r(0)$ will be defined in terms of these parameters; shortly, the parameters will be appropriately chosen to ensure that $E_r(0)$ has high probability and that certain natural geometric conditions hold (see Lemma \ref{lem:main_event_high_prob}).

For each $x,y \in \partial B_{2r}(0)$ with $|x-y| \geq \delta r$,  we choose in a deterministic manner depending only on $U_r^{x,y}$ but not on the particular values of $x$ and $y$,  a smooth,  compactly supported bump function $f_r^{x,y} : \mathbb{C} \to [0,1]$ which is identically equal to $1$ on the neighbourhood $U_r^{x,y}$ and is equal to $0$ outside of the neighbourhood $B_{\zeta r }(U_r^{x,y})$. Since each $U_r^{x,y}$ is the interior of a finite union of discrete squares,  there are at most a finite number of choices for $U_r^{x,y}$ as $x$ and $y$ vary which does not depend on $r$.  Therefore combining with the scale invariance of the Dirichlet energy\footnote{That is, if for functions $f,g\colon \CC\rightarrow \RR$, we have $g(x)=f(rx)$ for all $x$ and some $r>0$, then $(g,g)_{\nabla}=(f,f)_{\nabla}$.}, we obtain that the Dirichlet energy $(f_r^{x,y} ,  f_r^{x,y})_{\nabla}$ is bounded above by a constant depending only on $\delta_1 ,\delta_2, \delta_3, \rho ,  A ,  \delta$ and $\zeta$.  %

For all $x \in \partial B_{2r}(0)$, we let $g_r^{x} : \mathbb{C} \to [0,1]$ be a smooth compactly supported function which is identically equal to $1$ on $W_r^x$ and is identically equal to $0$ outside of the neighbourhood $B_{\theta^2 r}(W_r^x) \subseteq B_{3r}(0)$.  We now define two constants $K_f,K_g$ by
\begin{equation}
  \label{eq:1}
  K_f := \frac{1}{\xi} \log\left(\frac{100B}{\alpha \Delta}\right), K_g := K_f + \frac{1}{\xi} \log(M).
\end{equation}
Here, $K_f$ should be thought of as large with $K_g$ being even larger than $K_f$. For all $x,y \in \partial B_{2r}(0)$ with $|x-y| \geq \delta r$,  we define $\phi_r^{x,y}:= K_f f_r^{x,y} + K_g (g_r^x + g_r^y)$.  Since $f_r^{x,y},g_r^x,g_r^y$ are all supported on the annulus $\mathbb{A}_{r / 4 , 3r}(0)$,  the same is true for $\phi_r^{x,y}$.  We then set
\begin{align*}
\mathcal{G}_r:=\{\phi_r^{x,y} : x,y  \in \partial B_{2r}(0),  |x-y| \geq \delta r\} \cup \{\text{zero function}\}.
\end{align*}

\subsubsection{Defining the main event $E_r(0)$.}
\label{sec:maineventforce}
With the parameters $\delta,  \Delta,  B,  \zeta,\alpha,\theta,M,\Lambda_0$ as above,  we now define $E_r(0)$ to be the event on which the following are true.

\begin{enumerate}[(1)]
\item \label{it:abundance} Each arc of $\partial B_r(0)$ with Euclidean length at least $\frac{\delta r}{2}$ contains a point $z \in \mathcal{Z}$ such that $F_{\frac{\rho r}{A^2}}(z)$ occurs.
\item \label{it:lqg_euclidean_distance_small} 
For all $x,y \in \partial B_{2r}(0)$ with $|x-y| < \delta r$,  we have
\begin{align*}
D_h(x',y' ;  \A_{r,4r}(0)) \leq \Delta r^{\xi Q} e^{\xi h_r(0)} \leq D_h(\partial B_{2r}(0),\partial B_{3r}(0)),
\end{align*}
where $x' :=\frac{3}{2}x$ and $y':=\frac{3}{2}y$.
\item \label{it:lqg_distance_upper_bound}
For all $x,y \in \partial B_{2r}(0)$ with $|x-y| \geq \delta r$, we have
\begin{align*}
\sup_{w_1 ,  w_2 \in U_r^{x,y}} D_h\left(w_1 ,  w_2 ; U_r^{x,y}\right) \leq B r^{\xi Q} e^{\xi h_r(0)}.
\end{align*}
\item \label{it:lqg_distance_thin_ngbd}
For all $x,y \in \partial B_{2r}(0)$ with $|x-y| \geq \delta r$,  the $D_h$-length of every continuous path of Euclidean diameter at least $\frac{\delta_3 \rho r}{10^4 A^2}$ which is contained in $B_{2\zeta r}(\partial U_r^{x,y})$ is at least $100 B r^{\xi Q} e^{\xi h_r(0)}$.
\item \label{it:lqg_distance_lower_bound}
For all $z_1,z_2 \in \A_{\frac{r}{4},4r}(0)$ such that $|z_1-z_2| \geq \zeta r$,  we have that $D_h(z_1,z_2 ;  \A_{\frac{r}{4},4r}(0)) \geq \alpha r^{\xi Q} e^{\xi h_r(0)}$.
\item \label{it:lqg_distance_thin_line}
$D_h(\frac{3x}{2},(\frac{3}{2}-\theta)x ; \A_{r,4r}(0)) \leq e^{-\xi K_f} r^{\xi Q} e^{\xi h_r(0)}$ for all $x \in \partial B_{2r}(0)$.
\item \label{it:lqg_distance_thin_tube}
$\sup_{w_1 ,  w_2 \in W_r^x} D_h(w_1 ,  w_2 ; W_r^x) \leq M r^{\xi Q} e^{\xi h_r(0)}$.
\item \label{it:bump_function_dirichlet_energy}
  $|(h,\phi)_{\nabla}| + \frac{1}{2} (\phi,\phi)_{\nabla} \leq \Lambda_0$ for all $\phi \in \mathcal{G}_r$.
\end{enumerate}
As we shall see now, the probability of $E_r(0)$ can be made as high as desired by carefully tuning the parameters. We shall also need some basic geometric properties of the $\scX$s obtained when the event $E_r(0)$ occurs and these are also formalised in the following lemma. %
\begin{lemma}\label{lem:main_event_high_prob}
  We can choose the parameters $\delta,\Delta,\zeta,\alpha \in (0,\frac{\delta_3\rho}{4A^2}),\theta \in (0,\zeta/100),M,\Lambda_0 , B>1$ in a manner depending only on $b,\bp,\delta_1,\delta_2,\delta_3,A$ and $\rho$ in such a way that the following hold.
  \begin{enumerate}
  \item \label{it:lower_bound_on_prob_of_E_r(z)}
    We have $\mathbb{P}(E_r(0)) \geq \mathbbm{p}$ for all $r>0$.
  \item \label{it:measurable}The event $E_r(0)$ is measurable with respect to $\sigma( (h - h_{5r}(0))\lvert_{\A_{r/4,4r}(0)})$.
  \item \label{it:disconnection_of_tubes}
   For all $x,y\in \partial B_{2r}(0)$ with $|x-y|\geq \delta r$, and any $z\in J_{x,y}\cap \cZ$, the corresponding sets $\cA_{u_+},\cA_{u_-},\cM,\cA_{v_-},\cA_{v_+}$ and tubes $\cD_u,\cD_v$  have the following properties.  %
Set $\mathcal{C} = B_{2 \zeta r}(U_r^{x,y} \cup W_r^x \cup W_r^y)$ and suppose that $z = z_k$ for some $k \in [\![k_x ,  k_y]\!]$.  Then the following conditions hold.
\begin{enumerate}[(i)]
\item \label{it:intersecting_branches}
Any path in $\mathcal{C}$ connecting $B_{2 \zeta r}(L_k)$ to $B_{2\zeta r}(L_{k+1})$ has to intersect all of the sets $B_{2\zeta r}(\mathcal{D}_u),  B_{2\zeta r}(\mathcal{D}_v)$ and $B_{2\zeta r}(\mathcal{M} \setminus (S_u \cup S_v))$. 
\item \label{it:intersecting_square}
Any path in $\mathcal{C}$ from $B_{2\zeta r}(\mathcal{D}_u)$ to $B_{2\zeta  r}(\mathcal{A}_{u_+} \cup \mathcal{A}_{u_-})$ (resp.\  $B_{2\zeta r}(\mathcal{D}_v)$ to $B_{2\zeta r}(\mathcal{A}_{v_+} \cup \mathcal{A}_{v_-})$) has to intersect $B_{2\zeta r}(S_u)$ (resp.\  $B_{2\zeta r}(S_v)$).
\item \label{it:intersecting_arms}
Any path in $\mathcal{C}$ from $B_{2\zeta r}(\mathcal{D}_u)$ to $B_{2\zeta r}(V_{\rho r / A^2}(z) \setminus (\mathcal{A}_{u_+} \cup S_u \cup \mathcal{A}_{u_-}))$ (resp.\  $B_{2\zeta r}(\mathcal{D}_v)$ to $B_{2\zeta r}(V_{\rho r / A^2}(z) \setminus (\mathcal{A}_{v_+} \cup S_v \cup \mathcal{A}_{v_-}))$) has to intersect $B_{2\zeta r}(\mathcal{A}_{u_+} \cup \mathcal{A}_{u_-})$ (resp.\  $B_{2\zeta r}(\mathcal{A}_{v_+} \cup \mathcal{A}_{v_-})$). 
\item \label{it:intersecting_square_again}
  Any path in $\mathcal{C}$ from $B_{2\zeta r}(\mathcal{A}_{u_+} \cup \mathcal{A}_{u_-})$ to $B_{2\zeta r}(\mathcal{M} \setminus S_u)$ (resp.\  $B_{2\zeta r}(\mathcal{A}_{v_+} \cup \mathcal{A}_{v_-})$ to $B_{2\zeta r}(\mathcal{M} \setminus S_v)$) has to intersect $B_{2\zeta r}(S_u)$ (resp.\  $B_{2\zeta r}(S_v)$).
    \item \label{it:intersectmid} 
    Any path in $\mathcal{C}$ from $B_{2\zeta r}(\cA_{u_+}\cup \cA_{u_-})$ (resp.\ $B_{2\zeta r}(\cA_{v_+}\cup \cA_{v_-}))$ to $B_{2\zeta r}(\cA_{v_+}\cup S_v\cup \cA_{v_-})$ (resp.\ $B_{2\zeta r}(\cA_{u_+}\cup S_u\cup \cA_{u_-})$) necessarily intersects $B_{2\zeta r}(\cM\setminus S_u)$ (resp.\ $B_{2\zeta r}(\cM\setminus S_v)$).
    \item \label{it:intersect_endpoint}
    Any path in $\mathcal{C}$ from $B_{2\zeta r}(L_k)$ (resp.\ $B_{2\zeta r}(L_{k+1})$) to $B_{2\zeta r}(\mathcal{A}_{u_+} \cup \mathcal{A}_{u_-})$ (resp.\ $B_{2\zeta r}(\mathcal{A}_{v_+} \cup \mathcal{A}_{v_-})$) intersects $B_{2\zeta r}(\mathcal{D}_u)$ (resp.\ $B_{2\zeta r}(\mathcal{D}_v)$).
\end{enumerate}     
  \item \label{it:onlyway}
    We have $B_{\frac{b\rho r}{2A^2}}(\{u_-,u_+\})\cap B_{2\zeta r}(U_r^{x,y}\cup W_r^x \cup W_r^y) = B_{2\zeta r}(\cA_{u_-}\cup\cA_{u_+}) \cap B_{\frac{b\rho r}{2A^2}}(\{u_-,u_+\})$.
  \item \label{it:onlyway1}
    We have $B_{\frac{b\rho r}{2A^2}}(\{v_-,v_+\})\cap B_{2\zeta r}(U_r^{x,y}\cup W_r^x \cup W_r^y) = B_{2\zeta r}(\cA_{v_-}\cup\cA_{v_+}) \cap B_{\frac{b\rho r}{2A^2}}(\{v_-,v_+\})$. %
  \end{enumerate}
\end{lemma}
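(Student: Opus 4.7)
The plan is to verify each of the numbered conclusions in turn, choosing the free parameters $\delta,\Delta,\zeta,\alpha,\theta,M,\Lambda_0,B$ in an order compatible with all eight defining conditions of $E_r(0)$.

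\textbf{Probability lower bound and measurability.} By the scale and translation invariance of the whole-plane GFF modulo additive constants, it suffices to treat a single value of $r$. Condition~\eqref{it:abundance} is exactly the content of Lemma~\ref{lem:event_occurs_at_many_places}. Conditions~\eqref{it:lqg_euclidean_distance_small}, \eqref{it:lqg_distance_upper_bound}, \eqref{it:lqg_distance_thin_tube} are upper bounds on internal $D_h$-distances in deterministic subsets of $\mathbb{A}_{r/4,4r}(0)$ normalized by $r^{\xi Q}e^{\xi h_r(0)}$, and hold with high probability after taking $\Delta,B,M$ large using moment estimates for the $\gamma$-LQG metric as in \cite[Lemma~3.19]{DFGPS20}. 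Conditions~\eqref{it:lqg_distance_thin_ngbd} and \eqref{it:lqg_distance_lower_bound} are lower bounds on $D_h$-distance that follow from the lower H\"older bound of Lemma~\ref{lem:holder_regularity} once $\zeta$ and $\alpha$ are taken small. Condition~\eqref{it:lqg_distance_thin_line} follows from \cite[Lemma~3.22]{DFGPS20} by taking $\theta$ small (relative to the already fixed $K_f$). Condition~\eqref{it:bump_function_dirichlet_energy} is handled by taking $\Lambda_0$ large: the shape of $U_r^{x,y}$ ranges over a finite set as $(x,y)$ varies and $g_r^x$ is a rigid rotation of a single bump, so $(\phi,\phi)_\nabla$ is uniformly bounded and $(h,\phi)_\nabla$ is a centered Gaussian process with uniformly bounded variance whose supremum over the (effectively finite-dimensional) parameter $(x,y)$ is controlled by Dudley's bound. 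A union bound gives $\mathbb{P}(E_r(0))\geq \bp$. Measurability with respect to $h|_{\mathbb{A}_{r/4,4r}(0)}$ viewed modulo additive constants then follows from the locality and Weyl scaling properties (Proposition~\ref{prop:8}), together with the fact that each $\phi\in\mathcal{G}_r$ is supported in $\mathbb{A}_{r/4,3r}(0)$, so $(h,\phi)_\nabla$ is unchanged by adding a constant to $h$.

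\textbf{Disconnection properties.} These are deterministic topological claims arising from $F_{\rho r/A^2}(z_k)$ and the tube construction of Section~\ref{sec:manytubes}. The key inputs are: (a) the tubes $L_k,L_{k+1}$ avoid the balls $B_{3\rho r/4}(z_k)$ and lie at Euclidean distance at least $\rho r$ from each other; (b) by Proposition~\ref{prop:definition_of_F_r(z)}~\eqref{it:obj4} (applied at scale $\rho r/A^2$), $\mathcal{D}_u$ and $\mathcal{D}_v$ link the endpoints $z_k-\rho r$ and $z_k+\rho r$ of $L_k$ and $L_{k+1}$ to $x_u\in I_u\subseteq\partial S_u$ and $x_v\in I_v\subseteq\partial S_v$ respectively; (c) the planar disconnection result of Lemma~\ref{lem:11}. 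Taking $\zeta$ small enough (much smaller than $\delta_2\rho/A^2$) so that $2\zeta r$-thickenings preserve the separations guaranteed by Proposition~\ref{prop:definition_of_F_r(z)}, items~\eqref{it:intersect_endpoint}, \eqref{it:intersecting_square}, and \eqref{it:intersecting_square_again} are immediate: $\mathcal{D}_u$ meets the $\scX$ region only along $\partial S_u$, so any connection between $\mathcal{D}_u$ (or $L_k$) and the arms $\cA_{u_\pm}$ or the middle $\cM\setminus S_u$ must transit $S_u$. Items~\eqref{it:intersecting_arms} and \eqref{it:intersectmid} are rescaled applications of Lemma~\ref{lem:11}. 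For item~\eqref{it:intersecting_branches}, a path in $\mathcal{C}$ from $L_k$ to $L_{k+1}$ must enter $V_{\rho r/A^2}(z_k)$ through $\mathcal{D}_u$ and exit through $\mathcal{D}_v$, and within $V_{\rho r/A^2}(z_k)$ the only route from $S_u$ to $S_v$ is through $\cM\setminus(S_u\cup S_v)$ since $\cA_{u_\pm}$ and $\cA_{v_\pm}$ are dead-end arms attached only to $S_u$ and $S_v$. The main obstacle I expect is ruling out paths in $\mathcal{C}$ which wander outside $B_{2\zeta r}(V_{\rho r/A^2}(z_k))$ via some $L_{k'}$ with $k'\neq k$ and return via a topologically distinct route; this is handled by observing that $\mathcal{C}$ decomposes into $2\zeta r$-thickenings of pieces separated by gaps of Euclidean scale $\rho r\gg\zeta r$, so every re-entry into the $\scX$ at $z_k$ must again pass through $\mathcal{D}_u$ or $\mathcal{D}_v$, reducing the claim to a final subpath on which Lemma~\ref{lem:11} applies.

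\textbf{Items~\eqref{it:onlyway} and \eqref{it:onlyway1}.} By Proposition~\ref{prop:definition_of_F_r(z)}~\eqref{it:geodesics_separated2} and \eqref{it:I4} applied at scale $\rho r/A^2$, the set $\{u_+,u_-\}$ is at Euclidean distance at least $b\rho r/A^2$ from $\mathcal{D}_u\cup\mathcal{D}_v\cup S_u\cup\cM\cup S_v\cup\cA_{v_+}\cup\cA_{v_-}$. Moreover, $L_k$ avoids $B_{\rho r}(z_k)\supseteq B_{2\rho r/A^2}(\{u_+,u_-\})$ by construction, and $\widehat{L}_x,\widehat{L}_y,W_r^x,W_r^y$ stay at macroscopic Euclidean distance from $z_k$. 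Provided $\zeta$ is chosen so that $2\zeta r<b\rho r/(2A^2)$ (compatible with all earlier constraints), the only component of $U_r^{x,y}\cup W_r^x\cup W_r^y$ whose $2\zeta r$-neighborhood can meet $B_{b\rho r/(2A^2)}(\{u_+,u_-\})$ is $\cA_{u_+}\cup\cA_{u_-}$, giving the forward inclusion; the reverse inclusion is immediate since $\cA_{u_\pm}\subseteq U_r^{x,y}$. The argument for $\{v_+,v_-\}$ is symmetric, completing the proof.
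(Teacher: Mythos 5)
Your overall plan matches the paper's. The probability lower bound, measurability argument, and the deterministic disconnection arguments proceed in the same way: separate $\mathcal{C}$ into well-spaced pieces, exploit the $2\zeta r$-thickening being much thinner than the $\rho r$ gaps, and use planarity (via Lemma~\ref{lem:11} where needed). Items~\eqref{it:onlyway} and \eqref{it:onlyway1} are handled exactly as you propose.

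There is one genuine gap. You claim that condition~\eqref{it:lqg_distance_thin_ngbd} in the definition of $E_r(0)$ -- the lower bound $100 B\,r^{\xi Q}e^{\xi h_r(0)}$ on the $D_h$-length of paths of Euclidean diameter at least $\frac{\delta_3\rho r}{10^4 A^2}$ confined to $B_{2\zeta r}(\partial U_r^{x,y})$ -- follows from the lower H\"older bound once $\zeta$ is small. It does not. Hölder continuity only gives a length lower bound of order $\left(\frac{\delta_3\rho r}{10^4 A^2}\right)^{\chi'}$, which is independent of $\zeta$: normalizing to $r=1, h_1(0)=0$, this is a fixed constant depending on $\delta_3,\rho,A$, while $B>1$ has already been chosen large in condition~\eqref{it:lqg_distance_upper_bound}, so Hölder alone cannot deliver the required inequality. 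What is actually needed is the phenomenon that $D_h$-length diverges when paths of fixed Euclidean diameter are forced to remain inside tubes of vanishing Euclidean width. This is the content of \cite[Proposition~4.1]{DFGPS20} (the ``paths restricted to narrow deterministic tubes have large length'' estimate), and the paper's proof invokes it specifically for this one condition, which is singled out as the exception to the ``choose parameters appropriately large/small'' recipe. Without this input your argument for the probability lower bound does not close.

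Two smaller misattributions worth correcting. First, item~\eqref{it:intersectmid} is not a rescaled application of Lemma~\ref{lem:11}: that lemma controls paths leaving $I_u$ (the ``back'' of the coarse-grained $\scX$), whereas item~\eqref{it:intersectmid} concerns paths between the two sets of arms; the paper proves it by the same decomposition-into-disjoint-pieces argument used for item~\eqref{it:intersecting_branches}, and only item~\eqref{it:intersecting_arms} uses Lemma~\ref{lem:11}. Second, your description of item~\eqref{it:intersect_endpoint} as ``transit $S_u$'' is imprecise: the claim is that a path from $L_k$ to $\cA_{u_+}\cup\cA_{u_-}$ must transit $\mathcal{D}_u$, and the proof turns on the fact that $\mathcal{C}\cap\partial B_{\rho r}(z_k)\subseteq B_{2\zeta r}(\mathcal{D}_u)\cup B_{2\zeta r}(\mathcal{D}_v)$, so any entry into the $\scX$ neighbourhood passes through one of the two tubes.
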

We note that in the above, the geometric condition \eqref{it:disconnection_of_tubes} is important as we require the relevant geodesics to pass through all the constructed tubes one-by-one without any ``short-circuiting''. The reason why this is true is that the parameters have been chosen earlier to ensure that different parts of Figure \ref{fig:detailedchi} do not venture close to each other. As we shall see, proving this formally just involves technical bookkeeping and is not particularly illuminating; however, we have chosen to include the proof for completeness.

\begin{proof}[Proof of Lemma \ref{lem:main_event_high_prob}]
  Item ~\eqref{it:lower_bound_on_prob_of_E_r(z)} was proved in \cite[Lemma~5.10]{GM21}, and we now give a brief overview. By just choosing the individual parameters to be appropriately large/small and utilising the basic properties of the LQG metric, one can obtain all the conditions except \eqref{it:lqg_distance_thin_ngbd}. Obtaining that condition \eqref{it:lqg_distance_thin_ngbd} also holds with high probability for $\zeta$ small involves an estimate arguing that paths restricted to pass through deterministic narrow tubes have rather large lengths (see \cite[Proposition 4.1]{DFGPS20}). We note that we have chosen the parameters $\delta,\Delta,\zeta,\alpha,\theta,M,\Lambda_0$, and $B$, in the order induced by items \eqref{it:abundance}-\eqref{it:bump_function_dirichlet_energy} in the definition of $E_r(0)$. Possibly by taking the parameters to be smaller, we can assume without loss of generality that $\delta,\Delta,\zeta,\alpha \in (0,\frac{\delta_3 \rho}{4A^2})$ and $\theta<\zeta/100$. Throughout the rest of the proof of the lemma, the choice of the above parameters will be fixed and it will not change.

  Regarding item \eqref{it:measurable}, that $E_r(0)\in \sigma( h\lvert_{\A_{r/4,4r}(0)})$ is easy to see by using the locality of the LQG metric along with the fact that the involved sets $U_r^{x,y}, W_r^x,W_r^y$ are all subsets of $\A_{r/4,4r}(0)$. To obtain the measurability of $E_r(0)$ with respect to the smaller $\sigma$-algebra $\sigma( (h - h_{5r}(0))\lvert_{\A_{r/4,4r}(0)})$, simply note by Weyl-scaling that $E_r(0)$ stays invariant on adding a global constant to the field. We note that the above discussion appears as \cite[Lemma 5.9]{GM21}.

Next we focus on proving that item \eqref{it:disconnection_of_tubes} holds.

\emph{Proof of item \eqref{it:intersecting_branches}.}
 Let $P$ be a path in $\mathcal{C}$ connecting $B_{2\zeta r}(L_k)$ to $B_{2\zeta r}(L_{k+1})$ and consider the sets
\begin{align*}
&\mathcal{B} = B_{2\zeta r}\left(W_r^x \cup \wh{L}_x \cup L_k \cup\bigcup_{m = k_x}^{k-1} (V_{\frac{\rho r}{A^2}}(z_m) \cup L_m \cup \cD_u\cup\cD_v) \right),\\
&\mathcal{D} = B_{2\zeta r}\left(W_r^y \cup \wh{L}_y  \cup \bigcup_{m = k+1}^{k_y} (V_{\frac{\rho r}{A^2}}(z_m) \cup L_m \cup \cD_u\cup \cD_v)  \right).\\
\end{align*}
Clearly, both $\mathcal{B}$ and $\mathcal{D}$ are connected; we now show that $\mathcal{B}\cap \mathcal{D}=\emptyset$. First, note that
\begin{align*}
V_{\frac{\rho r}{A^2}}(z_m) \cup \mathcal{D}_u \cup \mathcal{D}_v \subseteq B_{\rho r}(z_m) \text{ for all } m \in [\![k_x,k_y]\!],
\end{align*}
and the Euclidean distance between any two disjoint balls of the form $B_{\rho r}(z_m)$ for $m \in [\![k_x ,  k_y]\!]$ is at least $\rho r$. Further, recall from Section \ref{sec:manytubes} that
\begin{align*}
\dist(L_n ,  L_m) \geq \rho r
\end{align*}
for all $m,n \in [\![k_x ,  k_y]\!]$ such that $n \leq k < m$.  Moreover, recall that (item \eqref{it:tubeht2} in the definition of $\wh{L}_x$) the tubes $\wh{L}_x ,  \wh{L}_y$ do not intersect any of the balls $B_{3\rho r/4}(w)$ for $w \in \mathcal{Z}$ and both $\wh{L}_x$ and $\wh{L}_y$ lie at (item \eqref{it:tubeht3}) Euclidean distance at least $\rho r$ from each $L_m$ for $m \in [\![k_x +1 ,  k_y]\!]$.  Furthermore, note that 
\begin{align*}
\dist(W_r^x ,  W_r^y) \geq (\delta - 4\theta) r \geq \left(\delta - \frac{\delta_3 \rho}{A^2}\right) r \geq \frac{\delta r}{2} > \rho r,
\end{align*} 
where to obtain the last inequality, we note that $\rho\leq \delta/24$. Now, we also have
\begin{align*}
W_r^x \cup W_r^y \subseteq \mathbb{C} \setminus B_{(2 - 2\theta)r}(0)
\end{align*}
which implies that the Euclidean distance between $W_r^x$ (resp.\  $W_r^y$) and $\wh{L}_y \cup \left(\bigcup_{m=k_x}^{k_y} L_m \cup B_{\rho r}(z_m) \right)$ (resp.\  $\wh{L}_x \cup \left(\bigcup_{m=k_x}^{k_y} L_m \cup B_{\rho r}(z_m) \right)$) is at least $\rho r$.

It follows that $\mathcal{B} \cap \mathcal{D} = \emptyset$ and that
\begin{align*}
\mathcal{C} \cap B_{\rho r}(z_k) = B_{2\zeta r}\left(V_{\frac{\rho r}{A^2}}(z_k) \cup \mathcal{D}_u \cup \mathcal{D}_v \right) \cap B_{\rho r}(z_k)
\end{align*}
since $\zeta < \frac{\delta_3\rho}{4A^2}<\frac{\rho}{2}$ (recall from Proposition \ref{prop:definition_of_F_r(z)} that $\delta_3\in (0,1)$ and $A>2$). Note also that $V_{\frac{\rho r}{A^2}}(z_k) \subseteq B_{\frac{\rho r}{A}}(z_k)$ and by item \eqref{it:I3} in Proposition \ref{prop:definition_of_F_r(z)}, we have
\begin{align}
\dist\left(\mathcal{D}_u , \mathcal{D}_v \right) \geq \frac{50 \delta_2 \rho r}{A^2}.
\end{align}
Thus, we obtain that
\begin{align*}
B_{2\zeta r}(L_k) \cap B_{2\zeta r}\left(V_{\frac{\rho r}{A^2}}(z_k) \cup \mathcal{D}_v \right) = \emptyset
\end{align*}
which implies that 
\begin{align*}
\mathcal{B} \cap \left(\mathcal{D} \cup B_{2\zeta r}\left(V_{\frac{\rho r}{A^2}}(z_k) \cup \mathcal{D}_v \right)\right) = \emptyset.
\end{align*}
Therefore, if $P$ does not intersect $B_{2\zeta r}(\mathcal{D}_u)$,  we must have that 
\begin{align*}
P \subseteq \mathcal{B} \cup \mathcal{D} \cup B_{2\zeta r}\left(V_{\frac{\rho r}{A^2}}(z_k) \cup \mathcal{D}_v \right)
\end{align*}
and 
\begin{align*}
P \cap \mathcal{B} \neq \emptyset,  P \cap  \left(\mathcal{D} \cup B_{2\zeta r}\left(V_{\frac{\rho r}{A^2}}(z_k) \cup \mathcal{D}_v \right)\right) \neq \emptyset,
\end{align*}
which contradicts the connectedness of $P$.  It follows that $P$ must intersect $B_{2\zeta r}(\mathcal{D}_u)$ and a similar argument shows that $P$ must intersect $B_{2\zeta r}(\mathcal{D}_v)$ as well.

Finally, in order to complete the proof that item \eqref{it:intersecting_branches} holds, we show that $P$ intersects $B_{2\zeta r}\left(\mathcal{M} \setminus (S_u \cup S_v) \right)$.  Indeed, recall that condition \eqref{it:Mdisconn} in Proposition~\ref{prop:definition_of_F_r(z)} combined with items \eqref{it:I2}, \eqref{it:I3} imply that the sets
\begin{align*}
B_{2\zeta r}\left(\mathcal{D}_u \cup \mathcal{A}_{u_+} \cup \mathcal{A}_{u_-} \cup S_u\right) ,  B_{2\zeta r}\left(\mathcal{D}_v \cup \mathcal{A}_{v_+} \cup \mathcal{A}_{v_-} \cup S_v\right)
\end{align*}
are disjoint.  Suppose that $P$ does not intersect $B_{2\zeta r}\left(\mathcal{M} \setminus (S_u \cup S_v)\right)$.  Then we have that
\begin{align*}
P \subseteq \left(\mathcal{B} \cup B_{2\zeta r}\left(\mathcal{D}_u \cup \mathcal{A}_{u_+} \cup \mathcal{A}_{u_-} \cup S_u\right) \right) \cup \left(\mathcal{D} \cup B_{2\zeta r}\left(\mathcal{D}_v \cup \mathcal{A}_{v_+} \cup \mathcal{A}_{v_-} \cup S_v\right)\right)
\end{align*}
and
\begin{align*}
&P \cap \left(\mathcal{B} \cup B_{2\zeta r}\left(\mathcal{D}_u \cup \mathcal{A}_{u_+} \cup \mathcal{A}_{u_-} \cup S_u\right) \right) \neq \emptyset\\
&P \cap \left(\mathcal{D} \cup B_{2\zeta r}\left(\mathcal{D}_v \cup \mathcal{A}_{v_+} \cup \mathcal{A}_{v_-} \cup S_v\right)\right) \neq \emptyset.
\end{align*}
But this contradicts the fact that $P$ is a connected set.  Therefore, $P$ must intersect $B_{2\zeta r}\left(\mathcal{M} \setminus (S_u \cup S_v)\right)$.  This completes the proof that item \eqref{it:intersecting_branches} holds.

\emph{Proof of item \eqref{it:intersecting_square}.}
Next we focus on proving that item \eqref{it:intersecting_square} holds.  Let $P$ be a path in $\mathcal{C}$ from $B_{2\zeta r}(\mathcal{D}_u)$ to $B_{2\zeta r}(\mathcal{A}_{u_+} \cup \mathcal{A}_{u_-})$.  Note that we have already shown in the proof of item \eqref{it:intersecting_branches} that 
\begin{align*}
    \mathcal{B} \cap (\mathcal{D} \cup B_{2\zeta r}(V_{\rho r / A^2}(z_k) \cup \mathcal{D}_v)) = \emptyset
\end{align*}
and a similar argument shows that
\begin{align*}
    \mathcal{D} \cap (\mathcal{B} \cup B_{2\zeta r}(V_{\rho r / A^2}(z_k) \cup \mathcal{D}_u)) = \emptyset.
\end{align*}
In particular, we have that
\begin{align*}
B_{2\zeta r}(\mathcal{D}_u) \cap \mathcal{D} = \emptyset.
\end{align*}
Also item \eqref{it:I2} in Proposition~\ref{prop:definition_of_F_r(z)} implies that
\begin{align*}
B_{2\zeta r}(\mathcal{D}_u) \cap B_{2\zeta r}\left(\left(\mathcal{A}_{u_+} \cup \mathcal{A}_{u_-} \cup \mathcal{A}_{v_+} \cup \mathcal{A}_{v_-} \cup \mathcal{M} \cup S_v \cup \mathcal{D}_v \right) \setminus S_u \right) = \emptyset
\end{align*}
and hence 
\begin{align*}
B_{2\zeta r}(\mathcal{D}_u) \cap \left(\mathcal{D} \cup B_{2\zeta r}\left(\left(\mathcal{A}_{u_+} \cup \mathcal{A}_{u_-} \cup \mathcal{A}_{v_+} \cup \mathcal{A}_{v_-} \cup \mathcal{M} \cup S_v \cup \mathcal{D}_v \right) \setminus S_u \right)\right) = \emptyset.
\end{align*}
This implies that
\begin{align*}
\left(\mathcal{B} \cup B_{2\zeta r}(\mathcal{D}_u) \right) \cap  \left(\mathcal{D} \cup B_{2\zeta r}\left(\left(\mathcal{A}_{u_+} \cup \mathcal{A}_{u_-} \cup \mathcal{A}_{v_+} \cup \mathcal{A}_{v_-} \cup \mathcal{M} \cup S_v \cup \mathcal{D}_v \right) \setminus S_u \right)\right) = \emptyset.
\end{align*}
It follows that the sets 
\begin{align*}
\mathcal{B} \cup B_{2\zeta r}(\mathcal{D}_u),  B_{2\zeta r}(\mathcal{A}_{u_+} \setminus S_u),B_{2\zeta r}(\mathcal{A}_{u_-} \setminus S_u),  B_{2\zeta r}\left(\left(\mathcal{M} \cup \mathcal{D}_v \cup \mathcal{A}_{v_+} \cup \mathcal{A}_{v_-} \cup S_v \right) \setminus S_u\right), \mathcal{D} \cup B_{2\zeta r}(\mathcal{D}_v)
\end{align*}
are open,  connected and pairwise disjoint,  and they contain $P$ if $P \cap B_{2\zeta r}(S_u) = \emptyset$.  But this contradicts the connectedness of $P$ and so we must have that $P \cap B_{2\zeta r}(S_u) \neq \emptyset$.  Similarly we can show that if $Q$ is a path in $\mathcal{C}$ from $B_{2\zeta r}(S_v)$ to $B_{2\zeta r}(\mathcal{A}_{v_+} \cup \mathcal{A}_{v_-})$,  then $Q$ has to intersect $B_{2\zeta r}(S_v)$.  This completes the proof that item \eqref{it:intersecting_square} holds.

\emph{Proof of item \eqref{it:intersecting_arms}.}
Let $P$ be a path in $\mathcal{C}$ connecting $B_{2\zeta r}(\mathcal{D}_u)$ with $B_{2\zeta r}(V_{\rho r / A^2}(z_k) \setminus (\mathcal{A}_{u_+} \cup S_u \cup \mathcal{A}_{u_-}))$. Suppose that $P$ does not intersect $B_{2\zeta r}(\mathcal{A}_{u_+} \cup \mathcal{A}_{u_-})$.  First we will show by arguing by contradiction that $P \cap B_{2\zeta r}(S_u) \neq \emptyset$.  Indeed suppose that $P \cap B_{2\zeta r}(S_u) = \emptyset$.  Then we have that
\begin{align*}
P \subseteq \left(\mathcal{B} \cup B_{2\zeta r}(\mathcal{D}_u)\right) \cup \left(\mathcal{D} \cup B_{2\zeta r}\left(\left(\mathcal{M} \cup \mathcal{A}_{v_+} \cup \mathcal{A}_{v_-} \cup S_v \cup \mathcal{D}_v \right) \setminus S_u \right)\right)
\end{align*}
and we have already shown in the proof of item \eqref{it:intersecting_square} that the sets 
\begin{align*}
\mathcal{B} \cup B_{2\zeta r}(\mathcal{D}_u),  \mathcal{D} \cup B_{2\zeta r}\left(\left(\mathcal{M} \cup \mathcal{A}_{v_+} \cup \mathcal{A}_{v_-} \cup S_v \cup \mathcal{D}_v \right) \setminus S_u \right)
\end{align*}
are connected and disjoint.  But since $P$ intersects both of them,  this contradicts the connectedness of $P$.

Next, we claim that the path $P$ enters $B_{2\zeta r}(S_u)$ via $B_{2\zeta r}(\mathcal{D}_u)$. Indeed, suppose that this does not hold and note that the sets $\overline{B_{2\zeta r}(S_u)}$ and $\overline{B_{2\zeta r}(\mathcal{A}_{v_+} \cup \mathcal{A}_{v_-} \cup S_v \cup \mathcal{D}_v)}$ are disjoint which implies that the path $P$ can enter $B_{2\zeta r}(S_u)$ only via the set
\begin{align*}
    B_{2\zeta r}(\mathcal{D}_u) \cup B_{2\zeta r}(\mathcal{A}_{u_+} \cup \mathcal{A}_{u_-}) \cup B_{2\zeta r}(\mathcal{M}).
\end{align*}
Since we have assumed that $P$ does not intersect $B_{2\zeta r}(\mathcal{A}_{u_+} \cup \mathcal{A}_{u_-})$ and it does not enter $B_{2\zeta r}(S_u)$ via $B_{2\zeta r}(\mathcal{D}_u)$, we obtain that $P$ enters $B_{2\zeta r}(S_u)$ via $B_{2\zeta r}(\mathcal{M})$. Set
\begin{align*}
    t = \inf\{s > 0 : P(s) \in B_{2\zeta r}(S_u)\}
\end{align*}
and note that 
\begin{align*}
    P([0,t)) \cap B_{2\zeta r}(\mathcal{M} \setminus S_u) \neq \emptyset. 
\end{align*}
In particular, we have that
\begin{align*}
P([0,t)) \subseteq \left(\mathcal{B} \cup B_{2\zeta r}(\mathcal{D}_u)\right) \cup \left(\mathcal{D} \cup B_{2\zeta r}\left(\left(\mathcal{M} \cup \mathcal{A}_{v_+} \cup \mathcal{A}_{v_-} \cup S_v \cup \mathcal{D}_v \right) \setminus S_u \right)\right)
\end{align*}
and $P|_{[0,t)}$ intersects both $\mathcal{B} \cup B_{2\zeta r}(\mathcal{D}_u)$ and $\mathcal{D} \cup B_{2\zeta r}((\mathcal{M} \cup \mathcal{A}_{v_+} \cup \mathcal{A}_{v_-} \cup S_v \cup \mathcal{D}_v) \setminus S_u)$. But this contradicts the connectedness of $P|_{[0,t)}$ as above. It follows that $P$ enters $B_{2\zeta r}(S_u)$ via $B_{2\zeta r}(\mathcal{D}_u)$ and so
\begin{align*}
    P \cap B_{2\zeta r}(\mathcal{D}_u) \cap B_{2\zeta r}(S_u) \neq \emptyset.
\end{align*}

Recall that by item \eqref{it:obj4} and item \eqref{it:I1} in Proposition \ref{prop:definition_of_F_r(z)}, we know that the path $Q_u$ ends at $x_u \in I_u$ and  $(Q_u\setminus \{x_u\}) \cap S_u = \emptyset$ and that the tube $\cD_u$ (a fattened version of $Q_u$) satisfies
\begin{align*}
    B_{\delta_3 \rho r/A^2}(\cD_u) \cap B_{\delta_3 \rho r / A^2}(S_u) \subseteq B_{\delta_2 \rho r / A^2}(I_u).
\end{align*}
It follows that $P$ connects $B_{\delta_3 \rho r / A^2}(I_u)$ with $B_{2\zeta r}((\mathcal{M} \setminus S_u) \cup S_v \cup \mathcal{A}_{v_+} \cup \mathcal{A}_{v_-})$. 
Therefore combining with Lemma~\ref{lem:11} and since $\zeta \in (0,\frac{\delta_3 \rho}{4 A^2})$,  we obtain that $P$ intersects $B_{2\zeta r}(\mathcal{A}_{u_+} \cup \mathcal{A}_{u_-})$ and that contradicts our original assumption.  It follows that $P$ intersects $B_{2\zeta r}(\mathcal{A}_{u_+} \cup \mathcal{A}_{u_-})$.  A similar argument shows that any path in $\mathcal{C}$ connecting $B_{2\zeta r}(\mathcal{D}_v)$ with $B_{2\zeta r}(V_{\rho r / A^2}(z_k) \setminus (\mathcal{A}_{v_+} \cup S_v \cup \mathcal{A}_{v_-}))$ has to intersect $B_{2\zeta r}(\mathcal{A}_{v_+} \cup \mathcal{A}_{v_-})$.  This completes the proof of item \eqref{it:intersecting_arms}.

\emph{Proof of item \eqref{it:intersecting_square_again}.}
Now we prove that item \eqref{it:intersecting_square_again} holds.  Let $P$ be a path in $\mathcal{C}$ connecting $B_{2\zeta r}(\mathcal{A}_{u_+} \cup \mathcal{A}_{u_-})$ with $B_{2\zeta r}(\mathcal{M} \setminus S_u)$.  Suppose that $P$ does not intersect $B_{2\zeta r}(S_u)$.  Then, we must have
\begin{align*}
P \subseteq \mathcal{B} &\cup B_{2\zeta r}(\mathcal{D}_u) \cup B_{2\zeta r}(\mathcal{A}_{u_+} \setminus S_u) \cup B_{2\zeta r}(\mathcal{A}_{u_-} \setminus S_u)\\
&~~~\cup \left(\mathcal{D} \cup B_{2\zeta r}\left((\mathcal{M} \setminus S_u) \cup \mathcal{A}_{v_+} \cup \mathcal{A}_{v_-} \cup S_v\cup \mathcal{D}_v \right)\right).
\end{align*}

Recall that we have already shown in the proof of item \eqref{it:intersecting_branches} that
\begin{align*}
\mathcal{B} \cap \left(\mathcal{D} \cup B_{2\zeta r}(V_{\frac{\rho r}{A^2}}(z_k) \cup \mathcal{D}_v)\right) = \emptyset,
\end{align*}
and we have also shown in the proof of item \eqref{it:intersecting_square} that
\begin{align*}
    B_{2\zeta r}(\mathcal{D}_u) \cap \mathcal{D} = \emptyset.
\end{align*}
Since $\zeta<\frac{\delta_3\rho}{4A^2}$, on combining with conditions \eqref{it:geodesics_separated}, \eqref{it:Mdisconn}, \eqref{it:I2} and \eqref{it:I3} in Proposition~\ref{prop:definition_of_F_r(z)},  we obtain that the sets 
\begin{align*}
\mathcal{B} \cup B_{2\zeta r}(\mathcal{D}_u),  B_{2\zeta r}(\mathcal{A}_{u_+} \setminus S_u),   B_{2\zeta r}(\mathcal{A}_{u_-} \setminus S_u),  \mathcal{D} \cup B_{2\zeta r}\left((\mathcal{M} \setminus S_u) \cup \mathcal{A}_{v_+} \cup \mathcal{A}_{v_-} \cup \mathcal{D}_v \cup S_v \right)
\end{align*}
are pairwise disjoint. But this contradicts the connectedness of $P$.  Hence, we obtain that $P$ intersects $B_{2\zeta r}(S_u)$.  Similarly, if $Q$ is a path in $\mathcal{C}$ connecting $B_{2\zeta r}(\mathcal{A}_{v_+} \cup \mathcal{A}_{v_-})$ with $B_{2\zeta r}(\mathcal{M} \setminus S_v)$,  then $Q$ must intersect $B_{2\zeta r}(S_v)$.  

\emph{Proof of item \eqref{it:intersectmid}.} Next we prove that item \eqref{it:intersectmid} holds. Let $P$ be a path in $\mathcal{C}$ from $B_{2\zeta r}(\mathcal{A}_{u_+} \cup \mathcal{A}_{u_-})$ to $B_{2\zeta r}(\mathcal{A}_{v_+} \cup S_v \cup \mathcal{A}_{v_-})$. Suppose that $P$ does not intersect $B_{2\zeta r}(\mathcal{M} \setminus S_u)$. Recall that
\begin{align*}
    \mathcal{C} \subseteq \mathcal{B} \cup \mathcal{D} \cup B_{2\zeta r}(V_{\rho r / A^2}(z_k) \cup \mathcal{D}_u \cup \mathcal{D}_v).
\end{align*}
Thus, we obtain that
\begin{align*}
    P \subseteq (\mathcal{B} \cup B_{2\zeta r}(\mathcal{A}_{u_+} \cup S_u \cup \mathcal{A}_{u_-} \cup \mathcal{D}_u)) \cup (\mathcal{D} \cup B_{2\zeta r}(\mathcal{A}_{v_+} \cup S_v \cup \mathcal{A}_{v_-} \cup \mathcal{D}_v)).
\end{align*}
Moreover, we have shown in the proof of item \eqref{it:intersecting_branches} that the sets 
\begin{align*}
    B_{2\zeta r}(\mathcal{D}_u \cup \mathcal{A}_{u_+} \cup S_u \cup \mathcal{A}_{u_-}), B_{2\zeta r}(\mathcal{D}_v \cup \mathcal{A}_{v_+} \cup S_v \cup \mathcal{A}_{v_-})
\end{align*}
are disjoint and that
\begin{align*}
    \mathcal{B} \cap (\mathcal{D} \cup B_{2\zeta r}(V_{\rho r / A^2}(z_k) \cup \mathcal{D}_v)) = \emptyset.
\end{align*}
A similar argument shows that
\begin{align*}
    \mathcal{D} \cap (\mathcal{B} \cup B_{2\zeta r}(V_{\rho r/ A^2}(z_k)\cup \mathcal{D}_u)) = \emptyset.
\end{align*}
It follows that the sets
\begin{align*}
    \mathcal{B} \cup B_{2\zeta r}(\mathcal{D}_u \cup \mathcal{A}_{u_+} \cup S_u \cup \mathcal{A}_{u_-}), \mathcal{D} \cup B_{2\zeta r}(\mathcal{D}_v \cup \mathcal{A}_{v_+} \cup S_v \cup \mathcal{A}_{v_-})
\end{align*}
are disjoint and the path $P$ intersects both of them. But since $P$ is contained in their union, this contradicts the connectedness of $P$. Therefore, we obtain that $P$ intersects $B_{2\zeta r}(\mathcal{M} \setminus S_u)$. A similar argument shows that any path in $\mathcal{C}$ from $B_{2\zeta r}(\mathcal{A}_{v_+} \cup \mathcal{A}_{v_-})$ to $B_{2\zeta r}(\mathcal{A}_{u_+} \cup S_u \cup \mathcal{A}_{u_-})$ has to intersect $B_{2\zeta r}(\mathcal{M} \setminus S_v)$. 

\emph{Proof of item \eqref{it:intersect_endpoint}}. Now we complete the proof of item \eqref{it:disconnection_of_tubes} by proving that item \eqref{it:intersect_endpoint} holds. Let $P$ be a path in $\mathcal{C}$ from $B_{2\zeta r}(L_k)$ to $B_{2\zeta r}(\mathcal{A}_{u_+} \cup \mathcal{A}_{u_-})$. Suppose that $P$ does not intersect $B_{2\zeta r}(\mathcal{D}_u)$. Recall that we have already shown in the proof of item \eqref{it:intersecting_branches} that
\begin{align*}
    \mathcal{C} \cap B_{\rho r}(z_k) = B_{2\zeta r}(V_{\rho r / A^2}(z_k) \cup \mathcal{D}_u \cup \mathcal{D}_v) \cap B_{\rho r}(z_k).
\end{align*}
Further, since,
\begin{align*}
    \dist(V_{\rho r / A^2}(z_k) , \partial B_{\rho r}(z_k)) > 0,
\end{align*}
we also have
\begin{align*}
    \mathcal{C} \cap \partial B_{\rho r}(z_k) \subseteq B_{2\zeta r}(\mathcal{D}_u) \cup B_{2\zeta r}(\mathcal{D}_v).
\end{align*}
Since $P$ intersects $\mathcal{A}_{u_+} \cup \mathcal{A}_{u_-} \subseteq B_{\rho r}(z_k)$, we obtain that $P$ has to intersect $\partial B_{\rho r}(z_k)$. We parameterize $P$ by the unit interval $[0,1]$ so that $P(0) \in B_{2\zeta r}(L_k)$, and let $t \in [0,1]$ be the first time that $P$ intersects $\partial B_{\rho r}(z_k)$. Since $P$ does not intersect $B_{2\zeta r}(\mathcal{D}_u)$, we have $P(t) \in B_{2\zeta r}(\mathcal{D}_v)$. Moreover, it holds that
\begin{align*}
    \mathcal{C} = \mathcal{B} \cup \mathcal{D} \cup B_{2\zeta r}(V_{\rho r / A^2}(z_k) \cup \mathcal{D}_u \cup \mathcal{D}_v)
\end{align*}
and so we have that $P([0,t]) \subseteq \mathcal{B} \cup \mathcal{D}$ and that $P([0,t])$ intersects both $\mathcal{B}$ and $\mathcal{D}$. However, we have already shown in the proof of item \eqref{it:intersecting_branches} that $\mathcal{B} \cap \mathcal{D} = \emptyset$, and thus this contradicts the connectedness of $P|_{[0,t]}$. Therefore, we obtain that $P$ has to intersect $B_{2\zeta r}(\mathcal{D}_u)$.

A similar argument shows that any path in $\mathcal{C}$ from $B_{2\zeta r}(L_{k+1})$ to $B_{2\zeta r}(\mathcal{A}_{v_+} \cup \mathcal{A}_{v_-})$ has to intersect $B_{2\zeta r}(\mathcal{D}_v)$. This completes the proof of item \eqref{it:intersect_endpoint} and hence we have completed the proof of item \eqref{it:disconnection_of_tubes}.

It remains to show that items \eqref{it:onlyway} and \eqref{it:onlyway1} hold. %
First we note that conditions \eqref{it:geodesics_separated2}, \eqref{it:I4} in Proposition~\ref{prop:definition_of_F_r(z)} imply that %
\begin{align*}
B_{\frac{b \rho r}{2A^2}}(\{u_+ ,  u_-\}) \cap B_{2\zeta r}\left(\mathcal{D}_u \cup \mathcal{M} \cup \mathcal{A}_{v_+} \cup \mathcal{A}_{v_-}
\cup \mathcal{D}_v \right) = \emptyset.
\end{align*}
Also condition \eqref{it:geodloc} in Proposition~\ref{prop:definition_of_F_r(z)} implies that $\{u_+ ,  u_-\} \subseteq B_{\frac{\rho r}{4A}}(z_k)$ and hence
\begin{align*}
B_{\frac{b \rho r}{2A^2}}(\{u_+ ,  u_-\}) \subseteq B_{\frac{3\rho r}{4A}}(z_k).
\end{align*}
Moreover, we know that (see items \eqref{it:tubeht2}, \eqref{it:tubel2}) that none of the objects 
\begin{displaymath}
W_r^x,W_r^y,\wh{L}_x,\wh{L}_y,
\{L_m\}_{m \in [\![k_x,k_y]\!]},  \{B_{\rho r}(z_n)\}_{n \in [\![k_x,k_y]\!] \setminus \{k\}}
\end{displaymath}intersect $B_{3\rho r/4}(z_k)$ which implies that
\begin{align*}
B_{2\zeta r}\left(W_r^x \cup W_r^y \cup \wh{L}_x \cup \wh{L}_y \cup\bigcup_{m=k_x}^{k_y} L_m  \cup \bigcup_{m \neq k} B_{\rho r}(z_m) \right) \cap B_{\frac{3\rho r}{4A}}(z_k) = \emptyset
\end{align*}
since $\zeta < \frac{\delta_3 \rho}{4A^2}<\frac{\rho}{2A^2}$ (recall from Proposition \ref{prop:definition_of_F_r(z)} that $\delta_3<1$). Hence, we obtain that 
\begin{align*}
\mathcal{C} \cap B_{\frac{b\rho r}{2A^2}}(\{u_+ ,  u_-\}) = B_{2\zeta r}(\mathcal{A}_{u_+} \cup \mathcal{A}_{u_-}) \cap B_{\frac{b\rho r}{2A^2}}(\{u_+ ,  u_-\}).
\end{align*}
A similar argument shows that
\begin{align*}
\mathcal{C} \cap B_{\frac{b\rho r}{2A^2}}(\{v_+ ,  v_-\}) = B_{2\zeta r}(\mathcal{A}_{v_+} \cup \mathcal{A}_{v_-}) \cap B_{\frac{b\rho r}{2A^2}}(\{v_+ ,  v_-\}).
\end{align*}
This completes the proof of the lemma.
\end{proof}

\subsection{Subtracting a bump function from the field}\label{sec:new_field}

Next, we fix distinct points $\bz,\bw \in \mathbb{C} \setminus B_{4r}(0)$ and let $P = P^{\bz,\bw}$ be the a.s.\ unique $D_h$-geodesic from $\bz$ to $\bw$.  Let $\sigma_r$ (resp.\ $\widehat{\sigma}_r$) be the smallest $s>0$ for which the $D_h$-metric ball $\cB_s(\bz)$ (resp.\ $\cB_s(\bw)$) intersects $\overline{B_{3r}(0)}$.  Also let $\bx'$ (resp.\ $\by'$) be a point contained in $\partial B_{3r}(0) \cap \cB_{\sigma_r}(\bz)$ (resp.\ $\partial B_{3r}(0) \cap \cB_{\widehat{\sigma}_r}(\bw )$) chosen in a pre-fixed manner depending only on $\sigma(h\lvert_{\mathbb{C}\setminus B_{3r}(0)})$. In fact, as also mentioned in \cite[Footnote 7]{GM21}, the points $\bx',\by'$ should be a.s.\ unique and this should be possible to prove by an adaptation of \cite{MQ18}; however, the uniqueness of $\bx',\by'$ is not important for us.

 Now, set $\bx = \frac{2}{3} \bx'$ and $\by = \frac{2}{3} \by'$.  We consider the bump function $\phi$ defined by $\phi = \phi_r^{\bx,\by}$ if $|\bx-\by| \geq \delta r$ and $\phi = 0$ otherwise,  and note that the random points $\bx ,  \by$ are measurable with respect to $\sigma(h|_{\mathbb{C} \setminus B_{3r}(0)})$.  Then $\phi$ is determined by $\bx,\by$ and hence by $h|_{\mathbb{C} \setminus B_{3r}(0)}$.  We also set $U:=U_r^{\bx,\by}$ and $\mathcal{W}:=B_{\theta^2 r}(W_r^{\bx}) \cup B_{\theta^2 r}(W_r^{\by})$. Let $P^\phi=\Gamma_{\bz,\bw}^\phi$ be the a.s.\ unique (since $\bz,\bw$ are fixed) $D_{h-\phi}$-geodesic from $\bz$ to $\bw$ and let $\overline{P}^\phi = P^\phi \setminus (B_{\sigma_r}(\bz) \cup B_{\widehat{\sigma}_r}(\bw))$.  Note that the definitions of $\sigma_r,\wh{\sigma}_r,  \mathcal{B}_{\sigma_r}(\bz)$ and $\mathcal{B}_{\wh{\sigma}_r}(\bw)$ remain unaffected if we replace $h$ by $h-\phi$ since $\phi$ is supported on $B_{3r}(0)$.

\begin{lemma}
  \label{lem:Er-conds}
By appropriately choosing the parameters in the definition for the event $E_r(0)$, on the event $E_r(0)\cap \{\Gamma_{\bz,\bw}\cap B_{2r}(0)\neq \emptyset\}$, we have the following.
  \begin{enumerate}[(a)]
  \item \label{it:er1} $|\bx-\by| \geq \delta r$. 
  \item \label{it:er2} $D_{h-\phi}(\bx',\by') \leq e^{-\xi K_f} (B+4) r^{\xi Q} e^{\xi h_r(0)}$. 
  \item  \label{it:er3} $\overline{P}^\phi \subseteq B_{2\zeta r}(U \cup \mathcal{W})$.
  \item \label{it:er4} There is no segment of $\overline{P}^\phi$ of Euclidean diameter at least $\frac{\delta_3 \rho r}{100A^2}$ which is contained in $B_{2\zeta r}(\partial U) \setminus \mathcal{W}$ 
  \end{enumerate}
  
\end{lemma}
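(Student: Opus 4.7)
My plan is to verify each item via cost-comparison arguments built from the Weyl-scaling identity $D_{h-\phi}=e^{-\xi\phi}\cdot D_h$ and the eight quantitative estimates in the definition of $E_r(0)$, exploiting the key relations $e^{-\xi K_f}=\alpha\Delta/(100B)$ and $K_g=K_f+\xi^{-1}\log M$.

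For part~\eqref{it:er1} I argue by contradiction: if $|\bx-\by|<\delta r$ then $\phi\equiv 0$ and item~\eqref{it:lqg_euclidean_distance_small} yields a path from $\bx'$ to $\by'$ in $\A_{r,4r}(0)$ of $D_h$-length at most $\Delta r^{\xi Q}e^{\xi h_r(0)}$. Concatenating with optimal $\bz\to\bx'$ and $\by'\to\bw$ paths gives a $\bz\to\bw$ path of cost $\sigma_r+\wh\sigma_r+\Delta r^{\xi Q}e^{\xi h_r(0)}$, whereas $\Gamma_{\bz,\bw}$ enters $B_{2r}(0)$ and therefore must cross $\A_{2r,3r}(0)$ twice, costing at least $\sigma_r+\wh\sigma_r+2\Delta r^{\xi Q}e^{\xi h_r(0)}$, a contradiction. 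For part~\eqref{it:er2} I build an explicit five-leg path $\bx'\to\by'$ along the highway $U\cup\mathcal{W}$: the radial segment to $(\tfrac{3}{2}-\theta)\bx$ (cost $\leq e^{-\xi K_f}r^{\xi Q}e^{\xi h_r(0)}$ by item~\eqref{it:lqg_distance_thin_line} and $\phi\geq 0$); a crossing of $W_r^\bx$ (cost $\leq e^{-\xi K_g}Mr^{\xi Q}e^{\xi h_r(0)}=e^{-\xi K_f}r^{\xi Q}e^{\xi h_r(0)}$ by item~\eqref{it:lqg_distance_thin_tube}, Weyl scaling, and $\phi\geq K_g$ on $W_r^\bx$); a traversal of $U$ from $\bx\in\wh{L}_\bx$ to $\by\in\wh{L}_\by$ (cost $\leq e^{-\xi K_f}Br^{\xi Q}e^{\xi h_r(0)}$ by item~\eqref{it:lqg_distance_upper_bound} and $\phi\geq K_f$ on $U$); and the mirror of the first two legs on the $\by$-side. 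These sum to the desired bound $(B+4)e^{-\xi K_f}r^{\xi Q}e^{\xi h_r(0)}$.

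For parts~\eqref{it:er3} and~\eqref{it:er4} I combine \eqref{it:er2} with $D_{h-\phi}(\bz,\bx')\leq\sigma_r$ and $D_{h-\phi}(\bw,\by')\leq\wh\sigma_r$---valid because $\phi$ vanishes outside $B_{3r}(0)$ and the $D_h$-optimal paths to $\bx',\by'$ stay in that zero-$\phi$ region, so that $\cB_{\sigma_r}(\bz),\cB_{\wh\sigma_r}(\bw)$ agree with the corresponding $D_{h-\phi}$-balls---to deduce that the $D_{h-\phi}$-length of $\overline{P}^\phi$ is at most $(B+4)e^{-\xi K_f}r^{\xi Q}e^{\xi h_r(0)}=\tfrac{(B+4)\alpha\Delta}{100B}r^{\xi Q}e^{\xi h_r(0)}$, which is strictly less than $\alpha\Delta r^{\xi Q}e^{\xi h_r(0)}$ (and less than $\alpha r^{\xi Q}e^{\xi h_r(0)}$). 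For~\eqref{it:er4}, any sub-segment of $\overline{P}^\phi$ of Euclidean diameter at least $\delta_3\rho r/(100A^2)$ contained in $B_{2\zeta r}(\partial U)\setminus\mathcal{W}$ has $D_h$-length at least $100Br^{\xi Q}e^{\xi h_r(0)}$ by item~\eqref{it:lqg_distance_thin_ngbd}, hence $D_{h-\phi}$-length at least $\alpha\Delta r^{\xi Q}e^{\xi h_r(0)}$ since $\phi\leq K_f$ off $\mathcal{W}$, contradicting the budget. For~\eqref{it:er3}, any excursion of $\overline{P}^\phi$ outside $B_{2\zeta r}(U\cup\mathcal{W})$ lies in the zero-$\phi$ region where $D_{h-\phi}=D_h$; item~\eqref{it:lqg_distance_lower_bound} handles large-diameter excursions (of Euclidean diameter at least $\zeta r$), while the remaining small excursions, which must traverse the shell $B_{2\zeta r}(\partial U)\setminus\mathcal{W}$ with Euclidean diameter at least $\delta_3\rho r/(100 A^2)$, are ruled out by the argument of~\eqref{it:er4}. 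The main obstacle will be this case analysis in~\eqref{it:er3}: one must verify, using the separation estimates of Lemma~\ref{lem:main_event_high_prob} and the fact that the support of $\phi$ sits strictly inside $B_{2\zeta r}(U\cup\mathcal{W})$, that \emph{every} excursion leaving $B_{2\zeta r}(U\cup\mathcal{W})$ triggers either item~\eqref{it:lqg_distance_lower_bound} or item~\eqref{it:lqg_distance_thin_ngbd} with the strength needed to exceed the budget.
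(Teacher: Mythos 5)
Your arguments for items~\eqref{it:er1}, \eqref{it:er2}, and~\eqref{it:er4} are correct and follow essentially the same cost-comparison template as the paper (which delegates to \cite[Lemmas~5.12--5.14]{GM21}); in particular the five-leg highway estimate in~\eqref{it:er2}, the observation that $\cB_{\sigma_r}(\bz),\cB_{\wh\sigma_r}(\bw)$ coincide for $D_h$ and $D_{h-\phi}$, and the budget comparison in~\eqref{it:er4} are exactly the intended computations.

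However, your outline for item~\eqref{it:er3} has a genuine gap, and the resolution you sketch cannot work. The excursion analysis you propose needs two inputs, both missing. First, you have no argument that $\overline{P}^\phi$ intersects $B_{2\zeta r}(U\cup\mathcal{W})$ at all; a priori $\overline{P}^\phi$ could stay entirely outside it, in which case there are no ``excursions leaving'' it to analyse and yet~\eqref{it:er3} would fail. Second, the mechanism you propose for small excursions is geometrically impossible: any point of an excursion outside $B_{2\zeta r}(U\cup\mathcal{W})$ is at Euclidean distance at least $2\zeta r$ from $U$, hence also at distance at least $2\zeta r$ from $\partial U\subseteq\overline{U}$, so it lies outside $B_{2\zeta r}(\partial U)$. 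Consequently such an excursion is disjoint from $B_{2\zeta r}(\partial U)\setminus\mathcal{W}$ and item~\eqref{it:lqg_distance_thin_ngbd} can never be triggered by it.

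The missing idea is to show first that $\overline{P}^\phi$ must enter the \emph{support} $B_{\zeta r}(U)\cup\mathcal{W}$ of $\phi$. On the event $\{\Gamma_{\bz,\bw}\cap B_{2r}(0)\neq\emptyset\}$, the same double-crossing computation you used for~\eqref{it:er1} gives $D_h(\cB_{\sigma_r}(\bz),\cB_{\wh\sigma_r}(\bw))\geq 2\Delta r^{\xi Q}e^{\xi h_r(0)}$. Since $D_{h-\phi}=D_h$ off the support of $\phi$, if $\overline{P}^\phi$ avoided the support its $D_{h-\phi}$-length would equal its $D_h$-length and hence be at least $2\Delta r^{\xi Q}e^{\xi h_r(0)}$, whereas your budget from~\eqref{it:er2} equals $\frac{(B+4)\alpha\Delta}{100B}r^{\xi Q}e^{\xi h_r(0)}<\Delta r^{\xi Q}e^{\xi h_r(0)}$; contradiction. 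Having established that $\overline{P}^\phi$ touches $B_{\zeta r}(U)\cup\mathcal{W}$, suppose it also contained a point outside $B_{2\zeta r}(U\cup\mathcal{W})$. Following $\overline{P}^\phi$ from that point until it first meets $B_{\zeta r}(U)\cup\mathcal{W}$ produces a segment contained in $\mathbb{A}_{r/4,4r}(0)\setminus(B_{\zeta r}(U)\cup\mathcal{W})$ with Euclidean diameter at least $\zeta r$; since $\phi$ vanishes along this segment, its $D_{h-\phi}$-length equals its $D_h$-length, which is at least $\alpha r^{\xi Q}e^{\xi h_r(0)}$ by item~\eqref{it:lqg_distance_lower_bound}, again exceeding the budget.
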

\begin{proof}
All the conditions are in fact shown in \cite[Section 5.6]{GM21} and we now provide the corresponding references. In order to be self-contained, we also sketch the proofs.

  Condition \eqref{it:er1} follows by using item \eqref{it:lqg_euclidean_distance_small} in the definition of $E_r(0)$ along with the fact that we are assuming that $\{\Gamma_{\bz,\bw}\cap B_{2r}(0)\neq \emptyset\}$-- this is because $\Gamma_{\bz,\bw}$ must then cross $\A_{2r,3r}(0)$ twice; the details are discussed in \cite[Lemma~5.12]{GM21}.

 For condition \eqref{it:er2}, the details are provided in \cite[Lemma~5.13]{GM21}. To summarise, using the item \eqref{it:lqg_distance_thin_line} in the definition of $E_r(0)$ and that $\phi\geq 0$, we obtain that $D_{h-\phi}(\bx',W_{r}^{\bx}),D_{h-\phi}(\by',W_{r}^{\by})$ are both upper bounded by $e^{-\xi K_f}r^{\xi Q}e^{\xi h_r(0)}$. Further, item \eqref{it:lqg_distance_thin_tube} in the definition of $E_r(0)$ and the fact that $\phi\geq K_g$ on $W_r^\bx,W_r^\by$ yields that the $D_{h-\phi}$ diameters of $W_r^{\bx}$ and $W_r^{\by}$ are at most $e^{-\xi K_g} M r^{\xi Q}e^{\xi h_r(0)}\leq e^{-\xi K_f} r^{\xi Q} e^{\xi h_r(0)}$. Finally, by item \eqref{it:lqg_distance_upper_bound} in the definition of $E_r(0)$ and the fact that $\phi\geq K_f$ on $U$, the $D_{h-\phi}$ internal diameter of $U$ is at most $e^{-\xi K_f} Br^{\xi Q}e^{\xi h_r(0)}$.

 Conditions \eqref{it:er3} and \eqref{it:er4} are handled in \cite[Lemma 5.14]{GM21}. In short, for \eqref{it:er4}, item \eqref{it:lqg_distance_thin_ngbd} in the definition of $E_r(0)$ and Weyl scaling imply that the $D_{h-\phi}$ length of any path with Euclidean diameter at least $\frac{\delta_3 \rho r}{10^4 A^2}$ which is contained  in $B_{2\zeta r}(\partial U)\setminus \cW$ is at least $100 e^{-\xi K_f} B r^{\xi Q} e^{\xi h_r(0)}$. By \eqref{it:er2} and since $B>1$ (see Lemma \ref{lem:main_event_high_prob}), such a segment cannot be a segment of $\overline{P}^\phi$.

 To obtain \eqref{it:er3}, it is used that as a consequence of item \eqref{it:lqg_distance_lower_bound} in the definition of $E_r(0)$ and the fact that $\phi$ is supported on $B_{\zeta r}(U)\cup \cW$, any path in $\A_{r/4,4r}(0)\setminus (B_{\zeta r}(U)\cup \cW)$ with Euclidean diameter at least $\zeta r$ has $D_{h-\phi}$ length at least $\alpha r^{\xi Q} e^{\xi h_r(0)}$ which is strictly larger than $e^{-\xi K_f} (B+4) r^{\xi Q} e^{\xi h_r(0)}$. Thus, there cannot be a segment of $\overline{P}^{\phi}$ with Euclidean length at least $\zeta r$ which lies strictly in $\A_{r/4,4r}(0)\setminus (B_{\zeta r}(U)\cup \cW)$.  Moreover condition~\eqref{it:lqg_euclidean_distance_small} in the definition of $E_r(0)$ combined with \eqref{it:er1} imply that the $D_h$-distance between $\mathcal{B}_{\sigma_r}(\bz)$ and $\mathcal{B}_{\wh{\sigma}_r}(\bw)$ is at least $2 \Delta r^{\xi Q} e^{\xi h_r(0)}$ which is larger than $e^{-\xi K_f} (B+4) r^{\xi Q} e^{\xi h_r(0)}$ by the definition of $K_f$.  If $\overline{P}^{\phi}$ did not enter the support $B_{\zeta r}(U) \cup \mathcal{W}$ of $\phi$,  then the $D_{h-\phi}$-length of $\overline{P}^{\phi}$ would be the same as its $D_h$-length,  which must be at least $2 \Delta r^{\xi Q} e^{\xi h_r(0)}$.  Hence, \eqref{it:er2} implies that $\overline{P}^{\phi}$ must enter $B_{\zeta r}(U) \cup \mathcal{W}$.  Therefore if $\overline{P}^{\phi} \not \subset B_{2\zeta r}(U \cup \mathcal{W})$,  then $\overline{P}^{\phi}$ would have to contain a segment in $\mathbb{A}_{r/4 ,  4r}(0) \setminus (B_{\zeta r}(U) \cup \mathcal{W})$ with Euclidean length at least $\zeta r$ but that yields a contradiction.  This completes the proof of the lemma. 
\end{proof}

Next we assume without loss of generality that the counterclockwise arc of $\partial B_{2r}(0)$ from $\bx$ to $\by$ is shorter than the clockwise arc.  Let $J_{\bx,\by}\subseteq \partial B_{2r}(0)$ be the clockwise arc from $\frac{\bx}{2}$ to $\frac{\by}{2}$.  Since the event $E_r(0)$ occurs,  there exists $z \in \mathcal{Z} \cap J_{\bx,\by}$ such that $F_{\frac{\rho r}{A^2}}(z)$ occurs. \textbf{For the remainder of this section, we shall work on the event $E_r(0)$ and for a $z$ as described above.}

We shall use $u_+ ,  v_+ ,  u_- ,  v_-$ to denote the corresponding points and $P_+ ,  P_-$ the corresponding $D_h$-geodesics associated with $F_{\frac{\rho r}{A^2}}(z)$.  We note that $P_+ \cup P_- \subseteq V_{\frac{\rho r}{A^2}}(z) \subseteq U$.  In the next lemma,  we will show that both $P_+$ and $P_-$ are also $D_{h-\phi}$-geodesics; in particular, this implies that the $\scX^{u_+,v_+}_{u_-,v_-}$ stays unchanged when one goes from the field $h$ to the field $h-\phi$.

\begin{lemma}
 On the event $E_r(0)$, with the point $z\in \cZ$ chosen as described above, the $D_h$-geodesics $P_+,P_-,\Gamma_{u_+,v_-},\Gamma_{u_-,v_+}$ are also the unique $D_{h-\phi}$-geodesics between their endpoints.
  \label{lem:chi_stays}
\end{lemma}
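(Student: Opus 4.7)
Since $F_{\rho r/A^2}(z)$ occurs with $z\in J_{\bx,\by}\cap\mathcal{Z}$, the four geodesics $P_+,P_-,\Gamma_{u_+,v_-},\Gamma_{u_-,v_+}$ all lie inside $V_{\rho r/A^2}(z)\subseteq B_{\rho r/A}(z)$; the last two because they are concatenations of subsegments of $P_+$ and $P_-$ by Definition~\ref{def:1*}. Since $V_{\rho r/A^2}(z)\subseteq O_{\rho r/A^2}(z)\subseteq U_r^{\bx,\by}$, we have $f_r^{\bx,\by}\equiv 1$ on $V_{\rho r/A^2}(z)$. Moreover, the ordering $\theta\ll\rho\ll 1$ built into Lemma~\ref{lem:main_event_high_prob} ensures that $B_{\rho r/A}(z)\subseteq\mathbb{A}_{(1-\rho/A)r,(1+\rho/A)r}(0)$ is disjoint from $\mathcal{W}\subseteq\mathbb{A}_{(2-\theta^2)r,(3+\theta^2)r}(0)$, so $g_r^{\bx}=g_r^{\by}=0$ on $V_{\rho r/A^2}(z)$. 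Hence $\phi\equiv K_f$ identically along each of the four geodesics, and Weyl scaling (Proposition~\ref{prop:8}~\eqref{it:weyl_scaling}) yields
\begin{align*}
\ell(P_+;D_{h-\phi})=e^{-\xi K_f}\ell(P_+;D_h)=e^{-\xi K_f}D_h(u_+,v_+),
\end{align*}
with analogous identities for $P_-,\Gamma_{u_+,v_-},\Gamma_{u_-,v_+}$.

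It suffices to show that $P_+$ is the unique $D_{h-\phi}$-geodesic from $u_+$ to $v_+$; the other three cases are handled identically, using that each of the remaining curves is the unique $D_h$-geodesic between its endpoints as guaranteed by Definition~\ref{def:1*}. Let $\eta$ be any continuous path from $u_+$ to $v_+$. If $\eta\cap\mathcal{W}=\emptyset$, then $g_r^{\bx}=g_r^{\by}=0$ on $\eta$, so $\phi=K_f f_r^{\bx,\by}\leq K_f$ throughout, giving
\begin{align*}
\ell(\eta;D_{h-\phi})\geq e^{-\xi K_f}\ell(\eta;D_h)\geq e^{-\xi K_f}D_h(u_+,v_+),
\end{align*}
and equality forces $\ell(\eta;D_h)=D_h(u_+,v_+)$, whence $\eta=P_+$ by uniqueness of the $D_h$-geodesic. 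If instead $\eta$ enters $\mathcal{W}$, then since $\phi\leq K_f$ off $\mathcal{W}$ and the portion of $\eta$ outside $\mathcal{W}$ contains both a subpath from $u_+$ to $\overline{\mathcal{W}}$ and a subpath from $\overline{\mathcal{W}}$ to $v_+$, we obtain
\begin{align*}
\ell(\eta;D_{h-\phi})\geq e^{-\xi K_f}\bigl(D_h(u_+,\overline{\mathcal{W}})+D_h(\overline{\mathcal{W}},v_+)\bigr),
\end{align*}
so the proof reduces to verifying $D_h(u_+,\overline{\mathcal{W}})+D_h(\overline{\mathcal{W}},v_+)>D_h(u_+,v_+)$.

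For the quantitative comparison, any continuous path from $u_+\in B_{(1+\rho/(4A))r}(0)$ to $\overline{\mathcal{W}}\subseteq\mathbb{C}\setminus B_{(2-\theta^2)r}(0)$ contains a subpath in $\overline{\mathbb{A}}_{(1+\rho/(4A))r,(2-\theta^2)r}(0)\subseteq\mathbb{A}_{r/4,4r}(0)$ joining its two boundary circles, whose Euclidean endpoints are separated by more than $\zeta r$ for the pre-fixed small $\rho,\theta$. Item~\eqref{it:lqg_distance_lower_bound} in the definition of $E_r(0)$ then bounds the $D_h$-length of this subpath below by $\alpha r^{\xi Q}e^{\xi h_r(0)}$, so $D_h(u_+,\overline{\mathcal{W}}),D_h(\overline{\mathcal{W}},v_+)\geq \alpha r^{\xi Q}e^{\xi h_r(0)}$. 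On the other hand, $D_h(u_+,v_+)\leq\diam_{D_h}(B_{\rho r/A}(z))$, which by the coordinate change formula (Proposition~\ref{prop:8}~\eqref{it:coordinate_change}) equals in distribution $(\rho/A)^{\xi Q}e^{\xi h_{\rho r/A}(z)}\cdot\diam_{D_h}(B_1(0))$; combined with item~\eqref{it:field_bounded} (controlling the relevant circle averages) and the almost-sure finiteness of $\diam_{D_h}(B_1(0))$, this is strictly less than $\alpha r^{\xi Q}e^{\xi h_r(0)}$ once $\rho$ is taken sufficiently small, which is permitted by the hierarchical choice of parameters in Lemma~\ref{lem:main_event_high_prob}. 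The principal obstacle is precisely this scale-separation estimate, which however is routine once the parameter ordering is respected.
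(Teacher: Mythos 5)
Your overall strategy is different from the paper's and, as written, contains a genuine gap.

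You correctly identify that $\phi\equiv K_f$ on $V_{\rho r/A^2}(z)$, so all four curves have $D_{h-\phi}$-length exactly $e^{-\xi K_f}$ times their $D_h$-length, and you correctly split competitor paths $\eta$ from $u_+$ to $v_+$ according to whether they meet $\mathcal{W}$ (where $\phi$ can exceed $K_f$). The case $\eta\cap\mathcal{W}=\emptyset$ is handled correctly. The problem is the other case: you reduce it to the estimate
\begin{equation*}
D_h(u_+,\overline{\mathcal{W}})+D_h(\overline{\mathcal{W}},v_+)>D_h(u_+,v_+),
\end{equation*}
which you further reduce to $D_h(u_+,v_+)<2\alpha r^{\xi Q}e^{\xi h_r(0)}$. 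But this last inequality is \emph{not} one of the conditions of $E_r(0)$, and the mechanism you invoke to obtain it does not work. Item~\eqref{it:field_bounded} is a condition of the regularity event $\cE$ in Section~\ref{sec:indep-along-geod}, not of $E_r(0)$, so you cannot use it here. The appeal to ``almost-sure finiteness of $\diam_{D_h}(B_1(0))$'' gives no deterministic bound on the random multiplicative factor $e^{\xi(h_{\rho r/A}(z)-h_r(0))}$ comparing the local LQG scale at $z$ to the global scale $r^{\xi Q}e^{\xi h_r(0)}$: on a positive-probability portion of $E_r(0)$ that factor can be arbitrarily large. And the ``hierarchical choice of parameters'' does not rescue this, because $\rho$ is fixed (as $\delta(n_*)^{-1}/24$) \emph{before} $\alpha$ in Section~\ref{sec:tubes}, and $\alpha$ is then constrained to be small for item~\eqref{it:lqg_distance_lower_bound} of $E_r(0)$ to hold with high probability; one cannot take $\rho$ ``sufficiently small'' as a function of $\alpha$. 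One would have to build a new condition comparing the two scales into $E_r(0)$ and re-run Lemma~\ref{lem:main_event_high_prob}, which is nontrivial extra work.

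The paper avoids the scale comparison entirely. It does \emph{not} compare $D_h(u_+,v_+)$ to $r^{\xi Q}e^{\xi h_r(0)}$. Instead it invokes condition~\eqref{it:across} of $F_{\rho r/A^2}(z)$, which reads
\begin{equation*}
\sup_{w_1,w_2\in B_{\rho r/A}(z)}D_h(w_1,w_2;B_{\rho r/\sqrt{A}}(z))<D_h(\partial B_{\rho r/\sqrt{A}}(z),\partial B_{\rho r}(z)),
\end{equation*}
a purely \emph{local} estimate at scale $\rho r$. Since $\phi\leq K_f$ everywhere on $\overline{B_{\rho r}(z)}$ (your correct observation that $B_{\rho r}(z)$ avoids $\mathcal{W}$), both sides scale by $e^{-\xi K_f}$ and the inequality survives for $D_{h-\phi}$, forcing every $D_{h-\phi}$-geodesic between the relevant endpoints to remain inside $B_{\rho r}(z)$. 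Once the competitor path is confined to $B_{\rho r}(z)$, the argument that the original $D_h$-geodesic is the unique $D_{h-\phi}$-geodesic follows by a direct length comparison ($\phi\leq K_f$ with equality on $V_{\rho r/A^2}(z)$), with no cross-scale estimate required. I would recommend replacing your Case~2 argument with this confinement step.
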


\begin{proof}
  The crucial aspect that we wish to harness is that on the set $V_{\rho r/A^2}(z) \subseteq B_{\rho r/A}(z)$, the restricted function $\phi\lvert_{B_{\rho r}(z)}$ is equal to its maximum value $K_f$. However, $K_g$, the global maximum of $\phi$ is strictly larger than $K_f$ and this is a difficulty which we shall need to bypass.

  Indeed, first recall that by \eqref{it:across} in Proposition~\ref{prop:definition_of_F_r(z)}, we necessarily have
  \begin{equation}
    \label{eq:19}
    \sup_{w_1,w_2\in B_{\rho r/A}(z)}D_h(w_1,w_2;B_{\rho r/\sqrt{A}}(z))< D_h(\partial B_{\rho r/\sqrt{A}}(z), \partial B_{\rho r}(z)).
  \end{equation}
  A particular consequence of the above is that for any $w_1,w_2\in B_{\rho r/A}(z)$ for which there is a $D_h$-geodesic $\Gamma_{w_1,w_2}$ satisfying $\Gamma_{w_1,w_2}\subseteq V_{\rho r/A^2}(z)\subseteq B_{\rho r/A}(z)$, any $D_{h-\phi}$ geodesic $\Gamma^\phi_{w_1,w_2}$ from $w_1,w_2$ must satisfy
  \begin{equation}
    \label{eq:21}
    \Gamma^\phi_{w_1,w_2}\subseteq B_{\rho r}(z).
  \end{equation}
  To see this, note that since $\phi$ is equal to $K_f$ on $\Gamma_{w_1,w_2}$, we have
  \begin{align}
    \label{eq:22}
    D_{h-\phi}(w_1,w_2)\leq e^{-\xi K_f} D_h(w_1,w_2)&= e^{-\xi K_f} D_h(w_1,w_2; B_{\rho r/\sqrt{A}}(z))\nonumber\\
    &< e^{-\xi K_f}D_h(\partial B_{\rho r/\sqrt{A}}(z), \partial B_{\rho r}(z))\nonumber\\
                                                                                                        &= e^{-\xi K_f}D_h(\partial B_{\rho r/\sqrt{A}}(z), \partial B_{\rho r}(z); \overline{B_{\rho r}(z)})\nonumber\\
    &\leq D_{h-\phi}(\partial B_{\rho r/\sqrt{A}}(z), \partial B_{\rho r}(z)),
  \end{align}
  where the last line above uses that the maximum value of $\phi\lvert_{B_{\rho r}(z)}$ is $K_f$. This proves \eqref{eq:21}.
  
  Now, for any path $\eta\subseteq B_{\rho r}(z)$ from $w_1$ to $w_2$, by using that $K_f$, the maximum value of $\phi\lvert_{B_{\rho r}(z)}$, is indeed attained on the set $V_{\rho r/A^2}(z)$, we have
  \begin{equation}
    \label{eq:24}
    \ell(\eta;D_{h-\phi})\geq e^{-\xi K_f} \ell(\eta;D_h)\geq e^{-\xi K_f}\ell(\Gamma_{w_1,w_2};D_h)=\ell(\Gamma_{w_1,w_2};D_{h-\phi}).
  \end{equation}
  Combining this with \eqref{eq:21}, we obtain that $\Gamma_{w_1,w_2}$, which was a priori defined to be a $D_h$-geodesic between $w_1,w_2$, is in fact also a $D_{h-\phi}$ geodesic between $w_1,w_2$.

  We are now ready to complete the proof-- this will be done by choosing $w_1\in \{u_+,u_-\}, w_2\in \{v_+,v_-\}$, Note that for all four possible choices of $w_1,w_2$, there is a unique $D_h$-geodesic $\Gamma_{w_1,w_2}$ and this is since we have chosen $z\in \cZ$ such that $F_{\rho r/A^2}(z)$ occurs (see items \eqref{it:uniquegeod1} and \eqref{it:concatenation} in Definition \ref{def:1*} and the definition of $F_{\rho r/A^2}(z)$). Further, for all $w_1,w_2$ as above, we have $\Gamma_{w_1,w_2}\subseteq V_{\rho r/A^2}(z)\subseteq B_{\rho r/A}(z)$. Thus, by the discussion above, $\Gamma_{w_1,w_2}$ is also a $D_{h-\phi}$ geodesic and as a result, we have $D_{h-\phi}(w_1,w_2)=e^{-\xi K_f}D_{h}(w_1,w_2)$.

  In particular, as a consequence of this, for all $w_1\in \{u_+,u_-\}, w_2\in \{v_+,v_-\}$ and \emph{any} $D_{h-\phi}$-geodesic $\Gamma_{w_1,w_2}^\phi$, we must also have
  \begin{equation}
    \label{eq:88}
\ell(\Gamma_{w_1,w_2}^\phi;D_h)\leq e^{\xi K_f}\ell(\Gamma_{w_1,w_2}^\phi;D_{h-\phi})=e^{\xi K_f}D_{h-\phi}(w_1,w_2)= D_{h}(w_1,w_2),
\end{equation}
thereby implying that $\Gamma_{w_1,w_2}^\phi$ is a $D_h$-geodesic. Thus, the unique $D_h$-geodesic $\Gamma_{w_1,w_2}$ is also the unique $D_{h-\phi}$-geodesic between $w_1$ and $w_2$.

\end{proof}

Before presenting the next lemma, we discuss a particular concern with our overall strategy. Recall that our design of the bump function seeks to force the $D_{h-\phi}$-geodesic $\Gamma_{\bz,\bw}^\phi$ to pass through a $\scX$. However, in order to ensure that a $\scX$ for $D_h$ remains a $\scX$ for $D_{h-\phi}$ (see Lemma \ref{lem:chi_stays}), we have setup things so that the function $\phi\lvert_{B_{\rho r}(z)}$ is equal to its maximum value in a neighbourhood of $\scX$. As a result, it is, a priori, possible that the geodesic $P^\phi$ circumvents passing through the $\scX$ by instead staying very close to the ``arms'' of the $\scX$ but not actually intersecting with any of the geodesics forming the $\scX$ (see Figure \ref{fig:circumvent}). The following lemma, which is a crucial part of the construction, rules out the above pathological behaviour.

Recall that for a simple path $\eta\colon [a,b]\rightarrow \CC$, we can view it as a directed path from $\eta(a)$ to $\eta(b)$ and thereby define its `left' and `right
 sides (see the notational comments from the introduction).
\begin{figure}
  \centering
  \includegraphics[width=\linewidth]{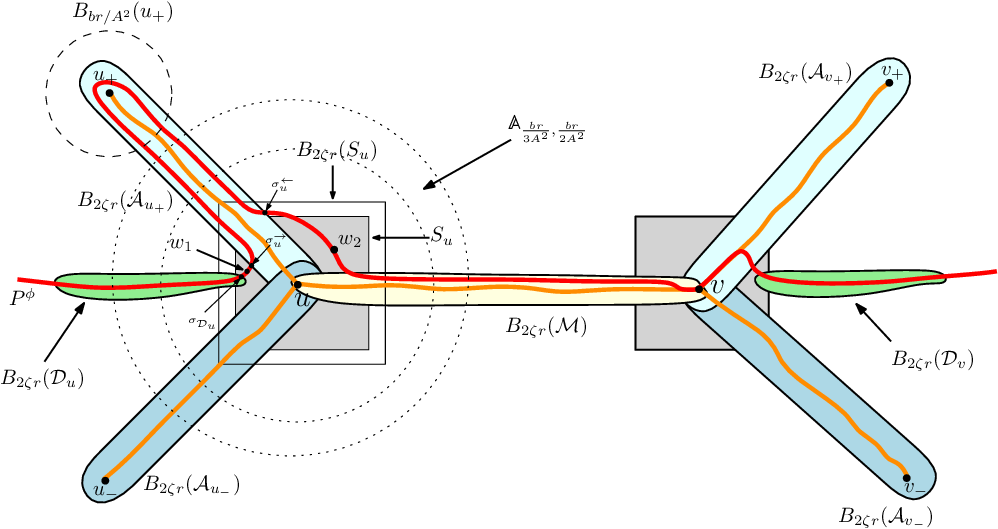}
  \caption{Illustration of the pathological scenario excluded via Lemma~\ref{lem:geodesic_intersects_chi}: Lemmas~\ref{lem:main_event_high_prob}-\ref{lem:chi_stays} imply that the part of the $D_{h-\phi}$-geodesic $P^{\phi}$ (red colour) between some times that it intersects the two green tubes has to stay within the region bounded by the tubes with colour green,  light blue, cyan,  and yellow respectively, and the two grey squares. We would like to show that between the aforementioned times that the red path intersects the two green tubes,  it intersects the union of the orange path from $u_+$ to $u$ and the orange path from $u_-$ to $u$ either to the right side of the former or to the left side of  the latter.  Similarly,  the time-reversal of the red path intersects the union of the orange path from $v$ to $v_+$ and the orange path from $v$ to $v_-$ either to the right side of the former or to the left side of the latter.  In order to prove the above,  we need to exclude pathological scenarios as the one depicted on the picture,  i.e.,  the red path makes two excursions within the cyan tube without intersecting any of the orange paths,  one from the grey square containing $u$ to a small neighbourhood of $u_+$ and the other from the small neighbourhood of $u_+$ to the grey square containing $u$.}
  \label{fig:circumvent}
\end{figure}
\begin{lemma}\label{lem:geodesic_intersects_chi}
  On the event $E_r(0)\cap \{\Gamma_{\bz,\bw}\cap B_{2r}(0)\neq \emptyset\}$, the following hold. %
  \begin{enumerate}
      \item \label{it:Pphi1} $P^\phi$ does not intersect $B_{\frac{b\rho r}{2A^2}}(u_-)\cup B_{\frac{b\rho r}{2A^2}}(u_+)$. %
     \item  $P^\phi$ does not intersect $B_{\frac{b\rho r}{2A^2}}(v_-)\cup B_{\frac{b\rho r}{2A^2}}(v_+)$.
         \item \label{it:Pphi2} $P^{\phi}$ intersects $P_+([0,\tau_u^+]) \cup P_-([0,\tau_u^-])$. With $P^\phi(s)$ being the first such intersection point, the curve $P^\phi\lvert_{[0,s]}$ intersects either on the right side of $P_+([0,\tau_u^+])$ or the left side of $P_-([0,\tau_u^-])$.
         \item \label{it:Pphi3} $P^{\phi}$ intersects $P_+([\tau_v^+,1]) \cup P_-([\tau_v^-,1])$. With $P^\phi(s)$ being the last such intersection point, the curve $P^\phi\lvert_{[s,D_{h-\phi}(\bz,\bw)}]$ intersects either on the right side of $P_+([\tau_v^+,1])$ or the left side of $P_-([\tau_v^-,1])$.

  \end{enumerate}
  
\end{lemma}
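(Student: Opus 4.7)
The plan is to prove the four items in two batches. The first batch, items (1) and (2), hinges on a quantitative comparison between the $D_{h-\phi}$-length of an excursion of $P^\phi$ that would reach $u_\pm$ or $v_\pm$ and the length of a direct shortcut through $S_u$ or $S_v$; the comparison exploits the choice $\delta_1<(b/6)^{\chi'/\chi}$ built into Proposition~\ref{prop:definition_of_F_r(z)}. The second batch, items (3) and (4), follows from a topological analysis of the components $\mathcal{C}_u^1,\mathcal{C}_u^2$ (and their $v$-analogues), with items (1)--(2) used to exclude the pathological scenario where $P^\phi$ loops around an endpoint $u_\pm$ inside $B_{2\zeta r}(\cA_{u_\pm})$ and thereby bypasses the arms (as in Figure~\ref{fig:circumvent}). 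Throughout, Lemma~\ref{lem:Er-conds}(c) confines $\overline{P}^\phi$ to $B_{2\zeta r}(U\cup\mathcal{W})$, and Lemma~\ref{lem:chi_stays} tells us that $P_+,P_-$ (as well as $\Gamma_{u_+,v_-},\Gamma_{u_-,v_+}$) remain the unique $D_{h-\phi}$-geodesics between their endpoints.

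For items (1) and (2) I focus on $u_+$, the other three points being handled by the obvious symmetries. Suppose for contradiction that $P^\phi$ enters $B_{b\rho r/(2A^2)}(u_+)$. Using Lemma~\ref{lem:Er-conds}(c) together with item~\eqref{it:onlyway} of Lemma~\ref{lem:main_event_high_prob}, I can pick times $s_1<s_2$ with $P^\phi(s_1),P^\phi(s_2)\in\partial S_u$ such that $P^\phi|_{[s_1,s_2]}\subseteq B_{2\zeta r}(\cA_{u_+})\cup S_u$ and $P^\phi|_{[s_1,s_2]}$ enters $B_{b\rho r/(2A^2)}(u_+)$. Since $\delta_1<b/8$, the square $S_u$ lies in $B_{b\rho r/(3A^2)}(u)$; combined with $|u-u_+|\geq 2b\rho r/A^2$, this forces $P^\phi|_{[s_1,s_2]}$ to cross the annulus $\mathbb{A}_{b\rho r/(3A^2),\,b\rho r/(2A^2)}(u)$ at least twice. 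Invoking condition~\eqref{it:lower_holder} of Proposition~\ref{prop:definition_of_F_r(z)} (applied at scale $\rho r/A^2$) together with Weyl scaling using $\phi\equiv K_f$ on $U$, the $D_{h-\phi}$-length of this excursion is at least $2(b/6)^{\chi'}(\rho r/A^2)^{\xi Q}e^{\xi(h_{\rho r/A^2}(z)-K_f)}$. On the other hand, the replacement path from $P^\phi(s_1)$ to $P^\phi(s_2)$ inside $S_u$ has, by condition~\eqref{it:upper_holder} and Weyl scaling, $D_{h-\phi}$-length at most $\delta_1^\chi(\rho r/A^2)^{\xi Q}e^{\xi(h_{\rho r/A^2}(z)-K_f)}$. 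The inequality $\delta_1^\chi<(b/6)^{\chi'}$ makes the replacement strictly shorter than the excursion, contradicting the fact that $P^\phi|_{[s_1,s_2]}$ is a $D_{h-\phi}$-geodesic.

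For items (3) and (4) I treat item (3); item (4) is symmetric. The passage of $P^\phi$ across the region near $z_k$ must, by items~\eqref{it:intersect_endpoint}, \eqref{it:intersecting_square}, and \eqref{it:intersecting_branches} of Lemma~\ref{lem:main_event_high_prob}, visit in turn $B_{2\zeta r}(\mathcal{D}_u)$, $B_{2\zeta r}(S_u)$, and $B_{2\zeta r}(\mathcal{M}\setminus S_u)$. Item~\eqref{it:I1} of Proposition~\ref{prop:definition_of_F_r(z)} combined with $\zeta<\delta_3\rho/(4A^2)$ forces the first crossing from $B_{2\zeta r}(\mathcal{D}_u)$ into $B_{2\zeta r}(S_u)$ to happen inside $B_{\delta_2\rho r/A^2}(I_u)$, which by condition~\eqref{it:boxdist} is contained in $\mathcal{C}_u^2$. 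Condition~\eqref{it:Mcomp} gives $B_{2\delta_2\rho r/A^2}(\mathcal{M}\setminus S_u)\cap \mathcal{C}_u^2=\emptyset$, so $P^\phi$ must eventually leave $\mathcal{C}_u^2$. A priori this can happen in two ways: by crossing the arm $P_+([0,\tau_u^+])\cup P_-([0,\tau_u^-])$, or by exiting $B_{2\delta_2\rho r/A^2}(S_u)$ on the $\mathcal{C}_u^2$ side, looping around the tip $u_+$ (resp.\ $u_-$) through $B_{2\zeta r}(\cA_{u_+})$ (resp.\ $B_{2\zeta r}(\cA_{u_-})$), and re-entering on the $\mathcal{C}_u^1$ side. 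Since the tube $\cA_{u_\pm}$ terminates at $u_\pm$ and $2\zeta r<b\rho r/(2A^2)$, any such loop forces $P^\phi$ into $B_{b\rho r/(2A^2)}(u_\pm)$, contradicting item (1). Hence $P^\phi$ must cross the arm; letting $s$ be the first such crossing time, $P^\phi|_{[0,s)}$ remains in $\mathcal{C}_u^2$ near $u$, and the identification of $\partial\mathcal{C}_u^2$ with the right side of $P_+([\sigma_u^+,\tau_u^+])$ and the left side of $P_-([\sigma_u^-,\tau_u^-])$ from the proof of condition~\eqref{it:positive_distance_from_marked_points} yields the stated sides condition.

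The principal obstacle lies in the quantitative comparison underlying items (1) and (2): one must carefully verify that the annular lower bound from condition~\eqref{it:lower_holder} and the internal upper bound from condition~\eqref{it:upper_holder} are both applied at the same rescaled radius $\rho r/A^2$ and under the same Weyl shift $K_f$, so that the factor $e^{-\xi K_f}$ cancels on both sides and the comparison reduces to the deterministic inequality $\delta_1^\chi<(b/6)^{\chi'}$. Once items (1)--(2) are secured, the topological argument for items (3)--(4) becomes essentially a bookkeeping exercise: track the components $\mathcal{C}_u^1,\mathcal{C}_u^2$ and invoke the disconnection properties of Lemma~\ref{lem:main_event_high_prob} to exclude the remaining exit routes from $\mathcal{C}_u^2$.
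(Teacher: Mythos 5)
The quantitative comparison at the heart of your items (1)--(2) (the inequality $\delta_1^\chi < (b/6)^{\chi'}$ plus the matching Weyl shift $e^{-\xi K_f}$ on both sides) is the right mechanism, and your reordering---proving (1)--(2) first and then using them topologically to force (3)--(4)---is a legitimate alternative structure to the paper's. However, there is a genuine gap in the first batch: you assert that you can ``pick times $s_1<s_2$ with $P^\phi(s_1),P^\phi(s_2)\in\partial S_u$'' bracketing the excursion to $u_+$, but none of the ingredients you cite deliver this. Lemma~\ref{lem:Er-conds}(c) and the disconnection items in Lemma~\ref{lem:main_event_high_prob} only force $P^\phi$ through the \emph{fattened neighbourhood} $B_{2\zeta r}(S_u)$, not through $S_u$ itself, and in particular not through $\partial S_u$. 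A priori $P^\phi$ could skirt around $S_u$ entirely inside the thin collar $B_{2\zeta r}(\partial U)$, in which case your candidate shortcut through $S_u$ has nothing to be compared against. Ruling out this skirting is precisely what condition~\eqref{it:lqg_distance_thin_ngbd} in the definition of $E_r(0)$ is for: it gives a lower bound on the $D_h$-length of paths of macroscopic Euclidean diameter trapped in $B_{2\zeta r}(\partial U)$, which combined with the upper bound $\ell(\overline{P}^\phi;D_{h-\phi})\le e^{-\xi K_f}(B+4)r^{\xi Q}e^{\xi h_r(0)}$ from Lemma~\ref{lem:Er-conds}(b) forces $\overline{P}^\phi$ to genuinely enter $S_u$. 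You never invoke~\eqref{it:lqg_distance_thin_ngbd}, so this step is missing, and without it the existence of $s_1,s_2$ cannot be justified. Relatedly, your claim $P^\phi|_{[s_1,s_2]}\subseteq B_{2\zeta r}(\cA_{u_+})\cup S_u$ needs the disconnection item~\eqref{it:intersecting_arms} of Lemma~\ref{lem:main_event_high_prob}, which you do not cite.

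As a structural comparison: the paper's proof first uses~\eqref{it:lqg_distance_thin_ngbd} to produce points $w_1\in\overline{P}^\phi((\sigma_{\cD_u},\sigma_u^{\rightarrow}))\cap S_u$ and $w_2\in\overline{P}^\phi((\sigma_u^{\leftarrow},\sigma_{\cM\setminus S_u}))\cap S_u$, then runs the $\delta_1^\chi<(b/6)^{\chi'}$ comparison to deduce the confinement $\overline{P}^\phi|_{[\tau_{w_1},\tau_{w_2}]}\subseteq B_{b\rho r/(2A^2)}(u)$, and from this confinement items (1) and (3) both follow (item (1) because the ball around $u$ is disjoint from the balls around $u_\pm$, item (3) because $w_1\in A_u$ and $w_2\in B_u$ lie in distinct components of $B_{b\rho r/(2A^2)}(u)\setminus(\text{arms})$). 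Your second batch---excluding loops around the endpoints $u_\pm$ via item (1)---is in spirit a valid alternative to the paper's $A_u/B_u$ argument, but it inherits the gap since it leans on item (1). To repair the proposal you would need to insert the step proving $P^\phi$ actually enters $S_u$ via condition~\eqref{it:lqg_distance_thin_ngbd}; once that is in place the rest of your outline can be made to go through.
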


\begin{proof}
  We shall show only the statements \eqref{it:Pphi1}, \eqref{it:Pphi2} pertaining to the points $u,u_+,u_-$-- the corresponding statement for the points $v,v_+,v_-$ will follow analogously.  Moreover, all the objects except $P^\phi$ in both the items above are subsets of $B_{3r}(0)$ and since we have
\begin{align*}
\overline{P}^{\phi} \cap B_{3r}(0) = P^{\phi} \cap B_{3r}(0),
\end{align*}
it suffices to prove the claims of the lemma with $\overline{P}^{\phi}$ in place of $P^{\phi}$.

Recall that condition \eqref{it:er3} in Lemma~\ref{lem:Er-conds} implies that $\overline{P}^{\phi} \subseteq B_{2\zeta r}(U \cup \mathcal{W})$.  Hence, on combining with conditions \eqref{it:intersecting_branches}-\eqref{it:intersecting_square_again} in Lemma~\ref{lem:main_event_high_prob},  we obtain that $\overline{P}^{\phi}$ intersects all of the sets $B_{2\zeta r}(\mathcal{D}_u),  B_{2\zeta r}(\mathcal{A}_{u_+} \cup \mathcal{A}_{u_-}),  B_{2\zeta r}(S_u)$ and $B_{2\zeta r}(\mathcal{M} \setminus S_u)$.  We let $\sigma_u^{\rightarrow}$ (resp.\ $\sigma_u^{\leftarrow}$) be the first (resp.\ last) time that $\overline{P}^{\phi}$ intersects $\overline{B_{2\zeta r}(\mathcal{A}_{u_+} \cup \mathcal{A}_{u_-})}$ and let $\sigma_{\mathcal{D}_u}$ (resp.\ $\sigma_{\mathcal{M} \setminus S_u}$) be the last (resp.\ first) time before (resp.\ after) time $\sigma_u^{\rightarrow}$ (resp.\ $\sigma_u^{\leftarrow}$) that $\overline{P}^{\phi}$ intersects $\overline{B_{2\zeta r}(\mathcal{D}_u)}$ (resp.\ $\overline{B_{2\zeta r}(\mathcal{M} \setminus S_u)}$),  and note that conditions \eqref{it:intersecting_branches}-\eqref{it:intersect_endpoint} in Lemma~\ref{lem:main_event_high_prob} imply that
\begin{align*}
\sigma_{\mathcal{D}_u} < \sigma_u^{\rightarrow} < \sigma_u^{\leftarrow} < \sigma_{\mathcal{M} \setminus S_u} < \infty.
\end{align*}
We will show that there exist points $w_1 \in \overline{P}^{\phi}((\sigma_{\mathcal{D}_u} ,  \sigma_u^{\rightarrow})) \cap S_u$ and $w_2\in \overline{P}^{\phi}((\sigma_u^{\leftarrow} ,  \sigma_{\mathcal{M} \setminus S_u})) \cap S_u$ such that with $\tau_{w_1}$ (resp.\ $\tau_{w_2}$) denoting the time that $\overline{P}^{\phi}$ hits $w_1$ (resp.\ $w_2$), we have 
\begin{equation}
    \label{eq:wcond}
    \overline{P}^{\phi}|_{[\tau_{w_1} ,  \tau_{w_2}]}\subseteq B_{\frac{b\rho r}{2A^2}}(u)
\end{equation}

We first prove that the existence of points $w_1,w_2$ satisfying the above two conditions is sufficient to complete the proof of the lemma. Thereafter, we shall prove that such points $w_1,w_2$ indeed exist.

Assume that the points $w_1,w_2$ as above exist. %
Since $\overline{P}^{\phi} \subseteq B_{2\zeta r}(U \cup \mathcal{W})$, conditions \eqref{it:onlyway} and \eqref{it:onlyway1} in Lemma~\ref{lem:main_event_high_prob} imply that $\overline{P}^{\phi}$ can intersect $B_{\frac{b \rho r}{2A^2}}(u_-) \cup B_{\frac{b \rho r}{2A^2}}(u_+)$ only during the time-interval $[\sigma_u^{\rightarrow} ,  \sigma_u^{\leftarrow}] \subseteq [\tau_{w_1},\tau_{w_2}]$. %
However, by \eqref{it:geodesics_separated2} in Proposition \ref{prop:definition_of_F_r(z)}, we know that $B_{\frac{b\rho r}{2A^2}}(u)$ and $B_{\frac{b\rho r}{2A^2}}(u_+)\cup B_{\frac{b\rho r}{2A^2}}(u_-)$ are disjoint. Combining this with \eqref{eq:wcond} proves \eqref{it:Pphi1} in the lemma statement.

Next, assuming the existence of points $w_1,w_2$ satisfying \eqref{eq:wcond}, we prove that condition \eqref{it:Pphi2} in the lemma statement holds. 
Let $\wh{\tau}_u^+$ (resp.\ $\wh{\tau}_u^-$) denote the last time that $P_+$ (resp.\ $P_-$) intersects $\partial B_{\frac{b \rho r}{2A^2}}(u)$ before time $\tau_u$.  Since 
\begin{align*}
P_+([0,\tau_u^+)) \cap P_-([0,\tau_u^-)) = \emptyset
\end{align*}
and both $P_+$ and $P_-$ are simple curves,  we obtain that the set
\begin{align}
\label{eq:ABconn}
B_{\frac{b \rho r}{2A^2}}(u) \setminus \left(P_+([\wh{\tau}_u^+ ,  \tau_u^+]) \cup P_-([\wh{\tau}_u^- ,  \tau_u^-]) \right)
\end{align}
has exactly two connected components $A_u,B_u$ such that $\partial A_u$ (resp.\ $\partial B_u$) contains the right (resp.\ left) side of $P_+|_{[\wh{\tau}_u^+ ,  \tau_u^+]}$ and the left (resp.\ right) side of $P_-|_{[\wh{\tau}_u^-,\tau_u^-]}$.  Then condition \eqref{it:positive_distance_from_marked_points} in Proposition~\ref{prop:definition_of_F_r(z)} implies that $\mathcal{C}_u^2 \subseteq A_u$ and $\mathcal{C}_u^1 \subseteq B_u$.

Since $\zeta<\frac{\delta_3\rho}{4A^2}$ (see Lemma \ref{lem:main_event_high_prob}), we obtain that $B_{2\zeta r}(\mathcal{D}_u)\subseteq B_{\delta_3\rho r/(A^2)}(\cD_u)$. On combining this with items \eqref{it:I2}, \eqref{it:boxdist} in Proposition~\ref{prop:definition_of_F_r(z)}, we have
\begin{align*}
B_{2 \zeta r}(\mathcal{D}_u) \cap B_{2\zeta r}(S_u) \subseteq B_{\delta_2\rho r/A^2}(I_u) \subseteq \mathcal{C}_u^2. 
\end{align*}
Hence by item \eqref{it:intersecting_square} in Lemma \ref{lem:main_event_high_prob}, we have that $\overline{P}^{\phi}|_{(\sigma_{\mathcal{D}_u} ,  \sigma_u^{\rightarrow})}$ intersects $A_u$ and since
\begin{align*}
P_+([0,\tau_u^+]) \cup P_-([0,\tau_u^-]) \subseteq \mathcal{A}_{u_+} \cup \mathcal{A}_{u_-}
\end{align*}
and the path $\overline{P}^{\phi}|_{(\sigma_{\mathcal{D}_u} ,  \sigma_u^{\rightarrow})}$ is contained in $B_{\frac{b \rho r}{2A^2}}(u)$ (see \eqref{eq:inside_the_square}) and does not intersect $\mathcal{A}_{u_+} \cup \mathcal{A}_{u_-}$,  we must have that
\begin{align*}
w_1\in \overline{P}^{\phi}((\sigma_{\mathcal{D}_u} ,  \sigma_u^{\rightarrow})) \subseteq A_u.
\end{align*}
Similarly we have that
\begin{align*}
w_2\in \overline{P}^{\phi}((\sigma_u^{\leftarrow} ,  \sigma_{\mathcal{M} \setminus S_u})) \subseteq B_u.
\end{align*}
Now, by \eqref{eq:wcond}, we know that $\overline{P}^{\phi}((\tau_{w_1},\tau_{w_2})\subseteq B_{\frac{b \rho r}{2A^2}}(u)$. Since $A_u$ and $B_u$ are distinct connected components of the set in \eqref{eq:ABconn}, it follows that $\overline{P}^{\phi}|_{(\sigma_{\mathcal{D}_u} ,  \sigma_{\mathcal{M} \setminus S_u})}$ must intersect $P_+([\wh{\tau}_u^+ ,  \tau_u^+]) \cup P_-([\wh{\tau}_u^- ,  \tau_u^-])$, and it does so by first intersecting $B_{\frac{b \rho r}{2A^2}}(u) \cap \partial A_u$.  But then it has to intersect either the right side of $P_+|_{[0,\tau_u^+]}$ or the left side of $P_-|_{[0,\tau_u^-]}$.  This proves that condition \eqref{it:Pphi2} in the statement of the lemma holds.

Finally in order to complete the proof of the lemma, it suffices to show the existence of points $w_1$ and $w_2$ satisfying \eqref{eq:wcond}. In order to do so, we need to establish that $\overline{P}^\phi( (\sigma_{\cD_u},\sigma_u^\rightarrow))\cap S_u$ and $\overline{P}^{\phi}((\sigma_u^{\leftarrow} ,  \sigma_{\mathcal{M} \setminus S_u})) \cap S_u$ are both non-empty. First, we show that
\begin{align}
\label{eq:inside_the_square}
\overline{P}^{\phi}((\sigma_{D_u} ,  \sigma_u^{\rightarrow})) \subseteq B_{2\zeta r}(S_u)\textrm{ and } \overline{P}^{\phi}((\sigma_u^{\leftarrow} ,  \sigma_{\mathcal{M} \setminus S_u})) \subseteq B_{2\zeta r}(S_u).
\end{align}
Indeed, since $\sigma_u^\rightarrow$ is the first time that $\overline{P}^\phi$ intersects $\overline{B_{2\zeta r}(\cA_{u_+}\cup \cA_{u_-})}$, condition \eqref{it:intersecting_arms} in Lemma~\ref{lem:main_event_high_prob} implies that
\begin{equation}
  \label{eq:92}
\overline{P}^{\phi}((\sigma_{\mathcal{D}_u} ,  \sigma_u^{\rightarrow})) \cap B_{2\zeta r}\left(V_{\frac{b \rho r}{A^2}}(z) \setminus S_u\right) = \emptyset.  
\end{equation}
Thus, since $\overline{P}^{\phi} \subseteq B_{2\zeta r}(U \cup \mathcal{W})$ and since $\overline{P}^{\phi}((\sigma_{\mathcal{D}_u} ,  \sigma_u^{\rightarrow})) \subseteq B_{\rho r}(z)$, it follows that the first inclusion in \eqref{eq:inside_the_square} must hold. By an analogous argument using condition \eqref{it:intersectmid} and the definition of $\sigma_{\cM\setminus S_u}$, the second inclusion therein follows as well.

In view of \eqref{eq:inside_the_square}, the our goal now is to in fact show that $\overline{P}^{\phi}((\sigma_{\cD_u},\sigma_u^\rightarrow))$ intersects $S_u$ and not just $B_{2\zeta r}(S_u)$-- this will allow us to choose a point $w_1\in \overline{P}^{\phi}((\sigma_{\cD_u},\sigma_u^\rightarrow))\cap S_u$. Indeed, suppose $\overline{P}^{\phi}((\sigma_{\cD_u},\sigma_u^\rightarrow)) \cap S_u = \emptyset$. Note that item \eqref{it:I2} in Proposition \ref{prop:definition_of_F_r(z)} implies that 
\begin{align*}
\dist(B_{2\zeta r}(\cD_u) ,  B_{2\zeta r}(\mathcal{A}_{u_+} \cup \mathcal{A}_{u_-})) \geq \frac{50 \delta_2 \rho r}{A^2} - 4 \zeta r\geq \frac{50 \delta_3 \rho r}{A^2} - 4 \zeta r
\end{align*}
and so 
\begin{align*}
\text{diam}(\overline{P}^{\phi}((\sigma_{\cD_u} ,  \sigma_u^{\rightarrow}))) \geq \frac{50 \delta_3 \rho r}{A^2} - 4 \zeta r
\end{align*}
since $\overline{P}^{\phi}|_{[\sigma_{\cD_u} ,  \sigma_u^{\rightarrow}]}$ connects $B_{2\zeta r}(\cD_u)$ with $B_{2\zeta r}(\mathcal{A}_{u_+} \cup \mathcal{A}_{u_-})$.  Therefore, there must exist a segment of $\overline{P}^{\phi}((\sigma_{\cD_u},\sigma_u^\rightarrow))$ of Euclidean diameter
at least $50\delta_3\rho r/A^2-4\zeta r$ which is a subset of $B_{2\zeta r}(\partial U)$.  Then, since $\zeta<\frac{\delta_3\rho}{4A^2}$ (see Lemma~\ref{lem:main_event_high_prob} for the choice of $\zeta$),
$\overline{P}^{\phi}((\sigma_{\cD_u},\sigma_u^\rightarrow))$ contains a segment of Euclidean diameter at least $\frac{\delta_3 \rho r}{10^4 A^2}$ which is contained in $B_{2\zeta r}(\partial U)$. Thus condition~\eqref{it:lqg_distance_thin_ngbd} in the definition of $E_r(0)$ combined with the fact that $\phi \leq K_f$ on $B_{\rho r}(z)$ imply that the $D_{h-\phi}$-length of $\overline{P}^{\phi}([\sigma_{\cD_u},\sigma_u^\rightarrow])$ is at least $e^{-\xi K_f}100B r^{\xi Q} e^{\xi h_r(0)}$.  However we have already shown (see Lemma \ref{lem:Er-conds} \eqref{it:er2}) that the $D_{h-\phi}$-length of $\overline{P}^{\phi}$ is at most $e^{-\xi K_f} (B+4) r^{\xi Q} e^{\xi h_r(0)}$ and so we obtain a contradiction.  Therefore, we have shown that there must exist a $w_1 \in \overline{P}^{\phi}((\sigma_{\cD_u},\sigma_u^\rightarrow)) \cap S_u$, and we now fix such a point.

Now, we similarly show that the required point $w_2$ exists. Suppose that $\overline{P}^{\phi}((\sigma_u^\leftarrow,\sigma_{\mathcal{M} \setminus S_u})) \cap S_u = \emptyset$. By an argument analogous to the above, $\overline{P}^{\phi}((\sigma_u^\leftarrow,\sigma_{\mathcal{M} \setminus S_u}))$ must contain a segment of Euclidean diameter at least $\frac{\delta_3 \rho r}{10^4 A^2}$ which is contained in $B_{2\zeta r}(\partial U)$.  Therefore we obtain a contradiction as before and so there exists $w_2 \in \overline{P}^{\phi}((\sigma_u^\leftarrow,\sigma_{\mathcal{M} \setminus S_u})) \cap S_u$.

We now show that these $w_1,w_2$ satisfy \eqref{eq:wcond}. Let $\tau_{w_1},\tau_{w_2}$ be the hitting times of $w_1$ and $w_2$ by $\overline{P}^{\phi}$ respectively.  Then \eqref{it:upper_holder} in Proposition \ref{prop:definition_of_F_r(z)} combined with the fact that $\phi = K_f$ on $S_u$ imply that the $D_{h-\phi}$-length of $\overline{P}^{\phi}([\tau_{w_1},\tau_{w_2}])$ is at most
\begin{align*}
e^{-\xi K_f} \delta_1^\chi \left(\frac{\rho r}{A^2}\right)^{\xi Q} e^{\xi h_{\frac{\rho r}{A^2}}(z)}.
\end{align*}
Now, with the aim of obtaining a contradiction, let us assume that \eqref{eq:wcond} does not hold. If this is the case, then $\overline{P}^{\phi}([\tau_{w_1},\tau_{w_2}])$ must make a crossing across $\A_{\frac{b\rho r}{3A^2},\frac{b\rho r}{2A^2}}(u)\subseteq B_{\rho r}(z)$. Now, by combining the above with \eqref{it:lower_holder} in Proposition \ref{prop:definition_of_F_r(z)}, we obtain that the $D_{h}$-length of $\overline{P}^{\phi}([\tau_{w_1},\tau_{w_2}])$ is at least
\begin{displaymath}
  (\frac{b}{6})^{\chi'} (\frac{\rho r}{A^2})^{\xi Q} e^{\xi h_{\frac{\rho r}{A^2}}(z)},
\end{displaymath}
and so the $D_{h-\phi}$-length of $\overline{P}^{\phi}([\tau_{w_1},\tau_{w_2}])$ is at least $e^{-\xi K_f} (\frac{b}{6})^{\chi'} (\frac{\rho r}{A^2})^{\xi Q} e^{\xi h_{\frac{\rho r}{A^2}}(z)}$.  Therefore we obtain a contradiction since $\delta_1$ was chosen to be sufficiently small so that $(\frac{b}{6})^{\chi'} > \delta_1^\chi$ (see Proposition~\ref{prop:definition_of_F_r(z)}). Thus, \eqref{eq:wcond} must indeed hold and this completes the proof of the lemma.

\end{proof}

\begin{lemma}\label{lem:geodesic_goes_through_chi}
  We have $P_+ \cap P_-\subseteq P^{\phi}$ and $P^\phi\cap (P_+\cup P_-)$ is a curve itself. The path $P_+$ (resp.\ $P_-$) only intersects the left (resp.\ right) side of $P^\phi$.
\end{lemma}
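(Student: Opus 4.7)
The plan is to combine the uniqueness of the four $D_{h-\phi}$-geodesics $P_+, P_-, \Gamma_{u_+,v_-}, \Gamma_{u_-,v_+}$ (from Lemma \ref{lem:chi_stays}) with the entry/exit information for $P^\phi$ (from Lemma \ref{lem:geodesic_intersects_chi}), via a concatenation/uniqueness argument. Define $s_1 = \inf\{t : P^\phi(t) \in P_+([0,\tau_u^+]) \cup P_-([0,\tau_u^-])\}$ and $s_2 = \sup\{t : P^\phi(t) \in P_+([\tau_v^+,1]) \cup P_-([\tau_v^-,1])\}$, both finite by Lemma \ref{lem:geodesic_intersects_chi}, and set $q_1 = P^\phi(s_1)$, $q_2 = P^\phi(s_2)$. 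Depending on whether each of $q_1,q_2$ lies on $P_+$ or on $P_-$, one of the four geodesics $P_+, \Gamma_{u_+,v_-}, \Gamma_{u_-,v_+}, P_-$ passes through both $q_1$ and $q_2$ and is the \emph{unique} $D_{h-\phi}$-geodesic between its (deterministic) endpoints. A standard concatenation argument then forces the $D_{h-\phi}$-geodesic from $q_1$ to $q_2$ itself to be unique and to equal the restriction of this big geodesic between $q_1$ and $q_2$: a distinct geodesic from $q_1$ to $q_2$ spliced into the big geodesic would produce a second $D_{h-\phi}$-geodesic between the fixed endpoints, contradicting Lemma \ref{lem:chi_stays}. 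Hence $P^\phi|_{[s_1,s_2]}$ coincides with this restriction, which in all four cases traverses the central segment $\Gamma_{u,v} = P_+ \cap P_-$, giving $P_+ \cap P_- \subseteq P^\phi$.

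The same analysis also shows $P^\phi|_{[s_1,s_2]} \subseteq P_+ \cup P_-$, so $P^\phi([s_1,s_2]) \subseteq P^\phi \cap (P_+ \cup P_-)$; to obtain the ``curve'' assertion I would promote this to equality by ruling out intersections outside $[s_1,s_2]$. Before $s_1$, hits with the $u$-arms are excluded by definition; hits with the central segment are ruled out by simplicity of $P^\phi$ combined with the previous paragraph's conclusion that the central segment is already fully traversed during $[s_1,s_2]$; and a hit of the $v$-arms at some $t_0 < s_1$ is excluded by reapplying the concatenation/uniqueness argument to $P^\phi|_{[t_0,s_2]}$, forcing it to coincide with a portion of a big geodesic that avoids $q_1$, which contradicts $P^\phi(s_1) = q_1 \in P^\phi((t_0,s_2))$. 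Hits at times $t_0 > s_2$ are handled symmetrically, e.g.\ a hit of $P_-([0,\tau_u^-])$ at $t_0 > s_2$ would force the sub-geodesic $P^\phi|_{[t^u,t_0]}$ (where $t^u \in [s_1,s_2]$ is the time $P^\phi$ passes through $u$) to coincide with a subinterval of $P_-$ that avoids $v$, contradicting $P^\phi(t^v) = v$ for some $t^v \in [t^u, s_2] \subseteq [t^u, t_0]$. I expect this verification of no pathological re-entry into $P_+ \cup P_-$ to be the main technical point of the argument.

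Finally, for the side statement, every intersection of $P^\phi$ with $P_+$ lies in the arc $P^\phi([s_1,s_2])$, which is a forward-traversed sub-segment of $P_+$ (in cases where $q_1, q_2$ are both on $P_+$) or a concatenation of pieces of $P_+$ and $P_-$ joined along the common central arc (in the mixed cases). Lemma \ref{lem:geodesic_intersects_chi} identifies the side from which $P^\phi$ approaches $q_1$ and leaves $q_2$: the right side of $P_+$ whenever the corresponding endpoint lies on $P_+$, and the left side of $P_-$ whenever it lies on $P_-$. A local planar analysis at $q_1, q_2$ and at the central endpoints $u, v$, converting these approach directions into $P^\phi$'s own frame under its orientation from $\bz$ to $\bw$, then shows that the portions of $P_+$ lying outside $P^\phi$ stick out only on the left side of $P^\phi$ and the portions of $P_-$ outside $P^\phi$ only on the right side, yielding the claimed one-sided intersection.
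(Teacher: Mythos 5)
Your proposal is correct and takes essentially the same route as the paper's proof: combine Lemma \ref{lem:chi_stays} (uniqueness of the four $D_{h-\phi}$-geodesics $P_+,P_-,\Gamma_{u_+,v_-},\Gamma_{u_-,v_+}$) with Lemma \ref{lem:geodesic_intersects_chi}'s entry/exit data via a concatenation/uniqueness argument. The only real difference is in how the ``curve'' claim is closed out: rather than the explicit re-entry case analysis you sketch (which you rightly flag as the main technical point), the paper notes directly that $P^\phi\cap P_+$ and $P^\phi\cap P_-$ are each connected sub-arcs of $P^\phi$ (because $P_+$, $P_-$ are \emph{unique} $D_{h-\phi}$-geodesics between deterministic endpoints, so between any first and last hit $P^\phi$ must coincide with the corresponding sub-arc of $P_+$ or $P_-$), and both contain $P_+\cap P_-$, hence their union is connected — a slicker route to the same conclusion that sidesteps the re-entry bookkeeping.
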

\begin{proof}
Lemma~\ref{lem:geodesic_intersects_chi} implies that $P^{\phi}$ intersects both $P_+([0,\tau_u^+]) \cup P_-([0,\tau_u^-])$ and $P_+([\tau_v^+,1]) \cup P_-([\tau_v^-,1])$ and let $w_1$ (resp.\ $w_2$) be the first (resp.\ last ) intersection points corresponding to these. By Lemma \ref{lem:chi_stays}, $P_+,P_-,\Gamma_{u_+,v_-},\Gamma_{u_-,v_+}$ are the unique $D_{h-\phi}$-geodesics between their endpoints and note that all these geodesics contain the segment $P_+\cap P_-$. Thus, the portion of $P^{\phi}$ between $w_1,w_2$ must also contain $P_+\cap P_-$.  Moreover, both $P^{\phi} \cap P_+$ and $P^{\phi} \cap P_-$ are segments of $P^{\phi}$ and
\begin{align*}
    P_+ \cap P_- \subseteq (P^{\phi} \cap P_+) \cap (P^{\phi} \cap P_-),
\end{align*}
and so we obtain that $P^{\phi} \cap (P_+ \cup P_-)$ is a segment of $P^{\phi}$.  In particular, $P^{\phi} \cap (P_+ \cup P_-)$ must be a curve. On combining the above with \eqref{it:Pphi2}, \eqref{it:Pphi3} in Lemma \ref{lem:geodesic_intersects_chi}, we obtain the final assertion about the side of intersection as well. This completes the proof of the lemma.

\end{proof}

\subsection{The $D_h$-geodesic goes through many $\scX$s.}\label{sec:geodesic_goes_through_chi}

Now we are going to show that with high probability,  we have that every $D_h$-geodesic of the form $\Gamma_{\bz,w}$ goes through at least one of the $\scX$s introduced in the definition of the event $F_r(z)$ simultaneously for all $\bz $ in some bounded grid which we will specify,  and all $w \in U$ such that the $D_h$-distance between $\bz$ and $w$ is sufficiently large.

For this, we are going to apply Corollary \ref{cor:main_corollary} with $\lambda_1 = \frac{1}{4},  \lambda_2 = 2,  \lambda_3 = 3,  \lambda_4 = 4$, and $\lambda_5 = 5$.  We fix $\nu > 0$ and we let the constant $\mathbbm{p} \in (0,1)$ of Section~\ref{sec:tubes} be given by the constant $\mathbbm{p}$ of Proposition~\ref{thm:theorem_uniqueness_lqg_metric} corresponding to $\nu$.  Then we choose the constants $\delta,  \Delta,B,\zeta,\alpha,\theta,M,\Lambda_0$ as in Lemma~\ref{lem:main_event_high_prob} so that $\mathbb{P}(E_r(z))\geq \mathbbm{p}$ for all $z \in \mathbb{C},  r>0$.  It remains to define the events $\mathfrak{C}_r^{\bz,\bw,\eta}(z)$ appearing in \cite[Theorem~4.2]{GM21}.  %
Fix $r>0$ and points $\bz,\bw \in \mathbb{C} \setminus B_{4r}(z)$ and suppose that we have the setup of Sections~\ref{sec:setup}-\ref{sec:new_field}. %
We define $\mathfrak{C}_r^{\bz,\bw,\eta}(z)$ to be the event that the following conditions hold.

\begin{enumerate}[(1)]
\item %
 There exists a $z'\in z+\cZ$ for which $F_{\rho r/A^2}(z')$ occurs.

\item \label{it:subset}  With $P_+=\Gamma_{u_+,v_+}, P_-=\Gamma_{u_-,v_-}$ corresponding to the points $u_-,u_+,v_-,v_+$ associated to the $z'$ above, $\eta\cap(P_+\cup P_-)$ is a curve and we have $P_+\cap P_-\subseteq \eta$.
\item \label{it:int1} $P_+$ (resp.\ $P_-$) only intersects the left (resp.\ right) side of $\eta$.

\item \label{it:notint} $\eta$ does not intersect the set $B_{\frac{b\rho r}{2A^2}}(u_+) \cup B_{\frac{b\rho r}{2A^2}}(v_+) \cup B_{\frac{b\rho r}{2A^2}}(u_-) \cup B_{\frac{b\rho r}{2A^2}}(v_-)$.
\item \label{it:dirich1} With $\Lambda= e^{\Lambda_0}$, we have
\begin{align*}
\exp\left(-(h,\phi)_{\nabla} - \frac{1}{2} (\phi,\phi)_{\nabla} \right) \leq \Lambda,\,\,\text{for all}\,\,\phi \in \mathcal{G}_r.
\end{align*}
\end{enumerate}

We are now ready to state and prove the following lemma.

\begin{lemma}\label{thm:main_chi_result}
The events $E_r(z)$ and $\mathfrak{C}_r^{\bz,\bw,\eta}(z)$ defined in this section satisfy the assumptions stated in Proposition \ref{thm:theorem_uniqueness_lqg_metric}.
\end{lemma}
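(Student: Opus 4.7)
The plan is to verify in turn the three hypotheses of Proposition~\ref{thm:theorem_uniqueness_lqg_metric} with parameters $\lambda_1 = 1/4, \lambda_2 = 2, \lambda_3 = 3, \lambda_4 = 4, \lambda_5 = 5$. The high probability bound $\mathbb{P}(E_r(z)) \geq \mathbbm{p}$ is exactly item~\eqref{it:lower_bound_on_prob_of_E_r(z)} of Lemma~\ref{lem:main_event_high_prob}, and the locality of $E_r(z)$ is item~\eqref{it:measurable} of the same lemma, matching the choices $\lambda_1 = 1/4, \lambda_4 = 4, \lambda_5 = 5$. For the locality of $\mathfrak{C}_r^{\bz,\bw,\eta}(z)$, I would inspect the five defining conditions and check that every referenced object (the $\scX$-configurations from $F_{\rho r/A^2}(z')$ for $z' \in z + \mathcal{Z} \subseteq \partial B_r(z)$, the tubes $\mathcal{D}_u, \mathcal{D}_v$, and the bump-function family $\mathcal{G}_r$ supported in $\mathbb{A}_{r/4, 3r}(z)$) lives inside $B_{4r}(z)$, so the event is determined by $h|_{B_{4r}(z)}$ and the trace of $\eta$ on $B_{4r}(z)$, which is recoverable from $\eta$ stopped at its last exit from $B_{4r}(z)$.

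The content of the lemma is the inclusion hypothesis. The plan is to condition on $\mathcal{F}_{\mathrm{out}} := \sigma(h|_{\mathbb{C} \setminus B_{3r}(z)})$, under which the points $\bx, \by$ and hence the bump function $\phi = \phi_r^{\bx, \by}$ become deterministic; since $\phi$ is supported in $B_{3r}(z)$, the Cameron-Martin theorem for the conditional GFF on $B_{3r}(z)$ gives that the law of $h - \phi$ is absolutely continuous with respect to that of $h$ with density $D(h) = \exp(-(h, \phi)_\nabla - \tfrac{1}{2}(\phi,\phi)_\nabla)$. Writing $A(h) := E_r(z) \cap \{\Gamma_{\bz, \bw} \cap B_{2r}(z) \neq \emptyset\}$ and $C(h) := \mathfrak{C}_r^{\bz, \bw, \Gamma_{\bz, \bw}}(z) \cap \{\Gamma_{\bz, \bw} \cap B_{2r}(z) \neq \emptyset\}$, the goal reduces to the pointwise implication $A(h) \implies C(h - \phi)$; combined with the bound $D(h) \leq \Lambda$ on $C(h)$ given by condition (5) of $\mathfrak{C}$, a standard Cameron-Martin computation yields
\begin{align*}
\mathbb{P}(A(h) \mid \mathcal{F}_{\mathrm{out}}) \leq \mathbb{P}(C(h - \phi) \mid \mathcal{F}_{\mathrm{out}}) = \mathbb{E}[\mathbbm{1}_{C(h)} D(h) \mid \mathcal{F}_{\mathrm{out}}] \leq \Lambda \, \mathbb{P}(C(h) \mid \mathcal{F}_{\mathrm{out}}).
\end{align*}

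The pointwise implication $A(h) \implies C(h - \phi)$ is exactly what Sections~\ref{sec:construction_of_chi}--\ref{sec:new_field} were built to deliver. On $A(h)$, the event $E_r(z)$ produces some $z' \in z + \mathcal{Z}$ with $F_{\rho r/A^2}(z')[h]$ occurring; Lemma~\ref{lem:chi_stays} then upgrades the unique $D_h$-geodesics $P_+, P_-, \Gamma_{u_+, v_-}, \Gamma_{u_-, v_+}$ making up the $\scX$ to unique $D_{h - \phi}$-geodesics between the same endpoints, and Lemmas~\ref{lem:geodesic_intersects_chi}--\ref{lem:geodesic_goes_through_chi} then verify the path-based conditions (2)--(4) of $\mathfrak{C}$ with $\eta = \Gamma_{\bz, \bw}(h - \phi)$. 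To obtain condition (1) of $\mathfrak{C}(h - \phi)$, I would verify $F_{\rho r/A^2}(z')[h - \phi]$ at the same $z'$ by exploiting that $\phi \equiv K_f$ on the region $V_{\rho r/A^2}(z') \supseteq S_u \cup S_v$ containing the $\scX$, so that the H\"older-type metric inequalities in items~\eqref{it:across}, \eqref{it:upper_holder}, \eqref{it:lower_holder} of Proposition~\ref{prop:definition_of_F_r(z)} rescale by the common factor $e^{-\xi K_f}$ which is absorbed consistently into the circle-average normalisation. Condition (5) of $\mathfrak{C}(h - \phi)$ follows from item~\eqref{it:bump_function_dirichlet_energy} of $E_r$ applied to $h$ together with Cauchy--Schwarz on the Dirichlet pairing $(\phi, \phi')_\nabla$, at the cost of enlarging $\Lambda$ by a universal factor.

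The hard part will be the verification of condition (1) of $\mathfrak{C}(h - \phi)$ just sketched, since $\phi$ is not literally constant throughout the ball $B_{\rho r}(z')$ of scale $\rho r$ but only on the tube $U_r^{\bx, \by} \cap B_{\rho r}(z')$, so the rescaling claim must be checked case-by-case for each metric inequality in the definition of $F$. I expect to close this gap by separately handling the conditions in $F$ whose candidate geodesics stay inside $U_r^{\bx, \by} \cap B_{\rho r}(z')$ (where the Weyl-scaling identity applies cleanly) from those that may exit, the latter controlled by the same narrow-tube H\"older estimates used in the proof of Lemma~\ref{lem:geodesic_intersects_chi} to rule out a $D_{h - \phi}$-geodesic that circumvents the $\scX$.
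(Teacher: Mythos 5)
Your overall route is the same as the paper's: the probability lower bound and the locality of $E_r(z)$ are read off from items \eqref{it:lower_bound_on_prob_of_E_r(z)} and \eqref{it:measurable} of Lemma~\ref{lem:main_event_high_prob}, the locality of $\mathfrak{C}_r^{\bz,\bw,\eta}(z)$ is checked by containment of all relevant objects in $B_{4r}(z)$, and the inclusion hypothesis is obtained by conditioning on $h|_{\CC\setminus B_{3r}(z)}$, using the Markov property together with the Cameron--Martin shift to compare $h$ and $h-\phi$, and feeding in the pointwise implication supplied by Lemmas~\ref{lem:chi_stays}, \ref{lem:geodesic_intersects_chi} and \ref{lem:geodesic_goes_through_chi}; this is exactly the chain \eqref{eq:28}--\eqref{eq:29} in the paper, and your handling of condition \eqref{it:dirich1} (Cauchy--Schwarz on the cross term, enlarging $\Lambda$) is in the same spirit as, and if anything slightly more careful than, the paper's appeal to item \eqref{it:bump_function_dirichlet_energy}.

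The one place where you diverge, and where your plan as written does not work, is the treatment of condition (1) of $\mathfrak{C}$ evaluated at the field $h-\phi$. You propose to re-verify the \emph{full} event $F_{\rho r/A^2}(z')$ for $h-\phi$ by arguing that the metric inequalities in items \eqref{it:across}, \eqref{it:upper_holder}, \eqref{it:lower_holder} of Proposition~\ref{prop:definition_of_F_r(z)} rescale by the common factor $e^{-\xi K_f}$ which is ``absorbed into the circle-average normalisation''. This is not correct: the circle $\partial B_{\rho r/A^2}(z')$ lies mostly outside the support of $\phi$ (inside $B_{\rho r}(z')$ the support is only the thin $\zeta r$-collar of $O_{\rho r/A^2}(z')$, of width much smaller than $\rho r/A^2$), so $(h-\phi)_{\rho r/A^2}(z')$ is \emph{not} $h_{\rho r/A^2}(z')-K_f$, and the two sides of, say, item \eqref{it:lower_holder} --- or of item \eqref{it:across}, whose right-hand side is a crossing distance of an annulus that the tube $\cD_u$ (on which $\phi\equiv K_f$) traverses --- get multiplied by very different factors. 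Nothing in the definition of $E_r(0)$ controls the margin with which these inequalities hold for $h$ against a loss as large as $e^{-\xi K_f}=\alpha\Delta/(100B)$, so your ``case-by-case Weyl scaling'' sketch does not close this step. The paper does not attempt such a re-verification: in the proof of \eqref{eq:28} the only input about the tilted field's $\scX$ is Lemma~\ref{lem:chi_stays}, namely that $P_+,P_-,\Gamma_{u_+,v_-},\Gamma_{u_-,v_+}$ remain the unique $D_{h-\phi}$-geodesics between their endpoints, so the configuration that conditions \eqref{it:subset}--\eqref{it:notint} refer to is literally unchanged, and conditions (2)--(5) of $\mathfrak{C}^\phi$ are then verified directly via Lemmas~\ref{lem:geodesic_intersects_chi}, \ref{lem:geodesic_goes_through_chi} and item \eqref{it:bump_function_dirichlet_energy}. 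In short: the geodesic-uniqueness/geometric content of $F$ transfers to $h-\phi$ because the geodesics are the same sets, but the auxiliary regularity items of $F$ do not transfer by your scaling argument, and the paper's proof is structured so as not to need them for the tilted field; you should either follow that structure or supply a genuinely different argument for items \eqref{it:across} and \eqref{it:lower_holder} under the tilt.
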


\begin{proof}

  Since all the relevant events are measurable with respect to $h$ viewed modulo an additive constant, just for this proof, we adjust the normalisation of $h$ such that $h_{4r}(z)=0$ (as opposed to the usual convention $h_1(0)=0$). By item \eqref{it:measurable} in Lemma \ref{lem:main_event_high_prob}, $E_r(z)$ is a.s.\ determined by $(h-h_{5r}(z))|_{\mathbbm{A}_{\frac{r}{4},4r}(z)}$. Secondly, by Proposition \ref{prop:definition_of_F_r(z)}, for all $z'\in \cZ$, the event $F_{\rho r/A^2}(z')$ is measurable with respect to $\sigma(h\lvert_{B_{\rho r}(z')})\subseteq \sigma(h\lvert_{B_{3r}(z)})$. Further, by \eqref{it:across} in Proposition~ \ref{prop:definition_of_F_r(z)}, on the event $F_{\rho r/A^2}(z')$, the geodesics $P_+,P_-$ are contained in $B_{\rho r}(z')$ and are determined by $\sigma(h\lvert_{B_{\rho r}(z')})$. Finally, by condition ~\eqref{it:geodloc} in Proposition~\ref{prop:definition_of_F_r(z)},  we have that $B_{\frac{br}{2A^2}}(u_+) \cup B_{\frac{br}{2A^2}}(v_+) \cup B_{\frac{br}{2A^2}}(u_-) \cup B_{\frac{br}{2A^2}}(v_-)\subseteq B_{\rho r}(z')$ as well. As a consequence, since $B_{\rho r}(z')\subseteq B_{3r}(z)$ for all $z'\in \cZ$, we obtain that items \eqref{it:subset}, \eqref{it:int1}, \eqref{it:notint} in the definition of $\mathfrak{C}_r^{\bz,\bw,\eta}(z)$
  are measurable with respect to $\sigma(h\lvert_{B_{4r}(z)})$ and $\eta$ stopped at its last exit time from $B_{4r}(z)$. Finally, all functions in $\cG_r$ are supported on $\A_{r/4,3r}(z)\subseteq B_{3r}(z)$ and this shows that item \eqref{it:dirich1} in the definition of $\mathfrak{C}_r^{\bz,\bw,\eta}(z)$ is measurable with respect to $\sigma(h\lvert_{B_{3r}(z)})$. This shows that events $E_r(z)$ and $\mathfrak{C}_r^{\bz,\bw,\eta}(z)$ satisfy condition \eqref{it:locality_property} in Proposition~\ref{thm:theorem_uniqueness_lqg_metric}. In fact, for $\mathfrak{C}_r^{\bz,\bw,\eta}(z)$, it shows the stronger statement: $\mathfrak{C}_r^{\bz,\bw,\eta}(z)$ is determined by $h|_{B_{3r}(z)}$ and the path $\eta$ stopped at the last time that it exits $B_{3r}(z)$.

  Item \eqref{it:lower_bound_on_prob_of_E_r(z)} in Lemma \ref{lem:main_event_high_prob} shows that $\PP(E_r(z))$ can be made arbitrarily high and thus condition \eqref{it:high_prob} in Proposition~\ref{thm:theorem_uniqueness_lqg_metric} is satisfied as well.

  To show that condition \eqref{it:inclusion_of_events} in the definition of $\mathfrak{C}_r^{\bz,\bw,\eta}(z)$ holds as well, we use an absolutely continuity argument modeled after \cite[Lemma 5.4]{GM21}. First note that since we are working with fixed $\bz,\bw\notin B_{4r}(z)$, there is a unique geodesic $\Gamma_{\bz,\bw}$ and thus, as usual, we write $\mathfrak{C}_{r}^{\bz,\bw}(z)$ as shorthand for $\mathfrak{C}_{r}^{\bz,\bw,\Gamma_{\bz,\bw}}(z)$. Locally, we use $\mathfrak{C}_r^{\phi}(z)$ to denote the event $\mathfrak{C}_r^{\bz,\bw}(z)$ occurring with the field $h-\phi$ in place of $h$.  Now, observe that
  \begin{equation}
    \label{eq:28}
E_r(z) \cap \{\Gamma_{\bz,\bw} \cap B_{2r}(z) \neq \emptyset\} \subseteq \mathfrak{C}_r^{\phi}(z) \cap \{\Gamma_{\bz,\bw} \cap B_{2r}(z) \neq \emptyset\}.
\end{equation}
To see the above, we note that Lemmas \ref{lem:geodesic_intersects_chi}, \ref{lem:geodesic_goes_through_chi} imply that conditions \eqref{it:subset} to \eqref{it:notint} in the definition of $\mathfrak{C}_r^{\phi}(z)$ are satisfied on the event $E_r(z) \cap \{\Gamma_{\bz,\bw} \cap B_{2r}(z) \neq \emptyset\}$. Finally, item \eqref{it:bump_function_dirichlet_energy} in the definition of $E_r(z)$ implies that condition \eqref{it:dirich1} in the definition of $\mathfrak{C}_r^{\bz,\bw,\eta}(z)$ (for $h-\phi$ in place of $h$) is also satisfied on the event $E_r(z)$.

Recall from the beginning of the proof that $h_{4r}(z)=0$. Thus, since $B_{3r}(z)\cap \partial B_{4r}(z)=\emptyset$, we can use the Markov property of the whole plane GFF (see e.g.\ \cite[Lemma A.1]{Gwy19}) to write $h\lvert_{B_{3r}(z)}$ as the sum of a harmonic function measurable with respect to $h\lvert_{\CC\setminus B_{3r}(z)}$ and an independent zero boundary GFF on $B_{3r}(z)$. Thus, by the absolute continuity properties (see the beginning of Section \ref{sec:bump_functions}) of the zero-boundary GFF, the conditional law of $h-\phi$ given $h\lvert_{\CC\setminus B_{3r}(z)}$ is a.s.\ mutually absolutely continuous to the conditional law of $h$ with the corresponding Radon Nikodym derivative being precisely $M_h= \exp(-(h,\phi)_\nabla -(\phi,\phi)_\nabla/2)$. As a result, we can write
\begin{align}
  \label{eq:29}
  \PP(E_r(z) \cap \{\Gamma_{\bz,\bw} \cap B_{2r}(z) \neq \emptyset\} \giv h|_{\mathbb{C} \setminus B_{3r}(z)})&\leq  \PP(\mathfrak{C}_r^{\phi}(z) \cap \{\Gamma_{\bz,\bw} \cap B_{2r}(z) \neq \emptyset\} \giv h|_{\mathbb{C} \setminus B_{3r}(z)})\nonumber\\
                                                                     &= \EE[M_h \ind_{\mathfrak{C}_r^{\bz,\bw}(z)\cap \{\Gamma_{\bz,\bw}\cap B_{2r}(z)\neq \emptyset\}}\lvert h\lvert_{\CC\setminus B_{3r}(z)}]\nonumber\\
  &\leq \Lambda \PP(\mathfrak{C}_r^{\bz,\bw}(z)\cap \{\Gamma_{\bz,\bw}\cap B_{2r}(z)\neq \emptyset\}\lvert h\lvert_{\CC\setminus B_{3r}(z)}),
\end{align}
where the last line uses that $M_h\leq \Lambda$ on $\mathfrak{C}_{r}^{\bz,\bw}(z)$ as holds by item \eqref{it:dirich1} in the definition of $\mathfrak{C}_r^{\bz,\bw,\eta}(z)$. This shows that the last condition in Proposition \ref{thm:theorem_uniqueness_lqg_metric} is indeed satisfied by the definitions of $E_r(z)$ and $\mathfrak{C}_{r}^{\bz,\bw}(z)$. This completes the proof.
  
\end{proof}
We are now ready to state the final result of this section and this is the only result that shall be used in the sequel. We shall now use Corollary \ref{cor:main_corollary} along with our definition of $\mathfrak{C}_r^{\bz,\bw,\eta}(z)$ to show that for all $\bz$ lying on a suitably fine lattice and for all $w$ which are not too close to $\bz$, \emph{all} geodesics $\Gamma_{\bz,w}$ pass through $\scX$s.
\begin{proposition}
  \label{prop:9}
  Fix a bounded open set $U$. Set $\varepsilon_n=2^{-n}$ for all $n\in \NN$. Then almost surely, the following holds for all $n$ large enough. First, with $\mathcal{H}_n=((\frac{\varepsilon_n}{2})^{1/\chi}\ZZ^2)\cap U$, for all $u\in U$, there exists a $\bz\in \mathcal{H}_n$ such that $D_h(u,\bz)\leq \varepsilon_n / 2$.

  Further, for any fixed $\alpha>0$, there exist random $0<\bphi<\bpsi<\alpha/20$ depending only on $U$ and $\alpha$ such that the following almost surely holds for all $n$ large enough. For all $\bz\in \mathcal{H}_n$, all $w\in U$ satisfying $D_h(w,\bz)\geq \alpha$, and every $D_h$-geodesic $\Gamma_{\bz,w}$, there exists a $(z,r)\in \CC\times [\varepsilon_n^{1/(2\chi')},\varepsilon_n^{1/ (4\chi')}]$ %
  satisfying the following properties.
  \begin{enumerate}
  \item \label{it:F} There exists a $z'\in z+\cZ$ for which $F_{\rho r/A^2}(z')$ occurs.
  \item \label{it:overlapping_geodesics}
    With $P_+=\Gamma_{u_+,v_+}, P_-=\Gamma_{u_-,v_-}$ being the unique $D_h$-geodesics corresponding to the points $u_-,u_+,v_-,v_+$ associated to the $z'$ above and with $\tau^-<\tau^+$ being such that $P_-\cap P_+=\Gamma_{\bz,w}\lvert_{[\tau^-,\tau^+]}$, we have $[\tau^-,\tau^+]\subseteq [\bphi,\bpsi]$. %
  \item \label{it:leftright} $P_+$ (resp.\ $P_-$) intersects $\Gamma_{\bz,w}$ on its left (resp.\ right) side.
  \item \label{it:chiinside}We have $P_+\cup P_- \subseteq B_{\varepsilon_n^{1/(4\chi')}}(P_+\cap P_-)$.
  \item \label{it:endpoints_of_arms_away_from_geodesic} 
    The set $B_{16(2048\varepsilon_{n})^{1/\chi'}}(\Gamma_{\bz,w}\lvert_{[\tau^--2(25\times 2048\varepsilon_{n})^{\chi/(4\chi')},\tau^+ + 2(25\times 2048\varepsilon_{n})^{\chi/(4\chi')}]})$ does not intersect any of $u_-,u_+,v_-,v_+$.
 \end{enumerate}
\end{proposition}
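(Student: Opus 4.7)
The lattice-approximation statement follows directly from Lemma \ref{lem:holder_regularity}: on the a.s.\ event that the identity $(U,|\cdot|)\to(U,D_h)$ is $\chi$-H\"older with some random constant, for all $n$ large the Euclidean grid $\mathcal{H}_n$ of spacing $(\varepsilon_n/2)^{1/\chi}$ is $\varepsilon_n/2$-dense in $D_h$ on $U$, giving the first assertion.

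For the main claim the strategy is to apply Corollary \ref{cor:main_corollary} with the events $E_r(z)$ and $\mathfrak{C}_r^{\bz,w,\eta}(z)$ constructed in Sections \ref{sec:bump_functions} and \ref{sec:geodesic_goes_through_chi}; the hypotheses of Proposition \ref{thm:theorem_uniqueness_lqg_metric} required by the corollary are exactly what Lemma \ref{thm:main_chi_result} verifies. I would apply it at the rescaled dyadic scale $\tilde{\varepsilon}_n := 2^{-\lceil n/(4\chi')\rceil}$ with $\nu=1$, so that the output scale $r$ lies in $[\tilde{\varepsilon}_n^2,\tilde{\varepsilon}_n]\subseteq[\varepsilon_n^{1/(2\chi')},\varepsilon_n^{1/(4\chi')}]$ for $n$ large, and with $q$ large enough (e.g.\ $q\geq 4\chi'/\chi$) so that $\mathcal{H}_n$ embeds into $(\tilde{\varepsilon}_n^q\mathbb{Z}^2)\cap U$ and the conclusion applies to every $\bz\in\mathcal{H}_n$. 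On an a.s.\ event of full measure I fix $\ell=\ell(h,U,\alpha)>0$ small enough that $\sup_{\bz\in U}\tau_{16\ell}(\bz)<\alpha/20$, which is possible by a.s.\ continuity of $\bz\mapsto\tau_{\cdot}(\bz)$ on the compact set $\overline{U}$. Then $D_h(\bz,w)\geq\alpha$ forces $w\notin\mathcal{B}^\bullet_{\tau_{16\ell}(\bz)}(\bz)$, so the corollary produces $(z,r)$ as in \eqref{eq:hitt}. Finally I set $\bphi:=\inf_{\bz\in U}\tau_{\ell}(\bz)>0$ and $\bpsi:=\sup_{\bz\in U}\tau_{2\ell}(\bz)\leq\sup_{\bz\in U}\tau_{16\ell}(\bz)<\alpha/20$.

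Properties \eqref{it:F}, \eqref{it:leftright}, and the inclusion $P_+\cap P_-\subseteq\Gamma_{\bz,w}$ of \eqref{it:overlapping_geodesics} are direct rewordings of conditions (1), (3), (2) in the definition of $\mathfrak{C}_r^{\bz,w,\eta}(z)$ at the start of Section \ref{sec:geodesic_goes_through_chi}. For the remaining part of \eqref{it:overlapping_geodesics}, since $\Gamma_{\bz,w}$ is parametrized by $D_h$-length from $\bz$, any $p\in P_+\cap P_-\subseteq B_{\rho r/(4A)}(z')\subseteq B_{\lambda_4r}(z)\subseteq\mathcal{B}_{\tau_{2\ell}(\bz)}^\bullet(\bz)\setminus\mathcal{B}_{\tau_\ell(\bz)}^\bullet(\bz)$ satisfies $D_h(\bz,p)\in[\tau_\ell(\bz),\tau_{2\ell}(\bz)]\subseteq[\bphi,\bpsi]$. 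Property \eqref{it:chiinside} follows because condition \eqref{it:geodloc} in Proposition \ref{prop:definition_of_F_r(z)}, applied at scale $\rho r/A^2$, forces $P_+\cup P_-\subseteq B_{\rho r/(4A)}(z')$, so the Euclidean diameter of $P_+\cup P_-$ is at most $\rho r/(2A)<r\leq\varepsilon_n^{1/(4\chi')}$, and $P_+\cap P_-\neq\emptyset$ by item \eqref{it:uniquegeod1} of Definition \ref{def:1*}.

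The last property \eqref{it:endpoints_of_arms_away_from_geodesic} is where the main bookkeeping lives. Item (4) in the definition of $\mathfrak{C}_r^{\bz,w,\eta}(z)$ gives that $\Gamma_{\bz,w}$ does not enter $B_{b\rho r/(2A^2)}(\{u_\pm,v_\pm\})$; since $r\geq\varepsilon_n^{1/(2\chi')}$ while $1/(2\chi')<1/\chi'$, for all $n$ large one has $b\rho r/(2A^2)>2\cdot 16\,(2048\varepsilon_n)^{1/\chi'}$, so the $16(2048\varepsilon_n)^{1/\chi'}$-Euclidean neighbourhood of any subarc of $\Gamma_{\bz,w}$ stays disjoint from $\{u_\pm,v_\pm\}$. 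In particular this holds for $\Gamma_{\bz,w}|_{[\tau^--s,\tau^++s]}$ with $s=2(25\cdot 2048\varepsilon_n)^{\chi/(4\chi')}$, provided the time interval $[\tau^--s,\tau^++s]$ lies inside $[0,D_h(\bz,w)]$; this inclusion follows for $n$ large because $\bphi>0$, $\bpsi<\alpha/20$, $D_h(\bz,w)\geq\alpha$, and $s\to 0$. The chief difficulty of the argument is thus not conceptual but the alignment of scales: choosing $\tilde{\varepsilon}_n$, $\nu$, $q$, and $\ell$ so that the output of the corollary, coupled with the H\"older comparisons between the Euclidean and $D_h$ metrics, simultaneously produces the specific quantitative radii $\varepsilon_n^{1/(2\chi')}$, $\varepsilon_n^{1/(4\chi')}$, $(2048\varepsilon_n)^{1/\chi'}$, and $(25\cdot 2048\varepsilon_n)^{\chi/(4\chi')}$ appearing in the statement.
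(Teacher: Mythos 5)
Your proof follows the same overall route as the paper's: invoke Corollary \ref{cor:main_corollary} with $\nu=1$ at a dyadic scale near $2^{-n/(4\chi')}$, feed in the events $E_r(z)$, $\mathfrak{C}_r^{\bz,\bw,\eta}(z)$ from Section \ref{sec:geodesic_goes_through_chi}, and read items \eqref{it:F}, \eqref{it:leftright}, \eqref{it:chiinside}, the inclusion $P_+\cap P_-\subseteq\Gamma_{\bz,w}$, and item \eqref{it:endpoints_of_arms_away_from_geodesic} off the definition of $\mathfrak{C}$ combined with the scale separation $r\geq\varepsilon_n^{1/(2\chi')}\gg\varepsilon_n^{1/\chi'}$. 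Your choice $\bphi=\inf_{\bz\in U}\tau_\ell(\bz)$, $\bpsi=\sup_{\bz\in U}\tau_{2\ell}(\bz)$ is a workable alternative to the paper's bi-H\"older proxies $\ell^{\chi'}$, $(2\ell)^\chi$, provided one also insists that $\ell$ run over a countable (e.g.\ dyadic) set so that the almost-sure statement of Corollary \ref{cor:main_corollary} can be applied simultaneously over all candidate values of $\ell$, as the paper does explicitly.

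The genuine gap is your claim that choosing $\ell$ with $\sup_{\bz\in U}\tau_{16\ell}(\bz)<\alpha/20$ makes $D_h(\bz,w)\geq\alpha$ imply $w\notin\cB^\bullet_{\tau_{16\ell}(\bz)}(\bz)$. This inference is false in general: $\cB_t^\bullet(\bz)$ is the \emph{filled} metric ball, and its ``holes'' (bounded complementary components of $\overline{\cB_t(\bz)}$) can contain points whose $D_h$-distance to $\bz$ far exceeds $t$. Thus $\tau_{16\ell}(\bz)<\alpha/20$ places no upper bound on $\sup\{D_h(\bz,x):x\in\cB^\bullet_{\tau_{16\ell}(\bz)}(\bz)\}$, and does not preclude $w$ lying in a hole while $D_h(\bz,w)\geq\alpha$. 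The paper avoids this by using the bi-H\"older bound from Lemma \ref{lem:holder_regularity} to deduce from $D_h(\bz,w)\geq\alpha$ the \emph{Euclidean} separation $|\bz-w|\geq 16\ell$ once $\ell$ is chosen small enough, and this does give $w\notin\cB^\bullet_{\tau_{16\ell}(\bz)}(\bz)$ because $\cB^\bullet_{\tau_{16\ell}(\bz)}(\bz)\subseteq\overline{B_{16\ell}(\bz)}$: at the first hitting time $\tau_{16\ell}(\bz)$ of $\partial B_{16\ell}(\bz)$, the filled ball is a connected set containing $\bz$ that has only just reached $\partial B_{16\ell}(\bz)$ and is therefore still contained in the closed Euclidean ball. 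Substituting this bi-H\"older step for your sentence closes the gap; the remainder of your argument matches the paper's and is sound.
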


\begin{proof}
First we note that by the a.s.\  bi-H\"older continuity of $D_h$ with respect to the Euclidean metric (Lemma~\ref{lem:holder_regularity}), it is a.s.\  the case that there exists a random $\varepsilon$ such that for all $z\in U$ and $w$ with $|z-w| \leq \varepsilon$,  it holds that
\begin{align}\label{eqn:bi_holder_continuity}
|z-w|^{\chi'} \leq D_h(z,w) \leq |z-w|^{\chi}.   
\end{align}
As a result, almost surely, by taking $n$ large enough, we can ensure that for each $u\in U$, there exists a $\bz\in \cH_n$ such that $D_h(u,\bz)\leq \varepsilon_n/2$.

Now, as a consequence of \eqref{eqn:bi_holder_continuity}, we can choose a dyadic (random) $\ell$ such that for all $\bz\in \cH_n,w\in U$ with $D_h(\bz,w)\geq \alpha$, we necessarily have $|\bz-w|\geq 16\ell$. We can also ensure that $\ell$ is chosen small enough so that
\begin{equation}
  \label{eq:62}
  D_h(\bz,\partial B_{2\ell}(\bz))\leq \alpha/20,
\end{equation}
and this choice will be useful shortly. Now, Lemma \ref{thm:main_chi_result} assures us that the events $E_r(z)$ and $\mathfrak{C}_r^{\bz,\bw,\eta}(z)$ defined earlier satisfy the assumptions from Proposition \ref{thm:theorem_uniqueness_lqg_metric}. Thus, we can invoke Corollary \ref{cor:main_corollary} with $\nu=1$, $\lambda_1 = \frac{1}{4},  \lambda_2 = 2,  \lambda_3 = 3,  \lambda_4 = 4$, and $\lambda_5 = 5$ and with $n$ replaced by $\lfloor n/(4\chi')\rfloor$ to obtain that, almost surely, for all $n$ large enough, for all $\bz\in \cH_n$ and $w\in U$ with $D_h(\bz,w)\geq \alpha$, and every $D_h$-geodesic $\Gamma_{\bz,w}$, there exists $(z,r)\in \CC\times [\varepsilon_n^{1 / (2\chi')},\varepsilon_n^{1/ (4\chi')}]$ such that
\begin{align}
  \label{eq:50}
  B_{4r}(z) \subseteq \mathcal{B}_{\tau_{2 \ell}(\bz)}^{\bullet}(\bz) \setminus \mathcal{B}_{\tau_{\ell}(\bz)}^{\bullet}(\bz), \quad  \Gamma_{\bz ,  w} \cap B_{2r}(z) \neq \emptyset
\end{align}
and $\mathfrak{C}_r^{\bz ,  w,\Gamma_{\bz,w}}(z)$ (as defined just above Lemma \ref{thm:main_chi_result}) occurs-- we now let $u_-,u_+,v_-,v_+$ be the points corresponding to the $z'\in z+\cZ$ for which $F_{\rho r/A^2}(z')$ occurs. By construction (see Figure \ref{fig:detailedchi}), note that $P_+,P_-\subseteq B_{2r}(z)$. Recall from \eqref{it:subset} in the definition of $\mathfrak{C}_r^{\bz ,  w,\Gamma_{\bz,w}}(z)$ that $P_+\cap P_-\subseteq \Gamma_{\bz,w}$. Now, by \eqref{eq:50}, we know that for any point $x\in B_{4r}(z)$, we have
\begin{equation}
  \label{eq:59}
  D_h(\bz,\partial B_{\ell}(\bz))\leq D_h(\bz,x)\leq D_h(\bz, \partial B_{2\ell}(\bz)).
\end{equation}

Note that \eqref{eqn:bi_holder_continuity} implies that 
\begin{align*}
    \ell^{\chi'} \leq D_h(\bz , \partial B_{\ell}(\bz)) \leq D_h(\bz , x) \leq D_h(\bz , \partial B_{2\ell}(\bz)) \leq (2\ell)^{\chi}
\end{align*}
and so setting $\bphi = \ell^{\chi'}$ and $\bpsi = (2\ell)^{\chi}$, we obtain that since $|\bz-w|\geq 16\ell$, and since $P_+,P_-\subseteq B_{2r}(z)$, for all $x\in P_+\cap P_-$, we must have $D_h(\bz,x)\in [\bphi, \bpsi]$. Possibly by taking $\ell$ to be smaller, we can assume that $\bpsi \leq \frac{\alpha}{20}$. This proves \eqref{it:overlapping_geodesics}.%
Condition \eqref{it:leftright} immediately follows from the corresponding condition \eqref{it:int1} in the definition of the event $\mathfrak{C}_r^{\bz,w,\Gamma_{\bz,w}}(z)$ earlier.

Now, recall that condition \eqref{it:notint} in the definition of $\mathfrak{C}_r^{\bz,w,\Gamma_{\bz,w}}(z)$ implies that
\begin{align*}
  \Gamma_{\bz,w}\cap   \left(B_{\frac{b \rho r}{2A^2}}(u_+) \cup B_{\frac{b \rho r}{2A^2}}(u_-) \cup B_{\frac{b \rho r}{2A^2}}(v_+) \cup B_{\frac{b \rho r}{2A^2}}(v_-)\right)=\emptyset.
\end{align*}

In particular, possibly by taking $n$ to be larger, we obtain that the Euclidean distance between $\Gamma_{\bz,w}$ and $\{u_- , u_+ , v_- , v_+\}$ is at least
\begin{align*}
    \frac{b \rho r}{2A^2} \geq \frac{b \rho \varepsilon_n^{1 / (2\chi')}}{2A^2} > 16 (2048 \varepsilon_n)^{1 / \chi'}.
\end{align*}
As a result, it follows that \eqref{it:endpoints_of_arms_away_from_geodesic} holds as well.

Finally, to prove \eqref{it:chiinside}, we note that (see \eqref{it:geodloc} in Proposition \ref{prop:definition_of_F_r(z)}) $P_+,P_- \subseteq B_{\frac{\rho r}{4A}}(z')$ where we recall that $z'\in z+\cZ$ is such that $F_{\frac{\rho r}{A^2}}(z')$ occurs. In particular, we have that 
\begin{align*}
    P_+ \cup P_- \subseteq B_{\frac{\rho r}{A}}(P_+ \cap P_-).
\end{align*}
Since $\frac{\rho r}{A} \leq \frac{\rho \varepsilon_n^{1 / (4\chi')}}{A} < \varepsilon_n^{1 / (4\chi')}$ for $n$ large enough, we obtain that \eqref{it:chiinside} holds as well. This completes the proof of the proposition.

\end{proof}

\section{Strong confluence of geodesics}
In this technical section, we will finish the proof of Theorem \ref{thm:3}. This will be done by combining Proposition~\ref{prop:9} with a geometric argument where one ``fits'' in a geodesic starting from a typical point between two possibly atypical geodesics which are very close but do not intersect each other; we shall adapt this geometric argument from \cite[Section 3.3]{MQ20}. We recall that \cite{MQ20} works in the Brownian case ($\gamma=\sqrt{8/3})$. In our setting of $\gamma$-LQG with general values $\gamma\in (0,2)$, owing to losses incurred from using H\"older continuity to go back and forth between Euclidean and $D_h$-distances, our statement of Proposition \ref{prop:9} is weaker than the corresponding statement in \cite{MQ20} (see Lemma 3.5 therein). For this reason, we shall have to undertake certain subtle modifications (see for e.g.\ Lemma \ref{lem:12}) in the argument from \cite[Section 3.3]{MQ20}, and doing so is the goal of this section. We begin by proving a strong confluence statement for the `one-sided' Hausdorff distance and then later upgrade it to the desired result Theorem \ref{thm:3}.

\subsection{Strong confluence for the one-sided Hausdorff distance}
\label{sec:strong-confl-one}

We shall use $d_H$ to denote the Hausdorff distance with respect to the $\gamma$-LQG metric $D_h$.  Also, with $\eta_1,\eta_2$ being two simple paths, we use $d_H^1(\eta_1,\eta_2)$ to denote the one-sided Hausdorff distance between them which is defined as follows.  First we note that the set $\mathbb{C} \cup \{\infty\} \setminus \eta_1$ is a simply connected domain whose boundary (seen as a collection of prime ends) is the union of two parts $\eta_1^{\mathrm{L}}$ and $\eta_1^{\mathrm{R}}$ which respectively correspond to the left and right sides of $\eta_1$. %
Now,  for $p,q \in \{\mathrm{L},\mathrm{R}\}$,  we want to define $\ell(\eta_1^p,\eta_2^q)$ as the infimum over all $r>0$ such that $\eta_2\setminus \eta_1$ is also contained in the $r$-neighborhood\footnote{This neighbourhood can be defined as the set of all points $x$ for which there exists a path $\gamma$ starting from a point $v\in\eta_1^{p}$ to $x$ with $\gamma\cap \eta_1=\{v\}$ and satisfying $\ell(\gamma;D_h)<r$.} of $\eta_1^p$ with respect to the interior-internal metric on $\mathbb{C} \setminus \eta_1$ induced by $D_h$,  and such that $\eta_1\setminus \eta_2$ is contained in the $r$-neighborhood of $\eta_2^q$ with respect to the interior-internal metric on $\mathbb{C} \setminus \eta_2$ induced by $D_h$.  %
We define $d_H^1(\eta_1 ,  \eta_2) = \min_{p,q\in \{\mathrm{L},\mathrm{R}\}}\{\ell(\eta_1^p,\eta_2^q)\}$.  Note that for any disjoint $D_h$-geodesics $\eta_1 ,  \eta_2$,  the distance $D_h(\eta_1 ,  \eta_2 ; \mathbb{C} \setminus \eta_1) = D_h(\eta_1 ,  \eta_2) < \infty$-- to see this,  we simply take a $D_h$-geodesic from $\eta_1$ to $\eta_2$ and note that this geodesic only intersects $\eta_1$ once.  Therefore, for disjoint $D_h$-geodesics $\eta_1,\eta_2$, we necessarily have $d_H^1(\eta_1 ,  \eta_2) < \infty$.  Moreover we note that the definition of the one-sided Hausdorff distance implies that for any two simple paths $\eta_1,\eta_2$, we have $d_H(\eta_1 ,  \eta_2) \leq d_H^1(\eta_1 ,  \eta_2)$.

The purpose of this section is to prove the following proposition which states that if two $D_h$-geodesics are very close to each other in the one-sided Hausdorff sense, then they must intersect.

\begin{proposition}\label{prop:close_one_sided_they_intersect}
  It is a.s.\  the case that the following is true.  Let $\eta_* \colon [0, T_*] \to \mathbb{C}$ be a $D_h$-geodesic.  Then there exists a $\delta$ sufficiently small depending on $\eta_*$ such that there does not exist any $D_h$-geodesic $\eta \colon [0,  T] \to \mathbb{C}$ satisfying $d_H^1(\eta_* ,  \eta) \leq \delta$ along with $\eta_*\cap \eta=\emptyset$. %
\end{proposition}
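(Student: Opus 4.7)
The plan is to argue by contradiction using the $\scX$ configurations from Proposition~\ref{prop:9} as topological barriers that force intersection. Suppose the conclusion fails, so there exist $\eta_*\colon[0,T_*]\to\CC$ and a sequence of disjoint $D_h$-geodesics $\eta_n$ with $d_H^1(\eta_*,\eta_n)\to 0$. After fixing a bounded open $U$ containing $\eta_*$, an $\alpha\in(0,T_*)$, and an interior time $t_0\in(0,T_*-\alpha)$, I would, for each $n$, select a lattice point $\bz_n\in\cH_n$ with $D_h(\bz_n,\eta_*(t_0))\leq\varepsilon_n/2$. Applying Proposition~\ref{prop:9} with $w=\eta_*(T_*)$ then furnishes, for all $n$ large, a $\scX$ of scale $r\in[\varepsilon_n^{1/(2\chi')},\varepsilon_n^{1/(4\chi')}]$ whose central segment $P_+\cap P_-$ lies on the unique geodesic $\Gamma_{\bz_n,w}$ at $D_h$-distance in $[\bphi,\bpsi]$ from $\bz_n$, with $P_+$ and $P_-$ meeting $\Gamma_{\bz_n,w}$ from the left and right sides respectively (property~\eqref{it:leftright}) and the whole $\scX$ contained in a small Euclidean neighborhood of the central segment (property~\eqref{it:chiinside}).

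The second step is to transfer this $\scX$ onto $\eta_*$ itself. Since $D_h(\bz_n,w)=T_*-t_0+O(\varepsilon_n)$ while $\eta_*|_{[t_0,T_*]}$ is a geodesic of length $T_*-t_0$, the concatenation of a short geodesic from $\bz_n$ to $\eta_*(t_0)$ with $\eta_*|_{[t_0,T_*]}$ is an $O(\varepsilon_n)$-quasi-geodesic from $\bz_n$ to $w$. Combining this with the $D_h$-bi-H\"older continuity of Lemma~\ref{lem:holder_regularity} and subsequential Hausdorff compactness, I obtain that $\Gamma_{\bz_n,w}$ stays within a Hausdorff distance $\kappa_n\to 0$ of $\eta_*|_{[t_0,T_*]}$ away from a small neighborhood of $\bz_n$, so the central segment $P_+\cap P_-$ sits within $\kappa_n$ of a fixed subsegment of $\eta_*$ of macroscopic $D_h$-length at least $\bpsi-\bphi$. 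A delicate comparison argument, playing the role of the subtle modification alluded to in the text, then arranges matters so that the central segment actually coincides with a subsegment of $\eta_*$, not merely lies near one.

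With $P_+\cap P_-\subseteq\eta_*$, the four arms $\Gamma_{u_\pm,u}$ and $\Gamma_{v,v_\pm}$ of the $\scX$ lie on opposite sides of $\eta_*$ and form a topological barrier at Euclidean scale $r$. Assume $d_H^1(\eta_*,\eta_n)<r/100$ and that $\eta_n$ tracks, say, the left side of $\eta_*$. Then the portion of $\eta_n$ passing near this $\scX$ region must, while avoiding $\eta_*$ and remaining on its left side within $o(r)$, transversally cross one of the left-side arms, say $\Gamma_{u_+,u}$. Property~\eqref{it:endpoints_of_arms_away_from_geodesic} of Proposition~\ref{prop:9} guarantees that the crossing occurs at an interior point of the arm, so the standard geodesic cut-and-paste argument that two $D_h$-geodesics cannot cross transversally without sharing a subsegment containing the crossing point applies; tracing the merging structure of the $\scX$ (the geodesics $P_\pm$ and $\Gamma_{u_\pm,v_\mp}$ all contain the central segment), one concludes that the shared subsegment between $\eta_n$ and the arm must extend into $P_+\cap P_-\subseteq\eta_*$, contradicting $\eta_n\cap\eta_*=\emptyset$.

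The main obstacle is the transfer step in the second paragraph: Proposition~\ref{prop:9} only produces $\scX$s along lattice-point geodesics $\Gamma_{\bz,w}$, and without the subsegment uniqueness of Proposition~\ref{prop:11}---which is itself a consequence of the present theorem---extracting a $\scX$ on $\eta_*$ itself requires balancing the polynomial-in-$\varepsilon_n$ H\"older bounds against the non-polynomial scale $r$ of the available $\scX$s. In the Brownian setting of \cite{MQ20} stronger quantitative control made this balance straightforward, but for general $\gamma\in(0,2)$ the lossy H\"older exponents force a more delicate geometric argument carried out uniformly in $n$.
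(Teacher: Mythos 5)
Your proposal contains a genuine gap, and the paper resolves the problem by a fundamentally different construction that avoids the obstacle you identify.

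The core difficulty you flag at the end---that you cannot transfer a $\scX$ from the lattice-point geodesic $\Gamma_{\bz_n,w}$ onto $\eta_*$ itself without already knowing the subsegment uniqueness of Proposition~\ref{prop:11}, which is downstream of the present result---is real, and you do not resolve it; the ``delicate comparison argument'' is precisely what is missing. But there is a second, independent problem: even granting $P_+\cap P_-\subseteq\eta_*$, your final step invokes a ``standard cut-and-paste argument that two $D_h$-geodesics cannot cross transversally without sharing a subsegment containing the crossing point.'' This is false as stated: two $D_h$-geodesics can cross at an isolated point (this is exactly the phenomenon the geodesic-star sets $\mathfrak{S}_k$ quantify). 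Forcing $\eta_n$ to share an \emph{interval} with the arm $\Gamma_{u_+,u}$, and then forcing that interval to propagate into the central segment $P_+\cap P_-$, requires a uniqueness input at both stages that is not available from a single transversal crossing. So the approach does not close even in the favorable case.

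The paper sidesteps both issues by never attempting to put a $\scX$ on $\eta_*$. Instead, it builds the ``ladders'' $\xi_\eta^l$ between $\eta_*$ and $\eta$, identifies a lattice point $z_\eta$ \emph{inside} the thin region $V_\eta$ bounded by the two geodesics and two ladders (Lemma~\ref{lem:9} ensures $V_\eta$ is thick enough relative to the ladder lengths to contain such a point), and then produces a geodesic $\sigma_\eta^*$ from $z_\eta$ to an endpoint of $\eta_*$ that remains sandwiched between $\eta_*$ and $\eta$ (Lemma~\ref{lem:geodesic_between_geodesics}). Applying Proposition~\ref{prop:9} to this \emph{middle} geodesic yields a $\scX$ whose central segment lies on $\sigma_\eta^*$, with $P_+$ on its left and $P_-$ on its right, and whose arms must escape the narrow region $W_{\rho^+,\rho^-}$ through its long sides. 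By the sidedness, $P_+$ must hit $\eta_*$ and $P_-$ must hit $\eta$, each twice (once on each side of the shared segment). Crucially, the contradiction is then obtained not via a crossing argument but via the uniqueness of the geodesics $P_\pm$: once $P_+(s_u^+)$ and $P_+(s_v^+)$ both lie on $\eta_*$, the subsegment $P_+|_{[s_u^+,s_v^+]}$ is the \emph{unique} geodesic between those two points (because a distinct one would produce a second geodesic from $u_+$ to $v_+$ by concatenation), so it coincides with the corresponding piece of $\eta_*$; similarly $P_-|_{[s_u^-,s_v^-]}$ coincides with a piece of $\eta$; and since both contain $P_+\cap P_-$, we conclude $\eta_*\cap\eta\neq\emptyset$. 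This uses uniqueness of $P_\pm$ (built into the $\scX$ definition), not any a priori uniqueness about $\eta_*$ or $\eta$, which is what makes the whole scheme non-circular.
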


We now focus on proving Proposition~\ref{prop:close_one_sided_they_intersect}.  First, we state the following simple lemma which states that if two geodesics in a metric space are close in the Hausdorff sense,  then both their endpoints and their lengths have to be close as well.

\begin{lemma}(\cite[Lemma~3.8]{MQ20})\label{lem:endpoints_close}
Suppose that $(X,d)$ is a geodesic metric space and $\eta_i \colon [0,T_i] \to X$ for $i=1,2$ are $d$-geodesics.  Then, by possibly reversing the time of $\eta_2$,  we have 
\begin{align*}
\frac{|T_1-T_2|}{2}, \frac{d(\eta_1(0),\eta_2(0))}{5}, \frac{d(\eta_1(T_1),\eta_2(T_2))}{5} \leq d_H(\eta_1,\eta_2).
\end{align*}
\end{lemma}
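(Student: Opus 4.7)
The plan is to use the Hausdorff hypothesis to produce, for each of the four endpoints $\eta_1(0),\eta_1(T_1),\eta_2(0),\eta_2(T_2)$, a nearby witness on the opposite geodesic, and then to exploit the fact that along a geodesic, $d$-distance equals parameter difference. Setting $\delta:=d_H(\eta_1,\eta_2)$, I would pick parameters $s_0,s_1\in[0,T_2]$ with $d(\eta_1(0),\eta_2(s_0))\leq\delta$ and $d(\eta_1(T_1),\eta_2(s_1))\leq\delta$, and analogously $t_0,t_1\in[0,T_1]$ with $d(\eta_2(0),\eta_1(t_0))\leq\delta$ and $d(\eta_2(T_2),\eta_1(t_1))\leq\delta$; all four exist by the definition of $d_H$. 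Reversing the parametrization of $\eta_2$ swaps $s_0$ and $s_1$, so after a possible reversal I may assume $s_0\leq s_1$.

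The first step is to prove $|T_1-T_2|\leq 2\delta$. Using that $\eta_1$ is a geodesic and triangulating through $\eta_2(s_0),\eta_2(s_1)$, I get
\begin{equation*}
T_1 \;=\; d(\eta_1(0),\eta_1(T_1)) \;\leq\; \delta + d(\eta_2(s_0),\eta_2(s_1)) + \delta \;=\; 2\delta+(s_1-s_0) \;\leq\; 2\delta+T_2,
\end{equation*}
where the last two steps use that $\eta_2$ is a geodesic and $s_0\leq s_1$. For the reverse inequality, triangulating through $\eta_1(t_0),\eta_1(t_1)$ yields $T_2\leq 2\delta+|t_1-t_0|\leq 2\delta+T_1$ independently of the ordering of $t_0,t_1$. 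Together these two bounds give the desired length estimate.

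The second step deduces the endpoint bounds. Rearranging the first display above gives $s_1-s_0\geq T_1-2\delta$, so
\begin{equation*}
s_0 + (T_2-s_1) \;=\; T_2 - (s_1-s_0) \;\leq\; (T_2-T_1) + 2\delta \;\leq\; 4\delta,
\end{equation*}
by the bound from the first step. Since each summand on the left is nonnegative, $s_0\leq 4\delta$ and $T_2-s_1\leq 4\delta$, and a final triangle inequality along the geodesic $\eta_2$ gives $d(\eta_1(0),\eta_2(0))\leq d(\eta_1(0),\eta_2(s_0))+s_0\leq 5\delta$, and symmetrically $d(\eta_1(T_1),\eta_2(T_2))\leq 5\delta$. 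There is no significant obstacle in this argument; the only delicate point is the initial orientation choice, which is automatic once reversal of $\eta_2$ is permitted, together with the observation that the bound $T_2\leq T_1+2\delta$ via the $t_i$'s works regardless of the order of $t_0$ and $t_1$ (so I never need to argue that both the $s$-witnesses and the $t$-witnesses can be ordered consistently on the same orientation of $\eta_2$).
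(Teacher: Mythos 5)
Your argument is correct and complete: the witnesses exist by compactness of the geodesic images, the orientation of $\eta_2$ is fixed solely by requiring $s_0\leq s_1$, the two-sided length bound $|T_1-T_2|\leq 2\delta$ follows from the two triangulations, and the endpoint bounds then follow from $s_0+(T_2-s_1)\leq 4\delta$ exactly as you write. The paper does not reproduce a proof (it simply cites \cite[Lemma~3.8]{MQ20}), and your elementary triangle-inequality argument is the standard one for this statement, so there is nothing to flag.
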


Next we describe the setup of the proof of Proposition~\ref{prop:close_one_sided_they_intersect}.  Fix a bounded open set $U \subseteq \mathbb{C}$; it is easy to see that it is sufficient to prove Proposition \ref{prop:close_one_sided_they_intersect} only for geodesics $\eta_*,\eta\subseteq U$. Suppose we have a $D_h$-geodesic $\eta_*\colon [0,T_*]\rightarrow \CC$ contained in $U$, and we think of $\eta_*$ as fixed from now onwards. %

Now, for any $D_h$-geodesic $\eta \colon [0,T] \to \mathbb{C}$ such that $\eta\subseteq U$, we have (by Lemma \ref{lem:endpoints_close}) that
\begin{align}
  \label{eq:conds}
\frac{|T_*-T|}{2}, \frac{D_h(\eta_*(0),\eta(0))}{5}, \frac{D_h(\eta_*(T_*),\eta(T))}{5}\leq d_H^1(\eta_*,\eta).
\end{align}

Since we are allowed to work with $\eta$ disjoint from $\eta_*$ such that $d_H^1(\eta_*, \eta)$ is as small as we want, we can restrict to working with geodesics $\eta\subseteq U$ satisfying $d_H^1(\eta_*,\eta)\leq T_*/500$ and thereby,
\begin{equation}
  \label{eq:34}
  |T_*-T|\leq T_*/250.
\end{equation}

\emph{Without loss of generality, to fix notation, we shall always assume that $d_H^1(\eta_*,\eta)= \ell(\eta_*^{\mathrm{L}},\eta^{\mathrm{R}})$.} Now, given a geodesic $\eta$ as above, let $\gamma_\eta^{1}$ be a $D_h$-geodesic from $\eta_*(0)$ to $\eta(0)$ and $\gamma_\eta^{2}$ be a $D_h$-geodesic from $\eta_*(T_*)$ to $\eta(T)$. It can be checked that we can choose $\gamma_\eta^{1}$ and $\gamma_\eta^{2}$ such that all of
\begin{displaymath}
  \gamma_\eta^{1}\cap \eta_*, \gamma_\eta^{1}\cap \eta, \gamma_\eta^{2}\cap \eta_*, \gamma_\eta^{2}\cap \eta
\end{displaymath}
are connected. Consider the set given by $\CC\setminus (\eta_*\cup \gamma_\eta^{2}\cup \eta\cup \gamma_\eta^{1})$ which we note is a set consisting of two connected components-- one bounded and one unbounded.  Since the $D_h$-lengths of $\gamma_{\eta}^{1},\gamma_{\eta}^{2}$ are both at most $5d_H^1(\eta_*,\eta)\leq T_*/100$, the $D_h$-lengths of all the intersections
\begin{equation}
  \label{eq:31}
  \gamma_{\eta}^{1}\cap \eta_*, \gamma_{\eta}^{2}\cap \eta_*, \gamma_{\eta}^{1}\cap \eta, \gamma_{\eta}^{2}\cap \eta
\end{equation}
are at most $T_*/100$ as well, and as a result, we can uniquely define $\wt{U}_\eta$ to be the connected component of $\mathbb{C} \setminus (\eta_* \cup \gamma_{\eta}^2 \cup \eta \cup \gamma_{\eta}^1)$ whose boundary contains all of the points $\eta^{\mathrm{R}}(s)$ for $s\in [T_*/10,T-T_*/10]$. Note that the induced metric on the set $\overline{\wt{U}_\eta}$ is well-defined and admits geodesics as well, as can be seen by using that the boundary $\wt{U}_\eta$ is a union of geodesics for $D_h$.

Now, for an $l\in [T_*/10,T-T_*/10]$,  define $b_{\eta}^{l}$ so as to minimize the $D_h(\cdot,\cdot;\overline{\widetilde{U}_\eta})$-distance from $\eta_*(b_\eta^{l})$ to $\eta(l)$.  We let $\xi_{\eta}^l$ be the rightmost $D_h(\cdot,\cdot;\overline{\widetilde{U}_\eta})$ geodesic from $\eta_*(b_\eta^l)$ to $\eta(l)$. Following \cite{MQ20}, we shall refer to these paths as the ``ladders'' between the geodesics $\eta_*$ and $\eta$. Since these ladders are $D_h(\cdot,\cdot;\overline{\widetilde{U}_\eta})$-geodesics, and by the choice of $b_\eta^l$, we must have
\begin{equation}
  \label{eq:37}
  \xi_\eta^l\cap \eta_*=\{\eta_*(b_\eta^l)\},%
\end{equation}
and additionally,
\begin{equation}
  \label{eq:36}
  \xi_\eta^l\cap \eta \textrm{ must be connected.}
\end{equation}

Further, the ladders $\xi_\eta^l$ defined above satisfy the following two simple lemmas.
\begin{lemma}
  \label{lem:14}
  For all $l\in [T_*/10,T-T_*/10]$, we have $\ell(\xi_\eta^l;D_h)\leq d_H^1(\eta_*,\eta)=\ell(\eta_*^{\mathrm{L}},\eta^{\mathrm{R}})$. %
\end{lemma}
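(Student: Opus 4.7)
The plan is to produce, for each $\epsilon>0$, a path in $\overline{\widetilde{U}_\eta}$ from $\eta(l)$ to some point of $\eta_*$ whose $D_h$-length is at most $d_H^1(\eta_*,\eta)+\epsilon$. Since $\xi_\eta^l$ is by definition a $D_h(\cdot,\cdot;\overline{\widetilde{U}_\eta})$-geodesic from $\eta(l)$ to the nearest point of $\eta_*$, letting $\epsilon\downarrow 0$ gives the bound. By the definition of $\ell(\eta_*^{\mathrm{L}},\eta^{\mathrm{R}})=d_H^1(\eta_*,\eta)$, we can pick a path $\gamma\colon[0,1]\to\mathbb{C}$ with $\gamma(0)=v$ a prime end on $\eta_*^{\mathrm{L}}$, $\gamma(1)=\eta(l)$, $\gamma\cap\eta_*=\{v\}$, and $\ell(\gamma;D_h)<d_H^1(\eta_*,\eta)+\epsilon$.

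The key step, which I expect to be the main obstacle, is to show that $\gamma$ cannot meet $\gamma_\eta^1\cup\gamma_\eta^2$ at all. Identify $v$ with $\eta_*(t_v)$ for the appropriate $t_v\in[0,T_*]$. Applying the triangle inequality to $D_h(v,\eta_*(0))$ using $\ell(\gamma_\eta^1;D_h)\leq 5d_H^1(\eta_*,\eta)$, that $\eta,\eta_*$ are unit-speed $D_h$-geodesics, and $D_h(v,\eta(l))\leq\ell(\gamma;D_h)<d_H^1(\eta_*,\eta)+\epsilon$ yields $|t_v-l|\leq 6d_H^1(\eta_*,\eta)+\epsilon$. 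Combined with $l\in[T_*/10,T-T_*/10]$, $|T-T_*|\leq T_*/250$, and the restriction $d_H^1(\eta_*,\eta)\leq T_*/500$, this forces (for $\epsilon$ sufficiently small) $t_v\geq T_*/12.5$ and $T_*-t_v\geq T_*/12.5$. Since $\gamma_\eta^1$ contains $\eta_*(0)$ and has $D_h$-diameter at most $5d_H^1(\eta_*,\eta)\leq T_*/100$, while $D_h(v,\eta_*(0))=t_v\geq 8T_*/100$, we get $D_h(v,\gamma_\eta^1)\geq 7T_*/100$, which strictly exceeds $\ell(\gamma;D_h)$ once $\epsilon$ is small. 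Hence $\gamma$ never meets $\gamma_\eta^1$, and by symmetry $\gamma_\eta^2$.

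Having ruled out intersections with $\gamma_\eta^1\cup\gamma_\eta^2$, set $t_1=\inf\{t\in(0,1]:\gamma(t)\in\eta\}$; this is attained and lies in $(0,1]$ because $\gamma(1)=\eta(l)\in\eta$. By the earlier assumption that each $\gamma_\eta^i\cap(\eta_*\cup\eta)$ is connected, the complement $\mathbb{C}\setminus(\eta_*\cup\eta\cup\gamma_\eta^1\cup\gamma_\eta^2)$ has exactly two connected components: $\widetilde{U}_\eta$ and the unbounded one. Since $v\in\eta_*^{\mathrm{L}}$ lies on the left side of $\eta_*$, which borders $\widetilde{U}_\eta$, the path $\gamma$ enters $\widetilde{U}_\eta$ immediately after $t=0$. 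Combining with the previous step (so that $\gamma|_{(0,t_1)}$ avoids $\eta_*,\eta,\gamma_\eta^1,\gamma_\eta^2$) and continuity of $\gamma$, we conclude $\gamma|_{(0,t_1)}\subseteq\widetilde{U}_\eta$, and hence $\gamma|_{[0,t_1]}\subseteq\overline{\widetilde{U}_\eta}$.

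Write $\gamma(t_1)=\eta(s)$ and concatenate $\gamma|_{[0,t_1]}$ with the sub-arc of $\eta$ from $\eta(s)$ to $\eta(l)$. The latter lies in $\partial\widetilde{U}_\eta$ (possibly along $\gamma_\eta^1$ or $\gamma_\eta^2$ where they overlap $\eta$ at its initial or terminal end), hence in $\overline{\widetilde{U}_\eta}$. The resulting concatenation is a path in $\overline{\widetilde{U}_\eta}$ from $\eta_*(t_v)$ to $\eta(l)$ of $D_h$-length $\ell(\gamma|_{[0,t_1]};D_h)+|s-l|$. Since $\eta$ is a $D_h$-geodesic, $\ell(\gamma|_{[t_1,1]};D_h)\geq D_h(\eta(s),\eta(l))=|s-l|$, so the total length is at most $\ell(\gamma;D_h)<d_H^1(\eta_*,\eta)+\epsilon$. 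Thus $D_h(\eta_*(t_v),\eta(l);\overline{\widetilde{U}_\eta})\leq d_H^1(\eta_*,\eta)+\epsilon$, giving $\ell(\xi_\eta^l;D_h)\leq d_H^1(\eta_*,\eta)+\epsilon$; letting $\epsilon\downarrow 0$ completes the proof.
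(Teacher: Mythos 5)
Your proof is correct, and it takes a genuinely different route from the paper's at the key step of showing $\gamma$ can be made to lie in $\overline{\widetilde{U}_\eta}$.

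The paper allows $\gamma$ to intersect $\gamma_\eta^2$, then splices in a portion of $\gamma_\eta^2$ (with a case distinction on whether $\gamma$ terminates on $\eta_*\cap\gamma_\eta^2$ or not) to produce a modified path contained in $\overline{\widetilde{U}_\eta}$. Your approach instead shows that $\gamma$ cannot meet $\gamma_\eta^1\cup\gamma_\eta^2$ \emph{at all}: you locate $v=\eta_*(t_v)$ via the chain $|t_v-l|\leq D_h(v,\eta(l))+D_h(\eta_*(0),\eta(0))\leq 6d_H^1(\eta_*,\eta)+\epsilon$, deduce $t_v,T_*-t_v\gtrsim T_*/12.5$, and observe that $\gamma_\eta^1,\gamma_\eta^2$ have $D_h$-diameter $\leq 5d_H^1(\eta_*,\eta)\leq T_*/100$, so $D_h(v,\gamma_\eta^1\cup\gamma_\eta^2)\geq 7T_*/100$, which exceeds $\ell(\gamma;D_h)<d_H^1(\eta_*,\eta)+\epsilon$. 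This removes the case analysis. Your treatment also handles the subtlety that the footnote definition of $d_H^1$ only guarantees $\gamma\cap\eta_*=\{v\}$ (not that $\gamma$ meets $\eta$ only at its endpoint), by truncating at the first hit of $\eta$ and replacing the remainder by the corresponding sub-arc of $\eta$, exploiting that $\eta$ is itself a geodesic; the paper asserts a path meeting $\eta\cup\eta_*$ only at endpoints without justifying this stronger property. Overall your argument buys a cleaner topological picture (one splice instead of cases, and fidelity to the stated definition of $d_H^1$) at the cost of one additional arithmetic estimate. One trivial slip: you write that ``each $\gamma_\eta^i\cap(\eta_*\cup\eta)$ is connected,'' but since $\eta\cap\eta_*=\emptyset$ and $\gamma_\eta^i$ meets both, that intersection is actually disconnected; the paper's (and what you need) assumption is that each of $\gamma_\eta^i\cap\eta_*$ and $\gamma_\eta^i\cap\eta$ is separately connected. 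This does not affect your use of the fact that $\mathbb{C}\setminus(\eta_*\cup\eta\cup\gamma_\eta^1\cup\gamma_\eta^2)$ has exactly two components.
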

\begin{proof}
  By the definition of $d_H^1(\eta_*,\eta)$ (which we have assumed is equal to $\ell(\eta_*^{\mathrm{L}},\eta^{\mathrm{R}})$), there exists a path $\gamma$ from $\eta^{\mathrm{R}}(l)$ to $\eta^{\mathrm{L}}_*$ such that $\gamma$ intersects $\eta\cup \eta^*$ only at its endpoints and satisfies $\ell(\gamma;D_h)\leq d_H^1(\eta_*,\eta)$. The goal now is to show that we can choose $\gamma$ such that $\gamma\subseteq \overline{\wt{U}_\eta}$. This would complete the proof as then we would necessarily have $\ell(\xi_\eta^l;D_h)\leq \ell(\gamma;D_h)\leq d_H^1(\eta_*,\eta)$.

  Suppose that $\gamma$ intersects $\gamma_\eta^1\cup \gamma_\eta^2$ and let $s_0$ be the first time it does so. Without loss of generality, assume that $\gamma(s_0)\in \gamma_\eta^2$. Now, we claim that $\gamma\lvert_{[s_0,\ell(\gamma;D_h)]}\cap \gamma_\eta^1=\emptyset$. Indeed, if this were not so, then we would have
  \begin{equation}
    \label{eq:104}
    T_*=D_h(\eta_*(T_*),\eta_*(0))\leq \ell(\gamma_\eta^2;D_h)+ \ell(\gamma\lvert_{[s_0,\ell(\gamma;D_h)]};D_h)+ \ell(\gamma_\eta^1;D_h)\leq (5+1+5)d_H^1(\eta_*,\eta)\leq 11T_*/500,
  \end{equation}
  which is absurd.

  Now, define $t_0$ such that $\eta_*\cap \gamma_\eta^2=\eta_*([t_0,T_*])$. We first consider the case when $\gamma(\ell(\gamma;D_h))\notin \eta_*([t_0,T_*])$. Since $\gamma$ intersects $\eta\cup \eta_*$ only at its endpoints and does not intersect $\gamma_\eta^1$ after time $s_0$, the first and last intersections $\gamma(s_0),\gamma(s_0')$ of $\gamma\lvert_{(0,\ell(\gamma;D_h))}$ with $\partial \wt{U}_\eta$ must both be on $\gamma_\eta^2$. Replacing $\gamma\lvert_{[s_0,s_0']}$ by the portion of $\gamma_\eta^2$ between these points, and since $\gamma_{\eta}^2$ is a $D_h$-geodesic, the resulting path $\gamma$ satisfies $\gamma\subseteq \overline{\wt{U}_\eta}$. This completes the proof.

  The second case is when $\gamma(\ell(\gamma;D_h))\in \eta_*([t_0,T_*])$ Then, since $\gamma_\eta^2$ itself is a $D_h$-geodesic, we can modify $\gamma$ (without changing its length) by replacing $\gamma\lvert_{[s_0,\ell(\gamma;D_h)]}$ with a portion of $\gamma_\eta^2$. However, the resulting $\gamma$ passes through $\eta_*(t_0)$ and we now excise its portion after $\eta_*(t_0)$. The obtained path $\gamma$ travels from $\eta^{\mathrm{R}}(l)$ to $\eta_*(t_0)$ and satisfies $\gamma\subseteq \overline{\wt{U}_\eta}$. This completes the proof.

\end{proof}
\begin{lemma}
  \label{lem:7}
  For all $l\in [T_*/10,T-T_*/10]$, the ladder $\xi_{\eta}^l$ does not intersect either $\gamma_{\eta}^1$ or $\gamma_{\eta}^2$.  
\end{lemma}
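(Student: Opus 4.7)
The plan is to argue by contradiction using the length bound for the ladder from Lemma~\ref{lem:14} together with the fact that the ``connector'' paths $\gamma_\eta^1,\gamma_\eta^2$ are themselves short. Specifically, I would suppose that $\xi_\eta^l$ intersects $\gamma_\eta^1$ at some point $p$ and then construct a new path from $\eta(l)$ to $\eta(0)$ by concatenating the portion of $\xi_\eta^l$ from $\eta(l)$ up to $p$ with the portion of $\gamma_\eta^1$ from $p$ to $\eta(0)$. The symmetric construction with $\gamma_\eta^2$ will yield a path from $\eta(l)$ to $\eta(T)$.

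The key numerical step is a three-line chain of inequalities. By Lemma~\ref{lem:14}, $\ell(\xi_\eta^l;D_h)\leq d_H^1(\eta_*,\eta)$, and by \eqref{eq:conds} together with the standing assumption $d_H^1(\eta_*,\eta)\leq T_*/500$, each of $\ell(\gamma_\eta^1;D_h)$ and $\ell(\gamma_\eta^2;D_h)$ is at most $5d_H^1(\eta_*,\eta)\leq T_*/100$. Hence the concatenated path above has total $D_h$-length at most $T_*/500+T_*/100$, which is well below $T_*/10$.

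On the other hand, since $\eta$ is a unit-speed $D_h$-geodesic parametrised by $D_h$-length, we have $D_h(\eta(l),\eta(0))=l\geq T_*/10$ by the assumption $l\in[T_*/10,T-T_*/10]$, and similarly $D_h(\eta(l),\eta(T))=T-l\geq T_*/10$. Either of these lower bounds directly contradicts the upper bound from the previous paragraph, ruling out the assumed intersection.

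There is no real obstacle here: the proof is essentially just bookkeeping of lengths once the right path has been built. The only mild subtlety is making sure that the concatenation really is a well-defined continuous path joining the correct endpoints — but this is automatic from the fact that $\gamma_\eta^1$ connects $\eta_*(0)$ to $\eta(0)$ and that $p$ lies on it, and likewise for $\gamma_\eta^2$. The numerical slack (a factor of roughly $10$ between the two sides) means that we do not need to be careful about optimising constants.
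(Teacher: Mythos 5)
Your proof is correct and follows essentially the same route as the paper: assume an intersection, concatenate the relevant portion of $\xi_\eta^l$ with the relevant portion of $\gamma_\eta^1$ (or $\gamma_\eta^2$), and show that the resulting path from $\eta(l)$ to $\eta(0)$ (or $\eta(T)$) would have length at most $T_*/500+T_*/100=3T_*/250$, contradicting the fact that these $D_h$-distances along $\eta$ are at least $T_*/10$. The only cosmetic difference is that for the $\gamma_\eta^2$ case you use $T-l\geq T_*/10$ directly, whereas the paper lower-bounds by $T-T_*/10$ and invokes \eqref{eq:34}; both are fine.
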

\begin{figure}
  \centering
  \includegraphics[width=0.9\linewidth]{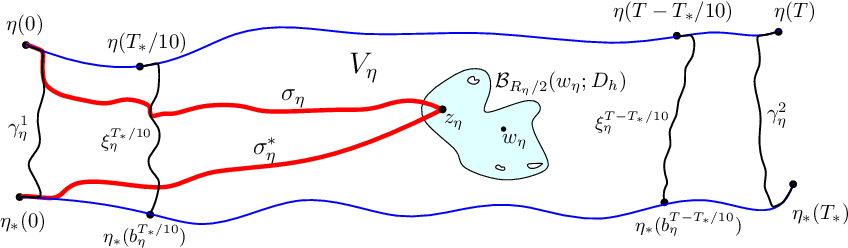}
  \caption{Illustration of the setup of Lemmas~\ref{lem:7}-\ref{lem:geodesic_between_geodesics}: The curves $\eta_* ,  \eta$ are depicted in red while the curves $\gamma_\eta^1,\xi_\eta^{T_*/10},\xi_\eta^{T-T_*/10}$,  and $\gamma_\eta^2$ are depicted in black.  The cyan coloured region represents the LQG metric ball centered at the grid point $w_{\eta}$ with radius $\frac{R_{\eta}}{2}$,  where the latter metric ball also contains the point $z_{\eta}$.  The $D_h$-geodesics $\sigma_{\eta}^*,\sigma_{\eta}$ are drawn in red and start from $z_{\eta}$ and emanate at $\eta_{*}(0)$ and $\eta(0)$ respectively, and they stay within the union of $\eta,\gamma_\eta^2,\eta_*,\gamma_\eta^1$ along with the region $\wt{U}_\eta$ surrounded by them.}
  \label{fig:sigma}
\end{figure}
\begin{proof}
  Recall from \eqref{eq:31} that the $D_h$-lengths of intersections of $\gamma_{\eta}^{1},\gamma_{\eta}^{2}$ with $\eta_*,\eta$ are all at most $T_*/100$. As a consequence, $\eta(l), \eta_*(b_\eta^l), \eta(T-T_*/10), \eta(T_*/10)\in \overline{\wt{U}_\eta}$. We will show that $\xi_\eta^l$ cannot intersect either $\gamma_{\eta}^{1}$ or $\gamma_{\eta}^{2}$. Indeed, if $\xi_\eta^l\cap \gamma_{\eta}^{1}\neq \emptyset$, by using Lemma \ref{lem:14}, we can write
 \begin{equation}
   \label{eq:32}
   T_*/10=D_h(\eta(T_*/10), \eta(0))\leq D_h(\eta(l),\eta(0)) \leq \ell(\xi_\eta^l;D_h)+\ell(\gamma_{\eta}^{1};D_h)\leq T_*/500+ T_*/100= 3T_*/250,
 \end{equation}
 which is not possible. Similarly, if $\xi_\eta^l\cap \gamma_{\eta}^{2}\neq \emptyset$, we would have
 \begin{equation}
   \label{eq:33}
      T-T_*/10=D_h(\eta(T-T_*/10), \eta(T))\leq D_h(\eta(l),\eta(T))\leq \ell(\xi_\eta^l;D_h)+\ell(\gamma_{\eta}^{2};D_h)\leq 3T_*/250,
    \end{equation}
where the above uses Lemma \ref{lem:14}. This contradicts \eqref{eq:34}, and is therefore not possible as well. This completes the proof.
\end{proof}
Let $V_\eta$ be the open set of points disconnected from $\infty$ by the union of $\eta_*|_{[b_{\eta}^{T_* / 10},b_{\eta}^{T - T_* / 10}]},  \xi_\eta^{T_*/10}$, $\eta|_{[T_*/10,T-T_*/10]}$, $\xi_\eta^{T-T_*/10}$ (see Figure \ref{fig:sigma}).  Note that $V_\eta$ is topologically a rectangle. We now have the following easy lemma.

\begin{lemma}
  \label{lem:13}
  For all $l\in (T_*/10+2d_H^1(\eta_*,\eta), T-T_*/10-2d_H^1(\eta_*,\eta))$, we have $\xi_\eta^l\subseteq \overline{V_\eta}$.
\end{lemma}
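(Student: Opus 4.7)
The plan is to first rule out that the ladder $\xi_\eta^l$ meets either of the two bounding ladders $\xi_\eta^{T_*/10}$ and $\xi_\eta^{T-T_*/10}$, and then to use the planar topology of $\wt{U}_\eta$ to conclude that $\xi_\eta^l$ is trapped inside $\overline{V_\eta}$.

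For the first step, I would argue by contradiction: any intersection point $x \in \xi_\eta^l \cap \xi_\eta^{T_*/10}$ would produce, by concatenating the subpath of $\xi_\eta^l$ from $\eta(l)$ to $x$ with the subpath of $\xi_\eta^{T_*/10}$ from $x$ to $\eta(T_*/10)$, a path joining $\eta(l)$ to $\eta(T_*/10)$ of $D_h$-length at most $\ell(\xi_\eta^l;D_h) + \ell(\xi_\eta^{T_*/10};D_h) \leq 2 d_H^1(\eta_*,\eta)$, by Lemma \ref{lem:14}. Since $\eta$ is parametrized to have unit $D_h$-speed, the left-hand side is exactly $l - T_*/10$, and using $l > T_*/10 + 2 d_H^1(\eta_*,\eta)$ produces a contradiction. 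The identical argument, now using $l < T - T_*/10 - 2 d_H^1(\eta_*,\eta)$, rules out an intersection of $\xi_\eta^l$ with $\xi_\eta^{T-T_*/10}$.

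The second step is topological. By its construction as a $D_h(\cdot,\cdot;\overline{\wt{U}_\eta})$-geodesic, $\xi_\eta^l \subseteq \overline{\wt{U}_\eta}$, and Lemma \ref{lem:7} already gives $\xi_\eta^l \cap (\gamma_\eta^1 \cup \gamma_\eta^2) = \emptyset$. The four arcs of $\eta_*, \gamma_\eta^2, \eta, \gamma_\eta^1$ bound $\wt{U}_\eta$ as a topological disk, and the two chords $\xi_\eta^{T_*/10}, \xi_\eta^{T-T_*/10}$ partition this disk into pieces, exactly one of which is $V_\eta$ (the piece whose boundary contains the subarc $\eta|_{[T_*/10,T-T_*/10]}$). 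Since $\xi_\eta^l$ is a connected subset of $\overline{\wt{U}_\eta}$, is disjoint from both $\xi_\eta^{T_*/10}$ and $\xi_\eta^{T-T_*/10}$ by the previous step, and contains the point $\eta(l) \in \eta|_{[T_*/10,T-T_*/10]} \subseteq \partial V_\eta$ approached from the $\eta_*$-side (i.e., from inside $\wt{U}_\eta$), the component of $\overline{\wt{U}_\eta} \setminus (\xi_\eta^{T_*/10} \cup \xi_\eta^{T-T_*/10})$ that contains $\xi_\eta^l$ must be $\overline{V_\eta}$.

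The distance comparison in the first step is completely routine once Lemma \ref{lem:14} is available, so the only mildly delicate point is the bookkeeping in the topological step: one has to verify that $\wt{U}_\eta \setminus (\xi_\eta^{T_*/10} \cup \xi_\eta^{T-T_*/10})$ indeed has $V_\eta$ as a connected component and that $\xi_\eta^l$ enters $V_\eta$ from the correct side of $\eta$, both of which follow from the fact that $\eta_* \cup \gamma_\eta^2 \cup \eta \cup \gamma_\eta^1$ bounds $\wt{U}_\eta$ as a Jordan domain and the two ladders are interior chords with endpoints on opposite sides.
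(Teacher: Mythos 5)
Your proof is correct and follows essentially the same route as the paper: the key computation ruling out intersection with the two bounding ladders $\xi_\eta^{T_*/10},\xi_\eta^{T-T_*/10}$ (via Lemma \ref{lem:14} together with the unit-speed geodesic parametrization of $\eta$) is exactly the one used in the paper's proof. For the topological conclusion, the paper briefly appeals to the rightmost-geodesic ordering of the ladders, whereas you argue directly from planarity, Lemma \ref{lem:7}, and the disjointness just established; both formulations are standard and amount to the same Jordan-curve argument.
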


\begin{proof}
  Since all all the ladders are chosen to be rightmost $D_h(\cdot,\cdot;\overline{\wt{U}_\eta})$ geodesics, $\xi_{\eta}^l$ is necessarily sandwiched between $\xi_{\eta}^{T_*/10},\eta, \xi_{\eta}^{T-T_*/10},\eta_*$. Thus, it suffices to show that for all $l\in (T_*/10+2d_H^1(\eta_*,\eta), T-T_*/10-2d_H^1(\eta_*,\eta))$, $\xi_\eta^l$ cannot intersect any of $\xi_{\eta}^{T_*/10},\xi_{\eta}^{T-T_*/10}$. We just show that $\xi_\eta^l\cap \xi_{\eta}^{T_*/10}=\emptyset$ and the intersection with the other path can be handled analogously. Now, since $\ell(\xi_\eta^l;D_h),\ell(\xi_{\eta}^{T_*/10};D_h)\leq d_H^1(\eta_*,\eta)$, if $\xi_\eta^l\cap \xi_{\eta}^{T_*/10}$ were non-trivial, then we would have
  \begin{equation}
    \label{eq:99}
    D_h(\eta(l),\eta(T_*/10))\leq \ell(\xi_\eta^l;D_h)+\ell(\xi_{\eta}^{T_*/10};D_h)\leq 2d_H^1(\eta_*,\eta)
  \end{equation}
  But since $\eta$ is geodesic and since $l\in (T_*/10+2d_H^1(\eta_*,\eta), T-T_*/10-2d_H^1(\eta_*,\eta))$, the above cannot hold. This completes the proof.
\end{proof}

Later, we shall apply Proposition \ref{prop:9} with the point $\bz$ therein lying inside $V_\eta$. To do so, we shall require that the two long sides of $V_\eta$ not be too close to each other, in the sense that there is an LQG metric ball contained within $V_\eta$ whose radius is not too small. Indeed, we shall frequently use the radii
\begin{equation}
  \label{eq:100}
\underline{R}_\eta = \sup\{r > 0 :\,\,\text{there exists}\,\,  w \in V_\eta, \cB_r(w) \subseteq V_\eta\} \quad \text{and} \quad R_\eta = \min\{\underline{R}_\eta,d^1_H(\eta_*,\eta)\}.  
\end{equation}
Recalling the notation $\varepsilon_n=2^{-n}$, let $k_\eta \in \mathbb{N}$ be such that
\begin{equation}
  \label{eq:53}
  2\varepsilon_{k_\eta} < R_\eta \leq 4 \varepsilon_{k_\eta.}
\end{equation}
Recall the grid $\mathcal{H}_n$ introduced in Proposition~\ref{prop:9}. Then, as long as we are working with $\eta$ with $d_H^1(\eta_*,\eta)$ small enough, and thereby $\varepsilon_{k_\eta}$ small enough, Proposition~\ref{prop:9} implies that
we can choose $w_\eta,z_\eta \in V_\eta$ (see Figure \ref{fig:sigma}) such that
\begin{equation}
  \label{eq:35}
  z_\eta \in \cH_{k_{\eta}} \cap\cB_{\varepsilon_{k_\eta}}(w_\eta)\subseteq \cB_{R_\eta/2}(w_\eta) \subseteq V_\eta.
\end{equation}

With the above setup at hand, we are now ready to provide the proof of Proposition~\ref{prop:close_one_sided_they_intersect}.  First, we will prove that we can find a geodesic $\sigma$ from $z_\eta$ to either $\eta(0)$ or $\eta_*(0)$ which is contained in $\overline{\wt{U}_\eta}\cup \eta$ or $\overline{\wt{U}_\eta}\cup \eta_*$ respectively. Then since the geodesic $\sigma$ starts from a point on the grid $\cH_{k_{\eta}}$ and travels a macroscopic Euclidean distance, our previous result (Proposition \ref{prop:9}) about the existence of $\scX$s implies that the geodesic has to pass through one of the $\scX$s whose LQG-size is at least of order $\varepsilon_{k_\eta}^{1/2}$ and is thus much larger than $R_\eta$. Combining with the uniqueness of geodesics $P_+,P_-$ in the definition of a $\scX$ (see \eqref{it:sides} in Definition \ref{def:1*}),  we obtain that both $\eta$ and $\eta_*$ pass through the same $\scX$ along $\sigma$ and therefore also intersect each other. In the following lemma, we prove the existence of a geodesic $\sigma$ with the above properties.

\begin{lemma}\label{lem:geodesic_between_geodesics} 
  At least one of the following hold. %
  \begin{enumerate}
  \item \label{it:geodnostar} There is a $D_h$-geodesic $\sigma_\eta$ from $z_\eta$ to $\eta(0)$ which is contained in $\overline{\wt{U}_\eta}\cup \eta$.
  \item \label{it:geodstar} There is a $D_h$-geodesic $\sigma_\eta^*$ from $z_\eta$ to $\eta_*(0)$ which is contained in $\overline{\wt{U}_\eta}\cup \eta_*$.
  \end{enumerate}
\end{lemma}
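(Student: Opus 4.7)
The plan is to pick an arbitrary $D_h$-geodesic $\sigma$ from $z_\eta$ to $\eta(0)$, analyse where it first meets $\partial \wt U_\eta$, and ``reroute'' its tail along the corresponding boundary arc. Recall that $\partial \wt U_\eta$ is a loop made of the four $D_h$-geodesic arcs $\eta$, $\eta_*$, $\gamma_\eta^1$, $\gamma_\eta^2$, meeting at the corners $\eta_*(0),\eta_*(T_*),\eta(T),\eta(0)$; in particular both $\eta(0)$ and $\eta_*(0)$ are endpoints of $\gamma_\eta^1$. Setting $s^* := \inf\{s\in[0,\ell(\sigma)] : \sigma(s) \in \partial \wt U_\eta\}$, the inclusion $z_\eta \in V_\eta \subseteq \wt U_\eta$ gives $s^*>0$ and $\sigma|_{[0,s^*]} \subseteq \overline{\wt U_\eta}$.

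The easy cases are $\sigma(s^*) \in \eta$ and $\sigma(s^*) \in \gamma_\eta^1$. If $\sigma(s^*) = \eta(t)$, I replace the tail $\sigma|_{[s^*,\ell(\sigma)]}$ by $\eta|_{[0,t]}$ traversed backwards; since the tail is a $D_h$-geodesic from $\eta(t)$ to $\eta(0)$ of length $\ell(\sigma)-s^*$ and since $\eta$ is unit-speed parametrised, one has $\ell(\sigma)-s^*=t$, so the concatenation has $D_h$-length $\ell(\sigma)=D_h(z_\eta,\eta(0))$ and is therefore a $D_h$-geodesic from $z_\eta$ to $\eta(0)$ contained in $\overline{\wt U_\eta} \cup \eta$, giving option \eqref{it:geodnostar}. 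An identical argument using that $\gamma_\eta^1$ is itself a $D_h$-geodesic ending at $\eta(0)$ handles $\sigma(s^*)\in \gamma_\eta^1$ and yields a $D_h$-geodesic contained in $\overline{\wt U_\eta}$. The exit $\sigma(s^*)\in \gamma_\eta^2$ is excluded by a length estimate: the ladder $\xi_\eta^{T-T_*/10}$ separates $V_\eta$ from $\gamma_\eta^2$ and has $D_h$-length at most $d_H^1(\eta_*,\eta)\le T_*/500$ by Lemma~\ref{lem:14}, while the $D_h$-distance from $\gamma_\eta^2$ to $\eta(0)$ is at least $T-O(d_H^1(\eta_*,\eta))$, and $D_h(z_\eta,\eta(0)) \le \bpsi + O(d_H^1(\eta_*,\eta))$ by \eqref{eq:59} together with $\ell(\gamma_\eta^1;D_h)\le 5 d_H^1(\eta_*,\eta)$; thus for $d_H^1(\eta_*,\eta)$ small enough relative to $T_*$, no $D_h$-geodesic from $z_\eta$ to $\eta(0)$ can reach $\gamma_\eta^2$.

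The remaining case is $\sigma(s^*)=\eta_*(t) \in \eta_*$. Here I switch target: I fix a $D_h$-geodesic $\tau$ from $z_\eta$ to $\eta_*(0)$ and run the same three-part analysis for $\tau$. Exits of $\tau$ through $\eta_*$ or $\gamma_\eta^1$ (resp.\ through $\gamma_\eta^2$) yield option \eqref{it:geodstar} directly (resp.\ are excluded). The last possibility is the ``simultaneously bad'' case where $\tau(s'^*)=\eta(t')\in \eta$. Writing $L_1:=\ell(\gamma_\eta^1;D_h)=D_h(\eta_*(0),\eta(0))$, I argue that the concatenation $\sigma|_{[0,s^*]}\ast \eta_*|_{[0,t]}^{\mathrm{rev}}$, which lies in $\overline{\wt U_\eta}\cup \eta_*$ and has $D_h$-length $s^*+t$, is actually a $D_h$-geodesic from $z_\eta$ to $\eta_*(0)$. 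Indeed the tail of $\sigma$ realises $D_h(\eta_*(t),\eta(0))=\ell(\sigma)-s^*$; the case analysis applied to this geodesic (after time reversal) combined with the exclusion of $\gamma_\eta^2$ forces it to exit $\wt U_\eta$ only through $\eta_*\cup\gamma_\eta^1$, and a length comparison against the boundary path $\eta_*|_{[0,t]}\ast \gamma_\eta^1$ (itself a concatenation of two $D_h$-geodesics of total length $t+L_1$) forces $\ell(\sigma)-s^* = t+L_1$. Substituting yields $s^*+t=\ell(\sigma)-L_1=D_h(z_\eta,\eta(0))-L_1\le D_h(z_\eta,\eta_*(0))$, and combined with the opposite triangle-inequality bound $D_h(z_\eta,\eta_*(0))\le s^*+t$ gives equality and hence option \eqref{it:geodstar}.

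The main obstacle is the length-additivity step in the final paragraph: showing that in the simultaneous bad case, the boundary concatenation $\eta_*|_{[0,t]}\ast \gamma_\eta^1$ is actually $D_h$-length-minimising between its endpoints. This is the one place where the geodesic nature of \emph{all four} arcs of $\partial \wt U_\eta$ is used, together with the smallness of $d_H^1(\eta_*,\eta)$ relative to the macroscopic scales $T_*, \bphi, \bpsi$. I also expect to need uniqueness of $D_h$-geodesics between a.s.-typical point pairs (cf.\ Proposition~\ref{prop:finitely_many_geodesics}) to rule out further pathologies near the corner $\eta_*(0)$.
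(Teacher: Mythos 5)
The first three cases (exit through $\eta$, $\gamma_\eta^1$, or $\gamma_\eta^2$) are handled correctly and in the same spirit as the paper: reroute the tail along the corresponding boundary geodesic when you exit through $\eta$ or $\gamma_\eta^1$, and rule out $\gamma_\eta^2$ by a length estimate. (Though your length estimate in the $\gamma_\eta^2$ step is garbled: the bound ``$D_h(z_\eta,\eta(0))\le \bpsi + O(d_H^1)$ by \eqref{eq:59}'' is not correct --- $z_\eta$ is simply a point of $V_\eta$, $D_h(z_\eta,\eta(0))$ is a macroscopic quantity of order $T_*$, and \eqref{eq:59} concerns a different setup entirely. The paper's estimate \eqref{eqn:1} does it properly.)

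The genuine gap is in your handling of the ``simultaneously bad'' case, where $\sigma$ exits via $\eta_*(t)\in\eta_*$ and $\tau$ exits via $\eta(t')\in\eta$. You claim $\ell(\sigma)-s^* = t + L_1$, i.e.\ $D_h(\eta_*(t),\eta(0)) = t + L_1$, which says precisely that the boundary route $\eta_*|_{[0,t]}\ast\gamma_\eta^1$ is a shortest path from $\eta_*(t)$ to $\eta(0)$. But the ``length comparison against the boundary path'' only gives the trivial upper bound $D_h(\eta_*(t),\eta(0)) \leq t + L_1$; it gives no lower bound, and there is no reason for a $D_h$-geodesic from $\eta_*(t)$ to $\eta(0)$ to pass through the corner $\eta_*(0)$ rather than cutting a shorter route outside (or back through) $\wt{U}_\eta$. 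The paper avoids this entirely: instead of trying to turn the bad case into option \eqref{it:geodstar} via a length identity, it argues by contradiction. After modifying both $\sigma_\eta^*$ and $\sigma_\eta$ so that, after their first exit, they stay outside $\wt{U}_\eta$, the cyclic order of the four marked points $\eta(0)$, $p_\tau\in\eta$, $p_\sigma\in\eta_*$, $\eta_*(0)$ on the Jordan curve $\partial\wt{U}_\eta$ interleaves, so the two exterior tails must cross at some $v$; concatenating $\sigma_\eta|_{[z_\eta,v]}$ with $\sigma_\eta^*|_{[v,\eta_*(0)]}$ produces a $D_h$-geodesic from $z_\eta$ to $\eta_*(0)$ that first exits through $\eta_*$, which you have already ruled out. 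This planarity/recombination step is what your argument is missing, and without it the final length-additivity claim is unsupported.
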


\begin{proof}
The proof closely follows the argument in the proof of \cite[Lemma~3.9]{MQ20} but we include it for completeness.  Suppose that the claim of the lemma does not hold.  Then every geodesic from $z_\eta$ to $\eta(0)$ exits $\overline{\wt{U}}_\eta\cup \eta$ and every geodesic from $z_\eta$ to $\eta_*(0)$ exits $\overline{\wt{U}}_\eta\cup \eta_*$. Let $\sigma_\eta^*$ be a $D_h$-geodesic from $z_\eta$ to $\eta_*(0)$ and note that $\sigma_\eta^*$ must exit $\overline{\wt{U}_\eta}\cup \eta_*$ through either $\eta,  \eta_*$ or $\gamma_\eta^{1} ,  \gamma_\eta^{2}$.

\paragraph{\emph{Case 1.\ $\sigma_{\eta}^*$ first exits $\overline{\wt{U}_{\eta}}\cup \eta_*$ through either $\eta_*$ or $\gamma_{\eta}^1$.}}

We claim that $\sigma_\eta^*$ cannot exit $\overline{\wt{U}_\eta}\cup \eta_*$ through $\eta_*$.  Indeed, if this were the case, then we can consider the first point $v_0\in \eta_*$ hit by $\sigma_\eta^*$. Then the concatenation of the part of $\sigma_\eta^*$ from $z$ to $v_0$ and the part of $\eta_*$ from $v_0$ to $\eta_*(0)$ is also a geodesic from $z$ to $\eta_*(0)$ and it is contained in $\overline{\wt{U}_\eta}\cup\ \eta_*$.  But this contradicts our assumption.  Similarly, if $\sigma_\eta^*$ were to first exit $\overline{\wt{U}_\eta}\cup \eta_*$ via $\gamma_{\eta}^1$, we could take the first intersection $v_0$ of $\sigma_\eta^*$ with $\gamma_{\eta}^1$ and then from then onwards use the part of $\gamma_{\eta}^1$ from $v_0$ to $\eta_*(0)$ (note that this portion of $\gamma_{\eta}^1$ does lie in the set $\overline{\wt{U}_\eta}\cup \eta_*$).

\paragraph{\emph{Case 2.\ $\sigma_{\eta}^*$ first exits $\overline{\wt{U}_{\eta}}\cup \eta_*$ through $\gamma_{\eta}^2$.}}

Next we show that $\sigma_\eta^*$ cannot first exit $\overline{\wt{U}_\eta}\cup \eta_*$ through $\gamma_\eta^{2}$. In fact, we establish that $\sigma_\eta^*$ cannot ever intersect $\gamma_\eta^{2}$. Indeed, suppose on the contrary that $\sigma_\eta^*$ intersects $\gamma_\eta^{2}$ at some point $v_2$.  Then, for topological reasons, it has to exit $V_\eta$ first and let $v_1 \in \sigma_\eta^* \cap \partial V_\eta$ be the point at which $\sigma_{\eta}^*$ first exits $V_{\eta}$. Then it is easy to see that
\begin{align}\label{eqn:1}
  &D_h(v_1 ,  \eta_*(0))\nonumber\\
  &\leq \max(\ell(\eta\lvert_{[0,T-T_*/10]};D_h), \ell(\eta_*\lvert_{[0,b_{\eta}^{T - T_* / 10}]};D_h))+ \max(\ell(\xi_\eta^{T_*/10};D_h),\ell(\xi_\eta^{T-T_*/10};D_h)) + \ell(\gamma_{\eta}^1 ; D_h) \nonumber\\
                        &\leq (\ell(\eta\lvert_{[0,T-T_*/10]};D_h)+ 2d_H^1(\eta_*,\eta))+d_H^1(\eta_*,\eta) + T_*/100\nonumber\\
                        &\leq (T- T_*/10 + 2T_*/500) +T_*/500 + T_*/100\nonumber\\
  &\leq T_* + T_*/250-T_*/10+2T_*/500+T_*/500 + T_* / 100\nonumber\\
                        &\leq T_* - 2T_*/25,
\end{align}
where to obtain the second last line, we have used \eqref{eq:34}. Further, since $\sigma_\eta^*$ first hits $v_1$ before hitting $v_2$,  we also have that 
\begin{align*}
  D_h(v_1 ,  \eta_*(0)) = D_h(v_1,v_2) + D_h(v_2,\eta_*(0)) \geq D_h(v_2,\eta_*(0)) &\geq D_h(\eta_*(T_*),\eta_*(0)) - D_h(\eta_*(T_*),v_2)\nonumber\\
                                                                                    &\geq T_* - D_h(\eta_*(T_*), \eta(T))\nonumber\\
  &\geq T_*- T_*/100,
\end{align*}
where we have used \eqref{eq:conds} to obtain the last line. This contradicts ~\eqref{eqn:1}. 

\paragraph{\emph{Case 3.\ $\sigma_{\eta}^*$ first exits $\overline{\wt{U}_{\eta}^*}\cup \eta_*$ through $\eta$.}}

The only remaining possibility is that $\sigma_\eta^*$ first exits $\overline{\wt{U}_\eta}\subseteq \overline{\wt{U}_\eta}\cup \eta_*$ through $\eta$. We show that by possibly modifying $\sigma_\eta^*$, we can assume that after $\sigma_\eta^*$ exits $\overline{\wt{U}_\eta}$ through $\eta$,  it does not re-enter $\wt{U}_\eta$.  Indeed,  recall from the previous paragraph that $\sigma_\eta^*$ cannot intersect $\gamma_\eta^{2}$. As a result, if $\sigma_\eta^*$ re-enters $\wt{U}_\eta$, it must do so via one of $\eta,\eta_*,\gamma_{\eta}^{1}$. Now, if $\sigma_\eta^*$ intersects $\eta$ again after exiting $\wt{U}_\eta$,  then we can replace the part of $\sigma_\eta^*$ between the first and last time that it intersects $\eta$ by the part of $\eta$ between these points.  Also if $\sigma_\eta^*$ intersects $\eta_*$ or $\gamma_\eta^{1}$ after it exits $\overline{\wt{U}_\eta}$ for the first time,  then we can modify $\sigma_\eta^*$ so that it follows $\eta_*$ or $\gamma_\eta^{1}$ after the first time that it intersects $\eta_*$ or $\gamma_\eta^{1}$ until it reaches $\eta_*(0)$.

In summary, we have shown that in order for item \eqref{it:geodstar} not to hold, we must have a geodesic $\sigma_\eta^*$ first exiting $\overline{\wt{U}_\eta}$ through $\eta$ remaining outside afterwards. By an analogous argument, in order for item \eqref{it:geodnostar} not to hold, we must have a geodesic $\sigma_\eta$ from $z_\eta$ to $\eta(0)$ first exiting $\overline{\wt{U}_\eta}$ through $\eta_*$ and thereafter remaining outside $\wt{U}_\eta$ afterwards. But then, by planarity, we must have that $\sigma_\eta$ and $\sigma_\eta^*$ intersect at some point $v$.  Therefore,  we can construct a new $D_h$-geodesic from $z_\eta$ to $\eta_*(0)$ which first exits $\overline{\wt{U}_\eta}$ through $\eta_*$ by concatenating the part of $\sigma_\eta$ from $z_\eta$ to $v$ and the part of $\sigma_\eta^*$ from $v$ to $\eta_*(0)$.  But we have already established that this cannot happen and so we obtain a contradiction.  Thus, at least one of items \eqref{it:geodnostar} and \eqref{it:geodstar} must hold and this completes the proof of the lemma.

\end{proof}

Recall the radius $R_{\eta}$ from \eqref{eq:100}; it is possible that $R_{\eta}$ (hence $\varepsilon_{k_{\eta}}$) is much smaller than $d_H^1(\eta_*,\eta)$. In the following lemma, which is a careful expansion of an argument implicit in \cite[Proof of Proposition 3.1]{MQ20}, we argue that $R_{\eta}$ cannot be much smaller than $\ell(\xi_{\eta}^l ; D_h)$. We note that, in the remainder of the section, the constant $2048$ shall appear very frequently. Thus, to avoid unnecessary clutter, we shall now set $K=2048$ and shall freely use $K$ throughout this section.
\begin{lemma}
  \label{lem:9}
For all $d_H^1(\eta_*,\eta)$ small enough and for all $l\in [T_*/10+3d_H^1(\eta_*,\eta), T-T_*/10-3d_H^1(\eta_*,\eta)]$, we have $\ell(\xi_{\eta}^l;D_h)< 512 R_\eta\leq K\varepsilon_{k_\eta}$.
\end{lemma}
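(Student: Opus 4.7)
The second inequality $512 R_\eta \leq K\varepsilon_{k_\eta}$ is immediate from the definition of $k_\eta$ in \eqref{eq:53} (which gives $R_\eta \leq 4\varepsilon_{k_\eta}$) and $K = 2048 = 512 \cdot 4$. The substantive content is the first inequality, which I would prove by splitting into cases according to the value of $R_\eta$.

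If $R_\eta = d_H^1(\eta_*,\eta)$, then Lemma \ref{lem:14} immediately gives $\ell(\xi_\eta^l;D_h) \leq d_H^1(\eta_*,\eta) = R_\eta < 512 R_\eta$, and we are done. The interesting case is $R_\eta = \underline{R}_\eta < d_H^1(\eta_*,\eta)$. Here I would argue by contradiction: assume $L := \ell(\xi_\eta^l;D_h) \geq 512\underline{R}_\eta$ and show that $V_\eta$ contains a $D_h$-ball of radius strictly greater than $\underline{R}_\eta$, contradicting the supremum defining $\underline{R}_\eta$.

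The candidate center is the midpoint $m := \xi_\eta^l(L/2)$ of the ladder in $D_h$-arclength (recall that for paths contained in $\overline{\wt U_\eta}$, the $D_h$-length and the $D_h(\cdot,\cdot;\overline{\wt U_\eta})$-length coincide). The goal is to show $D_h(m,\partial V_\eta) > \underline{R}_\eta$, and one handles the four boundary pieces of $V_\eta$ in turn:
\textbf{(a)} For $\xi_\eta^{T_*/10}$ (and similarly $\xi_\eta^{T-T_*/10}$): combining $D_h(m,\eta(l)) \leq L/2 \leq d_H^1/2$ (via the ladder), the fact that $\eta$ is a $D_h$-geodesic with $l - T_*/10 \geq 3d_H^1$, and $\operatorname{diam}_{D_h}(\xi_\eta^{T_*/10}) \leq d_H^1$ (Lemma \ref{lem:14}) gives $D_h(m,\xi_\eta^{T_*/10}) \geq 3 d_H^1/2 > \underline{R}_\eta$.
\textbf{(b)} For $\eta_*|_{[b_\eta^{T_*/10},b_\eta^{T-T_*/10}]}$: if some $p = \eta_*(s)$ satisfied $D_h(m,p) \leq r < L/2$, I would first use an iterative straightening argument to replace a $D_h$-geodesic $\sigma$ from $m$ to $p$ by a path $\tilde\sigma \subseteq \overline{\wt U_\eta}$ of equal length (every excursion of $\sigma$ outside $\wt U_\eta$ crosses $\eta$ or $\eta_*$, each of which is a $D_h$-geodesic, so the outside loop can be swapped for the corresponding boundary arc; excursions through the end ladders or through $\gamma_\eta^1,\gamma_\eta^2$ are ruled out by length). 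Concatenating $\tilde\sigma$ with $\xi_\eta^l|_{[L/2,L]}$ yields a path in $\overline{\wt U_\eta}$ from $\eta_*(s)$ to $\eta(l)$ of length $\leq r + L/2$, and the minimality of $b_\eta^l$ (i.e.\ $D_h(\eta_*(s),\eta(l);\overline{\wt U_\eta}) \geq L$) forces $r \geq L/2$, contradiction.
\textbf{(c)} For $\eta|_{[T_*/10,T-T_*/10]}$: this is the main obstacle. The analogous argument produces a concatenation $\xi_\eta^l|_{[0,L/2]} + \tilde\sigma + \eta|_{[l',l]}$ whose length is $\leq L/2 + r + |l-l'|$, yielding $|l-l'| \geq L/2 - r$; unfortunately the full-plane triangle inequality $|l-l'| = D_h(\eta(l),\eta(l')) \leq L/2 + r$ is compatible with this lower bound for any $r$, so no immediate contradiction results.

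To close step (c), my plan is to exploit the \emph{rightmost} characterization of $\xi_\eta^l$: if equality $|l-l'| + r + L/2 = L$ held then the concatenated path would itself be a $D_h(\cdot,\cdot;\overline{\wt U_\eta})$-geodesic from $\eta_*(b_\eta^l)$ to $\eta(l)$, and rightmost-ness would pin $\tilde\sigma$ to the left side of $\xi_\eta^l|_{[L/2,L]}$; combining with the planar configuration of $\eta, \tilde\sigma, \xi_\eta^l$ (which form the boundary of a subregion of $V_\eta$) should yield a ball of radius greater than $\underline R_\eta$ sitting inside that subregion. The alternative route is to avoid the midpoint and instead vary the center along $\xi_\eta^l$: the constraint $|l - l'(t)| \approx L - t$ derived at $\xi_\eta^l(t)$ cannot hold for all $t \in (\underline R_\eta, L-\underline R_\eta)$ because the corresponding arc of $\eta$ would have $D_h$-length $\approx L$ while remaining within $\underline R_\eta$ of $\xi_\eta^l$, which is incompatible with $\eta$ being a $D_h$-geodesic and the ladder $\xi_\eta^l$ starting on $\eta_*$ rather than $\eta$.

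I expect step (c) to be the main technical difficulty; steps (a) and (b), as well as the case split and the reduction to a ball-fitting statement, are routine combinations of Lemma \ref{lem:14}, the minimality of $b_\eta^l$, and the $D_h$-geodesic property of $\eta, \eta_*$.
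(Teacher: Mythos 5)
Your handling of the trivial case $R_\eta = d_H^1(\eta_*,\eta)$ and of steps (a), (b) is essentially correct and matches the paper's Step 1 (clearance from the end ladders and from $\eta_*$). You have also correctly identified that step (c) — clearance from $\eta$ — is where the naive argument breaks down. But the proposed remedies for (c) are not viable, and the gap here is genuine.

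The fundamental issue is that the midpoint $m=\xi_\eta^l(L/2)$ is simply \emph{not} the right center. The ball $\overline{\cB_{L/8}(m)}$ can intersect $\eta$, and there is no argument that forces $D_h(m,\eta)$ to be comparable to $L$: $\eta$ is one of the boundary curves of $V_\eta$ that $\xi_\eta^l$ actually touches. The paper does not try to avoid this — it proves only that $\overline{\cB_{L/8}(m)}$ cannot meet both $\eta([0,l])$ and $\eta([l,T])$ (your "rightmost" idea and the constraint involving $l'$ are partly aimed at this, but neither version of your closing argument for (c) would give $D_h(m,\eta)>\underline{R}_\eta$, because it is false). What you need instead is a \emph{new} center displaced off the ladder into the connected component $\cC$ of $V_\eta\setminus\xi_\eta^l$ that lies on the side away from where $\eta$ was hit. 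The mechanism for producing that center is the missing idea: take $v_0=\xi_\eta^l(L/2-L/64)$ and $v_1=\xi_\eta^l(L/2+L/64)$, show via a local-to-global argument (any geodesic between points of $\xi_\eta^l$ lying in $\cB_{L/8}(m)$ forces the corresponding sub-arc of $\xi_\eta^l$ to be a $D_h$-geodesic) that $\xi_\eta^l$ restricted to $[L/2-L/64, L/2+L/64]$ is a genuine $D_h$-geodesic with $D_h(v_0,v_1)=L/32$, and then use the Jordan-curve structure of $\partial\cB_{L/64}^{z,\bullet}(m)$ (the filled metric ball with a finite target $z\in\xi_\eta^{T_*/10}$) to find, by a continuity argument along the arc of $\partial\cB_{L/64}^{z,\bullet}(m)$ lying in $\cC$, a point $v_2$ equidistant from $v_0$ and $v_1$, hence at $D_h$-distance at least $L/64$ from both. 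The desired center is then $w$ on the geodesic $\Gamma_{v_2,m}$ at $D_h$-distance $L/256$ from $v_2$; the triangle inequalities using the three anchors $v_0,m,v_1$ and the lower bound at $v_2$ force $\cB_{L/512}(w)\subseteq\cC$. Your fallback of "varying the center along the ladder" is a move in the wrong direction — the center must be displaced transversally into $\cC$, and the topological control that makes this possible (prime ends / Jordan boundary of a filled metric ball) is the tool you would need to introduce.
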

\begin{proof}
  \emph{Step 0.\ Overview.} Let $v=\xi_{\eta}^l(L/2)$ be the midpoint of $\xi_{\eta}^l$ where $L = \ell(\xi_{\eta}^{l} ; D_h)$. It suffices to produce a metric ball of radius $L/512$ lying entirely inside $V_\eta$. If $\overline{\cB_{L/8}(v)}\cap \partial V_\eta=\emptyset$, then we have already produced a metric ball of radius $L/8$ lying inside $V_\eta$, so for the entire proof, we shall assume that
  \begin{equation}
    \label{eq:110}
   \overline{\cB_{L/8}(v)}\cap \partial V_\eta\neq \emptyset.
  \end{equation}
  We now give an outline for all the steps in the proof. In Step 1, we show that $\overline{\cB_{L/8}(v)}\cap \partial V_\eta\subseteq \eta$. In Step 2, it is shown that $\overline{\mathcal{B}_{L/8}(v)}$ cannot intersect both $\eta([0,l])$ and $\eta([l,T])$. Next, in Step 3, we will prove a small lemma, which we refer to as the ``\emph{local-to-global}'' result. In particular, we will show that for any points $x,y \in \xi_{\eta}^l$, if we assume that there exists a $D_h$-geodesic $\Gamma_{x,y}$ connecting $x$ with $y$ that stays in $\mathcal{B}_{L/8}(v)$, then the part of $\xi_{\eta}^l$ connecting $x$ with $y$ must also be a $D_h$-geodesic. %
  Finally, in Step 4, with $\cC$ denoting the connected component of $V_\eta\setminus \xi_\eta^l$ to the left of $\xi_\eta^l$, a point $w\in \cB_{L/8}(v)\cap \cC\subseteq V_\eta$ will be constructed such that $\mathcal{B}_{L/512}(w) \subseteq \cC$, thereby proving that $R_{\eta} > L / 512$ and completing the proof.

\emph{Step 1.\ $\overline{\cB_{L/8}(v)}$ does not intersect any of $\xi_\eta^{T_*/10},\xi_\eta^{T-T_*/10},\eta_*$.}
First, note that $\overline{\cB_{L/2}(v)}$ (and thereby $\overline{\cB_{L/8}(v)}$) is disjoint from both $\xi_\eta^{T_*/10}, \xi_\eta^{T-T_*/10}$ as else the $D_h$-distance from $\eta(l)$ to $\eta(T_*/10)$ or $\eta(T-T_*/10)$ would be at most
\begin{equation}
  \label{eq:103}
\frac{L}{2} + \frac{L}{2} + \max(\ell(\xi_\eta^{T_*/10};D_h),\ell(\xi_\eta^{T-T_*/10};D_h))\leq L+ d_H^1(\eta_* ,  \eta) \leq 2 d_H^1(\eta_* ,  \eta),  
\end{equation}
which is not possible as $l\in [T_*/10+3d_H^1(\eta_*,\eta), T-T_*/10-3d_H^1(\eta_*,\eta)]$. Now, we show that as long as $d_H^1(\eta_*,\eta)$ is small enough, $\overline{\cB_{L/8}(v)}$ must be disjoint from $\eta_*$.

Define $\alpha=\inf\{r:\cB^\bullet_{r}(v)\cap (\gamma_\eta^1\cup \gamma_\eta^2)\neq \emptyset\}$. The utility of the above choice of $\alpha$ is that for any time $s$ such that $\eta(s)\in \overline{\cB_{\alpha/2}(v)}\subseteq \cB_{\alpha/2}^\bullet(v)$, there is no path inside $\overline{\cB_{\alpha/2}(v)}\setminus \eta$ connecting $\eta^{\mathrm{L}}(s)$ to $\eta_*$. Now, note that by the triangle inequality,
\begin{align}
  \label{eq:101}
 T_*/10+3d_H^1(\eta_*,\eta)&\leq  \min(D_h(\eta(l),\eta(0)), D_h(\eta(l),\eta(T_*)))\nonumber\\
 &\leq \ell(\gamma_\eta^1;D_h)+ \ell(\gamma_\eta^2;D_h)+\alpha\nonumber\\
 &\leq 10d_H^1(\eta_*,\eta)+ \alpha,
\end{align}
thereby implying that $\alpha\geq T_*/10-7d_H^1(\eta_*,\eta)>T_*/20$. Thus, if we work with $d_H^1(\eta_*,\eta)\leq T_*/40$, then we are guaranteed that $L/8\leq \alpha /16$, which implies that for any two points $x,x'\in \overline{\cB_{L/8}(v)}$, we must have $\Gamma_{x,x'}\subseteq  \overline{\cB_{\alpha/2}(v)}$ for every geodesic $\Gamma_{x,x'}$.%

We now show that $\overline{\cB_{L/8}(v)}$ cannot intersect $\eta_*$. With the aim of obtaining a contradiction, assume that there exists a point $x\in \eta_*\cap \overline{\cB_{L/8}(v)}$. We now consider a geodesic $\Gamma_{v,x}\subseteq \overline{\cB_{\alpha/2}(v)}$. Since there is no path inside $\overline{\cB_{\alpha/2}(v)}\setminus \eta$ connecting the left side of $\eta$ to $\eta_*$, if $\Gamma_{v,x}$ exists $\overline{\wt{U}_\eta}$, it must finally re-enter it again via $\eta$ itself. Thus, by replacing this excursion of $\Gamma_{v,x}$ outside $\overline{\wt{U}_\eta}$ by a portion of $\eta$, we can assume that $\Gamma_{v,x}$ stays in $\overline{\wt{U}_\eta}$ until it intersects $\eta_*$ for the first time, say at a point $\Gamma_{v,x}(s)\in \eta_*$. But now, $\Gamma_{v,x}\lvert_{[0,s]}$ is a path from $\eta(v)$ to $\eta^*$ lying entirely $\overline{\wt{U}_\eta}$ with $\ell(\Gamma_{v,x}\lvert_{[0,s]};D_h)\leq L/8$. Since $v=\xi_\eta^l(L/2)$, concatenating the above path with $\xi_\eta^l\lvert_{[L/2,L]}$ yields a path from $\eta(l)$ to $\eta_*$ lying entirely within $\overline{\wt{U}_\eta}$ with $D_h$-length at most $5L/8$. However, this contradicts the definition of $b_\eta^l$ since it was chosen so as to minimise the $D_h(\cdot,\cdot;\overline{\wt{U}_\eta})$ distance from $\eta_*$ to $\eta(l)$. Thus $\overline{\cB_{L/8}(v)}$ cannot intersect $\eta_*$.

\paragraph{\emph{Step 2.\ $\overline{\mathcal{B}_{L/8}(v)}$ cannot simultaneously intersect both $\eta([0,l])$ and $\eta([l,T])$.}}
To show this, we first argue that
\begin{equation}
  \label{eq:102}
  D_h(v,\eta(l)) = D_h(v , \eta(l) ; \overline{\wt{U}_{\eta}}).
\end{equation}
To prove the above, we consider a geodesic $\Gamma_{v,\eta(l)}$. Recall from \eqref{eq:103} that $\overline{\mathcal{B}_{L/2}(v)}$ is disjoint from $\xi_{\eta}^{T_* / 10}\cup \xi_{\eta}^{T - T_* / 10}$ and thus can only exit $\overline{\wt{U}_{\eta}}$ via $\eta\cup \eta_*$. If $\Gamma_{v,\eta(l)}$ were to hit $\eta_*$ before hitting $\eta$, then by concatenating with $\xi_\eta^l\lvert_{[L/2,L]}$, we would obtain a path from $\eta(l)$ to $\eta_*$ which stays entirely inside $\overline{\wt{U}_\eta}$ and which has $D_h$-length strictly less than $L$, thereby contradicting the definition of $b_{\eta}^l$. Thus, $\Gamma_{v,\eta(l)}$ must intersect $\eta$ prior to intersecting $\eta_*$. However, once $\Gamma_{v,\eta(l)}$ does intersect $\eta$, say at a point $x$, from $x$ onwards, we can simply utilise the $D_h$-geodesic $\eta$ to reach the point $\eta(l)$. As a result, we can legitimately assume that $\Gamma_{v,\eta(l)}\subseteq \overline{\wt{U}_\eta}$, and we must thus have \eqref{eq:102}.

We now return to the primary goal of the current step. With the aim of obtaining a contradiction, assume that there exist $t_1\in [0,l],t_2 \in [l,T]$ such that $\eta(t_1),\eta(t_2)\in \overline{\cB_{L/8}(v)}$. Since $\eta$ is a $D_h$-geodesic, we must have $t_2-t_1\leq L/4$. However, by using \eqref{eq:102}, we must also have
  \begin{align}
    \label{eq:38}
    t_2-t_1\geq t_2- l=D_h(\eta(l),\eta(t_2)) &\geq D_h(v,\eta(l))- D_h(v,\eta(t_2))\nonumber\\
    &= D_h(v , \eta(l) ; \overline{\wt{U}_{\eta}})- D_h(v,\eta(t_2))\geq L/2-L/8= 3L/8,
  \end{align}
  which is a contradiction.%
  Thus, in view of \eqref{eq:110} and Step 1.\ in the proof, $\overline{\cB_{L/8}(v)}$ must intersect precisely one of $\eta([0,l])$ and $\eta([l,T])$. For the remainder of the proof, we assume that it intersects the latter, and the same arguments shall work in the other case as well.

\paragraph{\emph{Step 3. Local-to-global result.}}
We now prove a short result that will be used repeatedly throughout the proof. The assertion is that if for some $0\leq r_1<r_2\leq L$, we have a $D_h$-geodesic $\Gamma_{\xi_\eta^l(r_1),\xi_\eta^l(r_2)}$ lying entirely inside $\cB_{L/8}(v)$, then in fact, $\xi_\eta^l\lvert_{[r_1,r_2]}$ must be a $D_h$-geodesic. Indeed, as per our assumption, if $\overline{\cB_{L/8}(v)}$ exits $\overline{V_\eta}$, it only does so through $\eta([l,T])$ which we emphasize is to the right of $\xi_\eta^l$. As a result, if the path $\Gamma_{\xi_\eta^l(r_1),\xi_\eta^l(r_2)}$ does exit $\overline{V_\eta}$, it exits and re-enters $\overline{V_\eta}$ via the path $\eta([l,T])$. However, $\eta$ itself is a $D_h$-geodesic and thus, if the above happens, we can modify $\Gamma_{\xi_\eta^l(r_1),\xi_\eta^l(r_2)}$ so that $\Gamma_{\xi_\eta^l(r_1),\xi_\eta^l(r_2)}$ and $\eta$ intersect on an interval. As a result, the modified geodesic $\Gamma_{\xi_{\eta}^{l}(r_1),\xi_{\eta}^{l}(r_2)}$ does indeed lie in $\overline{V_\eta}$. As a result, $\xi_\eta^l\lvert_{[r_1,r_2]}$, which was initially just a $D_h(\cdot,\cdot;\overline{\wt{U}_{\eta}})$ must also be a $D_h$-geodesic. Throughout the proof, we shall refer to the above short result as the \emph{local-to-global} result.

\paragraph{\emph{Step 4. Constricting a radius $L/512$ metric ball inside $V_\eta$}}
Recall the set $V_\eta\setminus \xi_\eta^l$ has two connected components, and we use $\cC$ to denote the one to the left of $\xi_\eta^l$. Now we will prove that there exists a ball $\mathcal{B}_{L / 512}(w)$ contained in $\mathcal{C}$. To find such a $D_h$-metric ball, we will first show that there exist $v_0 , v_1 \in \partial \mathcal{B}_{L / 64}(v)\cap \cC$ such that $D_h(v_0,v_1)$ is comparable to $L$. Then, this will imply that there exists a point $v_2$ on an ``arc'' of $\partial \mathcal{B}_{L / 64}(v)$ connecting $v_0$ with $v_1$ that lies to the left of $\xi_{\eta}^l$ and such that both of $D_h(v_0 , v_2)$ and $D_h(v_2,v_1)$ are of order $L$. Hence, we can construct (see Figure \ref{fig:finitetarget}) the ball $\mathcal{B}_{L/512}(w)$ by following a $D_h$-geodesic $\Gamma_{v_2,v}$ and taking $w$ to be the point on $\Gamma_{v_2,v}$ which is sufficiently close to $v_2$ but whose $D_h$-distance from $v_2$ is of order $L$.

\paragraph{\emph{Step 4.1. Construction of the points $v_0,v_1$.}}
We consider the points $v_0=\xi_\eta^l(L/2-L/64)$, $v_1=\xi_\eta^l(L/2+L/64)$. We first note that the path $\xi_\eta^l\lvert_{[L/2-L/64,L/2+L/64]}$ which was a priori defined only to be a rightmost $D_h(\cdot,\cdot;\overline{\wt{U}_{\eta}})$-geodesic must in fact be a $D_h$-geodesic from $v_0$ to $v_1$. To see this, first note that $v_0,v_1\in \overline{\cB_{L/64}(v)}$. As a result, any $D_h$-geodesic $\Gamma_{v_0,v_1}$ must satisfy
\begin{equation}
  \label{eq:111}
  \Gamma_{v_0,v_1}\subseteq \cB_{L/8}(v).
\end{equation}
To see this, note that $\ell(\Gamma_{v_0,v_1};D_h) \leq 2\times L/64\leq L/32$ but $D_h(\partial \cB_{L/64}(v), \partial \cB_{L/8}(v))= 7L/64> L/32$. Thus we have shown that $\Gamma_{v_0,v_1}\subseteq \cB_{L/8}(v)$. By the local-to-global result, $\xi_\eta^l\lvert_{[L/2-L/64,L/2+L/64]}$ is a $D_h$-geodesic. As a consequence, we have
  \begin{equation}
    \label{eq:39}
    D_h(v_0,v_1)=L/32.
  \end{equation}
  \begin{figure}
  \centering
  \includegraphics[width=0.9\linewidth]{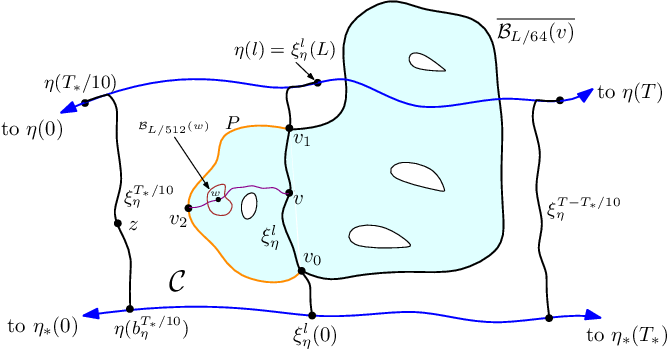}
  \caption{Steps 4.2, 4.3, 4.4 in the proof of Lemma \ref{lem:9}: Here $v=\xi_\eta^l(L/2)$ is the midpoint of the ladder $\xi_\eta^l$ and $v_1=\xi_\eta^l(L/2+L/64),v_0= \xi_\eta^l(L/2-L/64)$. The region between the ladders $\xi_\eta^{T_*/10}$ and $\xi_\eta^l$ is denoted by $\cC$. The metric ball $\overline{\cB_{L/64}(v)}$ is shown in cyan. As shown in \eqref{eq:97}, $\xi_\eta^l\setminus \xi_\eta^l\lvert_{[L/2-L/64,L/2+L/64]}\subseteq (\overline{\cB_{L/64}(v)})^c$. Further, since $\overline{\cB_{L/64}(v)}$ does not intersect any of $\eta_*,\xi_\eta^{T_*/10},\eta([0,l])$, there exist paths from $v_0,v_1\in \partial \cB_{L/64}(v)$ to $z\in \xi_\eta^{T_*/10}$ intersecting $\overline{\cB_{L/64}(v)}$ only at their starting points. Therefore (see Step 4.2), $v_0,v_1\in \partial \cB_{L/64}^{z,\bullet}(v)$. The path $P\subseteq \partial B_{L/64}^{z,\bullet}(v)$ connects $v_0$ to $v_1$ and stays to the left of $\xi_\eta^l$. The point $v_2\in P$ is chosen (Step 4.3) such that it is equidistant to $v_0$ and $v_1$. Finally, $w\in \Gamma_{v_2,v}$ can be chosen so as to satisfy $\cB_{L/512}(w)\subseteq \cC$ (see Step 4.4).}
  \label{fig:finitetarget}
\end{figure}
  \paragraph{\emph{Step 4.2. The set $\cB_{L/64}^{z,\bullet}(v)$ satisfying $v_0,v_1\in \partial \cB_{L/64}^{z,\bullet}(v)$}} We refer the reader to Figure \ref{fig:finitetarget} for depiction of this step in the proof. Fix a point $z\in \xi_\eta^{T_*/10}$ and consider the ``filled metric ball with finite target point'' $\cB_{L/64}^{z,\bullet}(v)$ defined as the complement of the unique connected component of $\CC\setminus \overline{\cB_{L/64}(v)}$ which contains $z$. By \cite[Lemma 2.4]{GPS20}, it is known that the boundary $\partial \cB_{L/64}^{z,\bullet}(v)\subseteq \partial \cB_{L/64}(v)$ is a Jordan curve. %
  
  Now, note that for $r\in [0, L/2-L/64)\cup (L/2+L/64,L]$, we have
  \begin{equation}
    \label{eq:97}
    \xi_\eta^l(r)\notin \overline{\cB_{L/64}(v)}.
  \end{equation}
  We just show the above for $r\in [0,L/2-L/64)$ and the other interval can be handled analogously. Suppose that there exists $r\in [0,L/2-L/64)$ such that $\xi_\eta^l(r)\in \overline{\cB_{L/64}(v)}$. By the same argument as used in \eqref{eq:111}, any geodesic $\Gamma_{\xi_\eta^l(r),v}$ must stay within $\cB_{L/8}(v)$ and thus, by the local-to-global result, $\xi_\eta^l\lvert_{[r,L/2]}$ must be a $D_h$-geodesic. However, this implies that $D_h(\xi_\eta^l(r),v)= L/2-r> L/64$ and thus $\xi_\eta^l(r)\notin \overline{\cB_{L/64}(v)}$, thereby proving \eqref{eq:97}.

  Using \eqref{eq:97} along with the fact that $\overline{\cB_{L/64}(v)}$ does not intersect any of $\eta_*,\xi_\eta^{T_*/10},\eta([0,l])$, we observe that (see Figure \ref{fig:finitetarget}) the union $\eta([0,l])\cup \xi_\eta^{T_*/10}\cup \xi_\eta^{l}\lvert_{[L/2+L/64,L]}$ (resp.\ $\eta_*\cup \xi_\eta^{T_*/10}\cup \xi_\eta^{l}\lvert_{[0,L/2-L/64]}$) contains a path from $v_1$ (resp.\ $v_0$) to $z$ intersecting $\overline{\cB_{L/64}(v)}$ only at $v_1$ (resp.\ $v_0$). This shows that $v_0,v_1$ satisfy $v_0,v_1\in \partial  \cB_{L/64}^{z,\bullet}(v)$.

 \paragraph{\emph{Step 4.3. The point $v_2\in \partial \cB_{L/64}^{z,\bullet}(v)\cap \cC$ equidistant from $v_0,v_1$}}
 
 Recall that $\partial \cB_{L/64}^{z,\bullet}(v)$ is a Jordan curve. Now, consider the path $P\subseteq \partial \cB_{L/64}^{z,\bullet}(v)$ from $v_0$ to $v_1$ lying to the left of $\xi_\eta^l$. Note that we must have $P\setminus \{v_0,v_1\}\subseteq \cC$ since $\overline{\cB_{L/64}(v)}$ does not intersect any of $\eta_*,\xi_\eta^{T_*/10},\eta([0,l])$. Now, by a continuity argument, where one varies a point along $P$, there must exist a $v_2\in P\setminus \{v_0,v_1\}=\partial \mathcal{B}_{L / 64}^{z,\bullet}(v)\cap \cC$ satisfying
  \begin{equation}
    \label{eq:40}
    D_h(v_2,v_0)= D_h(v_2,v_1).
  \end{equation}
  Since $D_h(v_0,v_1)=L/32$ (see \eqref{eq:39}), by the triangle inequality, we must have $D_{h}(v_2,v_0)+D_h(v_2,v_1)\geq D_h(v_0,v_1)$, thereby yielding that
  \begin{equation}
    \label{eq:98}
    D_h(v_2,v_0)=D_h(v_2,v_1)\geq L/64=D_h(v_2,v).
  \end{equation}
  \paragraph{\emph{Step 4.4. Construction of a ball $\mathcal{B}_{L/512}(w)\subseteq \cC$.}}
  Now we consider a $D_h$-geodesic $\Gamma_{v_2,v}$ and note that $\ell(\Gamma_{v_2,v};D_h)=L/64$. Since $\Gamma_{v_2,v}\subseteq \overline{\cB_{L/64}(v)}$ and since $\xi_{\eta}^l\lvert_{[L/2-L/64,L/2+L/64]}$ is a $D_h$-geodesic, by possibly modifying $\Gamma_{v_2,v}$ to agree with $\xi_{\eta}^l\lvert_{[L/2-L/64,L/2+L/64]}$ on a sub-segment, we can assume that $\Gamma_{v_2,v}\subseteq \overline{\cC}$. Define $w=\Gamma_{v_2,v}(L/256)$. We now claim that
  \begin{equation}
    \label{eq:41}
    \cB_{L/512}(w)\subseteq \cC.
  \end{equation}
  For this, since $D_h(w,v)=L/64-L/256$, by the triangle inequality we already know that
  \begin{equation}
    \label{eq:43}
    \cB_{L/512}(w)\subseteq \cB_{L/64}(v),
  \end{equation}
and since $\cB_{L/64}(v)$ can only hit $\partial V_{\eta}$ at $\eta([l,T])$ and since $w\in \cC$, we only need to show that $\cB_{L/512}(w)\cap \xi_\eta^l=\emptyset$. With the aim of obtaining a contradiction, assume that there exists a point $u\in \xi_\eta^l\cap B_{L/512}(w)$. Then we must have
  \begin{equation}
    \label{eq:42}
    D_h(u,v_2)\leq D_h(u,w)+D_h(w,v_2)\leq L/512+ L/256 = 3L/512.
  \end{equation}
Now, using \eqref{eq:43} along with \eqref{eq:97}, note that $u\in \xi\lvert_{[L/2-L/64,L/2+L/64]}$ which we recall is a $D_h$-geodesic. As a result, at least one of the quantities $D_h(u,v_0), D_h(u,v), D_h(u,v_1)$ must be at most $L/128$. Further, by construction, all the distances $D_h(v_2,v_0),D_h(v_2,v), D_h(v_2,v_1)$ are at least $L/64$. Thus, by the triangle inequality, we have $D_h(u,v_2)\geq L/64-L/128=L/128$. However, this contradicts \eqref{eq:42}.   It follows that
\begin{align*}
\mathcal{B}_{L / 512}(w) \subseteq \cC\subseteq V_{\eta},
\end{align*}
and hence $R_{\eta} > L / 512$.  This completes the proof of the lemma.

\end{proof}

As a consequence of the above lemma and a H\"older continuity argument (using Lemma~\ref{lem:holder_regularity}), as long as $d_H^1(\eta_*,\eta)$ is small enough, for all $l\in [T_*/10+3d_H^1(\eta_*,\eta), T-T_*/10-3d_H^1(\eta_*,\eta)]$, we have %
\begin{equation}
  \label{eq:70}
  \xi_{\eta}^l\subseteq B_{(K\varepsilon_{k_\eta} )^{1/\chi'}}(\eta(l)).
\end{equation}
As in the above, for the remainder of this section, we shall always assume that $d_H^1(\eta_*,\eta)$ and thereby $\varepsilon_{k_\eta}$ is small enough-- this will be used to use H\"older continuity to translate between $D_h$-distances and LQG distances inside small balls contained within $\overline{\wt{U}_{\eta}}$.

For $T_*/10\leq l_1\leq l_2\leq T-T_*/10$, we shall use $W_{l_1,l_2}$ to refer to the compact set defined as the union of the curves $\xi_\eta^{l_1}, \eta_*\lvert_{[b_\eta^{l_1}, b_\eta^{l_2}]}, \xi_\eta^{l_2}, \eta\lvert_{[l_1, l_2]}$ along with the bounded open set disconnected by them from $\infty$. We caution that the set $W_{l_1,l_2}$ is not necessarily a topological rectangle-- indeed, the ladder $\xi_\eta^{l_1}$ and and the curve $\eta\lvert_{[l_1,l_2]}$ can overlap on a segment. Note that by the triangle inequality, for any $l_1,l_2$ as in the above, we have
\begin{align}
  \label{eq:75}
  b_\eta^{l_2}-b_\eta^{l_1}= D_h(\eta_*(b_\eta^{l_1}), \eta_*(b_\eta^{l_2}))&\leq D_h(\eta_*(b_\eta^{l_1}), \eta(l_1)) + D_h(\eta(l_1), \eta(l_2))+ D_h(\eta(l_2), \eta_*(b_\eta^{l_2}))\nonumber\\
  &\leq \ell(\xi_\eta^{l_1};D_h)+l_2-l_1+\ell(\xi_\eta^{l_2};D_h).
\end{align}

Shortly, we shall need to use that geodesic segments which stay entirely inside $\overline{V_\eta}$ cannot avoid intersecting with ladders for too long. Roughly, this can be argued by showing that if the above happens, then the region between ``successive'' ladders will have large $D_h$-diameter which can be shown to not occur by using H\"older continuity since the $D_h$-lengths of the ladders are small. We now state a precise version of the above statement, and this will be used shortly.
\begin{lemma}
  \label{lem:12}
Suppose that \eqref{it:geodstar} in Lemma \ref{lem:geodesic_between_geodesics} holds. We then have the following:
  \begin{enumerate}
  \item \label{it:1.1}For any $0\leq \alpha<\beta\leq \ell(\sigma_\eta^*;D_h)$ such that $\sigma_\eta^*\lvert_{[\alpha,\beta]}\subseteq \overline{V_\eta}$ and $\beta-\alpha \geq (25d_H^1(\eta_*,\eta))^{\chi/\chi'}$, the path $\sigma_\eta^*\lvert_{[\alpha,\beta]}$ must intersect $\xi_\eta^{l_1},\xi_\eta^{l_2}$ for some $l_1,l_2\in [T_*/10,T-T_*/10]$ satisfying $l_2-l_1\geq 4d_H^1(\eta_*,\eta)$
  \item \label{it:1.2} For any $0\leq \alpha<\beta\leq \ell(\sigma_\eta^*;D_h)$ such that $\sigma_\eta^*\lvert_{[\alpha,\beta]}\subseteq W_{T_*/10+3d_H^1(\eta_*,\eta),T-T_*/10-3d_H^1(\eta_*,\eta)}$ and $\beta-\alpha \geq (25K\varepsilon_{k_\eta})^{\chi/\chi'}$, the path $\sigma_\eta^*\lvert_{[\alpha,\beta]}$ must intersect $\xi_\eta^{l_1},\xi_\eta^{l_2}$ for some $l_1,l_2\in [T_*/10+3d_H^1(\eta_*,\eta),T-T_*/10-3d_H^1(\eta_*,\eta)]$ satisfying $l_2-l_1\geq 4K\varepsilon_{k_\eta}$.
  \end{enumerate}

\end{lemma}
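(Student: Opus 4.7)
The plan is to argue by contradiction, combining a topological/foliation argument with the H\"older continuity of $D_h$ against the Euclidean metric (Lemma \ref{lem:holder_regularity}). I will focus on item (1), with item (2) being entirely analogous modulo replacing $4 d_H^1(\eta_*,\eta)$ by $4K\varepsilon_{k_\eta}$ and the full ladder range $[T_*/10, T-T_*/10]$ by the restricted range $[T_*/10+3d_H^1(\eta_*,\eta), T-T_*/10-3d_H^1(\eta_*,\eta)]$. Suppose the conclusion of item (1) fails: then the set $I = \{l \in [T_*/10, T-T_*/10] : \sigma_\eta^*|_{[\alpha,\beta]} \cap \xi_\eta^l \neq \emptyset\}$ has diameter at most $4 d_H^1(\eta_*,\eta)$, so $I \subseteq [l_1^*, l_2^*]$ for some $l_1^* \leq l_2^*$ with $l_2^* - l_1^* \leq 4 d_H^1(\eta_*,\eta)$.

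The first key step is to show that $\sigma_\eta^*|_{[\alpha,\beta]} \subseteq W_{l_1^*, l_2^*}$. This uses that the ladders $\{\xi_\eta^l\}$, being rightmost $D_h(\cdot, \cdot; \overline{\wt{U}_\eta})$-geodesics from $\eta_*(b_\eta^l)$ to $\eta(l)$, enjoy a monotonicity property: as $l$ increases, the ladders sweep across $\overline{V_\eta}$ from $\xi_\eta^{T_*/10}$ to $\xi_\eta^{T-T_*/10}$ without transversally crossing one another. From this one deduces that any continuous path in $\overline{V_\eta}$ which avoids $\xi_\eta^l$ for all $l \notin [l_1^*, l_2^*]$ is contained in the closed region $W_{l_1^*, l_2^*}$, giving the containment for $\sigma_\eta^*|_{[\alpha,\beta]}$.

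The second step is to estimate the $D_h$-length of $\partial W_{l_1^*, l_2^*}$, whose boundary is the union of the two ladders $\xi_\eta^{l_1^*}, \xi_\eta^{l_2^*}$, the segment $\eta|_{[l_1^*, l_2^*]}$ of $D_h$-length $l_2^* - l_1^*$, and the segment $\eta_*|_{[b_\eta^{l_1^*}, b_\eta^{l_2^*}]}$ whose length is controlled via \eqref{eq:75}. For item (1), Lemma \ref{lem:14} bounds each ladder by $d_H^1(\eta_*,\eta)$, so the total $D_h$-boundary length is at most $C d_H^1(\eta_*,\eta)$ for a universal $C$; for item (2), Lemma \ref{lem:9} bounds each ladder by $K\varepsilon_{k_\eta}$, giving a total at most $C K\varepsilon_{k_\eta}$.

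The final step is an H\"older continuity sandwich. Any two boundary points of $W_{l_1^*, l_2^*}$ have $D_h$-distance bounded by the total boundary length, so the reverse H\"older estimate in Lemma \ref{lem:holder_regularity} gives Euclidean diameter at most $C'(d_H^1(\eta_*,\eta))^{\chi'}$ (resp.\ $C'(K\varepsilon_{k_\eta})^{\chi'}$). The endpoints of $\sigma_\eta^*|_{[\alpha,\beta]}$ both lie in $W_{l_1^*, l_2^*}$, so the forward H\"older estimate yields
\begin{equation*}
\beta - \alpha = D_h(\sigma_\eta^*(\alpha), \sigma_\eta^*(\beta)) \leq C''|\sigma_\eta^*(\alpha) - \sigma_\eta^*(\beta)|^{\chi} \leq C''' (d_H^1(\eta_*,\eta))^{\chi\chi'},
\end{equation*}
with the analogous bound for item (2). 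Since $\chi' > 1$ (which we may ensure in the choice of H\"older exponent in Lemma \ref{lem:holder_regularity}), one has $\chi/\chi' < \chi\chi'$, so for $d_H^1(\eta_*,\eta)$ sufficiently small the hypothesised lower bound $(25d_H^1(\eta_*,\eta))^{\chi/\chi'}$ strictly exceeds this derived upper bound, contradicting the assumption $\beta - \alpha \geq (25d_H^1(\eta_*,\eta))^{\chi/\chi'}$. The main obstacle is the first step: rigorously establishing the monotonicity of rightmost $D_h(\cdot,\cdot; \overline{\wt{U}_\eta})$-geodesics and the resulting containment $\sigma_\eta^*|_{[\alpha,\beta]} \subseteq W_{l_1^*, l_2^*}$ via a careful planarity/Jordan-curve argument exploiting the disk topology of $V_\eta$; the remaining metric estimates are routine given Lemmas \ref{lem:14}, \ref{lem:9}, \ref{lem:holder_regularity} and \eqref{eq:75}.
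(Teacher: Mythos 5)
Your high-level strategy matches the paper's (locate the segment in a small region $W_{l_1,l_2}$ bounded by nearby ladders, bound the $D_h$-length of $\partial W_{l_1,l_2}$, pass to Euclidean diameter by H\"older, pass back to $D_h$-diameter, compare with the hypothesis), but there is a concrete error in the exponent bookkeeping that breaks your final step.

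The bi-H\"older estimate (see~\eqref{eqn:bi_holder_continuity}) reads $|z-w|^{\chi'} \leq D_h(z,w) \leq |z-w|^{\chi}$. Thus a $D_h$-length bound of size $L$ on $\partial W_{l_1,l_2}$ translates into a \emph{Euclidean} diameter bound of size $L^{1/\chi'}$, not $L^{\chi'}$; and passing back gives a $D_h$-diameter bound of size $L^{\chi/\chi'}$, not $L^{\chi\chi'}$. So the derived upper bound on $\beta-\alpha$ has exponent $\chi/\chi'$ --- exactly the same exponent as the hypothesised lower bound $(25\,d_H^1(\eta_*,\eta))^{\chi/\chi'}$. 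Your closing argument relies on the two exponents being different (``$\chi/\chi' < \chi\chi'$, so for $d_H^1$ sufficiently small the lower bound exceeds the upper bound''), and with the corrected exponents this no longer works. You must instead track the multiplicative constants explicitly, which is what the paper does: the four bounding curves have total $D_h$-length at most $4\,d_H^1(\eta_*,\eta) + 2(l_2'-l_1')$ (two ladders each $\leq d_H^1$ by Lemma~\ref{lem:14}, the $\eta$-side contributes $l_2'-l_1'$, and the $\eta_*$-side is controlled by~\eqref{eq:75}), and H\"older then gives $(\beta-\alpha)^{\chi'/\chi} \leq 4\,d_H^1(\eta_*,\eta) + 2(l_2'-l_1')$. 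Combined with $(\beta-\alpha)^{\chi'/\chi} \geq 25\,d_H^1(\eta_*,\eta)$ this forces $l_2'-l_1' \geq \tfrac{21}{2}\,d_H^1(\eta_*,\eta) > 4\,d_H^1(\eta_*,\eta)$; the constant $25$ in the statement is chosen precisely to beat the $4 + 2\cdot 4 = 12$ one would otherwise get if $l_2'-l_1' \leq 4\,d_H^1$. Your argument, once the exponents are fixed, reduces to exactly this comparison --- but you need to surface the constants rather than absorb them into $C', C'', C'''$, because the two sides of the inequality scale identically.

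Separately, your first step (the topological containment $\sigma_\eta^*|_{[\alpha,\beta]} \subseteq W_{l_1^*,l_2^*}$), which you flag as the main obstacle, is handled more cleanly in the paper by defining $l_1' = \sup\{l : \sigma_\eta^*|_{[\alpha,\beta]} \text{ is to the right of } \xi_\eta^l\}$ and $l_2'$ analogously (as infimum of the left-side indices), rather than from the intersection set $I$. With these definitions the containment $\sigma_\eta^*|_{[\alpha,\beta]} \subseteq W_{l_1'-\delta, l_2'+\delta}$ for each small $\delta>0$ is essentially built in (the paths are rightmost geodesics and so cannot be crossed transversally by the geodesic $\sigma_\eta^*$ in $\overline{\wt U_\eta}\cup\eta_*$), and one recovers the approximating intersecting ladders $l_1 \in [l_1', l_1'+\varepsilon]$, $l_2 \in [l_2'-\varepsilon, l_2']$ at the very end. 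Finally, your aside about arranging $\chi'>1$ is not available here: $\chi,\chi'$ were fixed once and for all just after Lemma~\ref{lem:holder_regularity} and are used globally (e.g.\ in Proposition~\ref{prop:definition_of_F_r(z)}), so they cannot be re-chosen in this proof --- though once the exponents are corrected, this point is moot anyway.
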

\begin{proof}
  We begin by proving the first statement. %
  Define $l_1'=\sup\{l: \sigma_\eta^*\lvert_{[\alpha,\beta]} \textrm{ is to the right of } \xi_\eta^l\}$\footnote{Here, by being to the right, we mean ``weakly'' to the right in the sense that $\sigma_\eta^*\lvert_{[\alpha,\beta]}$ is allowed to overlap with $\xi_\eta^l$ on a segment but is not allowed to cross into the region to the left of $\xi_\eta^l$.} and $l_2'=\inf\{l: \sigma_\eta^*\lvert_{[\alpha,\beta]} \textrm{ is to the left of }\xi_\eta^l \}$ and note that, since $\sigma_\eta^*\lvert_{[\alpha,\beta]}\subseteq \overline{V_\eta}$, we have
  \begin{equation}
    \label{eq:80}
    l_1',l_2'\in [T_*/10,T-T_*/10].
  \end{equation}
  Fix $\delta \in (0,1)$ sufficiently small (eventually we will take $\delta \to 0$).
  Now, consider the region $W_{l_1'-\delta,l_2'+\delta}$ and consider the four curves bounding it. Note that we must have $\sigma_\eta^*\lvert_{[\alpha,\beta]}\subseteq W_{l_1'-\delta,l_2'+\delta}$. By Lemma \ref{lem:14}, we know that
  \begin{equation}
    \label{eq:81}
   \ell(\xi_\eta^{l_1'-\delta};D_h), \ell(\xi_\eta^{l_2'+\delta};D_h)\leq d_H^1(\eta_*,\eta).
  \end{equation}
  Further, it is trivial that $\ell(\eta\lvert_{[l_1'-\delta,l_2'+\delta]};D_h)= l_2'-l_1'+2\delta$. Finally, by \eqref{eq:75}, it follows that $\ell(\eta_*\lvert_{[b_\eta^{l_1'-\delta},b_\eta^{l_2'+\delta}]};D_h)\leq 2d_H^1(\eta_*,\eta)+ l_2'-l_1'+2\delta$. By H\"older continuity (Lemma~\ref{lem:holder_regularity}), we obtain that for $\delta \in (0,1)$ and $d_H^1(\eta_* , \eta)$ sufficiently small, the Euclidean diameter of the union of all the four curves bounding $W_{l_1' - \delta , l_2' + \delta}$ is at most $(4d_H^1(\eta_*,\eta)+2l_2'-2l_1'+4\delta)^{1/\chi'}$. As a consequence, the entire region $W_{l_1'-\delta,l_2'+\delta}$ lies in an Euclidean ball of radius $(4d_H^1(\eta_*,\eta)+2 l_2'-2 l_1'+4\delta)^{1/\chi'}$. Again, by using H\"older continuity, we have $\beta-\alpha=\diam_{D_h}(\sigma_\eta^*\lvert_{[\alpha,\beta]})\leq \diam_{D_h}(W_{l_1'-\delta,l_2'-\delta})\leq (4d_H^1(\eta_*,\eta)+2l_2'-2l_1'+4\delta)^{\chi/\chi'}$. Taking $\delta \to 0$ gives that
  \begin{align*}
     25 d_H^1(\eta_* , \eta)\leq (\beta-\alpha)^{\chi'/\chi} \leq 4 d_H^1(\eta_* , \eta) + 2(l_2' - l_1')
  \end{align*}
  and hence $l_2' - l_1' \geq \frac{21}{2} d_H^1(\eta_* , \eta)$. Note that the definitions of $l_1'$ and $l_2'$ imply that for all $\varepsilon \in (0,\frac{l_2' - l_1'}{2})$, there exist $l_1 \in [l_1' , l_1' + \varepsilon], l_2 \in [l_2' - \varepsilon , l_2']$ such that $\sigma_{\eta}^*|_{[\alpha , \beta]}$ intersects both $\xi_{\eta}^{l_1}$ and $\xi_{\eta}^{l_2}$. This completes the proof of \eqref{it:1.1}. 

  The proof of \eqref{it:1.2} proceeds similarly. Indeed, we define $l_1',l_2'$ in the same manner and now have $l_1',l_2'\in [T_*/10+3d_H^1(\eta_*,\eta),T-T_*/10-3d_H^1(\eta_*,\eta)]$ instead of \eqref{eq:80}. Finally, \eqref{eq:81} is replaced by the better bound
  \begin{equation}
    \label{eq:82}
     \ell(\xi_\eta^{l_1'-\delta};D_h), \ell(\xi_\eta^{l_2'+\delta};D_h)\leq K \varepsilon_{k_\eta}
   \end{equation}
   for all $\delta \in (0,1)$ sufficiently small coming from Lemma \ref{lem:9}. The remainder of the proof proceeds verbatim.
 \end{proof}
 We are now finally ready to complete the proof of Proposition~\ref{prop:close_one_sided_they_intersect}.
\begin{proof}[Proof of Proposition~\ref{prop:close_one_sided_they_intersect}]
\emph{Step 1. Outline and setup.}
Before beginning, we refer the reader to Figure \ref{fig:fitchi} for a visual depiction of the proof. Now, consider the path $\xi_\eta^{T_*/2}$. Note that either every path in $\overline{\wt{U}_{\eta}}\cup \eta\cup \eta_*$ connecting $z_{\eta}$ to $\{\eta(0) , \eta_*(0)\}$ intersects $\xi_{\eta}^{T_*/2}$, or every path in $\overline{\wt{U}_{\eta}}\cup \eta\cup\eta_*$ connecting $z_{\eta}$ to $\{\eta(T),\eta_*(T_*)\}$ intersects $\xi_{\eta}^{T_*/2}$, and without loss of generality, let us assume the former.
  By Lemma \ref{lem:geodesic_between_geodesics}, at least one of \eqref{it:geodnostar}, \eqref{it:geodstar} therein need to hold. Without loss of generality, we assume that the latter holds, that is there exists a geodesic $\sigma_\eta^*$ from $z_\eta$ to $\eta_*(0)$ which is contained in $\overline{\wt{U}_\eta}\cup \eta_*$. As a consequence of the above assumptions, we know that
  \begin{equation}
    \label{eq:79}
    D_h(z_\eta, \xi_\eta^{T_*/5})\geq T_*/2-T_*/5 - 2d_H^1(\eta_*,\eta)>T_*/10.
  \end{equation}

  We now choose a rational $\alpha$ such that $\alpha\leq T_* / 10$ and let
  \begin{equation}
    \label{eq:85}
    0<\bphi<\bpsi<\alpha/20<T_*/200
  \end{equation}
  be the quantities obtained by applying Proposition \ref{prop:9} with $U$ and $\alpha$. The quantities $\bphi$ and $\bpsi$ will be used throughout the proof; we emphasize that $\bphi$ and $\bpsi$ do not depend on $\eta$.

  Let us now give a brief outline of the strategy of the proof. In Step 2, we will show that $\sigma_{\eta}^*|_{[\bphi / 2 , 2 \bpsi]}$ lies between two ladders far away from the ladders $\xi_\eta^{T_*/10},\xi_\eta^{T-T_*/10}$ bounding $V_{\eta}$. In Step 3, we will use Proposition~\ref{prop:9} to construct a $\scX$ that $\sigma_{\eta}^*|_{[\bphi / 2 , 2\bpsi]}$ passes through such that its common segment $P_+\cap P_-$ lies in a region $W_{\rho^+ , \rho^-}$, with $|\rho^--\rho^+|$ being considerably large. Next, in Step 4, we will show that $W_{\rho^+,\rho^-}$ is contained in a small neighborhood of $\sigma_{\eta}^*$ with Euclidean size of order $\varepsilon_{k_{\eta}}^{1 / \chi'}$. Finally, in Step 5, we will show that the arms of the $\scX$ exit $W_{\rho^+,\rho^-}$ and use this along with the uniqueness of the $D_h$-geodesics forming the $\scX$ to complete the proof of the proposition.

\emph{Step 2. $\sigma_{\eta}^*|_{[\bphi / 2 , 2\bpsi]}$ lies between two ladders far away from the two ladders bounding $V_{\eta}$.}
Note that since $\sigma_{\eta}^*\subseteq \overline{\wt{U}_\eta}\cup \eta_*$ is a $D_h$-geodesic, we have by the rightmost nature of the paths $\xi_{\eta}^l$ that $\sigma_{\eta}^*$ does not cross $\xi_{\eta}^{T - T_* / 10}$ and hence it stays to the left of $\xi_{\eta}^{T - T_* / 10}$. Also \eqref{eq:79} implies that if $d_H^1(\eta_* , \eta)$ is sufficiently small, then $\sigma_{\eta}^*|_{[0,(25 d_H^1(\eta_* , \eta))^{\chi / \chi'}]}$ stays to the right of $\xi_{\eta}^{T_* / 5}$. In particular, we have that $\sigma_{\eta}^*|_{[0,(25 d_H^1(\eta_* , \eta))^{\chi / \chi'}]} \subseteq \overline{V_{\eta}}$. Thus, \eqref{it:1.1} in Lemma~\ref{lem:12} implies that the path $\sigma_\eta^*\lvert_{[0, (25d_H^1(\eta_*,\eta))^{\chi/\chi'}]}$ intersects $\xi_\eta^l$ for some $l\leq T-T_*/10-4d_H^1(\eta_*,\eta)$. Moreover, the rightmost nature of $\xi_{\eta}^l$ implies that $\sigma_{\eta}^*$ has to stay to the left of $\xi_{\eta}^l$ after it intersects it for the first time. In particular, by choosing $d_H^1(\eta_* , \eta)$ sufficiently small such that $(25 d_H^1(\eta_* , \eta))^{\chi / \chi'} < \frac{\bphi}{2}$, we obtain that $\sigma_{\eta}^*|_{[\bphi / 2 , 2 \bpsi]}$ stays to the left of $\xi_{\eta}^{T - T_* / 10 - 4 d_H^1(\eta_* , \eta)}$. Thus, combining with \eqref{eq:79} and \eqref{eq:85}, we obtain that the path $\sigma_{\eta}^*|_{[\bphi / 2 , 2 \bpsi]}$ lies between $\xi_{\eta}^{T_* / 5}$ and $\xi_{\eta}^{T - T_* / 10 - 4 d_H^1(\eta_* , \eta)}$. In particular, we have
  \begin{equation}
    \label{eq:83}
    \sigma_\eta^*\lvert_{[\bphi/2,2\bpsi]} \subseteq W_{T_*/10+3d_H^1(\eta_*,\eta),T-T_*/10-3d_H^1(\eta_*,\eta)}.
  \end{equation}

  \end{proof}
\begin{figure}
  \centering
  \includegraphics[width=\linewidth]{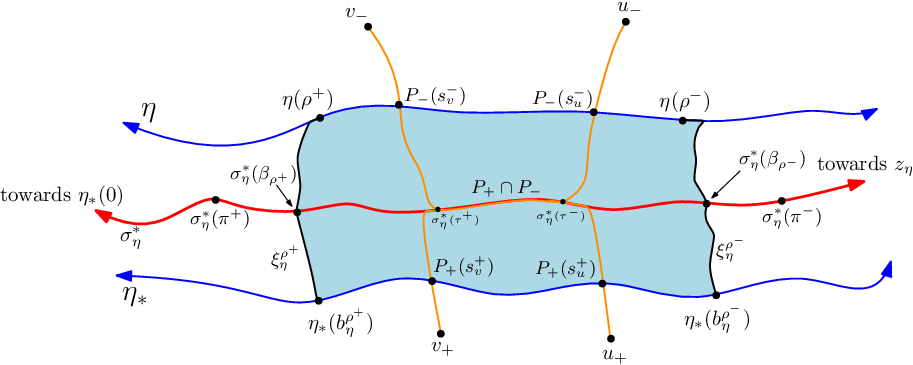}
  \caption{Illustration of the $\scX$ constructed in the proof of Proposition~\ref{prop:close_one_sided_they_intersect} using Proposition~\ref{prop:9}: The light blue region is the interior of the set $W_{\rho^+,\rho^-}$. We show that $\sigma_\eta^*$ (red) passes through one of the $\scX$s obtained using Proposition~\ref{prop:9} whenever it crosses the blue region; note that $\sigma_\eta^*$ is parametrised as a path \emph{from} $z_\eta$ \emph{to} $\eta_*(0)$. The LQG size of the $\scX$ is chosen so that the LQG size of its arms (orange paths) is larger than the Hausdorff LQG distance between the bottom and top boundaries of $W_{\rho^+,\rho^-}$.  This forces the orange paths to exit $W_{\rho^+,\rho^-}$. Further, the $\scX$ is chosen so that its LQG distance to the ladders $\xi_\eta^{\rho^+},\xi_{\eta}^{\rho^-}$ bounding $W_{\rho^+,\rho^-}$ is much larger than its LQG size. This implies that the orange paths have to exit $W_{\rho^+,\rho^-}$ and need to do so through either its top or bottom boundary.  Combining with the uniqueness of the $D_h$-geodesics $P_+,P_-$ used to form the $\scX$, along with the fact that $P_+$ (resp.\ $P_-$) lies to the left (resp.\ right) of $\sigma_\eta^*$, we obtain that the top and bottom boundaries of $W_{\rho^+,\rho^-}$ have to contain the common segment $P_+\cap P_-$ of the geodesics forming the $\scX$, and must thereby overlap on a segment.}
  \label{fig:fitchi}
\end{figure}

 \emph{Step 3. Constructing a $\scX$ that $\sigma_{\eta}^*|_{[\bphi / 2 , 2\bpsi]}$ passes through and a region $W_{\rho^+,\rho^-}$ containing $P_+\cap P_-$.}
 Now, we simply apply Proposition \ref{prop:9} and this yields $(z,r)\in  \CC\times [\varepsilon_{k_{\eta}}^{1/(2\chi')},\varepsilon_{k_{\eta}}^{1/ (4\chi')}]$ such that the conditions therein hold. With $u_-,u_+,v_-,v_+$ being the points associated to $\scX_{u_-,v_-}^{u_+,v_+}$, let $\tau^-<\tau^+$ be such that $P_-\cap P_+=\sigma_\eta^*\lvert_{[\tau^-,\tau^+]}$ and note that by condition \eqref{it:overlapping_geodesics} in Proposition \ref{prop:9}, $[\tau^-,\tau^+]\subseteq [\bphi,\bpsi]$. %
 Define the times $\pi^-,\pi^+$ by%
\begin{equation}
  \label{eq:65}
  \pi^-=\tau^--2(25 K\varepsilon_{k_{\eta}})^{\chi/(4\chi')}, \pi^+=\tau^++2(25 K\varepsilon_{k_{\eta}})^{\chi/(4\chi')},
\end{equation}
and note that since $d_H^1(\eta_* , \eta)$ is taken to be small, we can assume that $[\pi^-,\pi^+]\subseteq [\bphi/2,2\bpsi]$. Now, as a consequence of \eqref{eq:83} and assertion \eqref{it:1.2} in Lemma \ref{lem:12}, we know that $\sigma_\eta^*\lvert_{[\pi^-, \tau^--(25 K\varepsilon_{k_\eta})^{\chi/(4\chi')}]}$ and $\sigma_\eta^*\lvert_{[\tau^++(25 K\varepsilon_{k_\eta})^{\chi/(4\chi')},\pi^+]}$ must each intersect some ladders, say, $\xi_{\eta}^{\rho^-},\xi_{\eta}^{\rho^+}$ respectively satisfying
\begin{equation}
  \label{eq:84}
  T_*/10+3d_H^1(\eta_*,\eta)\leq \rho^+ < \rho^-\leq T-T_*/10-3d_H^1(\eta_*,\eta).
\end{equation}
Let $\beta_{\rho^-}$ be the last time that $\sigma_\eta^*\lvert_{[\pi^-, \tau^--(25 K\varepsilon_{k_\eta})^{\chi/(4\chi')}]}$ intersects $\xi_{\eta}^{\rho^-}$ and let $\beta_{\rho^+}$ be the first time that $\sigma_\eta^*\lvert_{[\tau^++(25 K\varepsilon_{k_\eta})^{\chi/(4\chi')},\pi^+]}$ intersects $\xi_{\eta}^{\rho^+}$. %
Since $\sigma_{\eta}^*$ stays to the left of $\xi_{\eta}^l$ after the first time that it intersects it, $\sigma_\eta^*\lvert_{[\beta_{\rho^-},\beta_{\rho^+}]}$ must lie in between $\xi_\eta^{\rho^-}$ and $\xi_\eta^{\rho^+}$, thereby implying that $\sigma_\eta^*\lvert_{[\beta_{\rho^-},\beta_{\rho^+}]}\subseteq W_{\rho^-,\rho^+}$. Also, by definition, it is clear that we must have
\begin{equation}
  \label{eq:76}
  \bphi/2 \leq\pi^-\leq \beta_{\rho^-}\leq \tau^--(25 K\varepsilon_{k_\eta})^{\chi/(4\chi')}<\tau^++(25 K\varepsilon_{k_\eta})^{\chi/(4\chi')}\leq \beta_{\rho^+}\leq \pi^+\leq 2\bpsi.
\end{equation}
Now, \eqref{eq:84} and Lemma \ref{lem:9} together imply that the ladders $\xi_\eta^l$ for $l\in [\rho^+,\rho^-]$ satisfy
\begin{equation}
  \label{eq:87}
  \ell(\xi_\eta^l;D_h)< K\varepsilon_{k_\eta}.
\end{equation}

Note that as a consequence of condition \eqref{it:endpoints_of_arms_away_from_geodesic} in Proposition \ref{prop:9}, we have
\begin{equation}
  \label{eq:66}
  B_{16(K\varepsilon_{k_\eta})^{1/\chi'}}(\sigma_\eta^*\lvert_{[\pi^-,\pi^+]})\cap \{u_-,u_+,v_-,v_+\}=\emptyset.
\end{equation}

\emph{Step 4. $W_{\rho^+,\rho^-}$ is contained in a small Euclidean neighborhood of $\sigma_{\eta}^*|_{[\bphi / 2 , 2\bpsi]}$.}
  Now, consider the region $W_{\rho^+,\rho^-}$. The plan now is to show that %
  \begin{equation}
    \label{eq:48}
   W_{\rho^+,\rho^-}\subseteq B_{16(K\varepsilon_{k_{\eta}})^{1/\chi'}}( \sigma_\eta^*\lvert_{[\pi^-,\pi^+]}).
 \end{equation}
 To do so, we alternatively prove the following-- we prove that for any $l_1,l_2$ such that $\rho^{+}\leq l_1\leq l_2\leq \rho^-$ additionally satisfying $|l_2-l_1|\leq \varepsilon_{k_\eta}$, the region $W_{l_1,l_2}$ %
 must satisfy
 \begin{equation}
   \label{eq:71}
   W_{l_1,l_2}\subseteq B_{16(K\varepsilon_{k_{\eta}})^{1/\chi'}}( \sigma_\eta^*\lvert_{[\pi^-,\pi^+]}).
 \end{equation}
  Note that $W_{\rho^+,\rho^-}$ can be written as a union of finitely many such $W_{l_1,l_2}$s and thus it is sufficient to prove \eqref{eq:71}.
 
  First, by employing \eqref{eq:70} and \eqref{eq:87}, and by using $\diam(\cdot)$ to refer to the Euclidean diameter of a set, we have
  \begin{equation}
    \label{eq:49}
    \xi_\eta^{l_1}, \xi_\eta^{l_2}\subseteq B_{2(K\varepsilon_{k_\eta})^{1/\chi'}}(\sigma_\eta^*\lvert_{[\beta_{\rho^+},\beta_{\rho^-}]})\subseteq B_{2(K\varepsilon_{k_\eta})^{1/\chi'}}(\sigma_\eta^*\lvert_{[\pi^-,\pi^+]}) \textrm{ and } \diam(\xi_\eta^{l_1}),\diam(\xi_\eta^{l_2})\leq (K\varepsilon_{k_\eta})^{1/\chi'}.
  \end{equation}
  Further, by using that $|l_2-l_1|\leq \varepsilon_{k_\eta}$ along with \eqref{eq:70} and H\"older continuity (see Lemma~\ref{lem:holder_regularity}), we also have
  \begin{equation}
    \label{eq:72}
    \eta\lvert_{[l_1, l_2]}\subseteq B_{3(K\varepsilon_{k_\eta})^{1/\chi'}}(\sigma_\eta^*\lvert_{[\pi^-,\pi^+]})\textrm{ and } \diam(\eta\lvert_{[l_1, l_2]})\leq \varepsilon_{k_\eta}^{1/\chi'}.
  \end{equation}

  Finally, we come to the curve $\eta_*\lvert_{[b^{l_1}_\eta,b^{l_2}_\eta]}$. As a consequence of \eqref{eq:75}, \eqref{eq:87} and $|l_2-l_1|\leq \varepsilon_{k_\eta}$, we have $b_\eta^{l_2}-b_\eta^{l_1}\leq 3K\varepsilon_{k_\eta}$. Combining the above with H\"older continuity (Lemma~\ref{lem:holder_regularity}) and \eqref{eq:49}, we obtain that
  \begin{equation}
    \label{eq:73}
    \eta_*\lvert_{[b_\eta^{l_1}, b_\eta^{l_2}]}\subseteq B_{4(K\varepsilon_{k_\eta})^{1/\chi'}}( \sigma_\eta^*\lvert_{[\pi^-,\pi^+]}) \textrm{ and } \diam(\eta_*\lvert_{[b_\eta^{l_1}, b_\eta^{l_2}]})\leq 3(K\varepsilon_{k_\eta})^{1/\chi'}.
  \end{equation}

As a result of the above, we have shown that all the four bounding curves of $W_{l_1,l_2}$ are contained in $B_{4(K\varepsilon_{k_\eta})^{1/\chi'}}( \sigma_\eta^*\lvert_{[\pi^-,\pi^+]})$ and that each of their Euclidean diameters is at most $3(K\varepsilon_{k_\eta})^{1/\chi'}$. Fix an arbitrary point $\omega$ lying on the boundary curves; as a consequence of the above, all these bounding curves lie in $B_{12(K\varepsilon_{k_\eta})^{1/\chi'}}(\omega)$. Since $B_{12(K\varepsilon_{k_\eta})^{1/\chi'}}(\omega)$ is convex, we must also have $W_{l_1,l_2}\subseteq B_{12(K\varepsilon_{k_\eta})^{1/\chi'}}(\omega)$. However, we have already shown that $\omega\in B_{4(K\varepsilon_{k_\eta})^{1/\chi'}}( \sigma_\eta^*\lvert_{[\pi^-,\pi^+]})$ and as a result, the triangle inequality immediately yields the desired inclusion \eqref{eq:71} and thereby also proves \eqref{eq:48}.

 \emph{Step 5. Conclusion of the proof by proving that the arms of the $\scX$ exit $W_{\rho^+,\rho^-}$.}
  Now, by condition \eqref{it:chiinside} in Proposition \ref{prop:9}, we know that
  \begin{equation}
    \label{eq:78}
    P_+\cup P_-\subseteq B_{\varepsilon_{k_\eta}^{1/(4\chi')}}(P_+\cap P_-)=B_{\varepsilon_{k_\eta}^{1/(4\chi')}}(\sigma_\eta^*\lvert_{[\tau^-,\tau^+]}).
  \end{equation}
    By \eqref{eq:76}, we know that $\beta_{\rho^+}-\tau^+\geq (25K\varepsilon_{k_\eta})^{\chi/(4\chi')}$ and that $\tau^--\beta_{\rho^-}\geq (25K\varepsilon_{k_\eta})^{\chi/(4\chi')}$ and thus by the triangle inequality along with \eqref{eq:87}, we must have that
    \begin{align*}
        D_h(\sigma_{\eta}^*|_{[\tau^- , \tau^+]} , \xi_{\eta}^{\rho^-} \cup \xi_{\eta}^{\rho^+}) &\geq (25 K \varepsilon_{k_{\eta}})^{\chi / (4 \chi')} - \ell(\xi_{\eta}^{\rho^-} ; D_h) - \ell(\xi_{\eta}^{\rho^+} ; D_h)\\
        &\geq (25 K \varepsilon_{k_{\eta}})^{\chi / (4 \chi')} - 2K \varepsilon_{k_{\eta}} \geq (24 K \varepsilon_{k_{\eta}})^{\chi / (4 \chi')}.
    \end{align*}
    By H\"older continuity (Lemma~\ref{lem:holder_regularity}), we obtain that if $d_H^1(\eta_* , \eta)$ is sufficiently small, we have
    \begin{align*}
        \text{dist}(\sigma_{\eta}^*|_{[\tau^- , \tau^+]} , \xi_{\eta}^{\rho^-} \cup \xi_{\eta}^{\rho^+}) \geq (24 K \varepsilon_{k_{\eta}})^{1 / (4 \chi')}.
    \end{align*}
    Thus, on combining with \eqref{eq:78}, we obtain that
   \begin{equation}
      \label{eq:77}
      (P_+\cup P_-) \cap (\xi_\eta^{\rho^-}\cup \xi_\eta^{\rho^+})=\emptyset.
    \end{equation}

Combining \eqref{eq:66} with \eqref{eq:48},  we obtain that $u_-,u_+,v_-,v_+\notin W_{\rho^+,\rho^-}$,  and thus the geodesics $P_-,P_+$ must exit $W_{\rho^+,\rho^-}$ (recall that $P_+\cap P_-\subseteq W_{\rho^+,\rho^-}$). However, due to \eqref{eq:77}, neither of the geodesics $P_+,P_-$ can exit $W_{\rho^+,\rho^-}$ via $\xi_\eta^{\rho^-}$ or $\xi_\eta^{\rho^+}$. As a result, the geodesics $P_+, P_-$ must exit $W_{\rho^+,\rho^-}$ via $\eta_*\lvert_{[b_\eta^{\rho^+},b_\eta^{\rho^-}]}  \cup \eta\lvert_{[\rho^+,\rho^-]}$. Now, recall from condition \eqref{it:leftright} in Proposition \ref{prop:9} that $P_+$ is to the left of $\sigma_{\eta}^*$ when $\sigma_{\eta}^*$ is viewed as a path from $z_{\eta}$ to $\eta_*(0)$ while $P_-$ is to its right (see Figure \ref{fig:fitchi}). As a result, there must exist times
\begin{align}
  \label{eq:56}
  &0<s_u^+<\tau_{u}^+<\tau_v^+<s_v^+<D_h(u_+,v_+),\nonumber\\
  &0<s_u^-<\tau_{u}^-< \tau_v^-<s_v^-<D_h(u_-,v_-).
\end{align}
such that $P_+(s_u^+), P_+(s_v^+)\in \eta_*\lvert_{[b_\eta^{\rho^+},b_\eta^{\rho^-}]}$ and $P_-(s_u^-), P_+(s_v^-)\in \eta \lvert_{[\rho^+,\rho^-]}$. Finally, recall from condition \eqref{it:overlapping_geodesics} in Proposition \ref{prop:9} that $P_+$ and $P_-$ are the unique $D_h$-geodesics between their endpoints. As a result, we must necessarily have
\begin{equation}
  \label{eq:57}
  P_+\cap P_-\subseteq \eta_* \lvert_{[b_\eta^{\rho^+},b_\eta^{\rho^-}]} \cap \eta \lvert_{[\rho^+, \rho^-]}\subseteq \eta\cap \eta_*.
\end{equation}
This shows that the intersection $\eta_*\cap \eta$ is non-empty as long as $d_H^1(\eta_*,\eta)$ is small enough, thereby completing the proof.

\subsection{Strong confluence for the (two-sided) Hausdorff distance}
\label{sec:strong-confl-two}

The goal of this section is to complete the proof of Theorem \ref{thm:3}. First, we shall prove the result on the non-existence of geodesic bubbles (Proposition \ref{prop:11}) and then subsequently use it to prove Theorem \ref{thm:3}. To prove Proposition \ref{prop:11}, we shall also require some additional known results that we now state. First, we require the following deterministic bound from \cite{Gwy21} on the number of distinct $D_h$-geodesics between two points.

\begin{proposition}(\cite[Theorem~1.7]{Gwy21})\label{prop:finitely_many_geodesics}
There is a finite,  deterministic number $m = m(\gamma) \in \mathbb{N}$ such that a.s.\  any two points in $\mathbb{C}$ are joined by at most $m$ distinct $D_h$-geodesics.
\end{proposition}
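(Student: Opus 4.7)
The plan is to reduce the problem to a dimension-theoretic bound on the order of geodesic star points. The starting observation is that if $u,v\in\CC$ admit $k$ distinct $D_h$-geodesics $\gamma_1,\dots,\gamma_k$, then by a repeated splitting argument one can produce an intermediate point from which at least $\lceil\log_2 k\rceil$ disjoint geodesic arms emanate; here by \emph{disjoint arms from $z$} I mean injective paths $\eta_1,\dots,\eta_N\colon[0,\varepsilon]\to\CC$ which are each initial segments of some $D_h$-geodesic and which only intersect each other at their common starting point $z$. Indeed, first parametrise the $\gamma_i$'s by $D_h$-length starting at $u$. Let $t_1$ be the supremum of times at which all $\gamma_i$ coincide; at $\gamma_1(t_1)$, the $\gamma_i$'s split into at least two groups of consecutive indices, each containing a fraction of the paths. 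Iterating this splitting inside each group until the groups are singletons yields a binary tree of depth at least $\log_2 k$, and on any maximal root-to-leaf branch one obtains a point emitting disjoint geodesic arms in distinct directions.

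Next I would prove that the set $X_N$ of points of $\CC$ from which $N$ pairwise-disjoint geodesic arms of macroscopic length emanate is almost surely empty for all sufficiently large $N$. Denoting by $\text{SARM}_N(z,r)$ the event that $N$ disjoint $D_h$-geodesic arms from some point in $B_{r/2}(z)$ cross the Euclidean annulus $\A_{r,2r}(z)$, the key estimate to establish is an \emph{arm exponent bound} of the form $\PP(\text{SARM}_N(z,r))\leq C\, r^{\alpha(N)}$ modulo the normalisation of the field, with $\alpha(N)\to\infty$ as $N\to\infty$. This would be obtained by exploiting the near-independence of the GFF across annuli of distinct scales (analogous to Proposition~\ref{thm:theorem_uniqueness_lqg_metric}) together with the uniqueness of $D_h$-geodesics between fixed points (\cite[Theorem 1.2]{MQ18}): the presence of $N$ disjoint arms inside an annulus forces $N$ mutually avoiding crossings by unique geodesics between pairs of typical boundary points, and such configurations can be penalised scale by scale. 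Combining the arm exponent with a multiscale covering argument and the KPZ-type relation between Euclidean and $D_h$-Hausdorff dimensions (as in \cite{GP19}) gives that the $D_h$-Hausdorff dimension of $X_N$ is bounded by $d_\gamma - c\,(\alpha(N)-2)$ for some constant $c>0$ depending only on $\gamma$.

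For $N$ sufficiently large, $\alpha(N)$ exceeds a threshold making this dimension bound negative; then $X_N=\emptyset$ almost surely, and setting $m:=2^{N_0}$ for the smallest such $N_0$ yields the theorem. The main obstacle will be quantifying the arm exponent $\alpha(N)$: one must upgrade the single-arm independence estimates available in \cite{GM21} to a \emph{multi-arm} setup where $N$ geodesic arms are simultaneously forced to avoid each other. The natural route is to set up a localised event at each scale which demands that inside a fixed annulus, $N$ prescribed pairs of marked points on the inner and outer boundaries admit unique internally-disjoint $D_h$-geodesics staying within sub-tubes, and to concatenate across many scales using Markovian independence. A subtler issue is that disjoint geodesic arms from a common point can be extremely close to each other near the base point, so one must use the strong confluence theorem (Theorem~\ref{thm:3}) of the present paper to ensure that "almost coinciding" arms actually coincide, turning distinct arms into genuinely separated geometric objects at sufficiently small but positive scales where the independence machinery can be applied.
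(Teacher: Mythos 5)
First, note that the paper does not prove this proposition: it is quoted verbatim from \cite[Theorem~1.7]{Gwy21} and used as an external input, so there is no internal proof to compare against. Judged on its own terms, your proposal has two genuine gaps.

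The first is the opening reduction. Having $k$ distinct geodesics between $u$ and $v$ does \emph{not} force a point with $\lceil\log_2 k\rceil$ pairwise-disjoint arms. Your splitting tree has $k$ leaves and hence depth at least $\log_2 k$ when every split is binary, but the internal nodes along a root-to-leaf branch are \emph{different points of the plane}: the number of disjoint arms at a single point is governed by the degree of one node, not by the depth of the tree. Concretely, a network consisting of a trunk with $B$ disjoint two-strand bubbles strung along it carries $2^B$ distinct geodesics while no point of the network emits more than three disjoint arms, so no star-order bound, however strong, can by itself bound the number of geodesics. To kill such configurations you would need the non-existence of interior bubbles, i.e.\ Proposition \ref{prop:11} --- and this exposes the second gap, circularity: Proposition \ref{prop:11} is proved in this paper \emph{using} Proposition \ref{prop:finitely_many_geodesics}, and Theorem \ref{thm:3} (which you invoke at the end to separate nearly-coinciding arms) is in turn proved using Proposition \ref{prop:11}. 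Neither may be used here; the actual argument in \cite{Gwy21} predates both and rests on a quantitative coalescence estimate across annuli combined with the scale-iteration lemma of \cite[Lemma 3.1]{GM19}. Your middle step --- multi-arm exponents $\alpha(N)\to\infty$ plus a multiscale covering --- is the right spirit for ruling out high-order \emph{stars}, and is essentially how Proposition \ref{prop:13} is proved (legitimately, downstream of Theorem \ref{thm:3}); but by the bubble-chain example it does not bound geodesic counts, and the detour through a KPZ relation and a negative Hausdorff dimension is both unjustified as stated and unnecessary, since a first-moment bound with exponent exceeding $2$ already yields emptiness.
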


Further, we require the non-existence of geodesic bubbles for geodesics starting from a \emph{fixed} point.

\begin{proposition}(\cite[Lemma 3.9]{GPS20})
\label{prop:geodesics_don't_form_bubbles}
Fix $\bz\in \CC$. Almost surely, simultaneously for all $z \in \mathbb{C}$, the following holds. For every $D_h$-geodesic $P$ from $\bz$ to $z$ and any $t\in (0,D_h(\bz,z))$, $P\lvert_{[0,t]}$ is the unique $D_h$-geodesic between its endpoints.
\end{proposition}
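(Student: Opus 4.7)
The plan is to derive Proposition~\ref{prop:geodesics_don't_form_bubbles} from the one-point confluence statement Proposition~\ref{prop:7} combined with the uniqueness assertion in Proposition~\ref{prop:6}\eqref{it:uniquegeod}, essentially reproducing the argument of \cite[Lemma 3.9]{GPS20}.

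First I would reduce to the case where the dividing time $t$ is rational. Indeed, given a $D_h$-geodesic $P\colon[0,T]\to\CC$ from $\bz$ to $z$ and an arbitrary $t\in(0,T)$, pick any rational $t'\in(t,T)$. If $Q$ is another $D_h$-geodesic from $\bz$ to $P(t)$, then the concatenation of $Q$ with $P|_{[t,t']}$ is a $D_h$-geodesic from $\bz$ to $P(t')$ of total length $t'=D_h(\bz,P(t'))$; once uniqueness at the rational time $t'$ is known, this concatenation must equal $P|_{[0,t']}$, forcing $Q=P|_{[0,t]}$. Hence it suffices to establish uniqueness simultaneously for all rational $t$ and all geodesics $P$ emanating from $\bz$.

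For each pair of positive rationals $q_1<q_2$, Proposition~\ref{prop:7} applied with $\sigma_1=q_1$ and $\sigma_2=q_2$ (deterministic times are stopping times for the filtration $\cG$) yields an almost sure event $\Omega_{q_1,q_2}$ on which every $D_h$-geodesic from $\bz$ to any point of $\CC\setminus\cB_{q_2}^\bullet(\bz)$ hits $\cX_{q_1,q_2}(\bz)$ at time $q_1$, and on which, by Proposition~\ref{prop:6}\eqref{it:uniquegeod}, each $x\in\cX_{q_1,q_2}(\bz)$ admits a unique $D_h$-geodesic from $\bz$. Intersecting over the countably many rational pairs gives a full probability event $\Omega_0$ on which this holds simultaneously. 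On $\Omega_0$, given rational $t$ and a geodesic $P$ from $\bz$ to $z$ with $T=D_h(\bz,z)>t$, the goal is to produce a rational $q_2\in(t,T)$ such that $z\notin\cB_{q_2}^\bullet(\bz)$: applying confluence with $\sigma_1=t,\sigma_2=q_2$ would then give $P(t)\in\cX_{t,q_2}(\bz)$ and uniqueness of the $D_h$-geodesic from $\bz$ to $P(t)$, namely $P|_{[0,t]}$, as required.

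The main obstacle is this last step, which is topological in nature. Writing $s^*(z)=\sup\{s\ge0:z\notin\cB_s^\bullet(\bz)\}$, monotonicity of the filled metric balls gives $0<s^*(z)\le D_h(\bz,z)$ (for $s>D_h(\bz,z)$ one has $z\in\cB_s(\bz)\subseteq\cB_s^\bullet(\bz)$). When $s^*(z)>t$, any rational $q_2\in(t,s^*(z))$ works. The delicate case is $s^*(z)\le t$, which occurs precisely when $z$ sits inside a ``bubble'' of $\cB_t^\bullet(\bz)$, i.e.\ in a bounded connected component of $\CC\setminus\overline{\cB_t(\bz)}$. To handle this, one can replace the deterministic $\sigma_2$ by a stopping time such as $\sigma_2=\tau_R(\bz)$ for large rational $R$: since $P$ can be extended (or compared with a geodesic reaching points of large Euclidean norm) so as to exit such a bubble, confluence at $(t,\tau_R(\bz))$ still forces $P(t)\in\cX_{t,\tau_R(\bz)}(\bz)$. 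Alternatively one can work directly with the Jordan-curve structure of $\partial\cB_s^\bullet(\bz)$ from \cite[Lemma~2.4]{GPS20} to reduce to the case $s^*(z)>t$. Either route feeds into the same confluence argument to give uniqueness of $P|_{[0,t]}$, completing the proof.
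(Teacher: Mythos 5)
The paper itself does not prove this proposition at all -- it is imported verbatim from \cite[Lemma 3.9]{GPS20} -- so the only question is whether your self-contained derivation from Propositions \ref{prop:6} and \ref{prop:7} is complete. Your reduction to rational $t$ is correct, and so is the easy case: if $z\notin\cB_{q_2}^\bullet(\bz)$ for some rational $q_2\in(t,T)$, then Proposition \ref{prop:7} places $P(t)$ in $\cX_{t,q_2}(\bz)$ and Proposition \ref{prop:6}\eqref{it:uniquegeod} gives uniqueness of the geodesic from $\bz$ to $P(t)$.

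The gap is exactly the case you flag, $s^*(z)\le t$, and none of your proposed fixes closes it. This case is not vacuous: the holes of $\cB_s^\bullet(\bz)$ are nonempty open sets, every point of a hole is joined to $\bz$ by some geodesic, and such a geodesic necessarily has its tail inside the hole. Moreover, once $P((q,T])$ lies in a hole of $\cB_q^\bullet(\bz)$ it lies in $\cB_{q_2}^\bullet(\bz)$ for every $q_2\ge q$, so \emph{no} admissible $\sigma_2>t$ makes the hypothesis ``target in $\CC\setminus\cB_{\sigma_2}^\bullet(\bz)$'' of Proposition \ref{prop:7} true, and replacing the target $z$ by $P(t'')$ for $t''\in(t,T)$ fails for the same reason (every point of $P$ then lies in $\cB_{q_2}^\bullet(\bz)$). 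Taking $\sigma_2=\tau_R(\bz)$ only enlarges the filled ball and so makes the hypothesis strictly harder to satisfy; ``extending $P$'' past $z$ is not possible in general, since geodesics need not be extendable and the statement must cover geodesics terminating at arbitrary, possibly exceptional, points $z$; and comparing with a geodesic to a distant point yields information about that other geodesic, not about $P$. The Jordan-curve alternative is only gestured at. What is missing is precisely an argument for confluence of geodesics targeted \emph{into} the holes of filled metric balls -- this is the additional content of \cite[Lemma 3.9]{GPS20} beyond \cite[Proposition 3.2]{GPS20} -- and your write-up does not supply it.
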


We shall also require the following simple but useful lemma that is implicitly shown in \cite{MQ20}.
\begin{lemma}\label{prop:geodesics_intersect}
  It is almost surely the case that the following is true. Let $P_n\colon [0,T_n]\rightarrow \CC$ be a sequence of $D_h$-geodesics converging to some $D_h$-geodesic $P \colon [ 0 , T]\rightarrow \CC$ as $n \to \infty$ in the Hausdorff sense with respect to $D_h$. Let $0<s<t<T$. Then, given $\varepsilon>0$, for all $n$ large enough, we have at least one of the following:
  \begin{enumerate}[(a)]
  \item \label{it:cr1} $d_H^1(P\lvert_{[s,t]},P_n\lvert_{[s,t]})\leq \varepsilon$,
  \item \label{it:cr2} $P\lvert_{[s,t]}\cap P_n\lvert_{[s,t]} \neq \emptyset$.
  \end{enumerate}
\end{lemma}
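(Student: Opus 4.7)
The plan is to argue by contradiction: suppose there exist $\varepsilon>0$ and a subsequence (still denoted $P_n$) along which both $P|_{[s,t]}\cap P_n|_{[s,t]}=\emptyset$ and $d_H^1(P|_{[s,t]},P_n|_{[s,t]})>\varepsilon$. I would first promote Hausdorff convergence of the full geodesics to uniform convergence of their unit-speed parametrisations on $[s,t]$: by Lemma~\ref{lem:endpoints_close} we have $T_n\to T$ and, after possibly reversing the parametrisation, $P_n(0)\to P(0)$ and $P_n(T_n)\to P(T)$. Since $P_n,P$ are $1$-Lipschitz with respect to $D_h$, a standard Arzel\`a--Ascoli argument, using the rigidity of geodesic parametrisations (any subsequential uniform limit of the $P_n$ is a $D_h$-geodesic with the same endpoints as $P$, hence equals $P$ by the uniqueness assertion), gives $\delta_n:=\sup_{r\in[s,t]}D_h(P_n(r),P(r))\to 0$. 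In particular $d_H(P|_{[s,t]},P_n|_{[s,t]})\leq \delta_n\to 0$.

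Next, I would carry out a topological side-assignment. Because $P|_{[s,t]}$ is a simple arc, $\CC\cup\{\infty\}\setminus P|_{[s,t]}$ is a simply connected domain whose boundary, viewed as prime ends, splits as $P|_{[s,t]}^{\mathrm{L}}\sqcup P|_{[s,t]}^{\mathrm{R}}$. The hypothesis that $P_n|_{[s,t]}\cap P|_{[s,t]}=\emptyset$ means $P_n|_{(s,t)}$ is contained in this complement, and the map sending each point of $P_n|_{(s,t)}$ to the local side of $P|_{[s,t]}$ (determined by a small neighbourhood in the complement) is continuous on the connected arc $P_n|_{(s,t)}$, hence constant; WLOG, it equals $\mathrm{R}$. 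The analogous argument, swapping the roles, yields that $P|_{(s,t)}$ corresponds to a single side of $P_n|_{[s,t]}$, which by planarity must be $\mathrm{L}$.

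Finally, I would bound $d_H^1$ directly by $d_H$ in this disjoint situation. Given $y\in P_n|_{[s,t]}$, pick $x\in P|_{[s,t]}$ with $D_h(x,y)\leq \delta_n$ and a $D_h$-geodesic $\gamma$ from $y$ to $x$; truncating $\gamma$ at its first hit of $P|_{[s,t]}$ produces a path from $y$ to some point $v\in P|_{[s,t]}$ of $D_h$-length at most $\delta_n$ which lies in $\CC\setminus P|_{[s,t]}$ except at its terminal endpoint. Because $y$ approaches $P|_{[s,t]}$ from the right side and $\gamma$ does not cross $P|_{[s,t]}$, the terminal prime end is in $P|_{[s,t]}^{\mathrm{R}}$; reversing the path realises $y$ in the $\delta_n$-neighbourhood of $P|_{[s,t]}^{\mathrm{R}}$ for the interior-internal metric on $\CC\setminus P|_{[s,t]}$ induced by $D_h$. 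The symmetric construction, with $P$ and $P_n$ exchanged, places every $x\in P|_{[s,t]}$ within interior-internal distance $\delta_n$ of $P_n|_{[s,t]}^{\mathrm{L}}$. Hence $\ell(P|_{[s,t]}^{\mathrm{R}},P_n|_{[s,t]}^{\mathrm{L}})\leq \delta_n$ and therefore $d_H^1(P|_{[s,t]},P_n|_{[s,t]})\leq \delta_n\to 0$, contradicting our standing assumption.

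The main obstacle is Step~2: the topological side-assignment. One must verify carefully that, for a simple arc $P|_{[s,t]}$ (whose endpoints $P(s),P(t)$ do not themselves have well-defined sides), the assignment of a prime-end side to each point of the disjoint arc $P_n|_{(s,t)}$ is well-defined, continuous, and that the two resulting side-labels are compatible with each other by planarity. The argument should be robust enough to handle the possibility that $P_n|_{[s,t]}$ and $P|_{[s,t]}$ share an endpoint (which a priori is not excluded by the hypotheses), in which case one needs to work with prime ends and slightly shrink the interval if necessary.
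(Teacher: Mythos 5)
Your Steps 1 and 3 are essentially sound (modulo minor phrasing; in Step 1 you invoke a ``uniqueness assertion,'' but $P$ need not be the unique $D_h$-geodesic between its endpoints -- the correct justification is that any uniform subsequential limit $Q$ of the $P_n$ agrees with $P$ as a set by the assumed Hausdorff convergence, and two unit-speed geodesics that coincide as sets with matching endpoints coincide as paths). The genuine gap is in Step 2, and you have correctly identified it as the main obstacle, but the route you suggest does not close it. You claim that each point of $P_n|_{(s,t)}$ has a well-defined ``local side of $P|_{[s,t]}$.'' This is not a well-defined assignment: $P|_{[s,t]}$ is a simple \emph{arc}, so $\CC\cup\{\infty\}\setminus P|_{[s,t]}$ is a single simply connected domain, and a point of that domain that is near the endpoint $P(s)$ or $P(t)$ (or far from the arc altogether) has no canonical side. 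For large $n$, $P_n|_{[s,t]}$ will lie in a small $D_h$-neighbourhood of $P|_{[s,t]}$, but its two ends necessarily approach $P(s)$ and $P(t)$, precisely where the side is ill-defined and where the left and right sides of the arc communicate. So the map you propose is not continuous (indeed not defined) on all of $P_n|_{(s,t)}$, and the ``hence constant'' conclusion does not follow.

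The fix, which is the route the paper's sketch takes, is to use the \emph{full} geodesic $P$ as the separating barrier rather than the subarc $P|_{[s,t]}$. Since $0<s<t<T$, the points $P(s)$ and $P(t)$ are interior to $P$, so there does exist a $\delta>0$ such that no path in $\cB_\delta(P|_{[s,t]})\setminus P$ joins the $\delta$-radius interior-internal neighbourhood of $P^{\mathrm{L}}|_{[s,t]}$ to that of $P^{\mathrm{R}}|_{[s,t]}$: unlike the subarc, the full arc $P$ genuinely separates its two sides throughout $\cB_\delta(P|_{[s,t]})$ once $\delta$ is small enough that the ball misses $P(0)$ and $P(T)$ (which holds because $P$ is a unit-speed geodesic, so $\cB_\delta(P|_{[s,t]})\cap P=P|_{(s-\delta,t+\delta)}$). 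Given this $\delta$, for large $n$ either $P_n|_{[s,t]}$ crosses $P|_{[s,t]}$ (giving alternative (b)), or $P_n|_{[s,t]}$ stays on one side of $P$ inside $\cB_\delta(P|_{[s,t]})$, after which your Step 3 (together with a short argument to handle the $O(\delta_n)$-windows around $r=s$ and $r=t$, where $P_n(r)$ may touch $P\setminus P|_{[s,t]}$ without crossing $P|_{[s,t]}$ -- those points are within $2\delta_n$ of the endpoints $P(s),P(t)$ and pose no obstruction) yields alternative (a).
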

\begin{proof}[Proof sketch]
  A proof of the above is implicitly present in \cite[Proof of Proposition 5.1]{MQ20} and now only provide a rough sketch. Using that $\lim_{n\rightarrow \infty}d_H(P_n,P)=0$, one can first show that we necessarily have
  \begin{equation}
    \label{eq:96}
    \lim_{n\rightarrow \infty}d_H(P_n\lvert_{[s,t]},P\lvert_{[s,t]})=0.
  \end{equation}
  Now choose a $\delta$ small enough such that %
 there is no path $\eta\subseteq \cB_\delta(P\lvert_{[s,t]})\setminus P$ connecting the $\delta$ radius $D_h(\cdot,\cdot;\CC\setminus P)$ neighbourhood of $P^{\mathrm{L}}\lvert_{[s,t]}$ to the $\delta$ radius $D_h(\cdot,\cdot;\CC\setminus P)$ neighbourhood of $P^{\mathrm{R}}\lvert_{[s,t]}$. By \eqref{eq:96}, we must have $P_n\lvert_{[s,t]}\subseteq \cB_\delta(P\lvert_{[s,t]})$ for all $n$ large enough. As a result, for all $n$ large enough, either $P_n\lvert_{[s,t]}$ must necessarily cross $P\lvert_{[s,t]}$ implying \eqref{it:cr2} or $d_H^1(P_n\lvert_{[s,t]},P\lvert_{[s,t]})$ must be small, thereby implying \eqref{it:cr1}.
\end{proof}

On being combined with Proposition \ref{prop:close_one_sided_they_intersect}, the above lemma yields the following proposition.
\begin{proposition}
  \label{prop:12}
  It is almost surely the case that the following is true. Let $P_n:[0,T_n]\rightarrow \CC$ be a sequence of $D_h$-geodesics converging to some $D_h$-geodesic $P:[0,T]\rightarrow \CC$ in the Hausdorff sense. Let $0\leq s<t\leq T$. Then for all $n$ large enough, we have $P_n\lvert_{[s,t]}\cap P\lvert_{[s,t]}\neq \emptyset$.
\end{proposition}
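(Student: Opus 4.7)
The plan is to deduce Proposition \ref{prop:12} essentially for free from the two ingredients just assembled in this subsection: Lemma \ref{prop:geodesics_intersect}, which provides the dichotomy ``either $P_n\lvert_{[s,t]}$ is close to $P\lvert_{[s,t]}$ in one-sided Hausdorff distance or it already intersects it,'' and Proposition \ref{prop:close_one_sided_they_intersect}, which rules out close-but-disjoint pairs of $D_h$-geodesics. Since Lemma \ref{prop:geodesics_intersect} is stated for strictly interior subintervals $0 < s < t < T$, the first step will be to reduce to that case; the remainder of the argument is a short splicing of the two inputs, all carried out on the intersection of the a.s.\ events on which the two results hold.

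First I would treat the core case $0 < s < t < T$. Since any restriction of a $D_h$-geodesic to a subinterval is again a $D_h$-geodesic, $P\lvert_{[s,t]}$ is itself a $D_h$-geodesic, and so is $P_n\lvert_{[s,t]}$ once $n$ is large enough that $t \leq T_n$; the latter holds for all large $n$ since Lemma \ref{lem:endpoints_close} gives $T_n \to T$. I would apply Proposition \ref{prop:close_one_sided_they_intersect} with $\eta_* = P\lvert_{[s,t]}$ to obtain a random $\delta > 0$, depending only on $P\lvert_{[s,t]}$, such that no $D_h$-geodesic $\eta$ with $d_H^1(P\lvert_{[s,t]}, \eta) \leq \delta$ and $P\lvert_{[s,t]} \cap \eta = \emptyset$ can exist. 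Then I would apply Lemma \ref{prop:geodesics_intersect} to the sequence $(P_n)$ and $P$ with $\varepsilon := \delta$: for all large $n$, either $d_H^1(P\lvert_{[s,t]}, P_n\lvert_{[s,t]}) \leq \delta$, in which case Proposition \ref{prop:close_one_sided_they_intersect} applied to the $D_h$-geodesic $P_n\lvert_{[s,t]}$ forces $P\lvert_{[s,t]} \cap P_n\lvert_{[s,t]} \neq \emptyset$, or else we already have $P\lvert_{[s,t]} \cap P_n\lvert_{[s,t]} \neq \emptyset$ directly. Either way, the desired intersection holds for all large $n$.

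For the boundary cases $s = 0$ or $t = T$, I would simply pass to a strictly interior subinterval: pick $s', t'$ with $s \leq s' < t' \leq t$ and $0 < s' < t' < T$, which is possible since $s < t$ and $[s,t] \subseteq [0,T]$. For large $n$ we have $t' < T_n$, so $P_n\lvert_{[s',t']}$ is well-defined, and the interior case applied to $[s',t']$ yields $P_n\lvert_{[s',t']} \cap P\lvert_{[s',t']} \neq \emptyset$. Since $[s',t'] \subseteq [s,t]$, this immediately upgrades to $P_n\lvert_{[s,t]} \cap P\lvert_{[s,t]} \neq \emptyset$.

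There is no essential obstacle here beyond correctly combining the two inputs, since all of the substantive geometric and probabilistic content -- the $\scX$ construction, the bump-function absolute continuity argument, and the ladder geometry used to prove one-sided strong confluence -- has already been absorbed into Lemma \ref{prop:geodesics_intersect} and Proposition \ref{prop:close_one_sided_they_intersect}. The only minor subtlety is the strict inequality $0 < s < t < T$ in Lemma \ref{prop:geodesics_intersect}, which is disposed of by the subinterval reduction above.
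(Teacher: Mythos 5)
Your proposal is correct and matches the paper's intended approach, which the paper only states in one sentence ("On being combined with Proposition \ref{prop:close_one_sided_they_intersect}, the above lemma yields the following proposition") without writing out a proof. You have correctly identified that the argument is just a splicing of Lemma \ref{prop:geodesics_intersect} (the dichotomy: close in $d_H^1$ or already intersecting) with Proposition \ref{prop:close_one_sided_they_intersect} (close in $d_H^1$ and disjoint is impossible), applied with $\eta_* = P\lvert_{[s,t]}$ and $\varepsilon = \delta$; and you are right to note and handle the minor technicality that Lemma \ref{prop:geodesics_intersect} requires strict inequalities $0<s<t<T$, whereas Proposition \ref{prop:12} allows $s=0$ or $t=T$ (a reduction which is needed, as the endpoint cases are in fact used in the proof of Theorem \ref{thm:3}). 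The subinterval reduction you give is sound since the restrictions $P_n\lvert_{[s',t']}\subseteq P_n\lvert_{[s,t]}$ and $P\lvert_{[s',t']}\subseteq P\lvert_{[s,t]}$ hold as sets, and since $T_n\to T$ by Lemma \ref{lem:endpoints_close} ensures $P_n\lvert_{[s',t']}$ is eventually well-defined once $t'<T$.
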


We now use the above to provide the proofs of Proposition \ref{prop:11} and Theorem \ref{thm:3}.
\begin{proof}[Proof of Proposition \ref{prop:11}]
The proof follows the same argument as in the proof of \cite[Proposition~5.1]{MQ20} except that Proposition~\ref{prop:12} is used instead of \cite[Proposition~3.1]{MQ20},  and Propositions~\ref{prop:finitely_many_geodesics} and ~\ref{prop:geodesics_don't_form_bubbles} are used in place of \cite[Proposition~4.9]{MQ20} and \cite[Corollary~7.7]{LeGal10} respectively.  

More precisely, suppose that there exists another $D_h$-geodesic $\wt{P}\colon [0,T]\rightarrow \CC$ from $P(0)$ to $P(T)$ such that $\wt{P}\neq P$ but $P(r)=\wt{P}(r)$ for all $r\in [s,t]$.  Fix $s_0 \in (0,s)$ and let $\{z_k\}$ be a sequence such that $z_k \in 2^{-k} \mathbb{Z}^2$ for each $k$ and $z_k \to P(s_0)$ as $k \to \infty$.  For each $k$,  we let $P_k \colon [0,T_k] \to \mathbb{C}$ be a $D_h$-geodesic from $z_k$ to $P(T)$.  By possibly passing to a further subsequence,  we can assume that $P_k$ converges in the Hausdorff sense to some $D_h$-geodesic $P^{s_0}$.  Suppose that $P^{s_0} = P|_{[s_0,T]}$.  Then Proposition~\ref{prop:12} implies that we must have that $P_k \cap P|_{[s_0 ,  s]} \neq \emptyset$ for all $k$ sufficiently large. Then if $u_k,v_k$ are such that $s_0 \leq v_k \leq s$ and $P_k(u_k) = P(v_k)$,  we can construct a new geodesic from $P_k(0)$ to $P(T)$ by concatenating $P_k|_{[0,u_k]} ,  P|_{[v_k ,  s]} ,\wt{P}\lvert_{[s,t]}$ and $P|_{[t,T]}$.  But since we have assumed that $t<T$, this contradicts Proposition \ref{prop:geodesics_don't_form_bubbles} applied for $D_h$-geodesics starting from the rational point $z_k$, and so we obtain that $P^{s_0} \neq P|_{[s_0 ,  T]}$. Analogously, we also obtain that $P^{s_0}\neq \wt{P}\lvert_{[s_0,T]}$.

Finally, let $\{s_k\}$ be a decreasing sequence in $(0,s_0)$ converging to $0$ as $k \to \infty$.  Then, the previous paragraph implies that we can construct, for each $k$, a $D_h$-geodesic $P^{s_k}$ from $P(s_k)$ to $P(T)$ which is not equal to any of $P|_{[s_k ,  T]}$, $\wt{P}|_{[s_k ,  T]}$ and is also not equal to the concatenation of $P|_{[s_k ,  s_j]}$ with $P^{s_j}$ for $1 \leq j \leq k-1$.  This implies that $P(0)$ and $P(T)$ can be connected by infinitely many geodesics and thus violates Proposition~\ref{prop:finitely_many_geodesics}.  Therefore, we have obtained a contradiction, and this completes the proof of the proposition.

\end{proof}
\begin{proof}[Proof of Theorem~\ref{thm:3}]

  Let the points $u,v$, the geodesic $P \colon [0,T] \to \mathbb{C}$ from $u$ to $v$ and the geodesics $P_n \colon [0,T_n] \to \mathbb{C}$ be as in the statement of the theorem. Without loss of generality, we can assume that $\varepsilon \in (0,T/2)$. Since $\lim_{n\rightarrow \infty}d_H(P_n,P)=0$, it can be checked that $\lim_{n\rightarrow \infty}d_H(P_n\lvert_{[0,\varepsilon]},P\lvert_{[0,\varepsilon]})=0$ and $\lim_{n\rightarrow \infty}d_H(P_n\lvert_{[T_n-\varepsilon,T_n]},P\lvert_{[T-\varepsilon,T]})=0$. On combining this with Proposition \ref{prop:12}, we obtain that for all $n$ large enough, we must have
\begin{align*}
P\lvert_{[0,\varepsilon]} \cap P_n\lvert_{[0,\varepsilon]} \neq \emptyset \quad \text{and} \quad P\lvert_{[T-\varepsilon,T]} \cap P_n\lvert_{[T_n - \varepsilon ,  T_n]} \neq \emptyset.
\end{align*}
Finally, Proposition~\ref{prop:11} implies that the $D_h$-geodesic from any point of $P\lvert_{[0,\varepsilon]} \cap P_n\lvert_{[0,\varepsilon]}$ to any point of $P\lvert_{[T-\varepsilon ,  T]} \cap P_n\lvert_{[T_n - \varepsilon ,  T_n]}$ is unique and so it follows that
\begin{equation}
  \label{eq:113}
  P\lvert_{[\varepsilon ,  T- \varepsilon]},P_n\lvert_{[\varepsilon,T_n-\varepsilon]}\subseteq P_n\cap P
\end{equation}
for all $n \geq n_0$. Further, by choosing $n_0$ to be large enough, we can ensure that
\begin{equation}
  \label{eq:112}
  P_n(0)\in \cB_{\varepsilon}(u), P_n(T_n)\in \cB_{\varepsilon}(v)
\end{equation}
for all $n\geq n_0$ as well. This shows that for all $n\geq n_0$, we have
\begin{equation}
  \label{eq:114}
      (P_n\setminus P)\cup (P\setminus P_n)\subseteq \cB_{2\varepsilon}(u)\cup \cB_{2\varepsilon}(v),
    \end{equation}
    and replacing $\varepsilon$ by $\varepsilon/2$ now completes the proof.

\end{proof}

\noindent Having proved Proposition \ref{prop:11} and Theorem \ref{thm:3}, it now remains to provide the proofs of Propositions \ref{thm:4}, \ref{prop:15}.
\begin{proof}[Proof of Proposition~\ref{thm:4}]
Let $\{z_n\}_{n \in \mathbb{N}}$ be an enumeration of $\mathbb{Q}^2$ and for all $n , m \in \mathbb{N}$ with $n \neq m$, we let $P_{n,m}$ be the a.s.\ unique $D_h$-geodesic which connects $z_n$ and $z_m$. With $\dim_{D_h}(\cdot)$ denoting the Hausdorff dimension with respect to the LQG metric, note that $\text{dim}_{D_h}(\cup_{i,j} P_{i,j}) = 1$ a.s.\ since $\text{dim}_{D_h}(P_{i,j}) = 1$ a.s.\ for each $i,j \in \mathbb{N}$; indeed, any $D_h$-geodesic, by definition, has dimension $1$ with respect to $D_h$. We will finish the proof by using strong confluence to show that $\cW\subseteq \cup_{i,j} P_{i,j}$.

Let $P \colon [0 , T] \to \mathbb{C}$ be a $D_h$-geodesic and fix $\varepsilon \in (0,T/4)$. Then there exist subsequences $\{z_{n_j}\} , \{z_{m_j}\}$ of $\{z_j\}$ so that $z_{n_j} \to P(\varepsilon)$ and $z_{m_j} \to P(T - \varepsilon)$ as $j \to \infty$. We claim that 
\begin{align*}
    d_H(P_{n_j , m_j} , P|_{[\varepsilon , T - \varepsilon]}) \to 0 \quad \text{as} \quad j \to \infty.
\end{align*}
Indeed, for any subsequence $\{P_{n_k' , m_k'}\}$ of $\{P_{n_k,m_k}\}$, we can pass to a further subsequence $\{P_{n_k'',m_k''}\}$ which converges in the Hausdorff metric to a $D_h$-geodesic $\widehat{P}$ connecting $P(\varepsilon)$ to $P(T - \varepsilon)$. Now, the non-existence of geodesic bubbles (Proposition~\ref{prop:11}) implies that it is a.s.\ the case that $\widehat{P}$ is equal to $P|_{[\varepsilon , T - \varepsilon]}$. Thus we have shown that every subsequence of $\{P_{n_k,m_k}\}$ has a further subsequence which converges to $P|_{[\varepsilon,T-\varepsilon]}$ in the Hausdorff sense and therefore $\{P_{n_k,m_k}\}$ itself converges to $P|_{[\varepsilon,T-\varepsilon]}$ in the Hausdorff sense. Now, Theorem~\ref{thm:3} implies that it is a.s.\ the case that there exists $k_0 \in \mathbb{N}$ such that
\begin{align*}
    P([2 \varepsilon , T - 2\varepsilon]) \subseteq P_{n_k , m_k} \quad \text{for all} \quad k \geq k_0
\end{align*}
and hence 
\begin{align*}
    P([2\varepsilon , T - 2\varepsilon]) \subseteq \cup_{i , j} P_{i,j}.
\end{align*}
This completes the proof of the proposition since $P$ and $\varepsilon>0$ were arbitrary.   
\end{proof}

\begin{proof}[Proof of Proposition \ref{prop:15}]
   It suffices to show that, almost surely, for any sequence of points $u_n\rightarrow P(s)$ and any sequence $v_n\rightarrow P(t)$, for all $n$ large enough, we have $P\lvert_{[s+\varepsilon,t-\varepsilon]}\subseteq \Gamma_{u_n,v_n}$ and $\Gamma_{u_n,v_n}\lvert_{[\varepsilon,D_h(u_n,v_n)-\varepsilon]}\subseteq P$. To see this, we first use the non-existence of geodesic bubbles (Proposition \ref{prop:11}) to obtain that $P\lvert_{[s,t]}$ is the unique $D_h$-geodesic between its endpoints. As a consequence, by a standard Arzela-Ascoli argument, the geodesics $\Gamma_{u_n,v_n}$ must converge to $P\lvert_{[s,t]}$ in the Hausdorff sense, and an application of Theorem \ref{thm:3} now yields the desired result.
\end{proof}
\section{Geodesic stars and networks: open problems and conjectures}
\label{sec:stars}
In this section, we discuss applications of the main results of the paper to the study of geodesic stars and networks. We also list several open problems and conjectures.

\subsection{Geodesic stars}
\label{sec:geodesic-stars}

As mentioned in the introduction, while planar models of random geometry exhibit \emph{geodesic confluence}, there do exist exceptional points around which confluence fails. Indeed, one can define a point $z$ to be a $k$-star if there exist $k$ distinct geodesics $\{P_i\}_{i=1}^k$ emanating from $z$ such that $P_i\cap P_j=\{z\}$ for all $i\neq j$. While one expects most points in the space to be $1$-stars, there could exist a fractal set of $k$-stars for each $k\geq 2$.

The study of geodesic stars was first initiated in Brownian geometry \cite{Mie13}. More recently, the works \cite{LeGal22,MQ20} compute the precise Hausdorff dimension of $k$-stars in Brownian geometry. Phrased in terms of $\sqrt{8/3}$-LQG, they together establish that, almost surely, $\dim_{D_h}($k$\textrm{-star})=5-k$ for $k\leq 5$, with the set being empty for all $k\geq 6$. In parallel, for the directed landscape, \cite{BGH21,GZ22} computes the dimension of the set of points admitting disjoint geodesics to two fixed points. Bringing over the terminology of $k$-stars to the directed landscape, the works \cite{Bha22,Dau23+} analyse geodesic stars in the directed landscape-- the former computes the dimension of $2$-stars which are additionally located on \emph{geodesics}, while the latter computes the precise dimension of $k$-stars across the whole space for all values of $k$. The recent work \cite{BCK25} studies geodesic stars for Kendall's Poisson roads metric, a yet another interesting model of planar random geometry.

For $\gamma$-LQG with general values of $\gamma$, the precise Hausdorff dimensions of the set of $k$-stars remain a mystery; with $h$ being a whole plane GFF, we shall use $\mathfrak{S}_k\subseteq \CC$ to denote the set of $k$-stars. Unlike phenomena like strong confluence, the precise values of the $k$-star dimensions appear to be highly model specific. For example, for Brownian geometry, the proof technique in \cite{MQ20} exploits the presence of independent continuous state branching processes ($3/2$-CSBP) for boundary lengths of filled metric balls when exploring a Brownian sphere via a peeling by layers algorithm, while the computation in \cite{Dau23+} for the directed landscape uses fine probability estimates for pinches in the Airy line ensemble, building on the earlier work \cite{Ham20}. However, as conjectured in \cite{Dau23+} by harnessing a topological result from \cite{Gwy21}, it is expected that $\dim_{D_h}(\mathfrak{S}_3)$ is in fact universal!
\begin{conjecture}[{\cite[Section 1.2]{Dau23+}}]
  \label{conj:dau}
  Fix $\gamma\in (0,2)$. Almost surely, one has $\dim_{D_h}(\mathfrak{S}_3)=2$.
\end{conjecture}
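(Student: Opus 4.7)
The plan is to separately establish an upper bound $\dim_{D_h}(\mathfrak{S}_3) \leq 2$ and a matching lower bound $\dim_{D_h}(\mathfrak{S}_3) \geq 2$, with the caveat that this is stated as an open conjecture and some steps below will necessarily be speculative.

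For the upper bound, I would proceed via a first-moment estimate at all $D_h$-dyadic scales. Fix a reference point $\bz$ and a scale $r > 0$. If $z \in \mathfrak{S}_3$ is close to $\bz$, then the three disjoint geodesics emanating from $z$ must, by strong confluence (Theorem \ref{thm:3}), remain topologically distinguishable on macroscopic scales: no sequence of approximating typical geodesics can collapse two of them together. Combined with the one-point confluence structure from Proposition \ref{prop:6}, this forces the leftmost geodesic tree rooted at $\bz$ to simultaneously admit a specific branching pattern near $z$. The central task is to show that the probability of this branching pattern in a given $D_h$-ball $\cB_r(z)$ is at most $r^{d_\gamma - 2 - o(1)}$, since $d_\gamma$ is the $D_h$-Hausdorff dimension of the plane. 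Summing over an $O(r^{-d_\gamma})$-sized $D_h$-cover of a bounded region would then yield the bound $\dim_{D_h}(\mathfrak{S}_3) \leq 2$.

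For the lower bound, the plan is to construct a random Frostman measure $\mu$ of dimension $2 - \varepsilon$ supported on $\mathfrak{S}_3$, for each $\varepsilon > 0$. The key input is the topological characterisation of $3$-stars from \cite{Gwy21} referenced by Dauvergne: this should identify a condition $\mathfrak{T}(z)$, depending only on the geometry of leftmost geodesic trees rooted at a finite number of fixed points, which is both necessary and sufficient for $z \in \mathfrak{S}_3$. Strong confluence (Theorem \ref{thm:3}) and the non-existence of geodesic bubbles (Proposition \ref{prop:11}) are essential to this reduction, since without them $\mathfrak{T}(z)$ would only guarantee an approximating sequence of typical geodesics exhibiting a star-like pattern at $z$, and one would still have to rule out such a sequence converging to two merged geodesics rather than three disjoint ones. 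To verify $\mathfrak{T}(z)$ on a dimension-$2$ set, the idea is to adapt the $\scX$-construction of Section \ref{sec:construction_of_chi} by inserting at each dyadic scale $r = 2^{-n}$ not a single $\scX$-configuration but a triple of mutually transversal $\scX$'s which forces the three confluence trees to locally split into three topologically distinct branches. Provided this "triple $\scX$" event satisfies the locality and uniform probability hypotheses of Proposition \ref{thm:theorem_uniqueness_lqg_metric}, a standard second-moment multiscale argument should produce the required measure.

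The main obstacle, and the point at which this paper's machinery stops short, will be obtaining the sharp probability rate for the triple $\scX$-event. In the Brownian case the formula $\dim(\mathfrak{S}_k) = 5 - k$ is obtained via the reverse metric exploration and CSBP independence, while in the directed landscape it is obtained via Airy line ensemble estimates on three-point pinches; neither tool transfers to general $\gamma$-LQG. A natural attempt would be to iterate the construction of Proposition \ref{prop:9} three times along disjoint arms, but maintaining control of the joint law of three coalescing geodesic trees uniformly in $\gamma \in (0, 2)$ appears to require new input, possibly a multi-point generalisation of the bump-function absolute continuity argument in Section \ref{sec:bump_functions} where one subtracts three independent $\phi$'s simultaneously. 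Such a joint estimate for three geodesic trees appears to be a substantial open question in its own right, and is likely the principal obstruction to converting the approach sketched above into a complete proof of Conjecture \ref{conj:dau}.
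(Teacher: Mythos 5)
This statement is a conjecture, not a theorem: the paper provides no proof of it, so there is no proof in the paper against which to compare your proposal. The paper simply records the conjecture (attributed to Dauvergne \cite{Dau23+}), notes that the heuristic behind it comes from a topological result of \cite{Gwy21}, and remarks that the \emph{lower} bound $\dim_{D_h}(\mathfrak{S}_3)\geq 2$ will be established in an upcoming work of the first author with Gwynne and Harper. You are right to flag your own write-up as speculative, but it should be understood as a research program rather than a candidate proof; the task of ``proving'' Conjecture~\ref{conj:dau} is genuinely open.

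A few substantive comments on the program you sketch. Your upper-bound bookkeeping is internally consistent: a probability estimate of order $r^{d_\gamma - 2 - o(1)}$ per $D_h$-ball of radius $r$, against $O(r^{-d_\gamma})$ such balls, gives an expected cover size $O(r^{-2-o(1)})$ and hence $\dim_{D_h}\leq 2$. But this is entirely circular until the probability estimate is proved, and the estimate is precisely the content of the conjecture. In both settings where this is known (Brownian geometry via CSBP estimates, the directed landscape via Airy line ensemble pinch estimates), obtaining the sharp exponent required a substantial amount of model-specific analytic input that the $\scX$-machinery of this paper does not supply. For the lower bound, you appeal to a ``topological characterisation of $3$-stars'' from \cite{Gwy21} that is ``both necessary and sufficient''; this overstates what \cite{Gwy21} provides. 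What is actually available is a classification of \emph{dense} geodesic networks and a bound on the number of geodesics between two points, which underlies Dauvergne's heuristic for why the value $2$ should be $\gamma$-universal, but does not by itself yield a Frostman-measure construction. Your ``triple-$\scX$'' idea is plausible as a way to certify three locally distinct branches, and it is true that strong confluence (Theorem~\ref{thm:3}) and the non-existence of bubbles (Proposition~\ref{prop:11}) are natural ingredients in such a certification. However, the present paper's Proposition~\ref{thm:theorem_uniqueness_lqg_metric} and the bump-function construction are calibrated to produce events of probability bounded away from zero uniformly in the scale, not to extract a precise polynomial decay rate; a Frostman measure of dimension $2-\varepsilon$ requires the latter. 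Identifying this quantitative gap as the principal obstruction is correct, but it is also where the argument currently ends.

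One smaller point: the conjecture asserts $\dim_{D_h}(\mathfrak{S}_3)=2$ \emph{almost surely}, and it is not a priori clear that $\dim_{D_h}(\mathfrak{S}_k)$ is even deterministic (see the separate Conjecture~\ref{conj:starzero}). A careful treatment would need to address this measurability/zero-one issue before any dimension calculation; you do not mention it.
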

In an upcoming work of the first author with Ewain Gwynne and Brin Harper computing dimension lower bounds for various random fractals associated to LQG, the lower bound in the above conjecture will be proved.  As for values other than $k=3$, there are no other conjectures for the precise values of $\dim_{D_h}(\mathfrak{S}_k)$. A priori, it is possible that $\mathfrak{S}_k\neq \emptyset$ for all values of $k$ as occurs in Euclidean geometry, but this can be ruled out by using the main result of this paper as we now show sketch a proof of.
\begin{proposition}
  \label{prop:13}
  Fix $\gamma\in (0,2)$. There is a deterministic constant $k_\gamma\in \NN$ such that $\mathfrak{S}_{k_\gamma}=\emptyset$ almost surely.
\end{proposition}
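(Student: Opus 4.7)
My plan is to set $k_\gamma = 2m(\gamma) + 1$, where $m(\gamma)$ is the deterministic constant from Proposition~\ref{prop:finitely_many_geodesics} bounding the number of distinct $D_h$-geodesics between any two points, and to derive a contradiction from the existence of a $k_\gamma$-star by producing more than $m(\gamma)$ distinct $D_h$-geodesics between a single pair of typical points. This proposed argument crucially exploits the strong confluence result of this paper (Theorem~\ref{thm:3}) together with its corollaries (Propositions~\ref{prop:11} and~\ref{prop:15}).

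Suppose towards contradiction that $z \in \mathfrak{S}_{k_\gamma}$ with arms $P_1, \ldots, P_{k_\gamma}$ of $D_h$-length at least $L > 0$. I would first use a cyclic pigeonhole argument on the positions of the arm endpoints $\{P_i(L)\}_{i = 1}^{k_\gamma}$ as they exit a small filled metric ball around $z$, combined with a metric comparison, to extract at least $m(\gamma) + 1$ pairs of arms $(P_{i_s}, P_{j_s})$ whose concatenation $\overline{P_{i_s}|_{[0, L]}} \cdot P_{j_s}|_{[0, L]}$ is a $D_h$-geodesic from $P_{i_s}(L)$ to $P_{j_s}(L)$ — a \emph{through-geodesic pair}. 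Heuristically, among many arms arranged cyclically in the plane, there must be sufficiently many ``antipodally positioned'' pairs whose concatenation is the shortest route between the corresponding endpoints.

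For each such through-geodesic pair, I would then use Proposition~\ref{prop:15} to find typical points $u_s, v_s$ near $P_{i_s}(L), P_{j_s}(L)$ such that the unique $D_h$-geodesic $\Gamma_{u_s, v_s}$ coincides with the through-geodesic on its inner portion. Next, by fixing far-away typical ``boundary'' points $u_\star, v_\star \in \mathbb{C}$ and applying one-point confluence (Proposition~\ref{prop:6}) at both $u_\star$ and $v_\star$, one can show that the $D_h$-geodesics from $u_\star$ (resp.\ $v_\star$) to the points $u_s$ (resp.\ $v_s$) share common initial (resp.\ terminal) segments all the way up to (resp.\ from) a neighbourhood of $z$. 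Concatenating these common segments with each through-geodesic yields $m(\gamma) + 1$ candidate $D_h$-geodesics between $u_\star$ and $v_\star$; distinctness follows from the fact that they traverse $z$ along distinct arm pairs, and the fact that each candidate is actually a $D_h$-geodesic (not merely a path) follows from Proposition~\ref{prop:11} applied to the inner segments of the through-geodesics, combined with Theorem~\ref{thm:3} used to rule out any ``phantom'' overlap that would collapse two candidates to the same geodesic. This would contradict Proposition~\ref{prop:finitely_many_geodesics}, completing the proof.

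The main obstacle is the cyclic pigeonhole step: showing that a $k_\gamma$-star admits at least $m(\gamma) + 1$ through-geodesic pairs of arms. This requires both a topological argument (using that $\partial \cB^{\bullet}_L(z)$ is a Jordan curve, delicate because $z$ is random rather than fixed, and so the Jordan-curve structure of Proposition~\ref{prop:6} does not directly apply) and a metric argument showing that ``antipodally positioned'' arm concatenations really are geodesics — the latter presumably requiring the H\"older regularity of $D_h$ (Lemma~\ref{lem:holder_regularity}) to compare Euclidean and LQG distances and rule out shortcut paths. A secondary difficulty is the stitching step, namely certifying that the composite paths obtained by extending each through-geodesic to $u_\star$ and $v_\star$ are $D_h$-geodesics rather than merely paths; this is precisely where the full force of the strong confluence Theorem~\ref{thm:3} of this paper (rather than just one-point confluence) appears to be essential, as it prevents direction-dependent inconsistencies in how geodesics approach $z$ from the directions of $u_\star$ and $v_\star$.
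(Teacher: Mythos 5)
There is a genuine gap in the proposal, and it lies exactly where you flag it as "the main obstacle": the claim that a $k_\gamma$-star must admit even one (let alone $m(\gamma)+1$) ``through-geodesic pairs.'' If $z \in \mathfrak{S}_{k}$ with arms $P_1,\dots,P_k$, the concatenation $\overline{P_i|_{[0,L]}}\cdot P_j|_{[0,L]}$ is a $D_h$-geodesic if and only if $D_h(P_i(L),P_j(L)) = 2L$, i.e.\ if and only if $z$ lies on a $D_h$-geodesic between $P_i(L)$ and $P_j(L)$. A generic star point is \emph{not} on the interior of any geodesic: the set of star points lying on geodesics is of strictly lower dimension than the set of star points overall (this is precisely the distinction discussed in Section~\ref{sec:geodesic-networks} around $\mathfrak{S}_3 \cap \Gamma_{z_1,z_2}$, and the analogues in \cite{Bha22,Dau23+}). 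Neither the cyclic positioning of the arm endpoints on $\partial\cB_L^\bullet(z)$ nor H\"older regularity can force $D_h(P_i(L),P_j(L))=2L$: the boundary of a filled metric ball is a fractal Jordan curve and two points ``antipodally'' positioned on it are typically at $D_h$-distance strictly less than $2L$ because of shortcuts outside the ball. So the pigeonhole step produces no through-geodesic pairs at all, and the subsequent stitching to $u_\star, v_\star$ never gets off the ground.

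The paper takes a route that sidesteps this entirely and does not attempt to place the star on any geodesic or to contradict Proposition~\ref{prop:finitely_many_geodesics}. It first upgrades Theorem~\ref{thm:3} to a localized strong-confluence statement for $D_{h|_W}$ on a fixed Jordan domain $W$, via a bump-function absolute-continuity argument. It then uses compactness of the Hausdorff hyperspace $\mathcal{K}(\overline{\mathbb{A}_{1/2,1}(0)})$: for any $\delta>0$ there is $k'$ such that among $k'$ paths crossing the annulus, two must be within Hausdorff distance $\delta$, and localized strong confluence forces such a pair of $D_{h|_W}$-geodesics to intersect. This gives a uniform bound $\mathbbm{p}$ on the probability of $k$ disjoint annulus-crossing geodesics, and a multi-scale GFF iteration (\cite[Lemma~3.1]{GM19}) plus a union bound over $\varepsilon$-balls then rules out $k$-stars. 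Note also that the paper's $k_\gamma$ is \emph{not} the explicit quantity $2m(\gamma)+1$; it is the $k$ that makes the annulus-crossing probability small enough for the iteration, and it has no direct relation to $m(\gamma)$. Your strategy would, if it worked, give an explicit and likely tighter $k_\gamma$, but the through-geodesic-pair claim is false as stated, so a fundamentally different mechanism (as in the paper) is needed.
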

\begin{proof}[Proof sketch]
  The broad strategy is to argue that if there are many disjoint geodesics crossing an Euclidean annulus, then two of these geodesics must be close to each other and this is unlikely due to strong confluence; such an approach was also used for Brownian geometry in \cite[Lemma 4.2]{MQ20}.

  Let $d_H$ denote the Hausdorff distance with respect to the Euclidean metric. Recall that Theorem \ref{thm:3} states strong confluence for $D_h$. For this proof, we shall need a version of strong confluence for the metric $D_{h\lvert_W}$ for fixed Jordan domains $W$ which we now briefly discuss. We fix Jordan domains $U\subseteq V\subseteq W$ such that $\mathrm{dist}(\partial U,\partial V),\mathrm{dist}(\partial V,\partial W)>0$ and we will prove \eqref{eq:117} for any $D_{h\lvert_W}$-geodesic $P$ between points $u,v\in U$ which additionally satisfies $P\subseteq V$.

  We claim that, almost surely, for any points $u,v\in U$ and any $D_{h\lvert_W}$-geodesic $P$ from $u$ to $v$ which additionally satisfies $P\subseteq V$ and any $\varepsilon>0$, there must exist a $\delta>0$ such that for any $D_{h\lvert_W}$-geodesic $P'$ with $d_H(P,P')\leq \delta$, we must have
  \begin{equation}
    \label{eq:117}
          (P\setminus P')\cup (P'\setminus P)\subseteq \cB_\varepsilon(u)\cup \cB_\varepsilon(v).
        \end{equation}

For convenience we now fix a Jordan domain $V'$ sandwiched between $V$ and $W$-- that is, we choose $V'$ to have $V\subseteq V'\subseteq W$ with $\dist(\partial V,\partial V')>0, \dist(\partial V',\partial W)>0$. Now, fix $p \in (0,1)$. Since we have that almost surely,
\begin{align*}
\sup_{u,v \in U} D_h(u,v ; V) < \infty, D_h(\partial V',\partial W) > 0,  
\end{align*}
we obtain that there exists $C>1$ large enough depending on $p$ such that the event $E$ defined by%
\begin{align*}
  E=\{ \sup_{u,v \in U} D_h(u,v ; V) \leq C, D_h(\partial V',\partial W) > C^{-1}\}
\end{align*}
satisfies $\PP(E)\geq p$. Let $\phi \in C_0^{\infty}(\mathbb{C})$ be a bump function such that $\phi = 2\log(C) / \xi$ on $V$, $\phi = 0$ on $\mathbb{C} \setminus V'$ and $\phi(x) \in [0,2\log(C) / \xi]$ for all $x\in \CC$. Recall that the fields $h,h-\phi$ are mutually absolutely continuous when viewed modulo additive constants (see \cite[Lemma A.2]{Gwy19}) and hence Theorem~\ref{thm:3} applies with $h-\phi$ in place of $h$.

Suppose that the event $E$ occurs. Let $P$ be a $D_{h|_W}$-geodesic between two points $u,v \in U$ such that $P \subseteq V$. We claim that $P$ is a $D_{h-\phi}$-geodesic as well. Indeed, first we note that any $D_{h-\phi}$-geodesic between $u$ and $v$ stays in $W$. To see this, note that, since $\phi=0$ on $\CC\setminus V'$,
\begin{align*}
    D_{h-\phi}(\partial V',\partial W) = D_h(\partial V',\partial W) > C^{-1}.
\end{align*}
Further, since $\phi\lvert_V$ is identically equal to its maximum value $2\log (C)/\xi$ and since $P\subseteq V$, we have
\begin{align*}
    D_{h-\phi}(u,v) \leq D_{h-\phi}(u,v;V) = D_h(u,v;V) C^{-2} \leq C^{-1}.
\end{align*}
As a result, we obtain that
\begin{align*}
    D_{h-\phi}(u,v) \geq D_{h|_W}(u,v) C^{-2} = \ell(P ; D_h) C^{-2}.
\end{align*}
Moreover, we have that
\begin{align*}
    D_{h-\phi}(u,v) \leq \ell(P;D_{h-\phi})=C^{-2} \ell(P;D_h),
\end{align*}
which implies that $D_{h-\phi}(u,v) = \ell(P;D_{h-\phi})$ and so, on the event $E$, $P$ is also a $D_{h-\phi}$-geodesic instead of only being a $D_{h\lvert_W}$-geodesic. Similarly, on the event $E$, if $\delta>0$ is sufficiently small and $P'$ is a $D_{h|_W}$-geodesic such that $d_H(P,P') \leq \delta$, we have that $P'$ is a $D_{h-\phi}$-geodesic as well. Therefore, applying Theorem~\ref{thm:3} for the field $h-\phi$ and since $\mathbb{P}(E) \geq p$, we obtain that \eqref{eq:117} holds with probability at least $p$. Since $p \in (0,1)$ was arbitrary, it follows that \eqref{eq:117} holds almost surely.

By using \eqref{eq:117} along with an equicontinuity argument, we know that for any $U,V,W$ as in the above and any $\alpha>0$, there exists a random $\delta>0$ such that for all points $u,v\in U$ satisfying
\begin{enumerate}
\item $|u-v|>\alpha$,
\item There exists a $D_{h\lvert_W}$-geodesic $P$ from $u$ to $v$ additionally satisfying $P\subseteq V$,
\end{enumerate}
if we have a $D_{h\lvert_W}$-geodesic $P'$ with $d_H(P,P')\leq \delta$, we must have $P\cap P'\neq \emptyset$.
 Now, recall that if $X$ is a compact metric space, then the space $\mathcal{K}(X)$ of non-empty compact subsets of $X$ equipped with the Hausdorff metric is compact as well. Thus, if we consider the annulus $\mathbb{A}_{1/2,1}(0)$, then for any $\delta>0$, there is a corresponding constant $k'$ such that for any collection of paths $\{\eta_i\}_{i=1}^{i=k'}\subseteq \overline{\mathbb{A}_{1/2,1}(0)}$ from $\partial B_{1/2}(0)$ to $\partial B_{1}(0)$, there must exist two paths $\eta_i,\eta_j$ with $i\neq j$ such that $d_H(\eta_i,\eta_j)\leq \delta$. On combining this with the previous paragraph, we obtain the following. Given any $\mathbbm{p}\in (0,1)$, there exists a $k=k(\mathbbm{p})$ large enough such that
 \begin{equation}
   \label{eq:115}
   \PP\left(\exists \{z_i\}_{i=1}^k\subseteq \partial B_{1/2}(0), \{w_i\}_{i=1}^k\subseteq \partial B_{1}(0) \textrm{ and disjoint } D_{h\lvert_{\mathbb{A}_{1/4,2}(0)}}\textrm{--geodesics } \Gamma_{z_i,w_i}\subseteq \overline{\mathbb{A}_{1/2,1}(0)}\right)\leq \mathbbm{p}.
 \end{equation}
 Crucially, note that $\mathbbm{p}$ above is arbitrary. By using the scaling and translational invariance of $h$, one can define an analogue of the above event for any annulus $\mathbb{A}_{r/4,2r}(z)$ instead of $\mathbb{A}_{1/4,2}(0)$ and we denote this event by $E_{r,z}$. Note that $E_{r,z}$ is measurable with respect to $h\lvert_{\mathbb{A}_{r/4,2r}(z)}$ viewed modulo an additive constant. By a standard iteration argument for the GFF (see \cite[Lemma 3.1]{GM19}), we obtain that given any $\beta>0$, we can choose $\mathbbm{p}$ small enough and thereby $k=k(\mathbbm{p})$ large enough such that for all $z\in \CC$, we have
 \begin{equation}
   \label{eq:120}
   \PP\left(\bigcap_{i=1}^{\log \varepsilon^{-1}}  E_{2^i\varepsilon,z}\right)\leq \varepsilon^\beta.
 \end{equation}

 Finally, it can be checked that if an Euclidean ball $B_{\varepsilon}(z)$ contains a $k$-star $z'$ admitting geodesics $\{P_{i}\}_{i=1}^k$ from points of $\partial B_4(z')$ to $z'$, then the event $\bigcap_{i=1}^{\log \varepsilon^{-1}}  E_{2^i\varepsilon,z}$ must necessarily occur. Thus, by choosing $\beta>2$, we can do a union bound over $O(\varepsilon^{-2})$ many $\varepsilon$ sized balls covering a bounded region, thereby proving that, almost surely, $k$-stars $z'$ as above whose $k$ geodesic arms each have one end point in $\partial B_4(z')$ do not exist. A simple scale invariance argument for the GFF now yields the above-mentioned non-existence when $\partial B_{4}(z')$ is replaced by $\partial B_{4\delta}(z')$ for any fixed $\delta>0$. This proves that $k$-stars almost surely do not exist.

\end{proof}

While the above proposition states that $\mathfrak{S}_{k_\gamma}=\emptyset$, we do not expect the $k_\gamma$ above to be uniformly bounded in $\gamma$. Indeed, we have the following conjecture.
\begin{conjecture}
  \label{conj:starzero}
  Fix $k\in \NN$. Then $\dim_{D_h}(\mathfrak{S}_k)$ is almost surely deterministic and furthermore, we have $\lim_{\gamma \rightarrow 0}\dim_{D_h}(\mathfrak{S}_k)=2$.
\end{conjecture}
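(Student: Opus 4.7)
For the first assertion that $\dim_{D_h}(\mathfrak{S}_k)$ is almost surely deterministic, the plan is to invoke a standard 0-1 law. The set $\mathfrak{S}_k$ is an intrinsically defined metric object and its $D_h$-Hausdorff dimension is invariant under the Weyl scaling $D_h\mapsto c\,D_h$ induced by shifting $h$ by an additive constant. Combined with the translation and scale symmetries of the whole-plane GFF modulo additive constant (and the coordinate-change formula in Proposition \ref{prop:8}), $\dim_{D_h}(\mathfrak{S}_k)$ becomes a measurable functional of $h$ modulo additive constant that is invariant under both translations and the scalings $h(\cdot)\mapsto h(r\cdot)-h_r(0)$. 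The ergodicity of the whole-plane GFF modulo additive constant under either of these actions (which can be proved via a tail-triviality argument on concentric annuli, as done in analogous dimension computations in \cite{MQ20,Bha22,Dau23+}) then forces $\dim_{D_h}(\mathfrak{S}_k)$ to equal a deterministic constant $\alpha_k(\gamma)\in[0,2]$.

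For the second assertion, the guiding heuristic is that in the $\gamma\to 0$ limit, the LQG metric becomes close to Euclidean, and every point of the Euclidean plane is an $\infty$-star. Quantitatively, the H\"older exponents $\chi\in(0,\xi(Q-2))$ and $\chi'>\xi(Q+2)$ from Lemma \ref{lem:holder_regularity} satisfy $\xi(Q\pm 2)\to 1$ as $\gamma\to 0$, so for any $\varepsilon>0$ one may take $\gamma$ small enough that the identity map between $(\mathbb{C},\mathrm{dist})$ and $(\mathbb{C},D_h)$ is locally bi-H\"older with exponents in $(1-\varepsilon,1+\varepsilon)$. Equivalently, the exponential fluctuations of the $\gamma$-LQG metric around the Euclidean one, and hence the ``confluence radius'' of $D_h$-geodesics, become arbitrarily small as $\gamma\to 0$.

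The concrete plan is to construct, for each small $\gamma$, a translation- and scale-invariant family of events $\{E_r^{k,\gamma}(z)\}_{z\in\CC,r>0}$ such that: (a) $E_r^{k,\gamma}(z)\in\sigma(h|_{B_{Ar}(z)}\text{ mod additive constant})$ for a deterministic $A>1$; (b) on $E_r^{k,\gamma}(z)$ there exist $k$ pairwise disjoint $D_h$-geodesic configurations locally realising a $k$-star structure at scale $r$ around a point of $B_{r/A}(z)$; and (c) $\mathbb{P}(E_r^{k,\gamma}(z))\to 1$ as $\gamma\to 0$ uniformly in $(z,r)$. The construction of such events would be modelled on Proposition \ref{prop:definition_of_F_r(z)} but with a $\scX$ replaced by a ``$k$-star gadget'' whose existence is driven by the Euclidean geometry in the $\gamma\to 0$ limit. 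Feeding the $E_r^{k,\gamma}(z)$ into the independence-along-a-geodesic machinery of Proposition \ref{thm:theorem_uniqueness_lqg_metric}, together with a multi-scale Frostman-type lower bound on the Hausdorff dimension of the ``good'' points realising $E_r^{k,\gamma}$ at infinitely many dyadic scales, should yield $\alpha_k(\gamma)\geq \beta(\gamma)$ with $\beta(\gamma)\to 2$. The matching upper bound $\alpha_k(\gamma)\leq d_\gamma\to 2$ is immediate from the fact that $\mathfrak{S}_k\subseteq\CC$.

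The main obstacle will be establishing item (c) together with the concatenation step: even granting $k$ disjoint geodesic arms at every scale around $z$, one must certify that they extend to $k$ globally disjoint geodesics emanating from a single (random) star point, rather than merging together at some intermediate scale due to one-point confluence. A natural route is to use the strong confluence statement of the present paper (Theorem \ref{thm:3}) in reverse, to show that distinct almost-overlapping arms at one scale must in fact be subsegments of distinct geodesics at the next scale. However, quantifying this so that the resulting dimension bound $\beta(\gamma)$ genuinely tends to $2$, rather than saturating at some smaller value reflecting residual confluence, will almost certainly require new estimates on the small-$\gamma$ asymptotics of the $\gamma$-LQG metric and of its confluence radius, which appear to be beyond the reach of the methods developed here.
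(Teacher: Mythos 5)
This statement is a \emph{conjecture}, and the paper does not prove it; it offers only a one-sentence heuristic (that the $\gamma$-LQG metric converges to the Euclidean metric as $\gamma\to 0$, and every Euclidean point is an $\infty$-star). Your submission is correspondingly not a proof but a proposed strategy, and you are candid about this yourself. That said, it is worth separating the two assertions, since they have very different status.

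For the first assertion (almost sure determinism of $\dim_{D_h}(\mathfrak{S}_k)$), your ergodicity route is the natural one and is likely correct in outline. But it is not a complete argument: you need to actually verify that $\dim_{D_h}(\mathfrak{S}_k)$ is a measurable function of $h$ (nontrivial, since $\mathfrak{S}_k$ is defined by an existential quantifier over geodesics and one must check that the resulting random set and its Hausdorff dimension are $\sigma(h)$-measurable), and you need to supply the tail-triviality argument establishing ergodicity of the whole-plane GFF modulo additive constant under the scaling action. These are standard but not free; the paper explicitly treats even this assertion as unproved (see the footnote to the Problem following the conjecture). For the second assertion, the proposed strategy has several genuine holes that you largely flag yourself. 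Item (c) — that the events $E_r^{k,\gamma}(z)$ have probability tending to $1$ as $\gamma\to 0$ uniformly in $(z,r)$ — is the crux and is left entirely unproven; the H\"older-exponent observation ($\xi(Q\pm 2)\to 1$) gives only qualitative closeness of the metrics and does not directly yield a $k$-star gadget, because disjointness of $k$ geodesic arms is a rigid property that can be destroyed by arbitrarily small metric perturbations. More fundamentally, the passage from a local $k$-arm gadget at scale $r$ to a genuine $k$-star (i.e.\ $k$ globally defined geodesics disjoint away from a common endpoint) requires preventing confluence at \emph{all} larger scales, and it is not clear how the machinery of Proposition~\ref{thm:theorem_uniqueness_lqg_metric}, which is designed to force events along a single geodesic, controls the joint behaviour of $k$ geodesics. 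Using Theorem~\ref{thm:3} ``in reverse'' to separate nearby arms into distinct geodesics does not obviously work either: strong confluence asserts that nearby geodesics merge, which is exactly the obstruction you would be trying to overcome, not a tool for overcoming it. So the second assertion remains genuinely open, as the paper intends.
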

Heuristically, the source of the above conjecture is the fact that the $\gamma$-LQG metric converges to the Euclidean metric as $\gamma\rightarrow 0$ (see \cite[Theorem 1.7]{Kav25}) and for the Euclidean metric, every point is an $\infty$-star. Finally, before moving on, we note that a useful consequence of strong confluence (specifically Proposition \ref{prop:15}) is that for any $k\in \NN$ and $z\in \mathfrak{S}_k$, we can find rational points $u_1,\dots u_k$ and $D_h$-geodesics $\{\Gamma_{z,u_i}\}_{i=1}^k$ intersecting each other only at $z$. In particular, this reduces the question of determining $\dim_{D_h}(\mathfrak{S}_k)$ to that of determining the corresponding dimension for the set of points $z$ admitting disjoint (except at $z$) geodesics to \emph{fixed} points $u_1,\dots, u_k$.
\subsection{Geodesic networks}
\label{sec:geodesic-networks}
Recall that almost surely, for fixed $z,w\in \CC$, there is a unique \cite[Theorem 1.2]{MQ18} $D_h$-geodesic $\Gamma_{z,w}$. However, there indeed exist exceptional points $z,w$ admitting multiple such geodesics. The graph formed by the set of all possible geodesics $\Gamma_{z,w}$, known as a ``geodesic network'', can have many different topologies, and it is of interest to classify the set of possible geodesic networks, and for any fixed network, compute the Hausdorff dimension of points $(z,w)\subseteq \CC\times \CC$ for which the geodesics $\Gamma_{z,w}$ give rise to the above network.

The study of geodesic networks in random geometry was initiated in \cite{AKM17} in the context of Brownian geometry, wherein they classified precisely which geodesic networks are a.s.\ dense and which are nowhere dense. Subsequently, the work \cite{MQ20} obtained the conjecturally correct $\sqrt{8/3}$-LQG Hausdorff dimensions of the set of pairs $(z,w)$ for which many of the above networks occur. Thereafter, the work \cite{Gwy21} analysed geodesic networks in $\gamma$-LQG for general values of $\gamma$, completely classifying the dense networks which occur. Finally, the work \cite{Dau23+} analyses geodesic networks in the directed landscape and, in particular, completely classifies and computes the precise Hausdorff dimensions for all possible geodesic networks in the directed landscape.

Both in \cite{MQ20} and \cite{Dau23+}, the corresponding network dimensions are obtained by considering a geodesic network as a concatenation of three portions: the two geodesic stars at its endpoints and an interior network (see Figure \ref{fig:fig14}). Guided by this intuition, \cite[Conjecture 1.6]{Dau23+} states a conjecture regarding which geodesic networks indeed exist in $\gamma$-LQG for general values of $\gamma$ and the corresponding Hausdorff dimension (with respect to $D_h\times D_h$ and in terms of the star dimensions $\dim_{D_h}(\mathfrak{S}_k)$ which are unknown) of the set of pairs $(z,w)\in \CC\times \CC$ for which these networks occur. In particular, they conjecture that $\gamma$-LQG a.s.\ has either $27,28$ or $29$ possible geodesic networks depending on whether $\dim_{D_h}(\mathfrak{S}_4)$ lies in $[0,1), [1,3/2)$ or $[3/2,d_\gamma]$.

\begin{figure}
  \centering
  \includegraphics[width=0.25\linewidth]{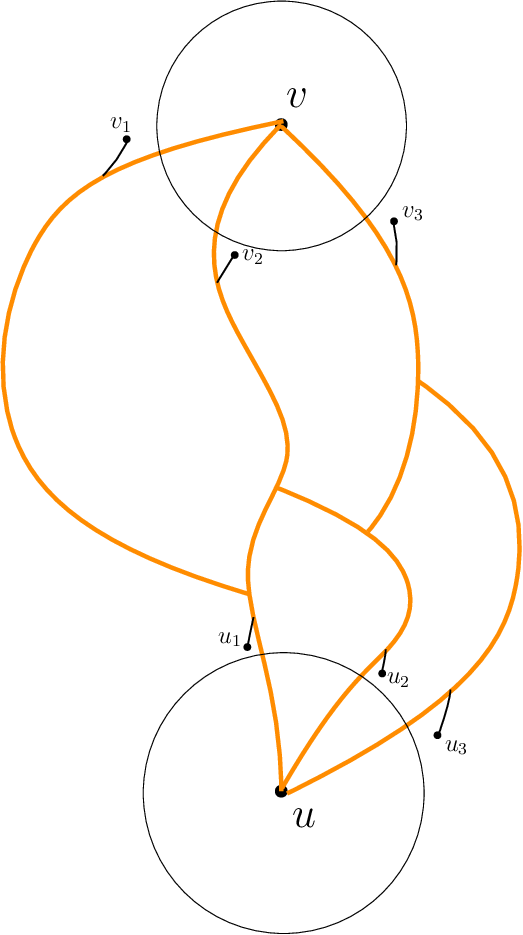}
  \caption{The decomposition of a geodesic network into star events and an interior network from \cite{Dau23+}: Here, the points $u,v$ are both $3$-stars and the points $\{u_i\}_{i=1}^{3}, \{v_j\}_{j=1}^{3}$ are rational points chosen such that the geodesics $\{\Gamma_{u_i,v_j}\}_{i,j}$ approximating the interior geodesic network corresponding to $u$ and $v$. \cite{Dau23+} conjectures that the network as in the above almost surely does exist in $\gamma$-LQG and the Hausdorff dimension with respect to $D_h\times D_h$ of the set of pairs $(u,v)$ as above is almost surely equal to $\dim_{D_h}(\mathfrak{S}_3)+\dim_{D_h}(\mathfrak{S}_3)-4=2+2-4=0$ for all values of $\gamma$. Here, $4$ comes from the number of bounded ``faces'' appearing in the geodesic network (the graph formed by the orange curves) and $2$ is the conjectured a.s.\ value of $\dim_{D_h}(\mathfrak{S}_3)$ (see Conjecture \ref{conj:dau}).} 
  \label{fig:fig14}
\end{figure}
In order to classify geodesic networks in the directed landscape, \cite{Dau23+} crucially uses strong confluence in multiple important ways, and we now state analogues of these for $\gamma$-LQG.
\begin{enumerate}
\item \label{it:dau1} Geodesic networks containing ``interior bubbles'' do not occur.
\item \label{it:dau2} While splitting a geodesic network into two star points and an interior network, the interior network is ``generic'' in the sense that it can be approximated by an interior network occurring between rational points.
\item \label{it:dau3} For disjoint compact sets $K_1,K_2$ with $\mathrm{dist}(K_1,K_2)=d>0$ and for any $0<\alpha<\beta<d$, there exist finitely many pairs of points $(p_i,q_i)\in \{z:\dist(K_1,z)=\alpha\}\times  \{z:\dist(K_1,z)=\beta\}$ and geodesics $\Gamma_{p_i,q_i}\subseteq \{z:\dist(K_1,z)\in [\alpha,\beta]\}$ such that for some point $u\in K_1,v\in K_2$ and geodesic $\Gamma_{u,v}$, we have $\Gamma_{p_i,q_i}\subseteq \Gamma_{u,v}$.
\end{enumerate}
The analogue of item \ref{it:dau3} above in \cite{Dau23+} appears as \cite[Proposition 3.4 (i)]{Dau23+}. We note that all the three items above are indeed consequences of the main result of this paper. Indeed, item \ref{it:dau1} appears as Proposition \ref{prop:11}, item \ref{it:dau2} is a consequence of Proposition \ref{prop:15} and item \ref{it:dau3} can be obtained by first using strong confluence to show that for any $\alpha\in (0,d)$, there are a.s.\ finitely many points in $\{z:\dist(K_1,z)=\alpha\}$ which are last intersection points of geodesics from points in $K_1$ to $K_2$, and by then using the non-existence of geodesic bubbles.

Since the above items \ref{it:dau1}, \ref{it:dau2}, \ref{it:dau3} indeed hold true in $\gamma$-LQG for general values of $\gamma$, one could attempt to classify geodesic networks in $\gamma$-LQG as done in \cite{Dau23+} for the directed landscape. The conjectured formulae involve the star dimensions $\dim_{D_h}(\mathfrak{S}_k)$ which are difficult to compute, but as a first step, one might try to compute the network dimensions in terms of the yet unknown star dimensions.

\begin{problem}
Fix a geodesic network $G$. With respect to the metric $D_h\times D_h$ on $\CC\times \CC$, compute the Hausdorff dimension of the set of pairs $(z,w)\in\CC\times \CC$ such that the possible geodesics $\Gamma_{z,w}$ are arranged according to $G$ in terms of the unknown star dimensions $\{\dim_{D_h}(\mathfrak{S}_k)\}_{k=1}^\infty$\footnote{We note that this would first involve showing that $\dim_{D_h}(\mathfrak{S}_k)$ is deterministic (see Conjecture \ref{conj:starzero}).}.
\end{problem}

For this, one approach is to attempt to carry out the strategy in \cite{Dau23+} by fixing $k$ distinct points $z_1,\dots, z_k\in \CC$, an affine set $A\subseteq \RR^k$ and computing the Hausdorff dimensions
\begin{equation}
  \label{eq:119}
  \dim_{D_h}(\{z: \exists \textrm{ almost}\footnotemark \textrm{ disjoint geodesics }\{\Gamma_{z,z_i}\}_{i=1}^k: (D_h(z,z_i))_{i=1}^k\in A\}).
\end{equation}
\footnotetext{Here ``almost disjoint'' refers to being pairwise disjoint except at $z$.}
It appears plausible that by using bump function arguments, one might be able to compute the optimal upper bounds for \eqref{eq:119} and the network dimensions in terms of the star dimensions (see \cite[Proposition 4.4]{Dau23+} for the directed landscape version). However, obtaining lower bounds on \eqref{eq:119} in terms of $\dim_{D_h}(\mathfrak{S}_k)$, particularly simultaneously over multiple $A\subseteq \RR^k$, appears to be harder (see \cite[Proposition 4.1]{Dau23+} for the directed landscape version).

In \cite{Dau23+}, to rule out the existence of certain networks apart from those with interior bubbles, it is proved (see \cite[Section 5]{Dau23+}) that ``generic'' interior networks as in item \ref{it:dau2} have points of ``multiplicity'' at most $3$. For this, in particular, one has to rule out two geodesics which intersect only at a single point (\cite[Lemma 5.1]{Dau23+}). This is not known for $\gamma$-LQG and, in fact, appears to be unknown even for Brownian geometry, and we now state it as a conjecture.

\begin{conjecture}
  Fix $\gamma\in (0,2)$, a whole plane GFF $h$ and distinct points $z_1,z_2,z_3,z_4\in \CC$. Then almost surely, the intersection $\Gamma_{z_1,z_2}\cap \Gamma_{z_3,z_4}$ cannot be a singleton.
\end{conjecture}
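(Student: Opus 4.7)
The plan is to argue by contradiction. Suppose $\PP(E) > 0$, where $E$ is the event that $\Gamma_{z_1,z_2} \cap \Gamma_{z_3,z_4}$ equals a single point $w$ lying in the interior of both geodesics; the degenerate case in which $w$ is an endpoint is ruled out separately using that $z_1, z_2, z_3, z_4$ are distinct and that by Proposition \ref{prop:finitely_many_geodesics} only finitely many geodesics join any pair of points. On $E$ there are unique times $\tau_1 \in (0, D_h(z_1,z_2))$ and $\tau_3 \in (0, D_h(z_3,z_4))$ with $w = \Gamma_{z_1,z_2}(\tau_1) = \Gamma_{z_3,z_4}(\tau_3)$, and the two geodesics cross transversally at $w$. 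By Proposition \ref{prop:11}, for every sufficiently small $\varepsilon > 0$, each of the four sub-arcs $\Gamma_{z_1,z_2}|_{[\tau_1 - \varepsilon, \tau_1]}$, $\Gamma_{z_1,z_2}|_{[\tau_1, \tau_1 + \varepsilon]}$, $\Gamma_{z_3,z_4}|_{[\tau_3 - \varepsilon, \tau_3]}$, $\Gamma_{z_3,z_4}|_{[\tau_3, \tau_3 + \varepsilon]}$ is the unique $D_h$-geodesic between its endpoints, and in particular each is determined by $h$ on a neighbourhood of itself.

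The key step is to show that, conditional on $E$ together with a high-probability regularity event for $D_h$, with positive conditional probability there exist a scale $r > 0$ and a point $z \in B_{r/2}(w)$ so that the annulus $\mathbb{A}_{r/2, r}(z)$ contains a $\scX^{u_+, v_+}_{u_-, v_-}$ configuration with the following geometry: $P_+ = \Gamma_{u_+, v_+}$ lies in one connected component of $B_r(z) \setminus \Gamma_{z_3, z_4}$, $P_- = \Gamma_{u_-, v_-}$ lies in the other, and the four arms of the $\scX$ are routed so that $\Gamma_{z_1, z_2}$ (resp.\ $\Gamma_{z_3, z_4}$) enters and exits $B_r(z)$ through the same pair of arcs of $\partial B_r(z) \setminus (P_+ \cup P_-)$ as $P_+$ (resp.\ $P_-$). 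If this can be arranged, then by items \eqref{it:uniquegeod1}--\eqref{it:concatenation} of Definition \ref{def:1*} and the uniqueness of $D_h$-geodesics between the various pairs of marked points, $\Gamma_{z_1, z_2}$ must contain $P_+$ locally at $w$ and $\Gamma_{z_3, z_4}$ must contain $P_-$ locally at $w$. Since $P_+ \cap P_-$ is by definition a non-trivial simple path, $\Gamma_{z_1, z_2} \cap \Gamma_{z_3, z_4}$ would contain this path, contradicting the singleton hypothesis.

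To carry this out, the plan is to develop a two-geodesic analogue of the machinery of Section \ref{sec:construction_of_chi}. First, one would upgrade Proposition \ref{thm:theorem_uniqueness_lqg_metric} and Corollary \ref{cor:main_corollary} to joint statements controlling a pair of $D_h$-geodesics $\Gamma_{\bz_1, \bw_1}, \Gamma_{\bz_2, \bw_2}$ simultaneously entering $B_{\lambda_2 r}(z)$. Second, one would redesign the bump function of Section \ref{sec:bump_functions}: instead of a single pair of thin tubes attracting one geodesic through the $\scX$, one uses two pairs of thin tubes $(U_A^1, U_B^1), (U_A^2, U_B^2)$ approaching the four ends of the $\scX$ from opposite sides of the crossing at $w$, together with a single non-negative bump $\phi$ that is large on both pairs of tubes and maximal on the $\scX$ itself. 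A Lemma \ref{lem:geodesic_intersects_chi}-style analysis would then show that, conditional on both geodesics entering $B_{\lambda_2 r}(z)$ with the correct crossing geometry, $\Gamma_{z_1, z_2}^{h-\phi}$ and $\Gamma_{z_3, z_4}^{h-\phi}$ are routed through $P_+$ and $P_-$ respectively, and the absolute continuity argument of Lemma \ref{thm:main_chi_result} would transfer the conclusion back to the original field.

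The main obstacle will be the joint independence estimate. The argument of Section \ref{sec:reg-event} leading to Proposition \ref{prop:chi_proposition} rests on the one-point confluence structure of Proposition \ref{prop:7}: conditional on $\mathcal{B}_t^{\bullet}(\bz)$, geodesics from $\bz$ admit a controlled exploration structure, and this feeds into the barrier argument behind Proposition \ref{prop:3}. A two-geodesic version would require a simultaneous exploration from two different base points, together with a joint stability event ensuring that the two metric explorations are mutually compatible with the deterministic $\scX$ one wishes to place. Moreover, the transverse crossing geometry at $w$ is random (both in the crossing angle and in the relative positions of the four arms) and must be matched to the deterministic sectorisation of $B_r(z)$ used in Proposition \ref{prop:definition_of_F_r(z)}; handling this by discretising the configuration space and pigeonholing is conceptually routine but technically demanding. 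These two difficulties, rather than any failure of the overall strategy, are what I expect would make a full proof substantial, and plausibly explain why the statement is raised only as a conjecture here.
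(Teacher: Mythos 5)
The statement you were asked to prove is stated in the paper as an \emph{open conjecture}; the paper gives no proof, and explicitly notes that the claim ``appears to be unknown even for Brownian geometry.'' Your own proposal acknowledges that it is a strategy sketch with ``main obstacles'' rather than a proof, and that assessment is correct. So there is no proof in the paper to compare against; what I can do is flag the points where the sketch itself is not yet sound.

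The first issue is logical. You propose to show that ``conditional on $E$ \ldots with positive conditional probability there exist $r,z$ so that [a $\scX$ exists through which both geodesics are routed].'' But the event you are describing --- both geodesics containing $P_+\cap P_-$, hence overlapping on a non-trivial segment --- is literally the complement of $E$. Its conditional probability given $E$ is therefore $0$, not something one can ``show'' is positive. The usable version of the argument must be structured as in Lemma~\ref{thm:main_chi_result} and Proposition~\ref{thm:theorem_uniqueness_lqg_metric}: define a high-probability locally determined event $E_r(z)$ and a locally determined event $\mathfrak{C}_r^{\mathrm{joint}}(z)$ for a \emph{pair} of geodesics, prove the ratio bound
\begin{equation*}
\Lambda^{-1}\,\PP\bigl(E_r(z)\cap\{\text{both geodesics near }z\}\mid h|_{\CC\setminus B_{\lambda_3 r}(z)}\bigr)\leq \PP\bigl(\mathfrak{C}_r^{\mathrm{joint}}(z)\cap\{\ldots\}\mid h|_{\CC\setminus B_{\lambda_3 r}(z)}\bigr),
\end{equation*}
and then conclude from the iteration over scales that $\mathfrak{C}_r^{\mathrm{joint}}(z)$ occurs with probability tending to $1$ whenever the two geodesics cross, forcing $\PP(E)=0$. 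This is not a conditional statement given $E$; it is an unconditional high-probability statement, which is what makes the contradiction usable.

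The second issue is the claim that ``$\Gamma_{z_1,z_2}$ must contain $P_+$ locally at $w$ and $\Gamma_{z_3,z_4}$ must contain $P_-$ locally.'' That is not what the $\scX$-mechanism gives. Lemma~\ref{lem:geodesic_goes_through_chi} shows only that the routed geodesic contains the merge segment $P_+\cap P_-$, with $P_+$ to its left and $P_-$ to its right; it does \emph{not} show the full arm $P_+$ is contained in the geodesic (the deterministic points $u_\pm,v_\pm$ almost surely do not lie on $\Gamma_{z_1,z_2}$). The actual conclusion you need is the weaker one --- both routed geodesics contain $P_+\cap P_-$ --- which already gives the contradiction. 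You should restate the mechanism in those terms.

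Beyond these fixes, your diagnosis of the main difficulties is accurate: Proposition~\ref{thm:theorem_uniqueness_lqg_metric} and the barrier argument behind Proposition~\ref{prop:3} are built around a single exploration from one base point $\bz$, and upgrading them to a joint statement for two geodesics crossing at a random interior point is a genuinely new piece of work; matching the random crossing geometry to the deterministic sectorisation of Proposition~\ref{prop:definition_of_F_r(z)} by discretisation also has to be done. It is also worth noting the alternative framing the paper itself offers immediately after the conjecture: a singleton intersection $\{w\}$ forces $w\in\mathfrak{S}_4$ to lie in the interior of a geodesic, so the conjecture is equivalent to emptiness of $\mathfrak{S}_4\cap\Gamma_{z_1,z_2}$, suggesting a dimension-counting approach rather than a direct routing argument. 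Either route remains open.
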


The above question is related to computing the dimension of $k$-stars additionally lying on geodesics since if $\Gamma_{z_1,z_2}\cap \Gamma_{z_3,z_4}$ is a singleton, it must necessarily be a $4$-star which lies in the interior of a geodesic. Star points which lie on geodesics were first investigated in the directed landscape in \cite[Theorem 4]{Bha22}, where the Hausdorff dimension of points on a directed landscape geodesic with one additional geodesic arm emanating out was precisely computed. In \cite[Lemma 5.3]{Dau23+}, it was established that there a.s.\ exist no points on a directed landscape geodesics with two additional geodesic arms emanating out. In view of these developments for the directed landscape, we state the following open question for $\gamma$-LQG.
\begin{problem}
  Fix $\gamma\in (0,2)$ and a whole plane GFF $h$. Fix distinct points $z_1,z_2\in \CC$. Compute the Hausdorff dimensions $\dim_{D_h}(\mathfrak{S}_3\cap \Gamma_{z_1,z_2}),\dim_{\mathrm{Euc}}(\mathfrak{S}_3\cap \Gamma_{z_1,z_2})$. Is the set $\mathfrak{S}_4\cap \Gamma_{z_1,z_2}$ necessarily empty?
\end{problem}
We note that it is easy to see that the set $\mathfrak{S}_3\cap \Gamma_{z_1,z_2}$ is a.s.\ at least countably infinite since any point where $\Gamma_{z_1,z_2}$ first coalesces with another geodesic automatically belongs to the above set.

\newpage
{\footnotesize
{\renewcommand{\arraystretch}{1.10}
  \begin{longtable}{|l|l|l|}
 \hline {\bf Notation}
  & {\bf  Defined in}
  & {\bf Informal Description}\\
\hline  
  $\gamma\in (0,2)$
  &   Section \ref{sec:setup}
  &   LQG parameter.
  \\
  \hline

$\nu>0$ 
&   Proposition \ref{thm:theorem_uniqueness_lqg_metric}  
  &  $\mathfrak{C}_{r}^{\bz,\bw}(z)$ is produced with an $r\in [\varepsilon^{1+\nu},\varepsilon]$.\\ \hline

  $u_+ ,  v_+ $
  &   Definition \ref{def:1*} 
  &   starting and ending points of the one of the geodesics making the $\scX$.\\\hline
  $ u_- ,  v_- $
  &   Definition \ref{def:1*}
  &   starting and ending points of the other geodesic making the $\scX$.\\ \hline

  $P_+$ (resp.\ $P_-$)
  &   Definition \ref{def:1*} 
  &  The unique $D_h$-geodesic from $u_+$ to $v_+$ (resp.\ from $u_-$ to $v_-$).\\\hline

    $u ,  v \in \mathbb{C}$
  &   Definition \ref{def:1*}  
  & the \emph{random} initial and final points of $P_+\cap P_-$ \\  \hline

$\tau_u^+,\tau_u^-,\tau_v^+,\tau_v^-$
  &   Proposition~\ref{prop:definition_of_F_r(z)}
  & times such that $P_+(\tau_u^+)=u,P_-(\tau_u^-)=u,P_+(\tau_v^+)=v,P_-(\tau_v^-)=v$\\ \hline
  
   $\Phi \in (0,1/10)$ 
&   Lemma~\ref{lem:1*} 
 &  Parameter governing the length of $P_+\cap P_-$ compared to $P_+$ and $P_-$.\\ 
 \hline
 
  $b>0$
  &   Lemma \ref{lem:1*}  
  &   lower bound on the distance between any pair of points in $\{u_+,u_-,v_+,v_-,u,v\}$ \\  &  
  &   and between $P_+\cap P_-$ and $\{u_+ ,  v_+ ,  u_- ,  v_-\}$.\\ \hline

 $\mathcal{A}_{u_+}$ (resp.\ $\mathcal{A}_{u_-}$)
  &   Proposition~\ref{prop:definition_of_F_r(z)}
  &  the \emph{deterministic} collection of squares containing, on the event $F_r(z)$,\\ &  
  & the arm of the $\scX$ connecting $u_+$ and $u$     (resp.\ $u_-$ and $u$ ).\\ \hline

$\mathcal{A}_{v_+}$ (resp.\ $\mathcal{A}_{v_-}$)
&   Proposition~\ref{prop:definition_of_F_r(z)}
  &  the \emph{deterministic} collection of squares containing, on the event $F_r(z)$,\\ &  
  & the arm of the $\scX$ connecting $v_+$ and $v$     (resp.\ $v_-$ and $v$ ).\\ \hline

  $\mathcal{M}$ 
  &   Proposition~\ref{prop:definition_of_F_r(z)}
  & the \emph{deterministic} collection of squares containing, on the event $F_r(z)$, $P_+\cap P_-$.\\ \hline

 $Q_u$ (resp.\ $Q_v$)
&   Proposition~\ref{prop:definition_of_F_r(z)}
  & \emph{deterministic} path in $B_{A^2 r}(z)$ connecting $z-A^2 r$ (resp.\ $z+A^2 r$)\\
 $I_u$ (resp.\ $I_v$) & & to $S_u$ (resp.\ $S_v$) via $x_u\in I_u\subseteq \partial S_u$ (resp.\ $x_v\in I_v\subseteq \partial S_v$).\\ \hline

  $\delta_1>0$ 
  &  Proposition~\ref{prop:definition_of_F_r(z)}
  &   size of the deterministic squares $S_u,S_v$ containing $u$ and $v$ on the event $F_r(z)$.\\ \hline

$\delta_2\ll\delta_1$ 
  &  Proposition~\ref{prop:definition_of_F_r(z)}
  & Length of $I_u,I_v$ and a lower bound \\ 
  & &on the distance between certain parts of the coarse-grained $\scX$. %
  
  \\ \hline

$\delta_3 \ll\delta_2 $ 
&   Proposition~\ref{prop:definition_of_F_r(z)}
  &   size of the squares intersecting the four arms and \\ 
  &  
 &  the common segment $P_+\cap P_-$ of the $\scX$.  \\   \hline

$\mathcal{D}_u$ (resp.\ $\cD_v$)
&   Proposition~\ref{prop:definition_of_F_r(z)}
  &   union of the $\delta_3 r$ squares covering $Q_u$ (resp.\ $Q_v$).\\ \hline
  
$V_r(z)$ 
&   \eqref{eq:26} 
  &  Interior of the set $\cA_{u_+}\cup\cA_{u_-}\cup\cM\cup\cA_{v_+}\cup\cA_{v_-} \cup S_u\cup S_v$.  \\
    &  
  & On $F_r(z)$, the entire $\scX$ resides in the deterministic set $V_r(z)$ \\ \hline

 $O_r(z)$ 
&   \eqref{eq:95}
  &  $\cD_u\cup V_r(z)\cup \cD_v$\\ \hline

$A>0$ 
&   Proposition~\ref{prop:definition_of_F_r(z)}
  &   $V_r(z)$ lies inside $B_{Ar}(z)$. \\
  & & On $F_r(z)$, geodesics inside $B_{Ar}(z)$ are determined by the field in $B_{A^2 r}(z)$\\ \hline

$\rho>0$ 
&   Section \ref{sec:tubes}  
  &   The ball $B_{2r}(0)$ contains disjoint balls $B_{\rho r}(z)$ for $z\in \cZ$. \\ 
  &  
  &  For each $z\in \cZ$, we consider the event $F_{\rho r/A^2}(z)$. \\ \hline

$\delta \in (0,1)$ 
&   Section \ref{sec:tubes}  
    &   lower bound on the Euclidean distance between the locations\\
    & Lemma \ref{lem:Er-conds}& where metric balls around $\bz,\bw$ intersect $\partial B_{3r}(0)$.\\ \hline

$\zeta < \delta_3\rho/(4A^2)$ 
&   Section \ref{sec:bump_functions}  
  &  The bump function $f_r^{x,y}$ is supported on $B_{\zeta r}(U_{r}^{x,y})$ \\\hline

    $\theta<\zeta/100$ 
&    \eqref{eq:1}
    & The bump function $g_r^x$ is supported on $B_{\theta^2 r}(W_r^x)$.\\ \hline

$K_f\ll K_g, \phi_r^{x,y}$ 
&    \eqref{eq:1}  
  & $\phi_r^{x,y}:= K_f f_r^{x,y} + K_g (g_r^x + g_r^y)$\\ \hline %

$\Delta \in (0,1)$ 
&    \eqref{it:lqg_euclidean_distance_small} in $E_r(0)$  
  &   upper bound on LQG distances between points on $\partial B_{3r}(0)$ \\ 
  &  Lemma \ref{lem:Er-conds}
  &  with Euclidean distance at most $\delta r$.\\  \hline

$B>0$ 
&    \eqref{it:lqg_distance_upper_bound}
and  \eqref{it:lqg_distance_thin_ngbd}
 
  &   upper bound on the LQG diameters of the domains $U_r^{x,y}$\\
    &  in $E_r(0)$
  &  and lower bound on the LQG length of any\\
    &  
   &  Euclidean path of Euclidean diameter at least $\delta_3 \rho r/ (10^4 A^2)$ \\   
     &  
  &  contained in a small Euclidean neighborhood of $\partial U_r^{x,y}$.\\ \hline

 $\alpha \in (0,1)$ 
&    \eqref{it:lqg_distance_lower_bound} in $E_r(0)$
  &   lower bound on LQG distances between any two points \\ 
  &  
  &  on $\mathbbm{A}_{r/4 ,  r}(0)$ with Euclidean distance at least $\zeta r$.\\  \hline

$M>0$ 
&    \eqref{it:lqg_distance_thin_tube}  in $E_r(0)$
  &   upper bound on the LQG diameter of $W_r^x$.\\ \hline

$\Lambda_0>0$ 
&    \eqref{it:bump_function_dirichlet_energy} in $E_r(0)$ 
  &   upper bound on $|(h,\phi)_{\nabla}| +\frac{1}{2}|(\phi,\phi)_{\nabla}|$ for the bump functions $\phi \in \mathcal{G}_r$.\\ \hline

$\Lambda>0$ 
&   Section \ref{sec:geodesic_goes_through_chi}  
  &   upper bound on the Radon-Nikodym derivative of the \\
   &  
  &   
  field $h-\phi$ with respect to the field $h$ for all $\phi \in \mathcal{G}_r$.\\ \hline

\hline
    \caption{Parameters and sets from Section \ref{sec:construction_of_chi}}
    \label{table}
\end{longtable}
}
}

\printbibliography
 \end{document}